\title{Von Neumann equivalence and properly proximal groups}
\author{Ishan Ishan}
\author{Jesse Peterson}
\author{Lauren Ruth}
\address{Department of Mathematics, University of Nebraska-Lincoln, 203 Avery Hall, PO BOX 880130, Lincoln, NE 68588}
\email{fishan2@unl.edu}
\address{Department of Mathematics, Vanderbilt University, 1326 Stevenson Center, Nashville, TN 37240, USA}
\email{jesse.d.peterson@vanderbilt.edu}
\address{Department of Mathematics, Mercy College, 555 Broadway, Dobbs Ferry, NY 10522}
\email{lruth@mercy.edu}
\thanks{J.P. was supported in part by NSF Grant DMS \#1801125 and NSF FRG Grant \#1853989}
\newtheorem{thm}{Theorem}[section]
\newtheorem{prop}[thm]{Proposition}
\newtheorem{cor}[thm]{Corollary}
\newtheorem{lem}[thm]{Lemma}
\theoremstyle{definition}
\newtheorem{defn}[thm]{Definition}
\newtheorem{defn/lem}[thm]{Definition/Lemma}
\newtheorem{examp}[thm]{Example}
\newtheorem{problem}{Problem}[]
\newcommand{\ovt}{\, \overline{\otimes}\,}
\newcommand{\oovt}[1]{\, \overline{\otimes}_{#1}\,}
\newcommand{\oht}[1]{\, \hat{\otimes}_{#1}\,}
\newcommand{\op}{{\rm op}}
\newcommand{\actson}{{\, \curvearrowright \,}}
\newcommand{\aactson}[1]{{\, \curvearrowright^{#1} \,}}
\DeclareRobustCommand\frownotimes{\mathbin{\mathpalette\frown@otimes\relax}}
\newcommand{\frown@otimes}[2]{%
  \vbox{
    \ialign{##\cr
      \hidewidth$\m@th#1{}_\frown$\kern-\scriptspace\hidewidth\cr
      \noalign{\nointerlineskip\kern-1pt}
      $\m@th#1\otimes$\cr
    }%
  }%
}
\begin{document}
\begin{abstract}
We introduce a new equivalence relation on groups, which we call von Neumann equivalence, that is coarser than both measure equivalence and $W^*$-equivalence. We introduce a general procedure for inducing actions in this setting and use this to show that many analytic properties, such as amenability, property (T), and the Haagerup property, are preserved under von Neumann equivalence. We also show that proper proximality, which was defined recently in \cite{BIP18} using dynamics, is also preserved under von Neumann equivalence. In particular, proper proximality is preserved under both measure equivalence and $W^*$-equivalence, and from this we obtain examples of non-inner amenable groups that are not properly proximal.
\end{abstract}

\maketitle


\section{Introduction}

Two countable groups $\Gamma$ and $\Lambda$ are measure equivalent if they have commuting measure-preserving actions on a $\sigma$-finite measure space $(\Omega, m)$ such that the actions of $\Gamma$ and $\Lambda$ individually admit a finite-measure fundamental domain. This notion was introduced by Gromov in \cite[0.5.E]{Gr93} as an analogy to the topological notion of being quasi-isometric for finitely generated groups. The basic example of measure equivalent groups is when $\Gamma$ and $\Lambda$ are lattices in the same locally compact group $G$. In this case, $\Gamma$ and $\Lambda$ act on the left and right of $G$ respectively, and these actions preserve the Haar measure on $G$. 

For certain classes of groups, measure equivalence can be quite a coarse equivalence relation. For instance, the class of countable amenable groups splits into two measure equivalence classes, those that are finite, and those that are countably infinite \cite{dye1, dye2, OrWe80}. Amenability is preserved under measure equivalence, as are other (non)-approximation type properties such as the Haagerup property or property (T).  Outside the realm of amenable groups, there are a number of powerful invariants to distinguish measure equivalence classes --- for example, Gaboriau's celebrated result that measure equivalent groups have proportional $\ell^2$-Betti numbers \cite{Ga00}.  There are also a number of striking rigidity results, such as Furman's work in \cite{Fu99B, Fu99A} where he builds on the superrigidity results of Margulis \cite{Ma75} and Zimmer \cite{Zi84}; and Kida's work in \cite{Ki10, Ki11}, where he considers measure equivalence for mapping class groups and for classes of amalgamated free product groups. 

If $\Gamma \actson (X, \mu)$ is a free probability measure-preserving action on a standard measure space, then associated to the action is its orbit equivalence relation, where equivalence classes are defined to be the orbits of the action. If $\Lambda \actson (Y, \nu)$ is another free probability measure-preserving action, then the actions are orbit equivalent if there is an isomorphism $\theta: X \to Y$ of measure spaces that preserves the orbit equivalence relations, i.e., $\theta( \Gamma \cdot x) = \Lambda \cdot \theta(x)$, for each $x \in X$. If $E \subset X$ is a positive measure subset, then one can also consider the restriction of the orbit equivalence relation to $E$. The two actions are stably orbit equivalent if there exist positive measure subsets $E \subset X$ and $F \subset Y$ such that the restricted equivalence relations are measurably isomorphic. A fundamental result in the study of measure equivalence is that two groups are measure equivalent if and only if they admit free probability measure-preserving actions that are stably orbit equivalent \cite[Section 3]{Fu99B} \cite[$P_{\rm ME}5$]{Ga05}. Moreover, in this case one can take the actions to be ergodic.

Also associated to each probability measure-preserving action $\Gamma \actson (X, \mu)$ is the Murray-von Neumann crossed product von Neumann algebra $L^\infty(X, \mu) \rtimes \Gamma$ \cite{MuvN37}. This is the von Neumann subalgebra of $\mathcal B(L^2(X, \mu) \ovt \ell^2 \Gamma)$ that is generated by a copy of $L^\infty(X, \mu)$ acting on $L^2(X, \mu)$ by pointwise multiplication, together with a copy of the group $\Gamma$ acting diagonally by $\sigma_\gamma \otimes \lambda_\gamma$, where $\sigma_\gamma$ is the Koopman representation $\sigma_\gamma(f) = f \circ \gamma^{-1}$ and $\lambda_\gamma$ is the left regular representation. The crossed product $L^\infty(X, \mu) \rtimes \Gamma$ is a finite von Neumann algebra with a normal faithful trace given by the vector state corresponding to $1 \otimes \delta_e \in L^2(X, \mu) \ovt \ell^2 \Gamma$, and if the action is free then this will be a factor if and only if the action is also ergodic, in which case $L^\infty(X, \mu)$ is a Cartan subalgebra of the crossed product. Non-free actions are also of interest in this setting. In particular, in the case when $(X, \mu)$ is trivial, this gives the group von Neumann algebra $L\Gamma$, which is a factor if and only if $\Gamma$ is ICC, i.e., every non-trivial conjugacy class in $\Gamma$ is infinite \cite{MuvN43}.

A celebrated result of Singer shows that two free ergodic probability measure-preserving actions $\Gamma \actson (X, \mu)$ and $\Lambda \actson (Y, \nu)$ are stably orbit equivalent if and only if their von Neumann crossed products are stably isomorphic in a way that preserves the Cartan subalgebras \cite{Si55}.  Specifically, Singer showed that if $E \subset X$ and $F \subset Y$ are positive measure subsets and $\theta: E \to F$ is a measure space isomorphism, then $\theta$ preserves the orbit structure almost everywhere if and only if there exists an isomorphism of von Neumann algebras 
\[
\tilde \theta: 1_F (L^\infty(Y, \nu) \rtimes \Lambda) 1_F \to 1_E( L^\infty(X, \mu) \rtimes \Gamma) 1_E
\]  
such that $\tilde \theta(f) = f \circ \theta$ for all $f \in L^\infty(F, \nu_{| F})$. 

Singer's result shows that the study of measure equivalence is closely connected to the study of finite von Neumann algebras, and there have been a number of instances where techniques from one field have been used to settle long-standing problems in the other. This exchange of ideas has especially thrived since the development of Popa's deformation/rigidity theory; see for instance \cite{Po06A, Po06B, Po06C, Po07, Po08}, or the survey papers \cite{Po07B, Va07, Va10, Io13, Io18}, and the references therein.

Two groups $\Gamma$ and $\Lambda$ are $W^*$-equivalent if they have isomorphic group von Neumann algebras, i.e.,  $L\Gamma \cong L\Lambda$. This is somewhat analogous to measure equivalence (although a closer analogy is made between measure equivalence and virtual $W^*$-equivalence, which for ICC groups asks for $L\Gamma$ and $L\Lambda$ to be virtually isomorphic in the sense that each factor is stably isomorphic to a finite index subfactor in the other factor \cite[Section 1.4]{Po86}) and both equivalence relations preserve many of the same ``approximation type'' properties. These similarities led Shlyakhtenko to ask whether measure equivalence implied $W^*$-equivalence in the setting of ICC groups. It was shown in \cite{ChIo11} that this is not the case, although the converse implication of whether $W^*$-equivalence implies measure equivalence is still open.

As with measure equivalence, we have a single $W^*$-equivalence class of ICC countably infinite amenable groups \cite{Co76}, which shows that $W^*$-equivalence is quite coarse. Yet there do exist countable ICC groups that are not $W^*$-equivalent to any other non-isomorphic group \cite{IoPoVa13, BeVa14, Be15, ChIo18}.

Returning to discuss measure equivalence, if $\Gamma$ and $\Lambda$ have commuting actions on $(\Omega, m)$ and if $F \subset \Omega$ is a Borel fundamental domain for the action of $\Gamma$, then on the level of function spaces, the characteristic function $1_F$ gives a projection in $L^\infty(\Omega, m)$ such that the collection $\{ 1_{\gamma F} \}_{\gamma \in \Gamma}$ forms a partition of unity, i.e., $\sum_{\gamma \in \Gamma} 1_{\gamma F} = 1$. This notion generalizes quite nicely to the non-commutative setting where we will say that a fundamental domain for an action on a von Neumann algebra $\Gamma \aactson{\sigma} \mathcal M$ is a projection $p \in \mathcal M$ such that $\sum_{\gamma \in \Gamma}\sigma_\gamma(p) = 1$, where the convergence is in the strong operator topology. 

Using this perspective for a fundamental domain we may then generalize the notion of measure equivalence by simply considering actions on non-commutative spaces. 
\begin{defn}\label{defn:vne}
Two groups $\Gamma$ and $\Lambda$ are von Neumann equivalent, written $\Gamma \sim_{vNE} \Lambda$, if there exists a von Neumann algebra $\mathcal M$ with a semi-finite normal faithful trace ${\rm Tr}$ and commuting, trace-preserving, actions of $\Gamma$ and $\Lambda$ on $\mathcal M$ such that the $\Gamma$ and $\Lambda$-actions individually admit a finite-trace fundamental domain.
\end{defn}
The proof of transitivity for measure equivalence is adapted in Proposition~\ref{prop:vnetransitive} below to show that von Neumann equivalence is a transitive relation. It is also clearly reflexive and symmetric, so that von Neumann equivalence is indeed an equivalence relation. 

Von Neumann equivalence is clearly implied by measure equivalence, and, in fact, von Neumann equivalence is also implied by $W^*$-equivalence. Indeed, if $\theta: L\Gamma \to L\Lambda$ is a von Neumann algebra isomorphism, then we may take $\mathcal M = \mathcal B(\ell^2 \Lambda)$ with the trace-preserving action $\sigma: \Gamma \times \Lambda \to {\rm Aut}(\mathcal M)$ given by $\sigma_{(s, t)}(T) = \theta(\lambda_s) \rho_t T \rho_t^* \theta(\lambda_s^*)$, where $\rho: \Lambda \to \mathcal U(\ell^2 \Lambda)$ is the right regular representation, which commutes with operators in $L\Lambda$. It is then not difficult to see that the rank one projection $p$ onto the subspace $\mathbb C \delta_e$ is a common fundamental domain for the actions of both $\Gamma$ and $\Lambda$. In fact, we'll show below that virtual $W^*$-equivalence also implies von Neumann equivalence.

We introduce below a general induction procedure for inducing representations via von Neumann equivalence from $\Lambda$ to $\Gamma$, and using these induced representations we show that some of the properties that are preserved for measure equivalence and $W^*$-equivalence are also preserved for von Neumann equivalence. 

\begin{thm}\label{thm:vneamenaT}
Amenability, property (T), and the Haagerup property are all von Neumann equivalence invariants. 
\end{thm}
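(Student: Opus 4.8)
The plan is to exploit the representation-theoretic characterizations of the three properties and to transport representations between $\Lambda$ and $\Gamma$ through the induction procedure announced above. Recall that $\Lambda$ is amenable iff the trivial representation $1_\Lambda$ is weakly contained in the left regular representation $\lambda_\Lambda$; that $\Lambda$ has property (T) iff every unitary representation with almost invariant vectors has a nonzero invariant vector; and that $\Lambda$ has the Haagerup property iff it admits a $c_0$ (mixing) unitary representation with almost invariant vectors. Since von Neumann equivalence is symmetric, in each case it suffices to transfer the property from $\Lambda$ to $\Gamma$.

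First I would set up the induction explicitly. Fix a coupling: a semifinite $(\cM, \Tr)$ with commuting trace-preserving actions of $\Gamma$ and $\Lambda$ and finite-trace fundamental domains $p$ (for $\Gamma$) and $q$ (for $\Lambda$). The actions give commuting unitary representations on $L^2(\cM,\Tr)$; write $\beta$ for the resulting $\Lambda$-representation, and note the commuting $\Gamma$-action yields a commuting unitary representation. Given $\pi\colon \Lambda \to \cU(\cH_\pi)$, form $L^2(\cM,\Tr)\ovt \cH_\pi$ with the diagonal $\Lambda$-action $\beta_\lambda \otimes \pi_\lambda$, and define the induced representation $\tilde\pi$ of $\Gamma$ to be the commuting $\Gamma$-action restricted to the fixed-point subspace
\[
\cH_{\tilde\pi} \;=\; \bigl(L^2(\cM,\Tr)\ovt \cH_\pi\bigr)^{\Lambda}.
\]
Using the fundamental domain $q$ one identifies $\cH_{\tilde\pi}$ with $\hat q\, L^2(\cM,\Tr)\ovt\cH_\pi$, which shows that $\cH_{\tilde\pi}$ is nonzero and that $\tilde\pi$ is a genuine unitary representation of $\Gamma$.

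The next step is to record the formal properties of $\pi\mapsto\tilde\pi$: it commutes with direct sums and preserves weak containment; inducing $1_\Lambda$ yields a representation containing $1_\Gamma$ (the invariant vector being supplied by the finite-trace fundamental domain, which makes the corresponding constant section square-integrable); inducing $\lambda_\Lambda$ yields a representation weakly contained in a multiple of $\lambda_\Gamma$; induction sends $c_0$ representations to $c_0$ representations; and induction carries almost invariant vectors to almost invariant vectors while reflecting nonzero invariant vectors. Granting these, the three statements follow. For amenability, $1_\Lambda\prec\lambda_\Lambda$ induces to $1_\Gamma\prec\widetilde{1_\Lambda}\prec\widetilde{\lambda_\Lambda}\prec \infty\cdot\lambda_\Gamma$, whence $1_\Gamma\prec\lambda_\Gamma$ and $\Gamma$ is amenable. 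For the Haagerup property, a $c_0$ representation of $\Lambda$ with almost invariant vectors induces to a $c_0$ representation of $\Gamma$ with almost invariant vectors. For property (T), given a $\Gamma$-representation $\rho$ with almost invariant vectors, I would induce it to a $\Lambda$-representation (reversing the roles of the two groups in the coupling) with almost invariant vectors; property (T) of $\Lambda$ then supplies a nonzero invariant vector, which reflects to a nonzero $\Gamma$-invariant vector in $\rho$, so $\Gamma$ has (T).

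The main obstacle will be establishing the two quantitative properties of induction in the non-commutative, semifinite setting: that $c_0$ matrix coefficients are preserved, and that inducing the regular representation stays within a multiple of the regular representation. In the measure-equivalence case these are checked by pointwise arguments on the coupling space, but here one must instead estimate inner products in $L^2(\cM,\Tr)$ directly, using the finiteness of $\Tr(p)$ and $\Tr(q)$ together with the partition-of-unity identities $\sum_\gamma \sigma_\gamma(p)=\sum_\lambda \sigma_\lambda(q)=1$ to control the overlaps $\Tr\bigl(\sigma_\gamma(q)\,q\bigr)$ that govern the decay of the induced matrix coefficients. Verifying that $\tilde\pi$ is well defined and unitary, and that invariant vectors genuinely reflect while remaining nonzero, is the other place where the finite-trace fundamental domain is essential.
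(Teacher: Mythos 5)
Your overall strategy is the paper's: induce representations through the coupling and feed them into the representation-theoretic characterizations of amenability, the Haagerup property, and property (T). However, two of your steps fail, and they are exactly the places where the real content of the paper's argument lies.

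First, as you have defined it, the induced representation is zero. Since the $\Lambda$-action admits a finite-trace fundamental domain, Proposition~\ref{prop:funddomain} (parts (\ref{item:C}) and (\ref{item:D})) shows that the Koopman representation $\beta$ of $\Lambda$ on $L^2(\cM,\Tr)$ is a multiple of the regular representation; by Fell absorption $\beta\otimes\pi$ is again a multiple of the regular representation, so for infinite $\Lambda$ the fixed-point space $(L^2(\cM,\Tr)\ovt\cH_\pi)^{\Lambda}$ is $\{0\}$. Your claimed identification of this space with the corner $(q\otimes 1)(L^2(\cM,\Tr)\ovt\cH_\pi)$ cannot hold: the $\Lambda$-equivariant extension $\sum_{\lambda\in\Lambda}(\beta_\lambda\otimes\pi_\lambda)\eta$ of a nonzero vector $\eta$ in that corner has pairwise orthogonal summands all of norm $\|\eta\|$, hence diverges in the Hilbert space norm. (The same phenomenon occurs in the measure-equivalence setting: a $\Lambda$-equivariant function on the coupling space has constant pointwise norm along orbits, so it is never square-integrable over the whole coupling unless it vanishes.) This is precisely why the paper does not induce inside $L^2(\cM,\Tr)\ovt\cH_\pi$: it takes fixed points in the W$^*$-module $\cM\ovt\cH_\pi$, where such orbit sums do converge in the weak$^*$ topology (Lemma~\ref{lem:orthsum}, Proposition~\ref{prop:cocylerep}), obtains a dual Hilbert $\cM^\Lambda$-module $(\cM\ovt\cH_\pi)^\Lambda$, and only then completes with respect to the finite trace $\tau$ on $\cM^\Lambda$ (Definition~\ref{defn:inducedrep}). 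The resulting Hilbert space $(\cM\ovt\cH_\pi)^\Lambda_\tau$ is not a subspace of $L^2(\cM,\Tr)\ovt\cH_\pi$, and the quantitative facts you postpone (preservation of weak containment and of mixing, and the identification of the induced regular representation) must be established at the module level, as in Lemmas~\ref{lem:wkcontain} and~\ref{lem:mix}.

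Second, your property (T) argument rests on the claim that a nonzero invariant vector in the induced representation ``reflects'' to a nonzero invariant vector of the original representation, and this claim is false. With your roles, an invariant vector of the induced $\Lambda$-representation is an element of $(\cM\ovt\cH_\rho)^{\Gamma\times\Lambda}\cong(\cM^\Lambda\ovt\cH_\rho)^{\Gamma}$, and what such an element yields --- by compressing with the finite trace on $\cM^\Lambda$, as in the proof of part (\ref{item:induceD}) of Proposition~\ref{prop:inducedunitary} --- is a nonzero $\Gamma$-invariant Hilbert--Schmidt operator on $\cH_\rho$, i.e., a finite-dimensional subrepresentation of $\rho$, not a fixed vector. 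The failure is visible in the simplest coupling: take $\Omega=\Z$ with counting measure and the translation actions of $\Z$ and its subgroup $2\Z$; the sign character of $\Z$ induces to the trivial representation of $2\Z$, which has invariant vectors although the sign character has none. So your argument can only conclude that $\rho$ contains a finite-dimensional subrepresentation, and to finish one must invoke the theorem of Bekka and Valette \cite{BeVa93}, that a group has property (T) if and only if every unitary representation with almost invariant vectors contains a nonzero finite-dimensional subrepresentation --- which is exactly how the paper closes the argument. Your amenability and Haagerup arguments are otherwise the paper's (weak containment is preserved, the regular representation induces to a multiple of the regular representation, mixing is preserved, and the finite trace makes the invariant section of the induced trivial representation square-integrable), so repairing the two points above would recover the paper's proof.
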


A group $\Gamma$ is properly proximal if there does not exist a left-invariant state on the $C^*$-algebra $(\ell^\infty \Gamma/ c_0 \Gamma )^{\Gamma_r}$ consisting of elements in $\ell^\infty \Gamma/ c_0 \Gamma$ that are invariant under the right action of the group. Properly proximal groups were introduced in \cite{BIP18}, where a number of classes of groups were shown to be properly proximal, including non-elementary hyperbolic groups, convergence groups, non-amenable bi-exact groups, groups admitting proper $1$-cocycles into non-amenable representations, and lattices in non-compact semi-simple Lie groups of arbitrary rank. It is also shown that the class of properly proximal groups is stable under commensurability up to finite kernels, and it was then asked if this class was also stable under measure equivalence \cite[Question 1(b)]{BIP18}. 

Proper proximality also has a dynamical formulation \cite[Theorem 4.3]{BIP18}, and using this, together with our induction technique applied to isometric representations on dual Banach spaces, we show that the class of properly proximal groups is not only closed under measure equivalence but also under von Neumann equivalence.

\begin{thm}\label{thm:vnepropprox}
If $\Gamma \sim_{vNE} \Lambda$ then $\Gamma$ is properly proximal if and only if $\Lambda$ is properly proximal.
\end{thm}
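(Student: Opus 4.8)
The plan is to establish the contrapositive of one implication and then invoke the symmetry of $\sim_{vNE}$. Because von Neumann equivalence is a symmetric relation, it suffices to prove that if $\Gamma$ fails to be properly proximal then so does $\Lambda$; interchanging the roles of $\Gamma$ and $\Lambda$ gives the reverse implication, and together these yield the asserted equivalence. Thus I would begin with a left-$\Gamma$-invariant state $\varphi$ on the commutative $C^*$-algebra $(\ell^\infty\Gamma/c_0\Gamma)^{\Gamma_r}$, whose existence is exactly the negation of proper proximality for $\Gamma$, and aim to produce from $\varphi$, through a fixed von Neumann coupling, a left-$\Lambda$-invariant state on $(\ell^\infty\Lambda/c_0\Lambda)^{\Lambda_r}$.

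The first step is to replace the definition of proper proximality by its dynamical reformulation from \cite[Theorem 4.3]{BIP18}, recasting the obstruction as the existence of an invariant vector for an isometric representation of the group on a dual Banach space. Left translation turns $((\ell^\infty\Gamma/c_0\Gamma)^{\Gamma_r})^*$ into such a representation, and the $\Gamma$-invariant state $\varphi$ becomes a $\Gamma$-fixed point inside the weak*-compact convex state space. This reformulation is the point of contact with the coupling: the induction procedure developed earlier in the paper is tailored to transport isometric representations on dual Banach spaces between von Neumann equivalent groups, so it is this representation, rather than the algebra itself, that I would induce.

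Next I would perform the induction. Fixing the coupling $(\mathcal M, \mathrm{Tr})$ with its commuting, trace-preserving $\Gamma$- and $\Lambda$-actions and finite-trace fundamental domains $p_\Gamma$ and $p_\Lambda$, I would use the $\Lambda$-translates of $p_\Lambda$, which partition the identity, to decompose the standard representation of $\mathcal M$ into an $\ell^2\Lambda$-indexed module, and then transport the dual-Banach $\Gamma$-representation above across this decomposition via the commuting $\Gamma$-action, obtaining an isometric $\Lambda$-representation on an associated dual Banach space that is weak*-continuous by virtue of the finiteness of $\mathrm{Tr}(p_\Lambda)$. The finite trace of the fundamental domain then lets me integrate $\varphi$ against the canonical normal state attached to $p_\Lambda$ to obtain a bounded functional on the induced representation; the relation between the two fundamental domains renders this functional $\Lambda$-invariant (extracting a weak*-limit point in the compact convex state space if exact invariance is not immediate), and by \cite[Theorem 4.3]{BIP18} applied to $\Lambda$ it corresponds to a left-$\Lambda$-invariant state on $(\ell^\infty\Lambda/c_0\Lambda)^{\Lambda_r}$, witnessing that $\Lambda$ is not properly proximal.

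The step I expect to be the main obstacle is verifying that the induced representation genuinely sits within the proper-proximality framework for $\Lambda$, and in particular that the ``vanishing at infinity'' data is respected under induction. Since the coupling is a noncommutative semifinite von Neumann algebra and the fundamental domains $p_\Gamma$ and $p_\Lambda$ need not be comparable, the boundary pieces for the two groups---$c_0\Gamma$ versus $c_0\Lambda$, or equivalently the compact parts of $\mathcal B(\ell^2\Gamma)$ and $\mathcal B(\ell^2\Lambda)$---must be matched across the amplification, together with the right-invariance encoded by the superscripts $\Gamma_r$ and $\Lambda_r$. It is precisely this compatibility, rather than the non-amenability content alone, that separates proper proximality from the approximation properties handled in Theorem~\ref{thm:vneamenaT}; getting it right guarantees that the invariant state produced above lives on $(\ell^\infty\Lambda/c_0\Lambda)^{\Lambda_r}$ and not merely on some coarser algebra that ignores the boundary. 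Once this matching is in place, the remaining verifications---isometry, weak*-continuity, and invariance of the integrated state---follow routinely from the finite-trace fundamental domain.
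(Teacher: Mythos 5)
There is a genuine gap, and it sits exactly where you deferred it: the ``matching of boundary pieces'' is not a routine verification to be postponed --- it is the entire content of the theorem, and your closing appeal to \cite[Theorem 4.3]{BIP18} cannot supply it. That theorem (Proposition~\ref{prop:properlyproximalequivalent} in this paper) gives an \emph{existential} characterization of proper proximality: $\Lambda$ is properly proximal iff \emph{some} action on a weak$^*$-compact convex set is properly proximal and has no fixed point. Its negation is therefore universal: to show $\Lambda$ is not properly proximal you must either produce an invariant state on the specific algebra $(\ell^\infty\Lambda/c_0\Lambda)^{\Lambda_r}$, or show that \emph{every} properly proximal $\Lambda$-action has a fixed point. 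Your construction produces a $\Lambda$-fixed point in \emph{one} induced object, built from the $\Gamma$-module $((\ell^\infty\Gamma/c_0\Gamma)^{\Gamma_r})^*$. This says nothing about $(\ell^\infty\Lambda/c_0\Lambda)^{\Lambda_r}$ unless you also construct a $\Lambda$-equivariant unital positive map from that algebra into your induced object, and no such map is constructed (nor is there an obvious candidate: the coupling gives no natural identification of $c_0\Gamma$-type ``compact parts'' with $c_0\Lambda$-type ones, precisely because the fundamental domains $p_\Gamma$ and $p_\Lambda$ need not be compatible). So the final step, ``by \cite[Theorem 4.3]{BIP18} applied to $\Lambda$ it corresponds to a left-$\Lambda$-invariant state,'' is a non sequitur.

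The quantifier structure also forces the induction to run in the direction opposite to the one you chose. The paper induces a $\Lambda$-module $E$ to the $\Gamma$-module $(\mathcal M \ovt E)^\Lambda$, and proves two facts: fixed points correspond (Proposition~\ref{prop:fixedpoint}), and properly proximal points survive induction (Proposition~\ref{prop:convergencept}), the latter using that the $\Gamma$-Koopman representation on $L^2(\mathcal M, {\rm Tr})$ is mixing because $\Gamma$ has a fundamental domain, hence its Koopman representation is a multiple of the left regular representation (Proposition~\ref{prop:funddomain}). With these, the existential witness transfers: a properly proximal $\Lambda$-action on $K$ with no fixed point induces a properly proximal $\Gamma$-action on $(\mathcal M \ovt K)^\Lambda$ with no fixed point (Theorem~\ref{thm:properlyprox}), so $\Lambda$ properly proximal implies $\Gamma$ properly proximal, and symmetry of $\sim_{vNE}$ finishes. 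Your contrapositive can be salvaged, but only by running this same induction: assuming $\Gamma$ is not properly proximal, take an \emph{arbitrary} properly proximal $\Lambda$-action on a weak$^*$-compact convex $K$, induce it to a $\Gamma$-action, obtain a properly proximal point by Proposition~\ref{prop:convergencept}, invoke the failure of proper proximality for $\Gamma$ to get a $\Gamma$-fixed point, and pull it back to a $\Lambda$-fixed point in $K$ by Proposition~\ref{prop:fixedpoint}. Note that even in this corrected version the induction goes from $\Lambda$-modules to $\Gamma$-modules --- not, as in your sketch, from the $\Gamma$-boundary to a $\Lambda$-representation --- and no state is ever transported between the two canonical boundary algebras.
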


An example of Caprace, which appears in Section 5.C of \cite{DuTDWe18}, shows that the class of inner amenable groups is not closed under measure equivalence. Specifically, if $p$ is a prime and $F_p$ denotes the finite field with $p$ elements, then the group $SL_3(F_p[t^{-1}]) \ltimes F_p[t, t^{-1}]^3$ is not inner amenable, although it is measure equivalent to the inner amenable group $(SL_3(F_p[t^{-1}]) \ltimes F_p[t^{-1}]^3 ) \times F_p[t]^3$. Using the previous theorem, we then answer another question from \cite{BIP18} by providing $SL_3(F_p[t^{-1}]) \ltimes F_p[t, t^{-1}]^3$ as an example of a non-inner amenable group that is also not properly proximal. As a corollary of our result and the example of Caprace, we see that there are uncountably many examples of non-inner amenable groups that are not properly proximal --- for example, all groups of the form $\left( SL_3(F_p[t^{-1}]) \ltimes F_p[t, t^{-1}]^3 \right) \times G$, where $G$ is any non-inner amenable group.

The notion of von Neumann equivalence also admits a generalization in the setting of finite von Neumann algebras. 
\begin{defn}\label{defn:vnefactor}
Two finite von Neumann algebras $M$ and $N$ are von Neumann equivalent, written $M \sim_{vNE} N$, if there exists a semi-finite von Neumann algebra $\mathcal M$ containing commuting copies of $M$ and $N^{\rm op}$ such that we have intermediate standard representations $M \subset \mathcal B(L^2(M)) \subset \mathcal M$ and $N^{\rm op} \subset \mathcal B(L^2 (N)) \subset \mathcal M$ satisfying the property that finite-rank projections in $\mathcal B(L^2(M))$ and $\mathcal B(L^2(N))$ are finite projections in $\mathcal M$. 
\end{defn}
We show in Section~\ref{sec:vnefinite} that this does indeed give an equivalence relation, which is coarser than the equivalence relation given by virtual isomorphism. Moreover, if $\mathcal M$ is a factor then we can associate an index $[M: N]_{\mathcal M}$, which is given by 
\[
[M: N]_{\mathcal M} = {\rm Tr}(p)/{\rm Tr}(q),
\] 
where ${\rm Tr}$ is a trace on $\mathcal M$ and $p$ and $q$ are rank $1$ projections in $\mathcal B(L^2(M))$ and $\mathcal B(L^2(N))$ respectively. The connection to von Neumann equivalence for groups is given by the following theorem:

\begin{thm}\label{thm:vnequivalencegp}
If $\Gamma$ and $\Lambda$ are countable groups, then $\Gamma \sim_{vNE} \Lambda$ if and only if $L\Gamma \sim_{vNE} L\Lambda$.
\end{thm}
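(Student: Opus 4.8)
The plan is to prove both implications by transporting a fundamental domain through the unitaries that implement the relevant actions, the bridge being that for a group the standard representation $L\Gamma \subset \mathcal B(\ell^2\Gamma) = \mathcal B(L^2(L\Gamma))$ is precisely the left regular representation, while its commutant copy is the right regular representation $R\Lambda \cong (L\Lambda)^{\op}$. The easier implication $L\Gamma \sim_{vNE} L\Lambda \Rightarrow \Gamma \sim_{vNE} \Lambda$ uses the same $\mathcal M$ witnessing $L\Gamma \sim_{vNE} L\Lambda$: let $\Gamma$ act by $\Ad(\lambda_\gamma)$ with $\lambda_\gamma \in L\Gamma \subset \mathcal B(\ell^2\Gamma) \subset \mathcal M$ the left regular unitaries, and let $\Lambda$ act by $\Ad(\rho_\mu)$ with $\rho_\mu$ generating the standard copy $(L\Lambda)^{\op} = R\Lambda \subset \mathcal B(\ell^2\Lambda) \subset \mathcal M$. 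Since $L\Gamma$ and $(L\Lambda)^{\op}$ commute by hypothesis these actions commute, and being inner they preserve $\Tr$. The rank-one projection $p$ onto $\C\delta_e \in \ell^2\Gamma$ satisfies $\sum_\gamma \Ad(\lambda_\gamma)(p) = \sum_\gamma \lvert\delta_\gamma\rangle\langle\delta_\gamma\rvert = 1_{\mathcal B(\ell^2\Gamma)}$, which equals $1_{\mathcal M}$ once the standard representations are seen to be unital, so $p$ is a fundamental domain for $\Gamma$; the analogous rank-one projection in $\ell^2\Lambda$ works for $\Lambda$. Both are finite-rank, hence finite projections in $\mathcal M$, giving finite-trace fundamental domains.

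The content lies in the converse. Given a coupling $(\mathcal M, \Tr)$ with commuting trace-preserving actions $\sigma$ of $\Gamma$ and $\tau$ of $\Lambda$ and finite-trace fundamental domains $p$ (for $\Gamma$) and $q$ (for $\Lambda$), I would form the crossed product $\widetilde{\mathcal M} = \mathcal M \rtimes (\Gamma \times \Lambda)$ with its canonical semifinite trace $\widetilde{\Tr}(\sum_g m_g u_g) = \Tr(m_e)$, which is a genuine trace precisely because the actions preserve $\Tr$. Writing $u_\gamma$ and $v_\mu$ for the canonical unitaries implementing $\sigma$ and $\tau$, these commute, so $\{u_\gamma\}''$ and $\{v_\mu\}''$ are commuting copies of $L\Gamma$ and $L\Lambda$. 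The key step is to manufacture the ambient type-I algebras from the fundamental domains: setting $e_{\gamma,\gamma'} = u_\gamma p u_{\gamma'}^*$, the fact that $\{\sigma_\gamma(p)\}$ are orthogonal projections summing to $1$ makes the $e_{\gamma,\gamma'}$ a system of matrix units with $\sum_\gamma e_{\gamma,\gamma} = 1$, generating a unital copy of $\mathcal B(\ell^2\Gamma)$. Since $e_{\gamma\gamma',\gamma'} = u_\gamma \sigma_{\gamma'}(p)$ one computes $u_\gamma = \sum_{\gamma'} e_{\gamma\gamma',\gamma'}$, so under the identification $e_{\gamma,\gamma'} \leftrightarrow \lvert\delta_\gamma\rangle\langle\delta_{\gamma'}\rvert$ the unitary $u_\gamma$ becomes $\lambda_\gamma$; hence $L\Gamma \subset \mathcal B(\ell^2\Gamma) \subset \widetilde{\mathcal M}$ is exactly the standard representation.

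Running the same construction for $\Lambda$, but indexing the matrix units as $f_{\mu,\mu'} = v_{\mu^{-1}} q v_{\mu'^{-1}}^*$, one finds $v_\mu = \sum_{\mu'} f_{\mu'\mu^{-1}, \mu'}$, so $\{v_\mu\}''$ is realized as the right regular representation, i.e. as the standard copy of $(L\Lambda)^{\op} = R\Lambda$ inside $\mathcal B(\ell^2\Lambda) \subset \widetilde{\mathcal M}$. Taking $M = \{u_\gamma\}'' = L\Gamma$ and $N^{\op} = \{v_\mu\}'' \cong (L\Lambda)^{\op}$, these commute because the $u$'s and $v$'s do, and every finite-rank projection in either $\mathcal B(\ell^2\Gamma)$ or $\mathcal B(\ell^2\Lambda)$ is a finite sum of translates of $p$ or $q$, whose $\widetilde{\Tr}$-values are $\Tr(p)$ and $\Tr(q)$; hence they are finite projections in $\widetilde{\mathcal M}$. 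This is exactly the data required by the definition of $L\Gamma \sim_{vNE} L\Lambda$.

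The main obstacle I anticipate is the bookkeeping that certifies these embeddings are genuinely standard representations with $M$ on the left and $N^{\op}$ on the right, not merely abstractly isomorphic copies; this is what forces the inverse-twisted indexing on the $\Lambda$ side and must be tracked carefully so that the single relation $[u_\gamma, v_\mu] = 0$ yields commutation of $M$ with $N^{\op}$ in standard form. A secondary delicate point is the unitality of the intermediate standard representations (needed so that $\sum_\gamma \sigma_\gamma(p)$ is $1_{\mathcal M}$ rather than a proper subprojection) together with the passage between the conditions \emph{finite projection} and \emph{finite trace}, i.e. choosing the semifinite trace so that the finite-rank projections of $\mathcal B(\ell^2\Gamma)$ and $\mathcal B(\ell^2\Lambda)$, and hence the fundamental domains, genuinely have finite trace.
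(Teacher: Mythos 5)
The converse direction of your argument is correct and is essentially the paper's proof: the paper also forms $\mathcal M \rtimes (\Gamma\times\Lambda)$ with its canonical trace and embeds $\mathcal B(\ell^2\Gamma) \cong \ell^\infty\Gamma\rtimes\Gamma \subset \mathcal M\rtimes\Gamma$ via the equivariant embedding $\theta_p(f) = \sum_\gamma f(\gamma)\sigma_{\gamma^{-1}}(p)$; your matrix units $u_\gamma p u_{\gamma'}^*$ are exactly the images of the canonical matrix units under that isomorphism, so the two constructions produce the same intermediate copy of $\mathcal B(\ell^2\Gamma)$, and your finiteness argument (the diagonal matrix units lie in $\mathcal M$ and have trace $\Tr(p)<\infty$) matches the paper's. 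One point where you are in fact more careful than the paper: your inverse-twisted units $f_{\mu,\mu'} = v_{\mu^{-1}} q v_{\mu'^{-1}}^*$ realize $\{v_\mu\}''$ as the right regular copy $R\Lambda$, i.e.\ as the standard image of $(L\Lambda)^{\rm op}$, whereas the paper silently passes through the identification $L\Lambda \cong (L\Lambda)^{\rm op}$.

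The gap is in the easier direction, in the last sentence: ``finite-rank, hence finite projections in $\mathcal M$, giving finite-trace fundamental domains.'' The definition of a von Neumann coupling between $L\Gamma$ and $L\Lambda$ comes with no distinguished trace, and the inference from \emph{finite projection} to \emph{finite trace} is false for an arbitrary normal semi-finite faithful trace: in $\mathcal B(\ell^2)\ovt L^\infty[0,1]$ with the product trace, a measurable field of projections of unbounded finite rank is a finite projection of infinite trace. Worse, if $P_1 := \mathcal B(\ell^2\Gamma)'\cap\mathcal M$ fails to be $\sigma$-finite, then \emph{no} normal semi-finite faithful trace on $\mathcal M$ assigns finite trace to your rank-one projection $p$, since such a trace would restrict to a finite faithful normal trace on $p\mathcal M p \cong P_1$, forcing $\sigma$-finiteness. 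So you must first cut $\mathcal M$ by a central projection to make it $\sigma$-finite (harmless, since the compression is again a coupling --- this is the reduction the paper performs), and then actually exhibit a trace that is simultaneously finite on the two rank-one projections $p \in \mathcal B(\ell^2\Gamma)$ and $q \in \mathcal B(\ell^2\Lambda)$: for instance, take a central projection $z$ with $qz \precsim pz$ and $p(1-z)\precsim q(1-z)$, on $z\mathcal M$ use the trace $\Tr_{\mathcal B(\ell^2\Gamma)}\otimes\tau_1$ coming from a finite faithful normal trace $\tau_1$ on $zP_1$, on $(1-z)\mathcal M$ the analogous trace from the $\Lambda$-side, and glue. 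With such a trace fixed, the rest of your argument (inner actions preserve every trace; $\sum_\gamma \Ad(\lambda_\gamma)(p) = 1$) goes through and recovers the paper's proof of this implication.
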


We show in Theorem~\ref{thm:vneindexgp} that the set of indices for factorial self von Neumann couplings forms a subgroup $\mathcal I_{vNE}(M) < \mathbb R^*_+$, which we call the index group of $M$. If $M$ is a factor then we show that the index group contains the square of the fundamental group of $M$. The fact that we have the square of the fundamental group instead of the fundamental group itself agrees with phenomena predicted by Connes and Shlyakhtenko in \cite[Theorem 2.4]{CoSh05} and leaves open the possibility that Gaboriau's theorem implying proportional $\ell^2$-Betti numbers could still hold in the setting of von Neumann equivalence. However, we make no attempt to achieve this result here. 

We also show that for a countable ICC group $\Gamma$ there is a connection between the index group of $L\Gamma$ and the class $\mathcal S_{eqrel}(\Gamma)$ studied by Popa and Vaes in \cite{PoVa10}, which consists of fundamental groups for equivalence relations associated to free, ergodic, probability measure-preserving actions of $\Gamma$. Specifically, we show in Corollary~\ref{cor:subgpindex} that $\mathcal I_{vNE}(L\Gamma)$ contains the group generated by all the groups in $\mathcal S_{eqrel}(\Gamma)$.

For the reader who may be more familiar with techniques coming from measured group theory, we end this article with an appendix where we give a direct proof in the measure equivalence setting that proper proximality is a measure equivalence invariant. 

\emph{Acknowledgments}. The authors would like to thank the anonymous referees for their useful suggestions improving the correctness and readability of the article.

\section{Preliminaries and notation}

The main techniques we use in this article involve von Neumann algebras endowed with semi-finite normal traces. We briefly discuss some of the facts regarding semi-finite von Neumann algebras that we will use in the sequel. We refer the reader to  \cite{Ta02} for proofs of these facts.

\subsection{Semi-finite traces} \label{sec:semifinite}

Let $\mathcal M$ be a semi-finite von Neumann algebra with a semi-finite normal faithful trace ${\rm Tr}$. We let $\mathcal M_+$ denote the set of positive operators in $\mathcal M$. We set $\mathfrak n_{\rm Tr} = \{ x \in \mathcal M \mid {\rm Tr}(x^*x) < \infty \}$, and $\mathfrak m_{\rm Tr} = \{ \sum_{j = 1}^n x_j^* y_j \mid x_j, y_j \in \mathfrak n_{\rm Tr}, 1 \leq j \leq n \}$. Both $\mathfrak n_{\rm Tr}$ and $\mathfrak m_{\rm Tr}$ are ideals in $\mathcal M$, and the trace ${\rm Tr}$ gives a $\mathbb C$-valued linear functional on $\mathfrak m_{\rm Tr}$, which is called the definition ideal of ${\rm Tr}$.

We let $L^1(\mathcal M, {\rm Tr})$ denote the completion of $\mathfrak m_{\rm Tr}$ under the norm $\| a \|_1 = {\rm Tr}( | a |)$, and then the bilinear form $\mathcal M \times \mathfrak m_{\rm Tr} \ni (x, a) \mapsto {\rm Tr}(xa)$ extends to the duality between $\mathcal M$ and $L^1(\mathcal M, {\rm Tr})$ so that we may identify $L^1(\mathcal M, {\rm Tr})$ with $\mathcal M_*$. 

We let $L^2(\mathcal M, {\rm Tr})$ denote the Hilbert space completion of $\mathfrak n_{\rm Tr}$ under the inner product $\langle a, b \rangle_2 = {\rm Tr}(b^* a)$. Left multiplication of $\mathcal M$ on $\mathfrak n_{\rm Tr}$ then induces a normal faithful representation of $\mathcal M$ in $\mathcal B(L^2(\mathcal M, {\rm Tr}))$, which is called the standard representation. 

Restricting the conjugation operator from $\mathcal M$ to $\mathfrak n_{\rm Tr}$ induces an anti-linear isometry $J : L^2(\mathcal M, {\rm Tr}) \to L^2(\mathcal M, {\rm Tr})$, and we have $J \mathcal M J = \mathcal M' \cap \mathcal B(L^2(\mathcal M, {\rm Tr}))$. The von Neumann algebra $J \mathcal M J$ is canonically isomorphic to the opposite von Neumann algebra $\mathcal M^{\rm op}$ via the map $\mathcal M^{\rm op} \ni x^{\rm op} \mapsto J x^* J$. We also have the induced trace on $\mathcal M^{\rm op}$ given by ${\rm Tr}(x^{\rm op}) = {\rm Tr}(x)$. 

If $\mathcal M$ is a semi-finite factor, then it has a unique (up to scalar multiples) normal semi-finite faithful trace. In general, if ${\rm Tr}_1$ and ${\rm Tr}_2$ are normal semi-finite traces, then there is an injective positive operator $a$ affiliated to the center $\mathcal Z(\mathcal M)$ such that ${\rm Tr}_2(x) = {\rm Tr}_1(a x)$ for all $x \in \mathcal M_+$. In particular, the map $\mathfrak n_{{\rm Tr}_2} \ni x \mapsto a^{1/2} x \in \mathfrak n_{{\rm Tr}_1}$ extends to a unitary operator from $L^2(\mathcal M, {\rm Tr}_2)$ onto $L^2(\mathcal M, {\rm Tr}_1)$ that intertwines the representations of $\mathcal M$, and also intertwines the representations of $\mathcal M^{\rm op}$. Thus, up to isomorphism, the representation $\mathcal M \subset \mathcal B(L^2(\mathcal M, {\rm Tr}))$ is independent of the choice of semi-finite normal faithful trace ${\rm Tr}$, and we may use the notation $\mathcal M \subset \mathcal B(L^2(\mathcal M))$ if we wish to emphasize this fact.

If $\mathcal H$ is a Hilbert space and we have an embedding $\mathcal B(\mathcal H) \subset \mathcal M$, then setting $P = \mathcal B(\mathcal H)' \cap \mathcal M$ we have an isomorphism $\mathcal B(\mathcal H ) \ovt P \cong \mathcal M$ that maps $T \otimes x$ to $Tx$ for $T \in \mathcal B(\mathcal H)$ and $x \in P$. Indeed, this is easy to verify in the case when $\mathcal M$ is a type I factor, and in general if we represent $\mathcal M \subset \mathcal B(\mathcal K)$, then we have $\mathcal B(\mathcal H) \subset \mathcal M \subset \mathcal B(\mathcal K) \cong \mathcal B(\mathcal H) \ovt \mathcal B(\mathcal K_0)$ for some Hilbert space $\mathcal K_0$. Thus, $\mathcal M' = \mathcal M' \cap \mathcal B(\mathcal K_0)$, so that $\mathcal M = \mathcal M'' = \mathcal B(\mathcal H) \ovt (\mathcal M \cap \mathcal B(\mathcal K_0) ) = \mathcal B(\mathcal H) \ovt P$. There also then exists a unique semi-finite normal faithful trace ${\rm Tr}_P$ on $P$ so that ${\rm Tr}_{\mathcal M} = {\rm Tr} \otimes {\rm Tr}_P$. Alternatively, this result follows directly from Ge-Kadison tensor splitting, Theorem A from \cite{GK96}.

If we have two embeddings $\theta_1, \theta_2: \mathcal B(\mathcal H) \to \mathcal M$, then $\theta_1(\mathcal B(\mathcal H))$ and $\theta_2(\mathcal B(\mathcal H))$ are conjugate by a unitary in $\mathcal M$ if and only if for some rank one projection $p \in \mathcal B(\mathcal H)$ we have that $\theta_1(p)$ and $\theta_2(p)$ are Murray-von Neumann equivalent.

\subsection{Actions on semi-finite von Neumann algebras}\label{sec:actions}

If $\Gamma$ is a discrete group and $\Gamma \aactson{\sigma} \mathcal M$ is an action that preserves the trace ${\rm Tr}$, then $\Gamma$ preserves the $\| \cdot \|_1$-norm on $\mathfrak m_{\rm Tr}$ and hence the action extends to an action by isometries on $L^1(\mathcal M, {\rm Tr})$, and the dual of the action on $L^1(\mathcal M, {\rm Tr})$ agrees with the action on $\mathcal M$. 

Restricted to $\mathfrak n_{\rm Tr}$ the action is also isometric with respect to $\| \cdot \|_2$ and hence gives a unitary representation in $\mathcal U(L^2(\mathcal M, {\rm Tr}))$, which is called the Koopman representation and denoted by $\sigma^0: \Gamma \to  \mathcal U(L^2(\mathcal M, {\rm Tr}))$. Note that considering $\mathcal M \subset \mathcal B(L^2(\mathcal M, {\rm Tr}))$ via the standard representation, we have that the action $\sigma: \Gamma \to {\rm Aut}(\mathcal M, {\rm Tr})$ becomes unitarily implemented via the Koopman representation, i.e., for $x \in \mathcal M$ and $\gamma \in \Gamma$ we have $\sigma_\gamma(x) = \sigma_\gamma^0 x \sigma_{\gamma^{-1}}^0$. 

The crossed product von Neumann algebra $\mathcal M \rtimes \Gamma$ is defined to be the von Neumann subalgebra of $\mathcal B(L^2(\mathcal M, {\rm Tr}) \ovt \ell^2 \Gamma)$ generated by $\mathcal M \ovt \mathbb C$ and $\{ \sigma_\gamma^0 \otimes \lambda_\gamma \mid \gamma \in \Gamma \}$. We use the notation $u_\gamma = \sigma_\gamma^0 \otimes \lambda_\gamma$. Note that by Fell's absorption principle, the representation $\Gamma \ni \gamma \mapsto u_\gamma \in \mathcal M \rtimes \Gamma$ is conjugate to a multiple of the left regular representation and hence generates a copy of the group von Neumann algebra $L\Gamma$. 

If $P_e$ denotes the rank one projection onto $\mathbb C \delta_e \subset \ell^2 \Gamma$, then we have a canonical conditional expectation from $\mathcal B(L^2(\mathcal M, {\rm Tr}) \ovt \ell^2 \Gamma)$ onto $\mathcal B(L^2(\mathcal M, {\rm Tr}))$ given by $T \mapsto (1 \otimes P_e) T (1 \otimes P_e)$ and then identifying $\mathcal B(L^2(\mathcal M, {\rm Tr}))$ with $\mathcal B(L^2(\mathcal M, {\rm Tr})) \otimes \mathbb C P_e$. Restricting this to $\mathcal M \rtimes \Gamma$ gives a conditional expectation $E_{\mathcal M}: \mathcal M \rtimes \Gamma \to \mathcal M$. The trace on $\mathcal M$ then extends to a faithful normal semi-finite trace on $\mathcal M \rtimes \Gamma$ given by ${\rm Tr}(x) = {\rm Tr} \circ E_{\mathcal M}(x)$. 

If we have a subgroup $\Gamma_0 < \Gamma$ and a $\Gamma_0$-invariant von Neumann subalgebra $\mathcal M_0$, then the von Neumann algebra generated by $\mathcal M_0$ and $\Gamma_0$ is canonically isomorphic to the crossed product $\mathcal M_0 \rtimes \Gamma_0$, and so we have a canonical embedding of crossed products $\mathcal M_0 \rtimes \Gamma_0 \subset \mathcal M \rtimes \Gamma$.

A specific example of the crossed product construction that we will use below is when we consider $\ell^\infty \Gamma$ with its trace coming from counting measure, and the action of $\Gamma \aactson{L} \ell^\infty \Gamma$ is given by right multiplication $L_\gamma(f) (x) = f(x\gamma)$. In this case, by considering a Fell unitary, we obtain an isomorphism $\theta: \ell^\infty \Gamma \rtimes \Gamma \to \mathcal B(\ell^2 \Gamma)$ such that $\theta(f)$ is the multiplication operator by $f$ for $f \in \ell^\infty \Gamma$, while for $\gamma \in \Gamma$ we have $\theta(u_\gamma) = \lambda_\gamma$ gives the left-regular representation. 

\subsection{The basic construction}

If $(\mathcal M, {\rm Tr})$ is a von Neumann algebra with a semi-finite normal faithful trace ${\rm Tr}$, then conjugation on $\mathfrak n_{\rm Tr}$ induces an anti-linear isometry $J: L^2(\mathcal M, {\rm Tr}) \to L^2(\mathcal M, {\rm Tr})$, and we have $\mathcal M' = J \mathcal M J$. If $\mathcal N \subset \mathcal M$ is a von Neumann subalgebra then the basic construction is the von Neumann algebra 
\[
\langle \mathcal M, \mathcal N \rangle := (J \mathcal N J)' \subset \mathcal B(L^2(\mathcal M, {\rm Tr})).
\] 
If $\mathcal N$ is semi-finite, the so is $\langle \mathcal M, \mathcal N \rangle$.

\subsection{Measurable functions into separable Banach spaces}

If $(X, \mu)$ is a standard measure space and $E$ is separable Banach space, then we let $L^1(X, \mu; E)$ denote the space of measurable functions $f: X \to E$ such that $\int \| f(x) \| \, d\mu(x) < \infty$, where we identify two functions if they agree almost everywhere. We have an isometric isomorphism $L^1(X, \mu) \oht{} E \to L^1(X, \mu; E)$, which takes an elementary tensor $f \otimes a$ to the function $x \mapsto f(x) a$; here $\oht{}$ represents the Banach space projective tensor product. If $E = (E_*)^*$ is dual to a separable Banach space then we let $L^\infty_{w^*}(X, \mu; E)$ denote the space of essentially bounded functions that are Borel with respect to the weak$^*$-topology restricted to some ball in $E$ that contains almost every point in the range of $f$, where we identify two functions if they agree almost everywhere. Note that since $E_*$ is separable, the weak$^*$-topology in $E$ is compact and metrizable when restricted to any closed ball. 

If $K \subset E$ is a weak$^*$-compact subset, then we denote by $L^\infty_{w^*}(X, \mu; K)$ the subset of $L^\infty_{w^*}(X, \mu; E)$ consisting of those functions whose essential range is contained in $K$. We have an isometric isomorphism $L^\infty_{w^*}(X, \mu; E) \to (L^1(X, \mu; E_*))^*$ \cite[Section 2.2]{Mo01} given by the pairing 
\[
\langle f, g \rangle = \int \langle f(x), g(x) \rangle \, d\mu(x).
\]
Thus  we have isometric isomorphisms 
\[
L^\infty_{w^*}(X, \mu; E) \cong (L^1(X, \mu; E_*) )^* \cong \mathcal B(L^1(X, \mu), E).
\]

\begin{prop}\label{prop:abelrange}
If $E = (E_*)^*$ is a dual Banach space and $K \subset E$ is a weak$^*$-closed convex subset, then under the above isomorphism we have
\[
L^\infty_{w^*}(X, \mu; K) \cong \{ \Xi \in \mathcal B(L^1(X, \mu), E) \mid \Xi(f) \in K {\rm \ for \ all \ } f \in L^1(X, \mu)_+, \| f \|_1 = 1 \}.
\]
\end{prop}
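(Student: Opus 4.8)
The plan is to verify the two inclusions directly, after unwinding the duality. Under the isomorphism $L^\infty_{w^*}(X,\mu;E) \cong \mathcal B(L^1(X,\mu),E)$, the operator $\Xi$ associated to $f$ is characterized by $\langle \Xi(g),\phi\rangle = \int g(x)\langle f(x),\phi\rangle\,d\mu(x)$ for all $g \in L^1(X,\mu)$ and $\phi \in E_*$; that is, $\Xi(g)$ is the weak$^*$-integral $\int g f\,d\mu$. The key observation is that a function $g \in L^1(X,\mu)_+$ with $\|g\|_1 = 1$ is precisely a probability density, so that $\Xi(g)$ is the barycenter of the push-forward of the probability measure $g\,d\mu$ under $f$. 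With this dictionary in place, both implications become statements relating the essential range of $f$ to barycenters against probability densities.

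For the inclusion $\subseteq$, suppose $f$ has essential range in $K$, so that $f(x) \in K$ for a.e. $x$ (here I use that $K$ is weak$^*$-closed and that, by separability of $E_*$, the weak$^*$-topology is metrizable on balls, so $f$ attains values in its essential range almost everywhere). Fix a probability density $g$ and suppose toward a contradiction that $\Xi(g) \notin K$. Since $K$ is a weak$^*$-closed convex subset of $E$ and the weak$^*$-continuous functionals on $E$ are exactly the elements of $E_*$, the Hahn--Banach separation theorem provides $\phi \in E_*$ and $\alpha \in \R$ with $\operatorname{Re}\langle\Xi(g),\phi\rangle > \alpha \ge \sup_{k\in K}\operatorname{Re}\langle k,\phi\rangle$. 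But then $\operatorname{Re}\langle\Xi(g),\phi\rangle = \int g(x)\operatorname{Re}\langle f(x),\phi\rangle\,d\mu(x) \le \alpha\int g\,d\mu = \alpha$, using $g \ge 0$, $\int g\,d\mu = 1$, and $\operatorname{Re}\langle f(x),\phi\rangle \le \alpha$ a.e. This contradiction shows $\Xi(g) \in K$.

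The harder inclusion is $\supseteq$. Assume $\Xi(g) \in K$ for every probability density $g$; I want to deduce that $f(x) \in K$ a.e. The first step is localization: for any measurable $A$ with $0 < \mu(A) < \infty$, taking $g = \mu(A)^{-1}1_A$ shows that the average $\mu(A)^{-1}\int_A f\,d\mu$ lies in $K$. To pass from averages to the pointwise conclusion I would exploit separability of $E_*$: writing $R = \|f\|_\infty$, the values of $f$ lie a.e. in the closed ball $\overline B_R$, on which the weak$^*$-topology is compact and metrizable, and a separation argument relative to $\overline B_R$ (carried out against a countable norm-dense subset $\{\phi_n\} \subset E_*$ and rational levels) expresses $K \cap \overline B_R$ as a countable intersection of weak$^*$-closed half-spaces $H_{n,\alpha} = \{e : \operatorname{Re}\langle e,\phi_n\rangle \le \alpha\}$ containing it. If $f(x) \notin K$ on a set of positive measure, then by countability some half-space is violated on a set $A$ of positive measure, i.e. $\operatorname{Re}\langle f(x),\phi_n\rangle > \alpha$ on $A$ while $K \subseteq H_{n,\alpha}$; using $\sigma$-finiteness of $(X,\mu)$ I may shrink $A$ to have finite measure, and then averaging over $A$ yields $\operatorname{Re}\langle \Xi(\mu(A)^{-1}1_A),\phi_n\rangle > \alpha$, so that $\Xi(\mu(A)^{-1}1_A) \notin H_{n,\alpha} \supseteq K$, contradicting the hypothesis.

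I expect the main obstacle to be this last reduction of an almost-everywhere membership to countably many scalar tests. It is essential that the separation be performed inside the ball $\overline B_R$ rather than in all of $E$, since $K$ need not be bounded, so the norm-approximation of a separating functional $\psi$ by some $\phi_n$ must be controlled only on $\overline B_R$; the separability of $E_*$ (equivalently, metrizability of the weak$^*$-topology on balls) and the $\sigma$-finiteness of $(X,\mu)$ are exactly what make both the countable reduction and the subsequent choice of a finite-measure testing set possible. The remaining measurability points---that $x\mapsto \langle f(x),\phi_n\rangle$ is measurable and that $\Xi(g)$ is a genuine weak$^*$-integral---are immediate from the definition of $L^\infty_{w^*}(X,\mu;E)$.
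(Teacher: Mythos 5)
Your proof is correct in substance, but in the harder direction ($\supseteq$) it takes a genuinely different route from the paper. The paper argues by contraposition on a single point: if $f \notin L^\infty_{w^*}(X,\mu;K)$, then by definition the essential range of $f$ contains some $m \notin K$; Hahn--Banach separation produces a \emph{convex} weak$^*$-open neighborhood $G$ of $m$ with $\overline{G} \cap K = \emptyset$, and then $B = f^{-1}(G)$ has positive measure and the average $\frac{1}{\mu(B)}\int_B f\,d\mu$ lies in $\overline{G}$ (convexity of $G$ is the whole trick), hence outside $K$. This single convex neighborhood replaces your entire countable-reduction apparatus: no countable norm-dense subset of $E_*$, no rational levels, no passage from essential-range containment to a.e.\ pointwise membership. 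Your approach is longer, but it makes explicit two points the paper glosses over: that the testing set must be shrunk to finite measure when $\mu$ is infinite, and that the barycenter fact used in the easy direction ($\subseteq$) itself rests on weak$^*$ separation, which the paper simply asserts.

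One imprecision should be fixed, and your own closing paragraph nearly does it for you: since $K$ may be unbounded, norm-approximation of a separating functional $\psi$ by some $\phi_n$ only yields half-spaces $H_{n,\alpha}$ containing $K \cap \overline{B}_R$, not all of $K$, so the assertion ``$K \subseteq H_{n,\alpha}$'' in your third paragraph is unjustified as stated. The contradiction nevertheless survives, because every vector $\Xi(g)$ with $g$ a probability density is a barycenter of values of $f$ and therefore lies in $\overline{B}_R$; thus $\Xi\bigl(\mu(A)^{-1}1_A\bigr) \notin H_{n,\alpha} \supseteq K \cap \overline{B}_R$ together with $\Xi\bigl(\mu(A)^{-1}1_A\bigr) \in \overline{B}_R$ still forces $\Xi\bigl(\mu(A)^{-1}1_A\bigr) \notin K$. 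Adding that one observation closes the gap and makes your argument complete.
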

\begin{proof}
We let $\Psi: L^\infty_{w^*}(X, \mu; E) \to \mathcal B(L^1(X, \mu), E)$ be the isomorphism from above, so that for $f \in L^\infty_{w^*}(X, \mu; E)$ and $g \in L^1(X, \mu)$ we have $\Psi(f)(g) = \int g(x) f(x) \, d\mu(x)$. 

If $f \in L^\infty_{w^*}(X, \mu; K)$ and $g \in L^1(X, \mu)_+$ with $\| g \|_1 = 1$, then as $K$ is convex and weak$^*$-closed we have $\Psi(f)(g) = \int g(x) f(x) \, d\mu(x) \in K$. On the other hand, if $f \in L^\infty_{w^*}(X, \mu; E)$ is not in $L^\infty_{w^*}(X, \mu; K)$, then choose a point $m \in E \setminus K$ that is contained in the essential range of $f$. By the Hahn-Banach separation theorem, there exists a convex weak$^*$-open neighborhood $G$ of $m$ such that $\overline{G} \cap K = \emptyset$. If we set $B = f^{-1}(G)$, then we have that $\mu(B) > 0$, and taking $g = \frac{1}{\mu(B)} 1_B$, we have $\Psi(f)(g) = \frac{1}{\mu(B)} \int_B f(x) \, d\mu(x) \in \overline{G} \subset E \setminus K$. 
\end{proof}

\subsection{Tensor products of operator spaces}

For the basic results we'll need from the theory of operator spaces and their tensor products, we refer the reader to \cite{BlMe04} or \cite{Pi03}. A (concrete) operator space is a closed subspace $E \subset \mathcal B(\mathcal H)$. Given operator spaces $E$ and $F$, and a linear map $u: E \to F$, we define linear maps $u_n: \mathbb M_n(E) \to \mathbb M_n(F)$ by setting $u_n((x_{i j} ) ) = (u(x_{i j}) )$. The map $u$ is completely bounded if the completely bounded norm $\| u \|_{\rm cb} = \sup_n \| u_n \|$ is finite. 

We denote by $CB(E, F)$ the space of all completely bounded maps from $E$ to $F$, which is a Banach space when given the completely bounded norm. We also endow $\mathbb M_n( CB(E, F))$ with the Banach space norms coming from the canonical isomorphism $\mathbb M_n( CB(E, F)) \cong CB(E, \mathbb M_n(F))$. Ruan's abstract matrix norm characterization for operator spaces shows that the norms on $\mathbb M_n( CB(E, F))$ give an operator space structure to $CB(E, F)$, i.e., $CB(E, F)$ is completely isometrically isomorphic to a concrete operator space. In particular, when $F = \mathbb C$ we obtain the dual operator space structure on $E^*$. 

Any Banach space $X$ can be endowed with an operator space structure by embedding $X$ into the $C^*$-algebra $C( (X^*)_1)$ of weak$^*$-continuous functions on the unit ball of $X^*$, and where $X$ is realized via the evaluation map. We denote this operator space structure by ${\rm min}(X)$. We may also consider the supremum of all operator space norms on $X$, and we denote this operator space structure by ${ \rm max}(X)$. We then have completely isometrically ${\rm min}(X)^* = {\rm max}(X^*)$ and ${\rm max}(X)^* = {\rm min}(X^*)$.

For a Hilbert space $\mathcal H$ there are two canonical operator space structures. The first is the Hilbert column space $\mathcal H^c$, which endows $\mathcal H$ with the operator space structure coming from the canonical isomorphism $\mathcal H \cong CB(\mathbb C, \mathcal H)$. The second is the Hilbert row space $\mathcal H^r$, which endows $\mathcal H$ with the operator space structure coming from the canonical isomorphism $\mathcal H \cong CB(\mathcal H, \mathbb C)$. As operator spaces we then have natural identifications $(\mathcal H^c)^* \cong \overline{\mathcal H}^r$ and $(\mathcal H^r)^* \cong \overline{\mathcal H}^c$. Unless otherwise stated, in the sequel we will endow any Hilbert space with its operator space structure as a Hilbert column space. 

If $E \subset \mathcal B(\mathcal H)$ and $F \subset \mathcal B(\mathcal K)$ are operator spaces, the minimal tensor product $E \otimes_{\rm min} F$ is given by the completion of the algebraic tensor product $E \otimes F \subset \mathcal B(\mathcal H \ovt \mathcal K)$. The operator space structure on $E \otimes_{\rm min} F$ is independent of the concrete representations, and we have a completely isometric embedding 
\[
E \otimes_{\rm min} F \hookrightarrow CB(F^*, E)
\] 
where $u = \sum_{k = 1}^n x_k \otimes y_k \in E \otimes F$ is associated to the map $\tilde{u}: F^* \to E$ given by $\tilde{u}(\psi) = \sum_{k = 1}^n \psi(y_k) x_k$, for $\psi \in F^*$.

If $E$ and $F$ are operator spaces, then perhaps the simplest way to describe the projective tensor product is to define it as the completion of $E \otimes F$ when we embed $E \otimes F$ into the operator space $CB(E, F^*)^*$ via the map that assigns to $x \otimes y$ the functional $CB(E, F^*) \ni T \mapsto T(x)(y)$. We denote the operator space projective tensor product of $E$ and $F$ by $E \frownotimes F$. From \cite[Proposition 5.4]{BlPa91} we then have completely isometric isomorphisms 
\begin{equation}\label{eq:dualtensor}
(E \frownotimes F)^* \cong CB(E, F^*) \cong CB(F, E^*).
\end{equation}

We note that under the identification $(E \frownotimes F)^* \cong CB(E, F^*)$, the weak$^*$-topology on bounded sets is given by pointwise weak$^*$-convergence of operators.

In this article we will be mainly interested in dual operator spaces.  We therefore find it convenient to use the notation $E$ and $F$ for operator spaces that are dual to operator spaces $E_*$ and $F_*$ respectively. Every ultraweakly closed subspace $E$ of $\mathcal B(\mathcal H)$ is a dual operator space with a canonical predual $\mathcal B(\mathcal H)_*/E_{\perp}$, where $E_{\perp}$ is the preannihilator of $E$. Conversely, if $E$ is a dual operator space, then $E$ is weak$^*$-homeomorphically completely isometric to an ultraweakly closed subspace of $\mathcal B(\mathcal H)$.

If $E \subset \mathcal B(\mathcal H)$ and $F \subset \mathcal B(\mathcal K)$ are ultraweakly closed subspaces, then the normal minimal tensor product $E \ovt F$ is the ultraweak completion of the algebraic tensor product $E \otimes F \subset \mathcal B(\mathcal H \ovt \mathcal K)$. This is independent of the concrete representations, and we have a weak$^*$-homeomorphic completely isometric embedding
\[
E \ovt F \hookrightarrow (E_* \frownotimes F_*)^* \cong CB(E_*, F) \cong CB(F_*, E).
\]
We will therefore identify $E \ovt F$ as a subspace of $CB(E_*, F)$. We note that even in the case when $F = \mathcal M$ is a von Neumann algebra, this embedding will not be surjective in general. However, it follows from \cite[Theorem 2.5]{Bl92}, \cite[Proposition 3.3]{Ru92} and \cite{Kr91} that this embedding will be surjective whenever $F = \mathcal M$ is a von Neumann algebra with the $\sigma$-weak approximation property.

\subsection{Hilbert $C^*$-modules}\label{sec:HilbertC}

We refer the reader to \cite{La95} for the basic properties of Hilbert $C^*$-modules. If $A$ is a $C^*$-algebra and $I$ is a set, then we let $\bigoplus_{i \in I} A$ denote the space of functions $( a_i )_{i \in I}$ such that $\sum_{i \in I} a_i^*a_i$ converges in $A$. This gives a Hilbert $A$-module where we have an $A$-valued inner product (linear in the second variable) given by 
\[
\langle ( a_i )_{i \in I}, (b_i)_{i \in I} \rangle_A = \sum_{i \in I} a_i^* b_i.
\] 
If $\mathcal H$ is a Hilbert space then on the algebraic tensor product $A \otimes \mathcal H$ we have an $A$-valued inner product given by $\langle a \otimes \xi, b \otimes \eta \rangle_A = \langle \eta, \xi \rangle a^* b$. This inner product extends continuously to give a Hilbert $A$-module structure to $A \otimes_{\rm min} \mathcal H$ \cite[Theorem 8.2.17]{BlMe04}, where $\mathcal H$ is endowed with its operator space structure as a column Hilbert space. Choosing a basis $\{ e_i \}_{i \in I}$ gives an identification between the Hilbert $A$-modules $A \otimes_{\rm min} \mathcal H$ and $\bigoplus_{i \in I} A$.

If $\mathcal M$ is a von Neumann algebra and $I$ is a set then we let $\overline \bigoplus_{i \in I} \mathcal M$ denote the space of functions $( a_i )_{i \in I}$ such that $\sum_{i \in I} a_i^* a_i$ is bounded. If $(a_i)_{i \in I}$, $(b_i)_{i \in I} \in \overline \bigoplus_{i \in I} \mathcal M$ then we have ultraweak convergence of the sum 
\[
\langle ( a_i )_{i \in I}, (b_i)_{i \in I} \rangle_{\mathcal M} = \sum_{i \in I} a_i^* b_i.
\]
If $\mathcal H$ is a Hilbert space, then the Hilbert $\mathcal M$-module structure on $\mathcal M \otimes_{\rm min} \mathcal H$ has a unique extension to $\mathcal M \ovt \mathcal H$ such that the inner product $\langle \cdot, \cdot \rangle_{\mathcal M}$ is separately ultraweakly continuous. In particular, $\mathcal M \ovt \mathcal H$ will be self-dual in the sense of Paschke \cite{Pa73}, \cite[Proposition 2.9]{Sc02}. Choosing a basis $\{ e_i \}_{i \in I}$ gives an identification between the Hilbert $\mathcal M$-modules $\mathcal M \ovt \mathcal H$ and $\overline \bigoplus_{i \in I} \mathcal M$. 

Dual Hilbert $\mathcal M$-modules are naturally related to normal representations of $\mathcal M$ obtained via an internal tensor product $\mathcal K \oovt{\mathcal M} L^2(\mathcal M)$. When $\mathcal M$ has a finite trace $\tau$, this is quite explicit; as it will be used in the sequel, we describe it in detail here.  Given a Hilbert $\mathcal M$-module $\mathcal K$, we obtain a scalar-valued inner product $\langle \cdot, \cdot \rangle_\tau$ on $\mathcal K$ by $\langle \xi, \eta \rangle_\tau = \tau( \langle \eta, \xi \rangle_{\mathcal M})$. The completion gives a Hilbert space ${ \mathcal K_\tau}$, and the right $\mathcal M$-module structure on $\mathcal K$ then extends to a normal representation of $\mathcal M^{\op}$ on $ {\mathcal K_\tau}$. 

Each vector $\xi \in \mathcal K$ then gives rise to a bounded right $\mathcal M$-modular map $L_\xi: L^2(\mathcal M, \tau) \to \mathcal K_\tau$ such that $L_\xi(x) = \xi x$ for all $x \in \mathcal M \subset L^2(\mathcal M, \tau)$. To see that $L_\xi$ is bounded, just note that for $x \in \mathcal M \subset L^2(\mathcal M, \tau)$ we have 
\[
\| L_\xi(x) \|^2_\tau = \tau( \langle \xi x, \xi x \rangle_{\mathcal M} ) = \tau( x^* \langle \xi, \xi \rangle_{\mathcal M} x ) \leq \| \xi \|^2 \| x \|_2^2.
\] 
Every bounded right $\mathcal M$-modular map arises in this way, and if $\xi, \eta \in \mathcal K$, then we can recover our inner product as $\langle \xi, \eta \rangle = L_\xi^* L_\eta \in (J \mathcal M J)' \cap \mathcal B(L^2(\mathcal M, \tau)) = \mathcal M$. The mapping $L: \mathcal K \to \mathcal B(L^2(\mathcal M, \tau), \mathcal K_\tau)$ is then isometric and gives a homeomorphism between the weak$^*$-topology on $\mathcal K$ and the ultraweak topology on $ \mathcal B(L^2(\mathcal M, \tau), \mathcal K_\tau)$.

As a consequence, if $X \subset \mathcal K$ is an $\mathcal M$-invariant subset, then $X$ is weak$^*$-dense in $\mathcal K$ if and only if $X$ is dense in $\mathcal K_\tau$. Indeed, if $X$ were not dense in $\mathcal K_\tau$, and if we let $P$ denote the projection onto $X^\perp$ in $\mathcal K_\tau$, then $P$ is right $\mathcal M$-modular, and $P L_\xi = 0$ for all $\xi \in X$. If we then took any non-zero right $M$-modular map $L \in \mathcal B(L^2(\mathcal M, \tau), P \mathcal K_\tau)$, then $L = L_\eta$ for some $\eta \in \mathcal K$, and $P L_\eta \not= 0$, showing that $\eta$ is not in the weak$^*$-closure of $X$. 

Another consequence we shall use is that if $\mathcal K$ and $\mathcal H$ are two dual Hilbert $\mathcal M$-modules, $X \subset \mathcal K$ is a weak$^*$-dense $\mathcal M$-invariant subset, and $V: X \to \mathcal H$ is a right $\mathcal M$-modular map that satisfies 
\begin{equation}\label{eq:unitary}
\langle V\xi, V \eta \rangle = \langle \xi, \eta \rangle
\end{equation} 
for all $\xi, \eta \in X$, then $V$ has an extension to $\mathcal K$ that satisfies (\ref{eq:unitary}) for $\xi, \eta \in \mathcal K$ and such that $V$ is continuous with respect to the weak$^*$-topologies. Indeed, if $V_\tau$ denotes the map $V$ when viewed as a map between $\mathcal K_\tau$ and $\mathcal H_\tau$, then $V_\tau$ extends to an isometry, and we may define $V$ by $L_{V\xi} = V_\tau L_{\xi}$.

\section{Properly proximal groups}

Suppose $\Gamma$ is an infinite discrete group and we have an action by homeomorphisms on a non-empty Hausdorff topological space $X$. Recall that a pair of points $x, y \in X$ are called proximal if the orbit $\Gamma \cdot (x, y)$ has non-trivial intersection with every neighborhood of the diagonal $\Delta = \{ (x, x) \mid x \in X \} \subset X^2$. We say a pair of points $x, y \in X$ are properly proximal if for every neighborhood $O$ of $\Delta$ there is a finite set $F \subset \Gamma$ such that $(\Gamma \setminus F) \cdot (x, y) \subset O$. We say a point $x \in X$ is properly proximal if any pair of points in the orbit $\Gamma \cdot x$ are properly proximal, and we say the action $\Gamma \actson X$ is properly proximal if the set of properly proximal points in $X$ is dense.

\begin{lem}\label{lem:convex}
Let $E$ be a compact convex subset of a locally convex topological vector space $X$. Let $K\subset E$ be a compact convex subset and $U$ be relatively open in $E$ with $K\subset U$. Then there exists a convex set $V$ such that $K\subset V\subset U$ and $V$ is relatively open in $E$.
\end{lem}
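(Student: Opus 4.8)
The plan is to interpose a convex open ``tube'' around $K$ that stays inside $U$, and then cut it down by $E$. First I would unpack the hypothesis that $U$ is relatively open: write $U = E \cap W$ for some genuinely open set $W \subset X$ with $K \subset W$. With this reformulation it suffices to produce a convex set $V_0$ that is open \emph{in $X$} with $K \subset V_0 \subset W$, because $V := E \cap V_0$ will then automatically be convex (intersection of convex sets), relatively open in $E$, contain $K$, and satisfy $V \subset E \cap W = U$.

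The heart of the argument is the following claim: there exists a convex open neighborhood $O$ of $0$ in $X$ such that $K + O \subset W$. This is the standard ``compact set inside an open set absorbs a uniform neighborhood'' fact, and it is exactly here that I would use both the compactness of $K$ and the local convexity of $X$. For each $k \in K$ the set $W - k$ is an open neighborhood of $0$; by continuity of addition at $(0,0)$ together with local convexity, I can choose a convex open neighborhood $O_k$ of $0$ with $O_k + O_k \subset W - k$, i.e.\ $k + O_k + O_k \subset W$. The translates $\{ k + O_k \}_{k \in K}$ cover the compact set $K$, so finitely many $k_1 + O_{k_1}, \dots, k_n + O_{k_n}$ suffice; then $O := \bigcap_{i=1}^n O_{k_i}$ is a convex open neighborhood of $0$, and for any $k \in K$, picking $i$ with $k \in k_i + O_{k_i}$ gives $k + O \subset k_i + O_{k_i} + O_{k_i} \subset W$, whence $K + O \subset W$.

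It then remains to check that $V_0 := K + O$ has the required features, which is routine. It is open in $X$ as the union $\bigcup_{k \in K}(k + O)$ of open translates of $O$; it is convex as the Minkowski sum of the convex sets $K$ and $O$; and it contains $K$ since $0 \in O$. Setting $V = E \cap (K + O)$ and invoking the first paragraph finishes the proof. The one step that requires genuine care, and which I regard as the main obstacle, is the uniform-neighborhood claim in the second paragraph: one must use local convexity of $X$ (not merely that $X$ is a topological vector space) to arrange that the absorbing neighborhood $O$ can be taken \emph{convex}, and the ``$+O_k+O_k$'' slack is what lets the finite subcover argument go through. Note also that convexity of $K$ is not actually needed for the construction—only its compactness—though it costs nothing to keep.
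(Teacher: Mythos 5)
Your proof is correct and follows essentially the same argument as the paper's: the same doubled convex neighborhoods $O_k + O_k$ with a finite subcover of $K$, the same finite intersection $O = \bigcap_i O_{k_i}$, and the same conclusion via the Minkowski sum $K+O$ intersected with $E$. The only cosmetic difference is that you first lift the relatively open set $U$ to an ambient open set $W \subset X$, whereas the paper carries the intersection with $E$ through the argument directly.
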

\begin{proof}
For each point $x\in K$ choose an open convex neighborhood $U_x$ of zero such that $(x+U_x+U_x)\cap E\subset U$. The family $\{x+U_x : x\in K\}$ is an open cover of $K$. Therefore there exists a finite subset $F\subset K$ such that $K\subset \bigcup\{x+U_x : x\in F\}$. Put $U_0=\bigcap\{U_x : x\in F\}$.  Then $U_0$ is both open and convex. The set $K+U_0$ is convex as a sum of convex  sets, and it is open as a union of a family $\{x+U_0: x\in K\}$ of open sets. The set $V=(K+U_0)\cap E$ is relatively open in $E$, and it is convex as an intersection of two convex sets. It is clear that $K\subset V$. Now, let $x\in V\subset K+U_0$ be an arbitrary point. Then there exists a point $z\in K$ such that $x\in z+U_0$. Also there exists a point $y\in F$ such that $z\in y+U_y$. Then  $x\in y+U_y+U_0\subset  y+U_y+U_y$. Since $x\in E,$ we see that $x\in U$.
\end{proof}

\begin{lem}\label{lem:probpropproximal}
Suppose $\Gamma$ acts on a compact Hausdorff space $X$. If $\Gamma \actson X$ is properly proximal, then so is the action $\Gamma \actson {\rm Prob}(X)$.
\end{lem}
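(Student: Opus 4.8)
The plan is to produce a weak$^*$-dense set of properly proximal points in ${\rm Prob}(X)$. Write $D \subseteq X$ for the set of properly proximal points of $\Gamma \actson X$, which is dense by hypothesis. I would show that every finitely supported probability measure $\mu = \sum_{i=1}^n t_i \delta_{x_i}$ with all $x_i \in D$ (and $t_i \geq 0$, $\sum_i t_i = 1$) is a properly proximal point of ${\rm Prob}(X)$. Since $x \mapsto \delta_x$ is a weak$^*$-homeomorphism onto its image and $D$ is dense in $X$, the set of such $\mu$ is weak$^*$-dense in ${\rm Prob}(X)$ (finitely supported measures are dense, and their support points may be pushed into $D$ by continuity of convex combinations), so establishing the claim for these $\mu$ suffices.

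Two preliminary reductions make the verification manageable. First, proper proximality of a pair $(a,b)$ for any action is $\Gamma$-invariant: if $(\Gamma \setminus F)\cdot(a,b) \subseteq O$ then $(\Gamma \setminus Fg^{-1})\cdot(ga,gb) \subseteq O$, since $\eta(ga,gb) = (\eta g)(a,b)$. Consequently $\mu$ is a properly proximal point as soon as $(\mu, \gamma\mu)$ is properly proximal for every $\gamma \in \Gamma$, because an arbitrary pair $(\eta_1\mu, \eta_2\mu)$ in the orbit is carried by $\eta_1^{-1}$ to $(\mu, \eta_1^{-1}\eta_2\mu)$. Second, since ${\rm Prob}(X)$ is weak$^*$-compact, its neighborhoods of the diagonal are governed by the weak$^*$ uniformity: it suffices to test the defining condition against the basic entourages $O_{f,\eps} = \{(\nu_1,\nu_2) : |\langle \nu_1 - \nu_2, f\rangle| < \eps\}$ for $f \in C(X)$ and $\eps > 0$, since finite intersections of these form a base of neighborhoods of $\Delta \subseteq {\rm Prob}(X)^2$.

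With these reductions the computation is linear. Fix $\gamma \in \Gamma$, $f \in C(X)$, and $\eps > 0$. For $\eta \in \Gamma$,
\[
\langle \eta\mu - \eta\gamma\mu, f\rangle = \sum_{i=1}^n t_i\bigl(f(\eta x_i) - f(\eta\gamma x_i)\bigr).
\]
For each $i$ the pair $(x_i, \gamma x_i)$ lies in the orbit $\Gamma \cdot x_i$ and is therefore properly proximal because $x_i \in D$; applied to the neighborhood $\{(a,b) \in X^2 : |f(a) - f(b)| < \eps\}$ of the diagonal in $X^2$, this yields a finite set $F_i \subseteq \Gamma$ with $|f(\eta x_i) - f(\eta\gamma x_i)| < \eps$ for all $\eta \notin F_i$. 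Setting $F = \bigcup_i F_i$, the triangle inequality together with $\sum_i t_i = 1$ gives $|\langle \eta\mu - \eta\gamma\mu, f\rangle| < \eps$ for all $\eta \notin F$, i.e. $(\Gamma \setminus F)\cdot(\mu,\gamma\mu) \subseteq O_{f,\eps}$. Taking the union of the finitely many finite sets cutting out a general basic entourage shows $(\mu, \gamma\mu)$ is properly proximal, and hence $\mu$ is a properly proximal point.

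The one genuinely topological point, and the step I expect to require the most care, is the second reduction: that an arbitrary neighborhood of $\Delta \subseteq {\rm Prob}(X)^2$ contains a basic entourage (or a finite intersection of such). Here $\Delta$ is a compact convex subset of the compact convex set ${\rm Prob}(X)^2$, so Lemma~\ref{lem:convex} lets me shrink any neighborhood of $\Delta$ to a convex relatively open one; combining this with the compactness of $\Delta$ and the fact that the weak$^*$ topology on ${\rm Prob}(X)$ is generated by the seminorms $\nu \mapsto |\langle \nu, f\rangle|$, a Lebesgue-number argument extracts a single basic entourage inside it. Once this neighborhood-base statement is secured, the remaining content is exactly the elementary linear estimate above, and no dynamical input beyond proper proximality of the base points $x_i$ is needed.
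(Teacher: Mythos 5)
Your proof is correct, and its skeleton is the same as the paper's: both arguments show that finitely supported measures whose atoms are properly proximal points of $X$ are properly proximal points of ${\rm Prob}(X)$, and then conclude by weak$^*$-density of such measures. The genuine difference is how a general neighborhood of the diagonal of ${\rm Prob}(X)^2$ is handled. The paper shrinks it to a convex relatively open neighborhood via Lemma~\ref{lem:convex} and then argues purely from convexity; this yields the slightly more general intermediate fact that the set of properly proximal points of ${\rm Prob}(X)$ is closed under arbitrary convex combinations, not just combinations of Dirac masses. You instead reduce to the basic entourages $O_{f,\varepsilon}$ and use linearity of the pairing plus the triangle inequality. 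Your reduction is sound: since ${\rm Prob}(X)$ is compact Hausdorff it carries a unique compatible uniformity, consisting of all neighborhoods of the diagonal, and this must agree with the uniformity generated by the $O_{f,\varepsilon}$, which induces the weak$^*$ topology. In fact Lemma~\ref{lem:convex} is superfluous for this step: if $U \supseteq \Delta$ is open, then every point of the compact set ${\rm Prob}(X)^2 \setminus U$ has distinct coordinates, so it has an open neighborhood disjoint from some $O_{f,\varepsilon}$, and a finite subcover produces finitely many $O_{f_j,\varepsilon_j}$ whose intersection is contained in $U$; no convexity or Lebesgue-number device is needed. Thus your route trades the paper's convexity lemma for a uniformity/compactness argument; it is more computational, and since each $O_{f,\varepsilon}$ is itself convex, your estimate is in effect a concrete instance of the paper's convexity step. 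Your first reduction, moving an arbitrary orbit pair $(\eta_1\mu,\eta_2\mu)$ to the form $(\mu,\eta_1^{-1}\eta_2\mu)$, is likewise a clean explicit version of what the paper handles implicitly by quantifying over pairs $(\gamma\eta,\gamma g\eta)$.
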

\begin{proof}
First note that the embedding of $X$ into ${\rm Prob}(X)$ as Dirac masses is a homeomorphism from $X$ into ${\rm Prob}(X)$ with the weak$^*$-topology. Thus, if $x \in X$ is a properly proximal point, then so is $\delta_{\{ x \} } \in {\rm Prob}(X)$.

We now claim that the set of properly proximal points in ${\rm Prob}(X)$ is closed under taking convex combinations. Indeed, suppose $\eta_1, \ldots, \eta_k \in {\rm Prob}(X)$ are properly proximal, and $\eta=\sum_{i=1}^kt_i\eta_i$ with $\sum_{i=1}^kt_i=1$. We view ${\rm Prob}(X) \times {\rm Prob}(X)$ as a compact convex subset of $C(X)^* \oplus_\infty C(X)^*$. Let $\mathcal O^*$ be an open neighborhood of $\Delta^*$, the diagonal of ${\rm Prob}(X) \times {\rm Prob}(X)$. In light of Lemma~\ref{lem:convex}, we may assume without loss of generality that $\mathcal O^*$ is convex. For each $i=1,\ldots,k$, there exists a finite subset $F_i\subset\Gamma$ such that $\gamma\eta_i \oplus \gamma g\eta_i\in\mathcal {O}^*$ for all $g\in \Gamma, \gamma\notin F_i$. Set $F=\cup_{i=1}^k F_i$. Then 
$\gamma\eta_i \oplus \gamma g\eta_i\in\mathcal {O}^*$ for all $g\in \Gamma, \gamma\notin F$ and for all $i=1,\ldots, k$, and we have
\begin{align*}
 \gamma \eta \oplus \gamma g \eta 
= ( \gamma \sum_{i=1}^kt_i\eta_i ) \oplus ( \gamma g \sum_{i=1}^kt_i\eta_i )
= \sum_{i=1}^k t_i (\gamma \eta_i \oplus \gamma g \eta_i )
\in \mathcal {O}^*
\end{align*}
since  $\mathcal {O}^*$ is convex.

The proof is then immediate, as the convex combination of Dirac masses is dense in ${\rm Prob}(X)$.
\end{proof}

A Banach $\Gamma$-module consists of a pair $(\pi, E)$, where $E$ is a Banach space and $\pi: \Gamma \to {\rm Isom}(E)$ is an isometric representation of $\Gamma$ on $E$. We will often drop the notation $\pi$ and by abuse of notation refer to $E$ as a Banach $\Gamma$-module. A dual Banach $\Gamma$-module consists of a dual Banach space of a Banach $\Gamma$-module, together with the natural dual representation of $\Gamma$. Note that for a dual Banach $\Gamma$-module $(\pi, E)$, the Banach $\Gamma$-module to which it is dual is part of the data, and we denote this predual of $E$ by $E_*$ so that $E = (E_*)^*$. The weak$^*$-topology on $E$ will always refer to the weak$^*$-topology with respect to this duality. 

A group $\Gamma$ is defined in \cite{BIP18} to be properly proximal if there exists an action of $\Gamma$ on a compact Hausdorff space $X$ such that there is no $\Gamma$-invariant measure on $X$ and such that ${\rm Prob}(X)$ has a properly proximal point. It will be easier for us here to consider actions on convex subsets of locally convex topological vector spaces, and so we reformulate proper proximality in this setting.

\begin{prop}\label{prop:properlyproximalequivalent}
Let $\Gamma$ be an infinite discrete group. The following are equivalent:
\begin{enumerate}[$($i$)$]
\item\label{item:1} $\Gamma$ is properly proximal.
\item\label{item:2} $\Gamma$ has a properly proximal action on a compact Hausdorff space that does not have an invariant measure. 
\item\label{item:3} There is a dual Banach $\Gamma$-module $E$ and a non-empty $\Gamma$-invariant weak$^*$-compact convex subset $K \subset E$ such that $K$ has a properly proximal point (with respect to the weak$^*$-topology), but has no fixed point. 
\item\label{item:4} There is a dual Banach $\Gamma$-module $E$ and a non-empty $\Gamma$-invariant weak$^*$-compact convex subset $K \subset E$ such that the action $\Gamma \actson K$ is properly proximal (with respect to the weak$^*$-topology) but has no fixed point. 
\item\label{item:5} $\Gamma$ has an action by affine homeomorphisms on a non-empty compact convex subset $K$ of a locally convex topological vector space such that the action $\Gamma \actson K$ is properly proximal but has no fixed point. 
\end{enumerate}
\end{prop}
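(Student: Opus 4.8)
The plan is to establish the equivalences via the cyclic chain of implications $(\ref{item:1}) \Leftrightarrow (\ref{item:2})$, then $(\ref{item:2}) \Rightarrow (\ref{item:3})$, $(\ref{item:3}) \Leftrightarrow (\ref{item:4})$ with the easy direction $(\ref{item:4}) \Rightarrow (\ref{item:3})$ being trivial, and finally $(\ref{item:4}) \Leftrightarrow (\ref{item:5})$ to close the loop back toward $(\ref{item:2})$. The equivalence $(\ref{item:1}) \Leftrightarrow (\ref{item:2})$ is essentially just unwinding the definition given in \cite{BIP18} against the definition of proper proximality of an action on a compact Hausdorff space recalled above: a group is properly proximal precisely when it admits an action on a compact Hausdorff space $X$ with no invariant measure and such that $\mathrm{Prob}(X)$ has a properly proximal point, and the phrase ``properly proximal action'' in $(\ref{item:2})$ should be reconciled with this via Lemma~\ref{lem:probpropproximal}, which promotes a properly proximal point in $X$ (viewed as a Dirac mass) to properly proximal structure on $\mathrm{Prob}(X)$.

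For $(\ref{item:2}) \Rightarrow (\ref{item:3})$, the natural move is to take $E = C(X)^* = M(X)$ with its weak$^*$-topology, set $K = \mathrm{Prob}(X)$, which is a $\Gamma$-invariant weak$^*$-compact convex subset of the dual Banach $\Gamma$-module $E$, and transport the properly proximal point from $X \hookrightarrow \mathrm{Prob}(X)$ using Lemma~\ref{lem:probpropproximal}. The absence of a $\Gamma$-invariant measure on $X$ is exactly the statement that $K = \mathrm{Prob}(X)$ has no $\Gamma$-fixed point, so the hypotheses of $(\ref{item:3})$ are met. The implication $(\ref{item:3}) \Rightarrow (\ref{item:4})$ is the substantive upgrade: from a single properly proximal point one must produce a dense set of them. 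Here I would again invoke convexity, using Lemma~\ref{lem:probpropproximal} (or rather its proof technique) together with Lemma~\ref{lem:convex}: the key observations are that the set of properly proximal points is closed under convex combinations, and that it is $\Gamma$-invariant, so the closed convex hull of the orbit of a single properly proximal point is a $\Gamma$-invariant weak$^*$-compact convex subset $K' \subseteq K$ on which the action is properly proximal; one then checks $K'$ still has no fixed point (otherwise that fixed point would contradict the absence of a fixed point in the orbit's closed convex hull, or one passes to $K'$ directly as the new witness).

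The implications $(\ref{item:4}) \Rightarrow (\ref{item:5})$ and $(\ref{item:5}) \Rightarrow (\ref{item:2})$ translate between the dual-Banach-module picture and the abstract locally convex picture. For $(\ref{item:4}) \Rightarrow (\ref{item:5})$ one simply observes that a dual Banach space with its weak$^*$-topology is a locally convex topological vector space, that the dual representation acts by affine weak$^*$-homeomorphisms, and that a weak$^*$-compact convex $K$ is exactly a compact convex subset in this topology, so $(\ref{item:5})$ holds verbatim. The reverse passage $(\ref{item:5}) \Rightarrow (\ref{item:2})$ is where I expect the main obstacle: given an abstract affine action on a compact convex $K$ one must manufacture an \emph{honest} action on a compact Hausdorff space $X$ with no invariant measure whose $\mathrm{Prob}(X)$ detects proper proximality. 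The standard device is to realize $K$ as $\mathrm{Prob}(X)$ for a suitable $X$ --- for instance by taking $X$ to be the closure of the extreme points $\partial_e K$ (a compact $\Gamma$-invariant subset, since affine homeomorphisms permute extreme points), or by a Choquet-theoretic representation of points of $K$ as barycenters of measures on $X$. One then argues that a $\Gamma$-invariant measure on $X$ would barycenter to a $\Gamma$-fixed point of $K$, contradicting the no-fixed-point hypothesis, and that proper proximality of the action on $K \cong \mathrm{Prob}(X)$ forces a properly proximal point in $\mathrm{Prob}(X)$. The delicate point to handle carefully is ensuring $\partial_e K$ (or its closure) is genuinely $\Gamma$-invariant and compact Hausdorff, and that the correspondence between points of $K$ and probability measures behaves well enough with respect to the $\Gamma$-action and the proper proximality condition, which is phrased in terms of neighborhoods of the diagonal.
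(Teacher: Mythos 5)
Most of your chain is sound. Your (\ref{item:2}) $\Rightarrow$ (\ref{item:3}) with $E = C(X)^*$ and $K = {\rm Prob}(X)$ is exactly right, and your direct proof of (\ref{item:3}) $\Rightarrow$ (\ref{item:4}) --- taking the weak$^*$-closed convex hull $K'$ of the orbit of a properly proximal point, using Lemma~\ref{lem:convex} to see that convex combinations of properly proximal points are properly proximal, and noting that $K' \subset K$ forces $K'$ to have no fixed point --- is correct, and is in fact a genuinely different route from the paper, which never proves (\ref{item:3}) $\Rightarrow$ (\ref{item:4}) directly but instead goes (\ref{item:3}) $\Rightarrow$ (\ref{item:1}) $\Rightarrow$ (\ref{item:2}) $\Rightarrow$ (\ref{item:4}). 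The implication (\ref{item:4}) $\Rightarrow$ (\ref{item:5}) is trivial, as you say.

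The genuine gap is (\ref{item:5}) $\Rightarrow$ (\ref{item:2}), precisely where you flag ``the main obstacle.'' Your proposed Choquet-theoretic repair does not work in general: $K$ is affinely homeomorphic to ${\rm Prob}(\overline{\partial_e K})$ only when $K$ is a Bauer simplex (otherwise the barycenter map ${\rm Prob}(\overline{\partial_e K}) \to K$ is surjective but far from injective), and if you instead take $X = \overline{\partial_e K}$ merely as a $\Gamma$-invariant compact subspace, you cannot verify that $\Gamma \actson X$ is properly proximal: the dense set of properly proximal points in $K$ guaranteed by (\ref{item:5}) is convex and may well be disjoint from $\overline{\partial_e K}$ (think of a dense convex set avoiding all extreme points), and nothing forces any extreme point to be properly proximal. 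The missing idea, which makes the implication essentially trivial and is what the paper does, is to take $X = K$ itself, forgetting the linear structure: $K$ is already a compact Hausdorff space on which $\Gamma$ acts by homeomorphisms, the action is properly proximal by hypothesis, and $K$ admits no invariant measure because the barycenter of an invariant probability measure on a compact convex set (which exists since the ambient space is locally convex) would be a fixed point of the affine action. This yields (\ref{item:2}) outright, and a Dirac mass at a properly proximal point of $K$ is a properly proximal point of ${\rm Prob}(K)$, which yields (\ref{item:1}) directly. The same remark repairs a second, smaller gap at the other end of your chain: you need (\ref{item:1}) $\Rightarrow$ (\ref{item:2}), but Lemma~\ref{lem:probpropproximal} only gives (\ref{item:2}) $\Rightarrow$ (\ref{item:1}); for the forward direction you should restrict to the closure of the orbit of the properly proximal point $\eta \in {\rm Prob}(X)$, where the orbit points are dense and properly proximal, and where an invariant measure would barycenter to an invariant measure on $X$.
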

\begin{proof}
(\ref{item:1}) $\implies$ (\ref{item:3}) is trivial by considering the weak$^*$-compact convex set of probability measures on a compact Hausdorff space.  (\ref{item:1}) $\implies$ (\ref{item:2}) is also trivial as we can restrict to the closure of an orbit of a properly proximal point. (\ref{item:2}) $\implies$ (\ref{item:4}) follows from Lemma~\ref{lem:probpropproximal}. (\ref{item:4}) $\implies$ (\ref{item:5}) is trivial.

Finally, both (\ref{item:3}) $\implies$ (\ref{item:1}) and (\ref{item:5}) $\implies$ (\ref{item:1}) follow from the simple observations that if $k$ is a properly proximal point in a compact Hausdorff space $K$, then $\delta_{ \{ k \} }$ is a properly proximal point in ${\rm Prob}(K)$, and if a compact convex set has an invariant measure, then the barycenter of such a measure gives a fixed point. 
\end{proof}

Given a dual Banach $\Gamma$-module $(\pi, E)$, we let $E_{\rm mix}$ denote the set of all points $x \in E$ such that we have weak$^*$-convergence $\lim_{\gamma \to \infty} \pi(\gamma)x = 0$.  Note that $E_{\rm mix}$ is a norm-closed $\Gamma$-invariant subspace of $E$, so that $(\pi, E_{\rm mix})$ is also a Banach $\Gamma$-module. 

We also have a characterization of proper proximality in terms of bounded cohomology, which is of independent interest.  A bounded 1-cocycle is a map $c: \Gamma \rightarrow E$ such that $c(\gamma_1 \gamma_2)=\gamma_1 c(\gamma_2)+c(\gamma_1)$ and $\sup_{\gamma \in \Gamma} \Vert c(\gamma) \Vert_E < \infty.$  It represents a trivial cohomology class in $H^1_b(\Gamma,E)$ if it is of the form $c(\gamma)=\gamma v - v$ for some $v \in E$.

\begin{prop}
Let $\Gamma$ be a discrete group. Then $\Gamma$ is properly proximal if and only if there is a dual Banach $\Gamma$-module $E$ such that the induced map $H^1_b(\Gamma, E_{\rm mix}) \to H^1_b(\Gamma, E)$ has non-trivial range.  
\end{prop}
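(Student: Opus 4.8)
The plan is to prove both implications by translating bounded $1$-cocycles valued in the mixing submodule into affine actions on weak$^*$-compact convex sets, and then invoking Proposition~\ref{prop:properlyproximalequivalent}. Throughout, a bounded $1$-cocycle is a map $b:\Gamma\to E$ with $b(\gamma_1\gamma_2)=b(\gamma_1)+\pi(\gamma_1)b(\gamma_2)$ and $\sup_\gamma\|b(\gamma)\|<\infty$; since $\pi$ is isometric, every coboundary $\gamma\mapsto\pi(\gamma)x-x$ is automatically bounded, so $H^1_b(\Gamma,E)$ is $Z^1_b(\Gamma,E)$ modulo such coboundaries, and the map in the statement is induced by the inclusion $E_{\rm mix}\hookrightarrow E$. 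For the ``if'' direction I would take a bounded cocycle $b:\Gamma\to E_{\rm mix}$ whose class is nonzero in $H^1_b(\Gamma,E)$, i.e. $b$ takes values in $E_{\rm mix}$ but is not a coboundary in $E$, and form the affine action $\gamma\cdot x=\pi(\gamma)x+b(\gamma)$ on $E$, which is by weak$^*$-homeomorphisms since each $\pi(\gamma)$ is weak$^*$-continuous and $b(\gamma)$ is a fixed translation. The orbit of $0$ is the bounded set $\{b(\gamma)\}$, so $K:=\overline{\conv}^{\,w^*}\{b(\gamma):\gamma\in\Gamma\}$ is weak$^*$-compact by Banach--Alaoglu, convex, and $\Gamma$-invariant, with $0=b(e)\in K$. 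Because $b(g_1),b(g_2)\in E_{\rm mix}$, the differences of orbit points satisfy $\gamma\cdot b(g_1)-\gamma\cdot b(g_2)=\pi(\gamma)(b(g_1)-b(g_2))\to 0$ weak$^*$ as $\gamma\to\infty$, so $0$ is a properly proximal point of $K$. A fixed point of the affine action is exactly a witness $-x$ making $b$ a coboundary, so there is none; in particular $\Gamma\actson K$ has no fixed point, whence no invariant probability measure (its barycenter would be one). Thus $\delta_0$ is a properly proximal point of $\Prob(K)$ for an action on a compact space with no invariant measure, and $\Gamma$ is properly proximal.

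For the ``only if'' direction I would start from Proposition~\ref{prop:properlyproximalequivalent}(\ref{item:3}): a dual Banach $\Gamma$-module $F$ and a nonempty weak$^*$-compact convex $\Gamma$-invariant $K\subset F$ with a properly proximal point $\xi_0$ and no fixed point; replacing $K$ by $\overline{\conv}^{\,w^*}(\Gamma\xi_0)$ I may assume $K$ is the weak$^*$-closed convex hull of the orbit. The natural candidate is the displacement cocycle $b(\gamma)=\pi(\gamma)\xi_0-\xi_0$, which is a cocycle bounded by $2\sup_{K}\|\cdot\|$. Proper proximality of $\xi_0$ says precisely that each pair $(\pi(g)\xi_0,\xi_0)$ is properly proximal, i.e. $\pi(\gamma)b(g)=\pi(\gamma g)\xi_0-\pi(\gamma)\xi_0\to 0$ weak$^*$ as $\gamma\to\infty$, so $b$ takes values in $F_{\rm mix}$. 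Setting $E:=\overline{\operatorname{span}}^{\,w^*}\{b(\gamma):\gamma\in\Gamma\}$, a weak$^*$-closed $\Gamma$-invariant subspace and hence a dual Banach $\Gamma$-module, one has $b$ valued in $E_{\rm mix}=E\cap F_{\rm mix}$, and it remains to see that the class of $b$ survives in $H^1_b(\Gamma,E)$.

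The main obstacle is exactly this nonvanishing. If $b=\pi(\cdot)x-x$ for some $x\in E$, then $v:=\xi_0-x$ is a $\Gamma$-fixed vector, equivalently the affine action $\gamma\cdot y=\pi(\gamma)y+b(\gamma)$ has a fixed point in $E$, and one must extract from this a genuine fixed point of $\Gamma\actson K$, contradicting the hypothesis. The difficulty is that a coboundary witness only places $v$ in the weak$^*$-closed \emph{affine} hull of $\Gamma\xi_0$, while the hypothesis forbids fixed points only in its \emph{convex} hull $K$; a Hahn--Banach separation does not immediately locate $v$ in $K$, since the approximating affine combinations carry coefficients of both signs. I expect to resolve this by engineering the module so that every coboundary witness is forced into $K$: concretely, reducing to the case $F^\Gamma=\{0\}$ and $\xi_0\notin\overline{F_{\rm mix}}^{\,w^*}$, so that the affine action has the unique fixed point $-\xi_0$, which lies in $E\subseteq\overline{F_{\rm mix}}^{\,w^*}$ only if $\xi_0\in\overline{F_{\rm mix}}^{\,w^*}$, a contradiction. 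Alternatively, and perhaps more robustly, I would run the forward direction on the module attached to the dynamical definition of proper proximality, where a coboundary for the displacement cocycle of a suitable state translates directly into a left-invariant state on $(\ell^\infty\Gamma/c_0\Gamma)^{\Gamma_r}$ — precisely what proper proximality forbids.
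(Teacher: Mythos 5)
Your ``if'' direction is correct and is essentially the paper's own argument for that implication: from a bounded cocycle $b:\Gamma\to E_{\rm mix}$ that is not a coboundary in $E$, form the affine action $\gamma\cdot x=\pi(\gamma)x+b(\gamma)$, note that $0$ is a properly proximal point of the weak$^*$-compact invariant set generated by its orbit $\{b(\gamma)\}$, and rule out invariant measures by taking barycenters (the paper uses the weak$^*$-closure of the orbit and condition (\ref{item:2}) of Proposition~\ref{prop:properlyproximalequivalent}, you use the closed convex hull and the original definition; the difference is immaterial).

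The ``only if'' direction, however, contains a genuine gap, which you have correctly located but not closed. Your cocycle $b(\gamma)=\pi(\gamma)\xi_0-\xi_0$ is, by its very definition, the coboundary of $\xi_0$; hence its class dies in $H^1_b(\Gamma,E')$ for \emph{any} coefficient module $E'$ containing $\xi_0$ modulo $F^\Gamma$, and the entire content of the proposition is to exhibit a module that provably excludes $\xi_0$. Passing to $E=\overline{{\rm span}}^{\,w^*}\{b(\gamma)\}$ is the right instinct, but then non-vanishing is \emph{equivalent} to $\xi_0\notin E+F^\Gamma$, and nothing in the abstract data of condition (\ref{item:3}) of Proposition~\ref{prop:properlyproximalequivalent} yields this. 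Your proposed normalization ``$F^\Gamma=\{0\}$ and $\xi_0\notin\overline{F_{\rm mix}}^{\,w^*}$'' is not a reduction one is free to impose: the first half is achievable (quotient by the weak$^*$-closed subspace $F^\Gamma$; a bounded cocycle with values in a trivial module is an additive homomorphism, hence zero, so no new fixed points of $K$ appear), but the second half is precisely the assertion to be proved, and in an abstract dual Banach module there is no mechanism — no positivity, no distinguished invariant functional — to force it. Hahn--Banach separation, as you note, does not place a putative invariant vector $\xi_0-x$ back inside $K$, so the no-fixed-point hypothesis cannot be brought to bear.

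The paper closes exactly this gap by abandoning the ambient module $F$ altogether. It passes to the compact Hausdorff space $X=\overline{\Gamma\xi_0}^{\,w^*}$, sets $\eta=\delta_{\xi_0}\in{\rm Prob}(X)$, and takes as coefficients the signed measures of total mass zero, $E=\{\zeta\in C(X)^*\mid \zeta(X)=0\}$, with the cocycle $c(\gamma)=\eta-\gamma\eta$. Two features of this concrete module, unavailable abstractly, do all the work: first, the constant function $1\in C(X)$ is a $\Gamma$-invariant element of the \emph{predual} which vanishes on $E$ (hence on $E_{\rm mix}$) but takes the value $1$ on $\eta$, so the tautological witness $\eta$ lies outside $E$; second, if $c(\gamma)=\zeta-\gamma\zeta$ with $\zeta\in E$, then $\eta-\zeta$ is an invariant signed measure of total mass $1$, and Jordan decomposition upgrades it to an invariant \emph{probability} measure on $X$, whose barycenter would be a fixed point in $K$ — a contradiction. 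Your second, alternative suggestion (states on $(\ell^\infty\Gamma/c_0\Gamma)^{\Gamma_r}$) gestures at the same positivity mechanism, but as written it is a one-sentence sketch: you would still need to produce the relevant properly proximal state, verify that its displacement cocycle is mixing-valued, and run the Jordan-decomposition argument. Until one of these routes is carried out, the forward implication is not proved.
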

\begin{proof}
Suppose $\Gamma$ is properly proximal and let $\Gamma \actson X$ be an action on a compact Hausdorff space such that ${\rm Prob}(X)$ has a properly proximal point $\eta$ but $X$ has no invariant measure. We let $E = \{ \zeta \in {\rm Meas}(X) \mid \zeta(X) = 0 \} \subset C(X)^*$ and define a bounded cocycle $c: \Gamma \to E$ by $c(\gamma) = \gamma \eta - \eta$. Since $\eta$ is properly proximal we have that the cocycle $c$ ranges in $E_{\rm mix}$.  If we had $c(\gamma) = \gamma \zeta - \zeta$ for some $\zeta \in E$ then it would follow that $\eta - \zeta \in C(X)^*$ is $\Gamma$-invariant, and as $X$ has no $\Gamma$-invariant probability measure we must then have $\eta = \zeta \in E$. However, $\eta \not\in E$ since $\eta (X) = 1$, and hence $c$ represents a non-trivial cohomology class in $H^1_b(\Gamma, E)$. 

Conversely, suppose $E$ is a dual Banach $\Gamma$-module and $c: \Gamma \to E_{\rm mix}$ is a bounded cocycle that represents a non-trivial cohomology class in $H^1_b(\Gamma, E)$. Consider the associated isometric affine action on $E$ given by $\alpha(\gamma) x = \gamma x + c(\gamma)$. Note that we have weak$^*$-topology convergence $\lim_{\gamma \to \infty} \alpha(\gamma) \alpha(g).0 -  \alpha(\gamma).0  = \lim_{\gamma \to \infty} \gamma c(g) = 0$. Thus, $0$ is a properly proximal point with respect to the action $\alpha$. 

If we let $K$ be the weak$^*$-closure of $c(\Gamma)$, then $K$ is weak$^*$-compact by the Banach-Alaoglu Theorem, and we have that $\Gamma \aactson{\alpha} K$ is properly proximal. If we had an invariant measure on $K$, then taking the barycenter would give a $\Gamma$-fixed point in $E$, which would contradict the fact that the cocycle $c$ represents a non-trivial cohomology class in $H^1_b(\Gamma, E)$. Thus $\Gamma$ is properly proximal by condition (iii) in Proposition~\ref{prop:properlyproximalequivalent}.
\end{proof}

\section{Fundamental domains for actions on von Neumann algebras}\label{section:funddomain}

\begin{defn}
Let $\Gamma \aactson{\sigma} \mathcal M$ be an action of a discrete group $\Gamma$ on a von Neumann algebra $\mathcal M$. A fundamental domain for the action is a projection $p \in \mathcal M$ so that $\{ \sigma_\gamma(p) \}_{\gamma \in \Gamma}$ gives a partition of unity. 
\end{defn}

Note that if $p \in \mathcal M$ is a fundamental domain, then we obtain an inclusion $\theta_p:\ell^\infty \Gamma \to \mathcal M$ by $\theta_p(f) = \sum_{\gamma \in \Gamma} f(\gamma) \sigma_{\gamma^{-1}}(p)$. Moreover, this embedding is equivariant with respect to the $\Gamma$-actions, where $\Gamma \actson \ell^\infty \Gamma$ is the canonical right action given by $L_\gamma(f)(x) = f( x \gamma)$. Conversely, if $\theta: \ell^\infty \Gamma \to \mathcal M$ is an equivariant embedding, then $\theta(\delta_e)$ gives a fundamental domain.

\begin{prop}\label{prop:funddomain}
Suppose $\mathcal M$ is a semi-finite von Neumann algebra with a semi-finite normal faithful trace ${\rm Tr}$, and $\Gamma \aactson{\sigma} (\mathcal M, {\rm Tr})$ is a trace-preserving action that has a fundamental domain $p$. The following are true:
\begin{enumerate}[$($i$)$]
\item\label{item:A} The map $\tau(x) = {\rm Tr}(p x p)$ is independent of the fundamental domain $p$ and defines a faithful normal semi-finite trace on $\mathcal M^\Gamma$. 
\item\label{item:C} There is a unitary operator $\mathcal F_p:  \ell^2 \Gamma \ovt L^2(\mathcal M^\Gamma, \tau) \to L^2(\mathcal M, {\rm Tr})$ that satisfies 
\[ 
\mathcal F_p( \delta_\gamma \otimes x ) = \sigma_{\gamma^{-1}}(p) x, 
\]
for $x \in \mathfrak n_\tau \subset \mathcal M^\Gamma$ and $\gamma \in \Gamma$.
\item\label{item:D} The operator $\mathcal F_p$ satisfies 
\begin{equation}\label{eq:fdunitary} 
\mathcal F_p (1 \otimes JxJ) = JxJ \mathcal F_p, \ \ \  \mathcal F_p (\rho_\gamma \otimes 1) = \sigma_\gamma^0 \mathcal F_p, \ \ \ \mathcal F_p (f \otimes 1)  = \theta_p(f) \mathcal F_p , 
\end{equation}
for $x \in \mathcal M^\Gamma$, $\gamma \in \Gamma$, and $f \in \ell^\infty \Gamma$, where $\theta_p: \ell^\infty \Gamma \to \mathcal M$ is the $\Gamma$-equivariant embedding given by $\theta_p(f) = \sum_{\gamma \in \Gamma} f(\gamma) \sigma_{\gamma^{-1}}(p)$. 
\item\label{item:H} $\mathcal F_p^* \langle \mathcal M, \mathcal M^\Gamma \rangle \mathcal F_p = \mathcal B(\ell^2 \Gamma) \ovt \mathcal M^\Gamma$. 
\item\label{item:B} We have $\mathcal M = W^*(\theta_p(\ell^\infty \Gamma), \mathcal M^\Gamma)$, and, in fact, 
\[
{\rm span} \{ \theta_p(f)x \mid f \in \ell^\infty \Gamma, x \in \mathcal M^\Gamma \}
\] 
is strong operator topology dense in $\mathcal M$.
\item\label{item:E} If $\alpha \in {\rm Aut}(\mathcal M)$ is an automorphism that preserves ${\rm Tr}$ and is $\Gamma$-equivariant, then $\alpha_{| \mathcal M^\Gamma}$ preserves $\tau$. 
\end{enumerate}
\end{prop}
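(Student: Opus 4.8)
The plan is to establish the six assertions in the order (\ref{item:A}), (\ref{item:C}), (\ref{item:D}), (\ref{item:H}), (\ref{item:B}), (\ref{item:E}), since the later parts depend on the earlier ones. For (\ref{item:A}) I would first record that for positive $x\in\mathcal M^\Gamma$ traciality of ${\rm Tr}$ gives ${\rm Tr}(pxp)={\rm Tr}(px)$. The heart of the part is independence of the fundamental domain: given another fundamental domain $q$, insert $1=\sum_\gamma\sigma_\gamma(q)$ and, applying the trace-preserving automorphism $\sigma_{\gamma^{-1}}$ together with $\sigma_\gamma(x)=x$, rewrite ${\rm Tr}(p\sigma_\gamma(q)x)={\rm Tr}(\sigma_{\gamma^{-1}}(p)qx)$; summing and using $\sum_\gamma\sigma_{\gamma^{-1}}(p)=1$ yields ${\rm Tr}(px)={\rm Tr}(qx)$. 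The trace property of $\tau$ then follows from this by a conjugation trick: for a unitary $u\in\mathcal M^\Gamma$ one has $\sigma_\gamma(upu^*)=u\sigma_\gamma(p)u^*$, so $upu^*$ is again a fundamental domain, whence $\tau(x)={\rm Tr}(upu^*\,x\,upu^*)$, and pulling $u$ out with traciality of ${\rm Tr}$ gives $\tau(x)=\tau(u^*xu)$. Faithfulness is immediate, since $\tau(x^*x)=\|xp\|_{2,{\rm Tr}}^2=0$ forces $xp=0$ and hence $x=\sum_\gamma\sigma_\gamma(xp)=0$, and normality is inherited from ${\rm Tr}$. For semifiniteness I would use that the corner map $\mathcal M^\Gamma\ni x\mapsto pxp\in p\mathcal M p$ is onto: for $a\in(p\mathcal M p)_+$ the block-diagonal sum $\hat a=\sum_\gamma\sigma_\gamma(a)$ converges strongly (the $\sigma_\gamma(p)$ are orthogonal) to an element of $(\mathcal M^\Gamma)_+$ with $\tau(\hat a)={\rm Tr}(a)$, so a net of finite-trace $a_i\nearrow p$ in $p\mathcal M p$ produces $\hat{a_i}\nearrow 1$ inside $\mathfrak m_\tau^+$.

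For (\ref{item:C}), once $\tau$ is a faithful normal semifinite trace the space $L^2(\mathcal M^\Gamma,\tau)$ is defined and I would set $\mathcal F_p(\delta_\gamma\otimes x)=\sigma_{\gamma^{-1}}(p)x$ on the total set $\{\delta_\gamma\otimes x:\gamma\in\Gamma,\ x\in\mathfrak n_\tau\cap\mathcal M^\Gamma\}$. Isometry reduces to two computations: vectors attached to distinct group elements are orthogonal because $\sigma_{\gamma^{-1}}(p)\sigma_{\delta^{-1}}(p)=0$, and for fixed $\gamma$ the identity $\|\sigma_{\gamma^{-1}}(p)x\|_{2,{\rm Tr}}^2=\tau(x^*x)$ holds precisely because $\tau$ is a trace (here part (\ref{item:A}) is used). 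The hard part will be surjectivity, i.e.\ density of $\mathrm{span}\{\sigma_\gamma(p)x:x\in\mathcal M^\Gamma\}$ in $L^2(\mathcal M,{\rm Tr})$. I would reduce this, via the partition of unity and the Koopman unitaries, to density of $\{px:x\in\mathcal M^\Gamma\}$ in $pL^2(\mathcal M)$, and prove the latter by a second averaging argument: for $c\in\mathfrak n_{\rm Tr}$ and each $\gamma$ the translates $\sigma_\delta(pc\sigma_\gamma(p))$ have pairwise orthogonal left and right supports, so their sum $z_\gamma\in\mathcal M^\Gamma$ converges and satisfies $pz_\gamma=pc\sigma_\gamma(p)$; summing the orthogonal pieces $pc\sigma_\gamma(p)$ recovers $pc$, exhibiting it in the closed span of $\{px\}$. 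This density is the main obstacle of the whole proposition.

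Part (\ref{item:D}) is then a direct verification of the three relations in (\ref{eq:fdunitary}) on the total set $\{\delta_\gamma\otimes x\}$, using the Koopman formula $\sigma_\gamma^0(\hat a)=\widehat{\sigma_\gamma(a)}$, invariance of $x$, and the orthogonality $\sigma_{\gamma^{-1}}(p)\sigma_{\eta^{-1}}(p)=\delta_{\gamma\eta}\sigma_{\gamma^{-1}}(p)$. Given (\ref{item:D}), part (\ref{item:H}) is formal: the first relation says $\mathcal F_p$ conjugates $1\otimes J\mathcal M^\Gamma J$ to $J\mathcal M^\Gamma J$, and since $\langle\mathcal M,\mathcal M^\Gamma\rangle=(J\mathcal M^\Gamma J)'$ with $J\mathcal M^\Gamma J=(\mathcal M^\Gamma)'$ on $L^2(\mathcal M^\Gamma)$, taking commutants in $\mathcal B(\ell^2\Gamma)\ovt\mathcal B(L^2(\mathcal M^\Gamma))$ gives $\mathcal F_p^*\langle\mathcal M,\mathcal M^\Gamma\rangle\mathcal F_p=(1\otimes(\mathcal M^\Gamma)')'=\mathcal B(\ell^2\Gamma)\ovt\mathcal M^\Gamma$. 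Part (\ref{item:B}) then packages the density from (\ref{item:C}): the $*$-algebra generated by $\theta_p(\ell^\infty\Gamma)$, which contains every $\sigma_\gamma(p)=\theta_p(\delta_{\gamma^{-1}})$, together with $\mathcal M^\Gamma$ contains the $L^2$-dense subspace $\mathrm{span}\{\sigma_\gamma(p)x\}$, so its strong closure is a von Neumann subalgebra whose $L^2$-completion is all of $L^2(\mathcal M)$ and must therefore equal $\mathcal M$.

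Finally, (\ref{item:E}) is short once (\ref{item:A}) is in hand. If $\alpha\in{\rm Aut}(\mathcal M)$ preserves ${\rm Tr}$ and commutes with every $\sigma_\gamma$, then, being normal, it carries the partition $\sum_\gamma\sigma_\gamma(p)=1$ to $\sum_\gamma\sigma_\gamma(\alpha(p))=\alpha(1)=1$, so $q:=\alpha(p)$ is again a fundamental domain and $\alpha$ restricts to an automorphism of $\mathcal M^\Gamma$. Computing $\tau$ with the fundamental domain $q$, as permitted by the independence in (\ref{item:A}), I obtain for positive $x\in\mathcal M^\Gamma$ that $\tau(\alpha(x))={\rm Tr}(\alpha(p)\alpha(x)\alpha(p))={\rm Tr}(\alpha(pxp))={\rm Tr}(pxp)=\tau(x)$, where the middle equality is multiplicativity of $\alpha$ and the next is its preservation of ${\rm Tr}$. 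Hence $\alpha_{|\mathcal M^\Gamma}$ preserves $\tau$.
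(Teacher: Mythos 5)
Most of your proposal tracks the paper's proof closely and is correct. For part (i) your faithfulness, normality, and semifiniteness arguments are the paper's (the paper averages finite-trace projections under $p$ where you average positive elements of $p\mathcal M p$), and your route to the trace property --- independence first, then unitary invariance via the conjugated fundamental domain $upu^*$ --- is a legitimate variant of the paper's, which instead reads the trace property off the two-fundamental-domain identity $\tau(x^*x)={\rm Tr}(qxx^*q)$ with $q=p$; just be careful to phrase the independence computation with positive elements (as the paper does, via ${\rm Tr}(a^*a)={\rm Tr}(aa^*)$ applied to $a=\sigma_\gamma(q)xp$) rather than with traces of non-positive products, which are not a priori defined for a semi-finite trace. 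Your $z_\gamma=\sum_\delta\sigma_\delta(pc\sigma_\gamma(p))$ is exactly the paper's $a^T_\gamma$, so part (ii) is the same proof; parts (iii), (iv) and (vi) are the same direct verification, commutant argument, and fundamental-domain swap as in the paper.

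The genuine gap is in part (v). The statement makes two claims: $\mathcal M=W^*(\theta_p(\ell^\infty\Gamma),\mathcal M^\Gamma)$, and, ``in fact,'' that ${\rm span}\{\theta_p(f)x \mid f\in\ell^\infty\Gamma,\, x\in\mathcal M^\Gamma\}$ is itself dense in the strong operator topology; the second, stronger claim is the one the paper actually uses later (for instance in the surjectivity step of Proposition~\ref{prop:cocylerep}). Your argument addresses only the first claim, and even there the decisive step --- that a von Neumann subalgebra $N\subset\mathcal M$ with $N\cap\mathfrak n_{\rm Tr}$ dense in $L^2(\mathcal M,{\rm Tr})$ must equal $\mathcal M$ --- is asserted without justification. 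That principle is true but not formal: $\|\cdot\|_2$-density does \emph{not} imply strong density for subspaces (the kernel of $L^\infty[0,1]\ni f\mapsto\int fg_0\,d\mu$ with $g_0\in L^1\setminus L^2$ is $\|\cdot\|_2$-dense yet strongly closed and proper), so one genuinely needs the algebra structure, e.g.\ by first checking that $L^2$-density forces ${\rm Tr}$ to restrict semifinitely to $N$ and then running the basic construction argument with $e_N=1$. In particular, this route cannot yield the span statement. The repair is short, and it is really your own $z_\gamma$ trick run in the strong operator topology for arbitrary $T\in\mathcal M$ rather than in $L^2$ for $T\in\mathfrak n_{\rm Tr}$: one has $T=\sum_{\gamma_1,\gamma_2\in\Gamma}\sigma_{\gamma_1}(p)T\sigma_{\gamma_2}(p)$ strongly, and each block satisfies
\[
\sigma_{\gamma_1}(p)T\sigma_{\gamma_2}(p)=\sigma_{\gamma_1}(p)\Bigl(\sum_{\gamma\in\Gamma}\sigma_\gamma\bigl(\sigma_{\gamma_1}(p)T\sigma_{\gamma_2}(p)\bigr)\Bigr)=\theta_p(\delta_{\gamma_1^{-1}})\,a
\]
with $a\in\mathcal M^\Gamma$, which exhibits $T$ as a strong limit of elements of the span.
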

\begin{proof}
If $x \in \mathcal M^\Gamma$ such that $\tau(x^*x) = 0$, then as ${\rm Tr}$ is faithful we have $xp = 0$. We then have $x \sigma_\gamma(p) = \sigma_\gamma( x p ) = 0$ for all $\gamma \in \Gamma$, and since $\sum_{\gamma \in \Gamma} \sigma_\gamma(p) = 1$ we then have $x = 0$, so that $\tau$ is faithful.

As ${\rm Tr}$ is semi-finite, there exists an increasing net of finite-trace projections $\{ q_i \}_{i \in I}$ so that $q_i \to p$ in the weak operator topology. If we set $\tilde q_i = \sum_{\gamma \in \Gamma} \sigma_\gamma( q_i)$, then as $p$ is a fundamental domain for $\Gamma$, it follows that $\{ \tilde q_i \}_{i \in I}$ gives an increasing net of projections in $\mathcal M^\Gamma$ that converges in the weak operator topology to $\sum_{\gamma \in \Gamma} \sigma_\gamma(p) = 1$, and satisfies $\tau(\tilde q_i) = {\rm Tr}(q_i) < \infty$ for each $i \in I$. Therefore $\tau$ is semi-finite. 

If $q$ is another $\Gamma$-fundamental domain then we also have 
\begin{align}
\tau(x^* x) 
&= {\rm Tr}(p x^* x p) 
= \sum_{\gamma \in \Gamma} {\rm Tr}(p x^* \sigma_\gamma(q) x p) \nonumber \\
&= \sum_{\gamma \in \Gamma} {\rm Tr}(\sigma_\gamma(q) x p x^* \sigma_\gamma(q) )
= \sum_{\gamma \in \Gamma} {\rm Tr}(q x \sigma_{\gamma^{-1}}(p) x^* q ) 
= {\rm Tr}(qx x^*q). \nonumber 
\end{align}
Thus $\tau$ is independent of the fundamental domain and defines a trace, proving (\ref{item:A}).

If $x \in \mathfrak n_\tau \subset \mathcal M^\Gamma$, then $px \in \mathfrak n_{\rm Tr}$ and we have $\| x \|_\tau^2 = \tau(xx^*) = {\rm Tr}(p x x^* p) = \| p x \|_{\rm Tr}^2$, so the map $\mathfrak n_\tau \ni x \mapsto px \in p L^2(\mathcal M, {\rm Tr})$ is isometric with respect to the trace norms. If $T \in \mathfrak n_{\rm Tr}$, then for each $\gamma \in \Gamma$ we set $a^T_\gamma = \sum_{\lambda \in \Gamma} \sigma_\lambda( p T \sigma_\gamma(p) )$. Note that since $p$ is a fundamental domain, this sum converges in the strong operator topology, and we have $a^T_\gamma \in \mathcal M^\Gamma$. We then compute
\[
pT = \sum_{\gamma \in \Gamma} p T \sigma_\gamma(p)  = \sum_{\gamma \in \Gamma} p a^T_\gamma,
\]
where the sums converge in $pL^2(\mathcal M, {\rm Tr})$. Since $T \in \mathfrak n_{\rm Tr}$ was arbitrary, this shows that $\mathfrak n_\tau \ni x \mapsto px$ has dense range in $pL^2(\mathcal M, {\rm Tr})$, showing that this map extends to a unitary from $L^2(\mathcal M^\Gamma, \tau)$ onto $pL^2(\mathcal M, {\rm Tr})$. Since $p$ is a fundamental domain we have a direct sum decomposition $L^2(\mathcal M, {\rm Tr}) = \sum_{\gamma \in \Gamma} \sigma_{\gamma^{-1}}(p)L^2(\mathcal M, {\rm Tr})$, and (\ref{item:C}) then follows easily.

Let $\gamma, g \in \Gamma$ and $x,y \in \mathfrak n_\tau.$  The following three computations verify (\ref{item:D}):
\begin{align*}
 &\mathcal F_p (1 \otimes JxJ) (\delta_g \otimes y ) = \mathcal F_p (\delta_g \otimes yx^*) =  
 \sigma_{g^{-1}}(p)yx^* = JxJ \sigma_{g^{-1}}(p)y = JxJ \mathcal F_p (\delta_g \otimes y ), 
\end{align*}
\begin{align*}
\mathcal F_p (\rho_\gamma \otimes 1)(\delta_g \otimes y )=\mathcal F_p (\delta_{g \gamma^{-1} } \otimes y ) = \sigma_{\gamma g^{-1}}(p)y=\sigma_\gamma^0(\sigma_{g^{-1}}(p)y)=\sigma_\gamma^0 \mathcal F_p (\delta_g \otimes y ), 
\end{align*}
\begin{multline*}
 \mathcal F_p (f \otimes 1) (\delta_g \otimes y ) = \mathcal F_p (f \delta_g \otimes y)=\mathcal F_p (f(g) \delta_g \otimes y)=f(g) \mathcal F_p (\delta_g \otimes y)\\=f(g)\sigma_{g^{-1}}(p)y  
=\left( \sum_{\gamma \in \Gamma} f(\gamma) \sigma_{\gamma^{-1}}(p) \right) \sigma_{g^{-1}}(p)y = \theta_p(f) \sigma_{g^{-1}}(p)y = \theta_p(f) \mathcal F_p (\delta_g \otimes y ).
\end{multline*}
Since $\langle \mathcal M, \mathcal M^\Gamma \rangle = J \mathcal M^\Gamma J'$ we see that (\ref{item:H}) follows directly from (\ref{item:D}).

As in part (\ref{item:C}), if $T \in \mathcal M$ and $\gamma_1, \gamma_2 \in \Gamma$, then 
\[
\sigma_{\gamma_1}(p)T\sigma_{\gamma_2}(p) = \sigma_{\gamma_1}(p) \left( \sum_{\gamma \in \Gamma} \sigma_\gamma( \sigma_{\gamma_1}(p) T \sigma_{\gamma_2}(p) ) \right) \in {\rm span} \{  \theta_p(f) x \mid x \in \mathcal M^\Gamma, f \in \ell^\infty \Gamma \},
\]
where the sum converges in the strong operator topology. We also have strong operator topology convergence 
\[
T = \sum_{\gamma_1, \gamma_2 \in \Gamma} \sigma_{\gamma_1}(p) T \sigma_{\gamma_2}(p),
\]
and hence (\ref{item:B}) follows. 

If $\alpha \in {\rm Aut}(\mathcal M)$ is a $\Gamma$-equivariant automorphism that preserves ${\rm Tr}$, and if $p$ is a $\Gamma$-fundamental domain, then $\alpha(p)$ is also a $\Gamma$-fundamental domain, and hence for $x \in \mathcal M^\Gamma$ we have
\[
\tau( \alpha(x^* x) ) = {\rm Tr}( p \alpha(x^*x) p ) = {\rm Tr}( \alpha(p) x^* x \alpha(p) ) = \tau(x^* x),
\] 
showing (\ref{item:E}). 
\end{proof}

\begin{prop}\label{prop:diagonal}
Suppose $\Gamma \actson (\mathcal M, {\rm Tr})$ is a trace-preserving action with fundamental domain $p$. There exists a trace-preserving isomorphism $\Delta_p: \mathcal M \rtimes \Gamma \to \mathcal B(\ell^2 \Gamma) \ovt \mathcal M^\Gamma$ such that 
\[
\Delta_p(u_\gamma) = \rho_\gamma \otimes 1, \ \ \ \Delta_p(x) = \mathcal F_p^* x \mathcal F_p
\]
for $\gamma \in \Gamma$, $x \in \mathcal M \subset \mathcal M \rtimes \Gamma$.  In particular, we have $\mathcal M \rtimes \Gamma \cong \mathcal B(\ell^2 \Gamma) \ovt \mathcal M^\Gamma \cong \langle \mathcal M, \mathcal M^\Gamma \rangle$. 
\end{prop}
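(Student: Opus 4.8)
The plan is to factor the desired isomorphism through the basic construction $\langle\mathcal M,\mathcal M^\Gamma\rangle$, exploiting that $\mathcal F_p$ has already been shown in Proposition~\ref{prop:funddomain}(\ref{item:H}) to conjugate $\langle\mathcal M,\mathcal M^\Gamma\rangle$ onto $\mathcal B(\ell^2\Gamma)\ovt\mathcal M^\Gamma$. First I would observe that the left action of $\mathcal M$ on $L^2(\mathcal M,{\rm Tr})$ together with the Koopman unitaries $\sigma_\gamma^0$ is a covariant representation of $(\mathcal M,\Gamma,\sigma)$: this is precisely the relation $\sigma_\gamma^0 x\sigma_{\gamma^{-1}}^0=\sigma_\gamma(x)$ recorded in Subsection~\ref{sec:actions}. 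Consequently the assignment $x u_\gamma\mapsto x\sigma_\gamma^0$ is a $*$-homomorphism on the algebraic crossed product of finite sums $\sum_\gamma x_\gamma u_\gamma$.

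The crux is upgrading this algebraic $*$-homomorphism to a normal $*$-isomorphism $\pi\colon\mathcal M\rtimes\Gamma\to W^*(\mathcal M,\{\sigma_\gamma^0\})$ onto the von Neumann algebra generated by $\mathcal M$ and the Koopman unitaries. Here the fundamental domain is essential: by Proposition~\ref{prop:funddomain}(\ref{item:C}),(\ref{item:D}), conjugation by $\mathcal F_p$ identifies $L^2(\mathcal M,{\rm Tr})$ with $\ell^2\Gamma\ovt L^2(\mathcal M^\Gamma,\tau)$ in such a way that $\sigma_\gamma^0$ becomes $\rho_\gamma\otimes 1$, a multiple of the regular representation. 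Thus the Koopman covariant representation absorbs the regular representation in the sense of Fell, which is exactly the condition guaranteeing that the integrated homomorphism $\pi$ is both normal and faithful. To identify the image I would compute the commutant of $W^*(\mathcal M,\{\sigma_\gamma^0\})$: since $J\sigma_\gamma^0 J=\sigma_\gamma^0$, an element of $\mathcal M'=J\mathcal M J$, written $JzJ$ with $z\in\mathcal M$, satisfies $\sigma_\gamma^0(JzJ)\sigma_{\gamma^{-1}}^0=J\sigma_\gamma(z)J$, so it lies in the commutant iff $z\in\mathcal M^\Gamma$. Hence $W^*(\mathcal M,\{\sigma_\gamma^0\})'=J\mathcal M^\Gamma J$, and taking commutants gives $W^*(\mathcal M,\{\sigma_\gamma^0\})=(J\mathcal M^\Gamma J)'=\langle\mathcal M,\mathcal M^\Gamma\rangle$. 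Therefore $\pi$ is an isomorphism of $\mathcal M\rtimes\Gamma$ onto $\langle\mathcal M,\mathcal M^\Gamma\rangle$.

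Finally I set $\Delta_p={\rm Ad}(\mathcal F_p^*)\circ\pi$. Proposition~\ref{prop:funddomain}(\ref{item:H}) gives $\mathcal F_p^*\langle\mathcal M,\mathcal M^\Gamma\rangle\mathcal F_p=\mathcal B(\ell^2\Gamma)\ovt\mathcal M^\Gamma$, so $\Delta_p$ maps onto $\mathcal B(\ell^2\Gamma)\ovt\mathcal M^\Gamma$; relation (\ref{item:D}) yields $\Delta_p(u_\gamma)=\mathcal F_p^*\sigma_\gamma^0\mathcal F_p=\rho_\gamma\otimes 1$, while by construction $\Delta_p(x)=\mathcal F_p^* x\mathcal F_p$ for $x\in\mathcal M$, as required, and the two displayed isomorphisms $\mathcal M\rtimes\Gamma\cong\mathcal B(\ell^2\Gamma)\ovt\mathcal M^\Gamma\cong\langle\mathcal M,\mathcal M^\Gamma\rangle$ are then immediate. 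For trace preservation I would work in the concrete target, where the canonical trace is ${\rm Tr}_{\mathcal B(\ell^2\Gamma)}\otimes\tau$ with $\tau$ the trace from Proposition~\ref{prop:funddomain}(\ref{item:A}): using the matrix coefficients $\langle\mathcal F_p^* x\mathcal F_p(\delta_\eta\otimes a),\delta_\zeta\otimes b\rangle={\rm Tr}(b^*\sigma_{\zeta^{-1}}(p)x\sigma_{\eta^{-1}}(p)a)$ and the partition of unity $\sum_\gamma\sigma_\gamma(p)=1$, one checks on the dense $*$-subalgebra of finite sums that $({\rm Tr}_{\mathcal B(\ell^2\Gamma)}\otimes\tau)(\Delta_p(x u_\gamma))$ equals ${\rm Tr}(x)$ when $\gamma=e$ and vanishes otherwise, thereby matching ${\rm Tr}\circ E_{\mathcal M}$.

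I expect the main obstacle to be the normality and faithfulness of $\pi$, that is, the passage from the algebraic covariant representation to a genuine isomorphism of von Neumann algebras. This is where the fundamental-domain hypothesis does the real work, through Fell's absorption made concrete by $\mathcal F_p$ and relation (\ref{item:D}); that the hypothesis cannot be dropped is illustrated by an infinite group acting trivially on $\mathbb C$, which admits no fundamental domain and for which $\mathcal M\rtimes\Gamma=L\Gamma$ is not $\langle\mathcal M,\mathcal M^\Gamma\rangle=\mathbb C$.
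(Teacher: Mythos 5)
Your overall architecture is sound and quite close in spirit to what the paper does: everything reduces to showing that $xu_\gamma\mapsto x\sigma_\gamma^0$ extends to a normal isomorphism of $\mathcal M\rtimes\Gamma$ onto $W^*(\mathcal M,\sigma^0(\Gamma))$, after which your commutant computation $W^*(\mathcal M,\sigma^0(\Gamma))'=J\mathcal M^\Gamma J$ (which is correct, since $J\sigma^0_\gamma J=\sigma^0_\gamma$) identifies the image with $\langle\mathcal M,\mathcal M^\Gamma\rangle$, and Proposition~\ref{prop:funddomain}(\ref{item:H}) transports everything to $\mathcal B(\ell^2\Gamma)\ovt\mathcal M^\Gamma$. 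The genuine gap is at precisely the step you flag as the crux. You justify normality and faithfulness of $\pi$ by noting that $\mathcal F_p$ conjugates $\sigma^0$ to $\rho\otimes 1$, so that $\sigma^0$ is a multiple of the regular representation, and you assert that this Fell absorption is ``exactly the condition'' guaranteeing that the integrated homomorphism is a normal isomorphism. That inference is not valid: being a multiple of the regular representation is a property of the unitary representation $\sigma^0$ alone, whereas whether the integrated form is an isomorphism depends on how $\sigma^0$ sits relative to $\mathcal M$. What is actually needed is absorption at the level of covariant pairs, i.e.\ a unitary $U$ on $L^2(\mathcal M,{\rm Tr})\ovt\ell^2\Gamma$ that commutes with $\mathcal M\otimes\mathbb C$ and satisfies $U(\sigma^0_\gamma\otimes\lambda_\gamma)U^*=\sigma^0_\gamma\otimes 1$; the equivalence $\sigma^0\cong\rho\otimes 1$ by itself does not produce such a $U$. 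Indeed the principle you invoke is false as stated: a conservative, ergodic, infinite-measure-preserving transformation with countable Lebesgue spectrum (such examples exist, e.g.\ null-recurrent Markov shifts) gives a trace-preserving $\mathbb Z$-action whose Koopman representation is an infinite multiple of the regular representation, yet there is no fundamental domain, $L^\infty(X)\rtimes\mathbb Z$ is a ${\rm II}_\infty$ factor, while $W^*(L^\infty(X),\sigma^0(\mathbb Z))=\mathcal B(L^2(X))$ by ergodicity; so the map $xu_n\mapsto x\sigma^0_n$ cannot be injective there. Your argument uses the fundamental domain only through the equivalence $\sigma^0\cong\rho\otimes1$, and therefore cannot distinguish that situation from the one at hand.

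The gap is reparable, and the repair shows where the fundamental domain must enter a second time, on the commutant side. The projections $J\sigma_{t^{-1}}(p)J$, $t\in\Gamma$, lie in $\mathcal M'=J\mathcal M J$ and again form a partition of unity, and one checks that
\[
U:=\sum_{t\in\Gamma}J\sigma_{t^{-1}}(p)J\otimes\lambda_t
\]
is a unitary commuting with $\mathcal M\otimes\mathbb C$ which satisfies $U(\sigma^0_\gamma\otimes\lambda_\gamma)U^*=\sigma^0_\gamma\otimes1$: writing out $U(\sigma^0_\gamma\otimes\lambda_\gamma)U^*$, the relation $\sigma^0_\gamma\, J\sigma_{t^{-1}}(p)J\,\sigma^0_{\gamma^{-1}}=J\sigma_{\gamma t^{-1}}(p)J$ and orthogonality of the projections kill all cross terms and the surviving group elements cancel. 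Conjugation by $U$ is then the desired normal, faithful extension of your $\pi$, carrying $\mathcal M\rtimes\Gamma$ onto $W^*(\mathcal M,\sigma^0(\Gamma))\ovt\mathbb C$ while fixing $\mathcal M\otimes\mathbb C$ pointwise; the remainder of your proposal (the commutant identification, composition with ${\rm Ad}(\mathcal F_p^*)$, item (\ref{item:H}), and the trace computation) then goes through. This is also the honest point of contact with the paper's proof: the paper never appeals to an abstract absorption principle but instead writes down an explicit unitary intertwiner, $W(\mathcal F_p^*\otimes 1)$ with $W$ a Fell-type unitary on $\ell^2\Gamma\ovt L^2(\mathcal M^\Gamma,\tau)\ovt\ell^2\Gamma$, and verifies its effect on the generators. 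In any proof along these lines, the delicate point is exactly that the intertwining unitary can be chosen to commute with the copy of $\mathcal M$, and that is the content missing from your proposal.
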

\begin{proof}
We let $\mathcal F_p: \ell^2 \Gamma \ovt   L^2(\mathcal M^\Gamma, \tau)  \to L^2(\mathcal M, {\rm Tr})$ be the unitary from Proposition~\ref{prop:funddomain}. We define a unitary operator $W \in \mathcal U(\mathcal M' \ovt L\Gamma) \subset \mathcal U(L^2(\mathcal M, {\rm Tr}) \ovt \ell^2 \Gamma)$ by $W = \sum_{t \in \Gamma} J\sigma_{t} (p)J \otimes \lambda_{t}$, where $J$ is the conjugation operator for $\mathcal M \subset \mathcal B(L^2(\mathcal M, {\rm Tr}))$. 

Since $W \in \mathcal U(\mathcal M' \ovt L\Gamma)$ we see that for $x \in \mathcal M$ we have $(\mathcal F_p^* \otimes 1) W^* ( x \otimes 1) W (\mathcal F_p \otimes 1)= \mathcal F_p^* x \mathcal F_p \otimes 1$. Also, if $\gamma \in \Gamma$, then for all $r_1, s_1, r_2, s_2 \in \gamma$ and $x_1, x_2 \in \mathcal M^\Gamma$ we have
\begin{align}
\langle (\mathcal F_p^* \otimes 1) & W^* u_\gamma W (\mathcal F_p \otimes 1) (\delta_{r_1} \otimes x_1 \otimes \delta_{s_1}), (\delta_{r_2} \otimes x_2 \otimes \delta_{s_2}) \rangle \nonumber \\
& = \langle ( W^* u_\gamma W ) \sigma_{r_1^{-1}}(p) x_1 \otimes \delta_{s_1}, \sigma_{r_2^{-1}}(p) x_2 \otimes \delta_{s_2} \rangle \nonumber \\
& = \sum_{t_1, t_2 \in \Gamma} \langle u_\gamma ( \sigma_{r_1^{-1}}(p) x_1 \sigma_{t_1}(p) \otimes \delta_{t_1s_1} ), \sigma_{r_2^{-1}}(p) x_2 \sigma_{t_2}(p) \otimes \delta_{t_2 s_2} \rangle \nonumber \\
& = \sum_{t_1, t_2 \in \Gamma} \langle  \sigma_{\gamma r_1^{-1}}(p) x_1 \sigma_{\gamma t_1}(p) \otimes \delta_{\gamma t_1s_1}, \sigma_{r_2^{-1}}(p) x_2 \sigma_{t_2}(p) \otimes \delta_{t_2 s_2} \rangle. \nonumber
\end{align}

Each term in this sum of inner products is zero except when $\gamma t_1 s_1 = t_2 s_2$, $\gamma t_1 = t_2$, and $\gamma r_1^{-1} = r_2^{-1}$. Hence, this double summation is zero if $s_1 \not= s_2$ or $r_1 \gamma^{-1} \not= r_2$, and otherwise it may be simplified as 
\begin{align}
\sum_{t_1 \in \Gamma} \langle \sigma_{ r_2^{-1}}(p) x_1 \sigma_{\gamma t_1}(p), \sigma_{r_2^{-1}}(p) x_2 \sigma_{\gamma t_1}(p) \rangle
= \langle \sigma_{r_2^{-1}}(p) x_1, \sigma_{r_2^{-1}}(p) x_2 \rangle
= \langle x_1, x_2 \rangle_{L^2(\mathcal M^\Gamma, \tau)} \nonumber
\end{align}

Thus, it follows that
\begin{align}
\langle (\mathcal F_p^* \otimes 1) & W^* u_\gamma W (\mathcal F_p \otimes 1) (\delta_{r_1} \otimes x_1 \otimes \delta_{s_1}), (\delta_{r_2} \otimes x_2 \otimes \delta_{s_2}) \rangle \nonumber \\
& =
\langle (\rho_\gamma \otimes 1 \otimes 1) (\delta_{r_1} \otimes x_1 \otimes \delta_{s_1}), (\delta_{r_2} \otimes x_2 \otimes \delta_{s_2}) \rangle, \nonumber
\end{align}
and since $r_1, s_1, r_2, s_2 \in \Gamma$ and $x_1, x_2 \in \mathcal M^\Gamma$ were arbitrary, we have 
\[
(\mathcal F_p^* \otimes 1) W^* u_\gamma W (\mathcal F_p \otimes 1) = \rho_\gamma \otimes 1 \otimes 1.
\]

Hence, conjugation by $(\mathcal F_p^* \otimes 1) W^*$ defines a normal $*$-homomorphism $\Delta_p: \mathcal M \rtimes \Gamma \to \mathcal B(\ell^2 \Gamma) \ovt \mathcal M^\Gamma \otimes \mathbb C = (\mathcal F_p^* \otimes 1) (\langle \mathcal M, \mathcal M^\Gamma \rangle \otimes \mathbb C ) (\mathcal F_p \otimes 1)$, taking $u_\gamma$ to $\rho_\gamma \otimes 1 \otimes 1$ for each $\gamma \in \Gamma$, and taking $x$ to $\mathcal F_p^* x \mathcal F_p$ for each $x \in \mathcal M \subset \mathcal M \rtimes \Gamma$.

We may easily check that $\Delta_p( \sigma_{t^{-1}}( p ) )$ is the Dirac operator $\delta_t$ viewed as an operator in $\ell^\infty \Gamma \otimes \mathbb C \subset \mathcal B(\ell^2 \Gamma) \otimes \mathbb C$, hence the range of $\Delta_p$ contains both $\ell^\infty \Gamma$ and $R \Gamma$, which together generate the von Neumann algebra $\mathcal B(\ell^2 \Gamma) \subset \mathcal B(\ell^2 \Gamma) \ovt \mathcal M^\Gamma \otimes \mathbb C$. Since the range also contains $\mathcal M^\Gamma \subset \mathcal M$, it follows that the range of $\Delta_p$ equals $\mathcal B(\ell^2 \Gamma) \ovt \mathcal M^\Gamma \otimes \mathbb C$. 
\end{proof}

We note that if $x \in \mathcal M$, then we also have an explicit form for $\Delta_p(x)$. Indeed, if we view $\mathcal B(\ell^2 \Gamma) \ovt \mathcal M^\Gamma$ as $\mathcal M^\Gamma$-valued $\Gamma \times \Gamma$ matrices, then it's simple to check that $\Delta_p(x) = [x_{s, t}]_{s, t}$, where
\[
x_{s, t} = \sum_{\gamma \in \Gamma} \sigma_{\gamma}( \sigma_{t^{-1}}(p) x \sigma_{s^{-1}}(p) ) \in \mathcal M^\Gamma.
\]

\begin{prop}\label{prop:conjugatefds}
Suppose $\Gamma \actson (\mathcal M, {\rm Tr})$ is a trace-preserving action with fundamental domains $p$ and $q$. Then, using the notation above, we have $\mathcal F_q^* \mathcal F_p \in \mathcal U( L\Gamma \ovt \mathcal M^\Gamma)$ and $\Delta_p(p) ( \mathcal F_q^* \mathcal F_p ) = ( \mathcal F_q^* \mathcal F_p ) \Delta_p(q)$. 
\end{prop}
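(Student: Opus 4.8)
The plan is to prove the two assertions separately, in each case exploiting the intertwining relations (\ref{eq:fdunitary}) satisfied by both $\mathcal F_p$ and $\mathcal F_q$. The crucial point that lets these relations combine is that the trace $\tau$, and hence the Hilbert space $L^2(\mathcal M^\Gamma, \tau)$ on which everything is built, is independent of the choice of fundamental domain by Proposition~\ref{prop:funddomain}(\ref{item:A}); moreover the operators appearing on the $L^2(\mathcal M, {\rm Tr})$-side of (\ref{eq:fdunitary})---the Koopman unitaries $\sigma_\gamma^0$ and the operators $JyJ$ for $y \in \mathcal M^\Gamma$---depend only on $\mathcal M$ and not on $p$ or $q$. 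Thus $\mathcal F_p$ and $\mathcal F_q$ are unitaries between the same pair of Hilbert spaces and intertwine the same operators.

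For the first assertion, observe that $\mathcal F_q^* \mathcal F_p$ is a unitary of $\ell^2\Gamma \ovt L^2(\mathcal M^\Gamma, \tau)$, and that by the commutation theorem the commutant $(L\Gamma \ovt \mathcal M^\Gamma)'$ is generated by the operators $\rho_\gamma \otimes 1$ for $\gamma \in \Gamma$ together with $1 \otimes JyJ$ for $y \in \mathcal M^\Gamma$. It therefore suffices, by the bicommutant theorem, to check that $\mathcal F_q^* \mathcal F_p$ commutes with each such generator. This is immediate from (\ref{eq:fdunitary}): from $\mathcal F_p(\rho_\gamma \otimes 1) = \sigma_\gamma^0 \mathcal F_p$ and the identical relation for $q$ we obtain
\[
\mathcal F_q^* \mathcal F_p (\rho_\gamma \otimes 1) = \mathcal F_q^* \sigma_\gamma^0 \mathcal F_p = (\rho_\gamma \otimes 1) \mathcal F_q^* \mathcal F_p,
\]
and from $\mathcal F_p(1 \otimes JyJ) = JyJ\, \mathcal F_p$ and its counterpart for $q$ we similarly get that $\mathcal F_q^* \mathcal F_p$ commutes with $1 \otimes JyJ$. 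Hence $\mathcal F_q^* \mathcal F_p \in \mathcal U(L\Gamma \ovt \mathcal M^\Gamma)$.

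For the second assertion, the key computation is that $\Delta_p(p) = \mathcal F_p^* p \mathcal F_p = P_e \otimes 1$, and likewise $\mathcal F_q^* q \mathcal F_q = P_e \otimes 1$, where $P_e$ is the projection onto $\mathbb C \delta_e \subset \ell^2 \Gamma$. Indeed, viewing $p$ as left multiplication on $L^2(\mathcal M, {\rm Tr})$ and using $\mathcal F_p(\delta_\gamma \otimes x) = \sigma_{\gamma^{-1}}(p) x$, one computes $p\, \mathcal F_p(\delta_\gamma \otimes x) = p\, \sigma_{\gamma^{-1}}(p) x$, which equals $px$ for $\gamma = e$ and vanishes for $\gamma \neq e$ since $\{\sigma_\lambda(p)\}_{\lambda}$ is a partition of unity into mutually orthogonal projections; thus $p\, \mathcal F_p = \mathcal F_p(P_e \otimes 1)$. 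Granting this, the identity follows by a direct manipulation: the left-hand side equals $(P_e \otimes 1)\mathcal F_q^* \mathcal F_p$, while the right-hand side is
\[
\mathcal F_q^* \mathcal F_p\, \Delta_p(q) = \mathcal F_q^* \mathcal F_p\, \mathcal F_p^* q \mathcal F_p = \mathcal F_q^* q \mathcal F_p = (P_e \otimes 1)\mathcal F_q^* \mathcal F_p,
\]
where the last step uses $\mathcal F_q^* q = (P_e \otimes 1)\mathcal F_q^*$.

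I expect the genuine content to be minimal once (\ref{eq:fdunitary}) is available; the one point requiring real care is the domain-independence observations of the first paragraph, which are what license combining the relations for the two different fundamental domains, together with correctly identifying the $L\Gamma$ in $L\Gamma \ovt \mathcal M^\Gamma$ as the commutant $\{\rho_\gamma\}'$ of the right-convolution operators, rather than the copy $\{\rho_\gamma\}''$ that appears as the group part in Proposition~\ref{prop:diagonal}.
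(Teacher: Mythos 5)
Your proof is correct and follows essentially the same route as the paper's: both assertions are deduced from the intertwining relations (\ref{eq:fdunitary}) applied to $p$ and $q$ simultaneously (giving commutation with $\rho_\gamma \otimes 1$ and $1 \otimes JyJ$, hence membership in $L\Gamma \ovt \mathcal M^\Gamma$), together with the identities $\Delta_p(p) = P_e \otimes 1 = \mathcal F_q^* q\, \mathcal F_q$. Your write-up merely makes explicit two points the paper leaves implicit---that $\tau$, $\sigma^0_\gamma$, and the $J$'s are independent of the fundamental domain, and that the relevant $L\Gamma$ is $\{\rho_\gamma\}' = \lambda(\Gamma)''$---which are correct and worth noting.
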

\begin{proof}
By (\ref{eq:fdunitary}) and Proposition~\ref{prop:diagonal} we have $\mathcal F_q^* \mathcal F_p \in (\rho(\Gamma) \otimes \mathbb C)' \cap \mathcal B(\ell^2\Gamma) \ovt \mathcal M^\Gamma = L\Gamma \ovt \mathcal M^\Gamma$. Moreover, by Proposition~\ref{prop:diagonal} we have 
\[
\Delta_p(p) \mathcal F_q^* \mathcal F_p 
= (\delta_e \otimes 1) \mathcal F_q^* \mathcal F_p 
= \mathcal F_q^* q \mathcal F_p
= \mathcal F_q^* \mathcal F_p \Delta_p(q).
\]
\end{proof}

\section{Von Neumann couplings}\label{sec:VNE}

\begin{defn}
Let $\Lambda$ and $\Gamma$ be countable groups. A von Neumann coupling between $\Lambda$ and $\Gamma$ consists of a semi-finite von Neumann algebra $\mathcal M$ with a faithful normal semi-finite trace ${\rm Tr}$ and a trace-preserving action $\Lambda \times \Gamma \actson \mathcal M$ such that there exist finite-trace fundamental domains $q$ and $p$ for the $\Lambda$ and $\Gamma$-actions, respectively. The index of the von Neumann coupling is the ratio ${\rm Tr}(p)/{\rm Tr}(q)$ and is denoted by $[\Gamma:\Lambda]_{\mathcal M}$. This is well-defined by Proposition~\ref{prop:funddomain}.
\end{defn}
By Definition~\ref{defn:vne}, $\Lambda$ and $\Gamma$ are von Neumann equivalent if there exists a von Neumann coupling between them. 

Note that the notion of von Neumann equivalence coincides with measure equivalence when restricting to the case when $\mathcal M$ is abelian. Also, if we have an isomorphism $\theta: L \Lambda \to L\Gamma$, then setting $\mathcal M = \mathcal B(L^2(L\Gamma))$ we have an action of $\Gamma$ by conjugation by $\rho_\gamma$, an action of $\Lambda$ by conjugation by $\theta(u_\lambda)$, and a common fundamental domain $P_{e}$, so that if $\Gamma$ and $\Lambda$ are $W^*$-equivalent, then they are also von Neumann equivalent. More generally, we have the following construction:

\begin{examp}
Suppose $\Lambda$ and $\Gamma$ are countable groups, we have trace-preserving actions $\Gamma \actson (M_1, \tau)$ and $\Lambda \actson (M_2, \tau)$, and a trace-preserving isomorphism $\theta: M_2 \rtimes \Lambda \to M_1 \rtimes \Gamma$ such that $\theta(M_1) = M_2$. Then $\theta$ extends to an isomorphism of basic constructions $\tilde \theta: \langle M_2 \rtimes \Lambda, M_2 \rangle \to \langle M_1 \rtimes \Gamma, M_1 \rangle$ such that $\tilde \theta(e_{M_2}) = e_{M_1}$. 

For $\gamma \in \Gamma$ we have $[ u_\gamma (J u_\gamma J), e_{M_1} ] = 0$ and hence $\Gamma \ni \gamma \mapsto {\rm Ad}(J u_\gamma J)$ describes a trace-preserving action of $\Gamma$ on $\langle M_1 \rtimes \Gamma, M_1 \rangle$, which pointwise fixes $M_1 \rtimes \Gamma$. In particular, we have that $\Lambda \ni \lambda \mapsto {\rm Ad}(\tilde \theta(u_\lambda))$ gives an action that commutes with the action of $\Gamma$, and we have that $e_{M_1}$ gives a fundamental domain for both the $\Gamma$ and $\Lambda$-actions. Therefore, $\langle M_1 \rtimes \Gamma, M_1 \rangle \cong M_1 \ovt \mathcal B(\ell^2 \Gamma)$ gives an index-one von Neumann coupling.
\end{examp}

Just as in the case of measure equivalence, von Neumann equivalence is an equivalence relation. Reflexivity follows by considering the trivial von Neumann $\Gamma$-coupling $\ell^\infty \Gamma$. Symmetry is obvious, and transitivity follows from the following proposition.

\begin{prop}\label{prop:vnetransitive}
Let $(\mathcal N, {\rm Tr}_{\mathcal N})$ and $(\mathcal M, {\rm Tr}_{\mathcal M})$ be $(\Sigma, \Lambda)$ and $(\Lambda, \Gamma)$ von Neumann couplings, respectively. We consider the natural action of $\Sigma, \Lambda$, and $\Gamma$ on $\mathcal N \ovt \mathcal M$, where $\Lambda$ acts diagonally. Then $\mathcal N \ovt \mathcal M$ has a $\Lambda$-fundamental domain, and the induced semi-finite trace on $(\mathcal N \ovt \mathcal M)^\Lambda$ gives a $(\Sigma, \Gamma)$ von Neumann coupling with index 
\[
[\Sigma: \Gamma]_{(\mathcal N \ovt \mathcal M)^\Lambda} = [\Sigma: \Lambda]_{\mathcal N} [\Lambda: \Gamma]_{\mathcal M}.
\]
\end{prop}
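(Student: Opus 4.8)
The plan is to mirror the composition of measure-equivalence couplings, realizing the two required fundamental domains as $\Lambda$-saturations of product projections sitting inside the fixed-point algebra $(\mathcal N \ovt \mathcal M)^\Lambda$. First I would fix finite-trace fundamental domains: let $p_1, q_1 \in \mathcal N$ be fundamental domains for the $\Sigma$- and $\Lambda$-actions and $p_2, q_2 \in \mathcal M$ for the $\Lambda$- and $\Gamma$-actions, so that $[\Sigma:\Lambda]_{\mathcal N} = {\rm Tr}_{\mathcal N}(p_1)/{\rm Tr}_{\mathcal N}(q_1)$ and $[\Lambda:\Gamma]_{\mathcal M} = {\rm Tr}_{\mathcal M}(p_2)/{\rm Tr}_{\mathcal M}(q_2)$, and equip $\mathcal N \ovt \mathcal M$ with ${\rm Tr} = {\rm Tr}_{\mathcal N} \otimes {\rm Tr}_{\mathcal M}$. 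Since the diagonal action gives $\sum_{\lambda} (\sigma^{\mathcal N}_\lambda \otimes \sigma^{\mathcal M}_\lambda)(q_1 \otimes 1) = (\sum_\lambda \sigma^{\mathcal N}_\lambda(q_1)) \otimes 1 = 1$, the projection $q_1 \otimes 1$ is a $\Lambda$-fundamental domain, which proves the first assertion and, via Proposition~\ref{prop:funddomain}(\ref{item:A}), produces the faithful normal semi-finite trace $\tau(x) = {\rm Tr}((q_1 \otimes 1)x(q_1 \otimes 1))$ on $(\mathcal N \ovt \mathcal M)^\Lambda$. As the $\Sigma$- and $\Gamma$-actions act on separate tensor factors they commute with each other and with the diagonal $\Lambda$-action, and they preserve ${\rm Tr}$, so by Proposition~\ref{prop:funddomain}(\ref{item:E}) they restrict to commuting $\tau$-preserving actions on $(\mathcal N \ovt \mathcal M)^\Lambda$.

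Next I would exhibit the candidate fundamental domains
\[
P_\Gamma = \sum_{\lambda \in \Lambda} \sigma^{\mathcal N}_\lambda(q_1) \otimes \sigma^{\mathcal M}_\lambda(q_2), \qquad P_\Sigma = \sum_{\lambda \in \Lambda} \sigma^{\mathcal N}_\lambda(p_1) \otimes \sigma^{\mathcal M}_\lambda(p_2).
\]
Each is a strong-operator convergent sum of mutually orthogonal projections, the orthogonality coming from the fact that the $\sigma^{\mathcal N}_\lambda(q_1)$ (respectively the $\sigma^{\mathcal M}_\lambda(p_2)$) are pairwise orthogonal since $q_1$ (respectively $p_2$) is a $\Lambda$-fundamental domain; hence each sum is a projection, and both are manifestly $\Lambda$-invariant, so they lie in $(\mathcal N \ovt \mathcal M)^\Lambda$. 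Using that $\Gamma$ commutes with $\Lambda$ on $\mathcal M$ and that $q_2$ is a $\Gamma$-fundamental domain, a reindexing gives $\sum_{g} (\id \otimes \sigma^{\mathcal M}_g)(P_\Gamma) = \sum_\lambda \sigma^{\mathcal N}_\lambda(q_1) \otimes \sigma^{\mathcal M}_\lambda(\sum_g \sigma^{\mathcal M}_g(q_2)) = \sum_\lambda \sigma^{\mathcal N}_\lambda(q_1) \otimes 1 = 1$, so $P_\Gamma$ is a $\Gamma$-fundamental domain; the symmetric computation, commuting $\Sigma$ with $\Lambda$ on $\mathcal N$ and using that $p_1$ is a $\Sigma$-fundamental domain, shows $P_\Sigma$ is a $\Sigma$-fundamental domain.

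Finally I would compute the two traces. For $P_\Gamma$, compressing by $q_1 \otimes 1$ and using orthogonality of the $\sigma^{\mathcal N}_\lambda(q_1)$ collapses the sum to the $\lambda = e$ term, giving $(q_1 \otimes 1)P_\Gamma(q_1 \otimes 1) = q_1 \otimes q_2$ and hence $\tau(P_\Gamma) = {\rm Tr}_{\mathcal N}(q_1){\rm Tr}_{\mathcal M}(q_2) < \infty$. The evaluation of $\tau(P_\Sigma)$ is the one genuinely nonroutine step, since here the compressing domain $q_1$ and the summand $p_1$ are different fundamental domains: compressing yields $\tau(P_\Sigma) = {\rm Tr}_{\mathcal M}(p_2) \sum_\lambda {\rm Tr}_{\mathcal N}(q_1 \sigma^{\mathcal N}_\lambda(p_1) q_1)$, and using that $q_1$ is a projection together with the $\Lambda$-invariance and cyclicity of ${\rm Tr}_{\mathcal N}$ rewrites the sum as ${\rm Tr}_{\mathcal N}(p_1 \sum_\lambda \sigma^{\mathcal N}_{\lambda^{-1}}(q_1)) = {\rm Tr}_{\mathcal N}(p_1)$, invoking that $q_1$ is a $\Lambda$-fundamental domain. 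Thus $\tau(P_\Sigma) = {\rm Tr}_{\mathcal N}(p_1){\rm Tr}_{\mathcal M}(p_2) < \infty$, so $(\mathcal N \ovt \mathcal M)^\Lambda$ is indeed a $(\Sigma,\Gamma)$ von Neumann coupling, and dividing gives
\[
[\Sigma:\Gamma]_{(\mathcal N \ovt \mathcal M)^\Lambda} = \frac{{\rm Tr}_{\mathcal N}(p_1){\rm Tr}_{\mathcal M}(p_2)}{{\rm Tr}_{\mathcal N}(q_1){\rm Tr}_{\mathcal M}(q_2)} = [\Sigma:\Lambda]_{\mathcal N}\,[\Lambda:\Gamma]_{\mathcal M}.
\]

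The main obstacle is not any single hard estimate but guessing the correct form of the fundamental domains, namely the $\Lambda$-saturations $P_\Sigma$ and $P_\Gamma$, a choice dictated by the measure-equivalence picture in which $(\Omega_1 \times \Omega_2)/\Lambda$ is identified with $Y_1 \times \Omega_2$. Once these are in hand everything reduces to partition-of-unity manipulations, except for the trace $\tau(P_\Sigma)$, where the two distinct fundamental domains $p_1$ and $q_1$ force one to use the $\Lambda$-fundamental-domain relation $\sum_\lambda \sigma^{\mathcal N}_\lambda(q_1) = 1$ together with cyclicity of the trace in place of the simpler orthogonality argument that suffices for $P_\Gamma$.
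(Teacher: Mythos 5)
Your proposal is correct and follows essentially the same route as the paper: the $\Lambda$-fundamental domain $q_1 \otimes 1$, the induced trace on $(\mathcal N \ovt \mathcal M)^\Lambda$ from Proposition~\ref{prop:funddomain}, and the $\Lambda$-saturated projections $P_\Sigma$ and $P_\Gamma$ are exactly the fundamental domains the paper exhibits, with the same resulting trace values and index computation. The only difference is one of detail: the paper simply asserts the traces of both fundamental domains, while you verify them explicitly from the single compression by $q_1 \otimes 1$ (using cyclicity and $\Lambda$-invariance of ${\rm Tr}_{\mathcal N}$ for $P_\Sigma$), which is a legitimate, and welcome, filling-in of the omitted computation.
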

\begin{proof}
If $q$ is a $\Lambda$-fundamental domain for $\mathcal N$, then $q \otimes 1$ gives a fundamental domain for the action on $\mathcal N \ovt \mathcal M$. We therefore obtain an induced $\Sigma \times \Gamma$-invariant semi-finite normal faithful trace on $(\mathcal N \ovt \mathcal M)^\Lambda$ by Proposition~\ref{prop:funddomain}.

If $p \in \mathcal M$ is a fundamental domain for $\Gamma$, then we see that $\sum_{\lambda \in \Lambda} \sigma_\lambda(q) \otimes \sigma_\lambda(p) \in (\mathcal N \ovt \mathcal M)^\Lambda$ is a fundamental domain for $\Gamma$ with trace ${\rm Tr}(q){\rm Tr}(p) < \infty$. Similarly, if $r \in \mathcal N$ is a fundamental domain for $\Sigma$, and if $\tilde q \in \mathcal M$ is a fundamental domain for $\Lambda$, then $\sum_{\lambda \in \Lambda} \sigma_\lambda(r) \otimes \sigma_\lambda(\tilde q) \in (\mathcal N \ovt \mathcal M)^\Lambda$ is a fundamental domain for $\Sigma$ with trace ${\rm Tr}(r){\rm Tr}(\tilde q) < \infty$.

Hence, $(\mathcal N \ovt \mathcal M)^\Lambda$ is a $(\Sigma, \Gamma)$ von Neumann coupling with index
\[
[\Sigma: \Gamma]_{(\mathcal N \ovt \mathcal M)^\Lambda} 
= {\rm Tr}(q){\rm Tr}(p)/ {\rm Tr}(r) {\rm Tr}(\tilde q)
= [\Sigma: \Lambda]_{\mathcal N} [\Lambda: \Gamma]_{\mathcal M}.
\]
\end{proof}

\section{Inducing actions via semi-finite von Neumann algebras}

If $\Gamma$ is a group, then an operator $\Gamma$-module consists of a pair $(\pi, E)$, where $E$ is an operator space and $\pi: \Gamma \to CI(E)$ a homomorphism from $\Gamma$ to the group of surjective complete isometries of $E$. A dual operator $\Gamma$-module consists of a dual operator space $E = (E_*)^*$ that is an operator $\Gamma$-module such that the action of $\Gamma$ is dual to an action on $E_*$. Note that if $X = (X_*)^*$ is a dual Banach $\Gamma$-module, then we can regard $X$ also as an operator $\Gamma$-module by endowing $X_*$ with the operator space structure ${\rm min}(X_*)$, so that ${\rm max}(X) = ({\rm min}(X_*))^*$ becomes a dual operator $\Gamma$-module.

\begin{defn}\label{defn:induced}
Let $\Gamma$ and $\Lambda$ be discrete groups and suppose that $\Gamma \times \Lambda \actson (\mathcal M, {\rm Tr})$ is a trace-preserving action on a semi-finite von Neumann algebra $\mathcal M$. Let $E$ be a dual operator $\Lambda$-module.
\begin{enumerate}[$($i$)$]
\item Letting $\Gamma$ act trivially on $E$, we obtain an isometric action $\Gamma \actson \mathcal M_* \frownotimes E_*$, and hence a dual action $\Gamma \actson CB(\mathcal M_*, E) = (\mathcal M_* \frownotimes E_*)^*$, which we may then restrict to $(\mathcal M  \ovt  E)^\Lambda$. We call $(\mathcal M  \ovt  E)^\Lambda$ the dual operator $\Gamma$-module induced from $E$.

\item\label{item:part2def} If $K \subset E$ is a non-empty convex weak$^*$-closed subset that is $\Lambda$-invariant, then, considering the embedding $\mathcal M  \ovt  E \subset CB(\mathcal M_*, E)$, we let $\mathcal M  \ovt  K$ denote those maps $\Xi \in CB(\mathcal M_*, E)$ such that $\Xi(\varphi) \in K$ for each normal state $\varphi$. We then have that $\mathcal M  \ovt  K \subset \mathcal M  \ovt  E$ is a convex subset that is invariant under the actions of $\Gamma$ and $\Lambda$. Hence we have an action $\Gamma \actson ( \mathcal M  \ovt  K )^\Lambda$, which we refer to as the $\Gamma$-action induced from the $\Lambda$-action $\Lambda \actson K$. 
\end{enumerate}
\end{defn}

As motivation for Definition~\ref{defn:induced}, note that if $(X, \mu)$ is a standard measure space and $\mathcal M = L^\infty(X, \mu)$, then Proposition~\ref{prop:abelrange} gives an identification between $L^\infty(X, \mu)  \ovt  K$ and $L^\infty_{w^*}(X, \mu; K)$, so that $(L^\infty(X, \mu)  \ovt  K)^\Lambda$ can be identified as the space of $\Lambda$-equivariant measurable functions from $X$ to $K$.

\begin{lem}
Using the notation above, if $K$ is weak$^*$-compact, then $\mathcal M  \ovt  K$ is a weak$^*$-compact subset of $\mathcal M  \ovt  E$.
\end{lem}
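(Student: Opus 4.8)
The plan is to exhibit $\mathcal M \ovt K$ as a weak$^*$-closed and norm-bounded subset of the dual operator space $CB(\mathcal M_*, E) \cong (\mathcal M_* \frownotimes E_*)^*$ and then invoke the Banach--Alaoglu theorem. Since $\mathcal M \ovt E$ sits inside $CB(\mathcal M_*, E)$ weak$^*$-homeomorphically and carries the restricted weak$^*$-topology, it suffices to prove that $\mathcal M \ovt K$ is weak$^*$-closed in $CB(\mathcal M_*, E)$ and contained in some ball: then $\mathcal M \ovt K$ is a weak$^*$-closed subset of a weak$^*$-compact ball, hence weak$^*$-compact in $CB(\mathcal M_*, E)$, and therefore weak$^*$-compact as a subset of $\mathcal M \ovt E$.

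For weak$^*$-closedness I first record that, for each normal state $\varphi$, the evaluation $\Xi \mapsto \Xi(\varphi)$ is continuous from the weak$^*$-topology to the weak$^*$-topology on $E$. Indeed, for $\psi \in E_*$ the scalar $\langle \Xi(\varphi), \psi \rangle = \langle \Xi, \varphi \otimes \psi \rangle$ depends weak$^*$-continuously on $\Xi$, since the weak$^*$-topology on $CB(\mathcal M_*, E)$ is pointwise weak$^*$-convergence of operators, as noted after (\ref{eq:dualtensor}). Because $K$ is weak$^*$-closed, each set $\{ \Xi \mid \Xi(\varphi) \in K \}$ is then weak$^*$-closed, and $\mathcal M \ovt K = \bigcap_\varphi \{ \Xi \mid \Xi(\varphi) \in K \}$, the intersection being over all normal states $\varphi$, is weak$^*$-closed as well.

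The step I expect to be the main obstacle is boundedness. Since $K$ is weak$^*$-compact it is norm-bounded, say $\sup_{k \in K} \| k \| = C < \infty$. Evaluating on states immediately controls the \emph{ordinary} operator norm: decomposing an arbitrary $\varphi \in \mathcal M_*$ into its self-adjoint and positive parts exhibits it as a combination of states, whence $\| \Xi(\varphi) \| \leq 2C \| \varphi \|$ and $\| \Xi \| \leq 2C$. The genuine difficulty is to upgrade this to a uniform bound on the \emph{completely} bounded norm, which is what measures size in $CB(\mathcal M_*, E)$. My approach would be to represent $E \subset \mathcal B(\mathcal H)$ weak$^*$-homeomorphically and completely isometrically, so that $\mathcal M \ovt E \subset \mathcal M \ovt \mathcal B(\mathcal H)$ and the cb-norm of $\Xi$ equals its operator norm in the von Neumann algebra $\mathcal M \ovt \mathcal B(\mathcal H)$; computing that operator norm through finite compressions in the $\mathcal H$-variable reduces the estimate to bounding matrices $[\Xi_{lk}] \in \mathbb M_n(\mathcal M)$ whose state-slices $(\id_n \otimes \varphi)([\Xi_{lk}])$ are precisely the compressions of $\Xi(\varphi) \in K$ and hence have norm at most $C$. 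Passing from this family of scalar-level bounds to a bound on $\| [\Xi_{lk}] \|_{\mathbb M_n(\mathcal M)}$ --- that is, from ordinary to complete boundedness in the presence of non-commutativity --- is the delicate point, and I expect the convexity together with the weak$^*$-compactness of $K$ to have to enter here in an essential, matricial way, rather than only through the scalar bound $C$. Once this uniform cb-bound is in hand, Banach--Alaoglu finishes the proof.
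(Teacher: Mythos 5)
Your weak$^*$-closedness argument is correct, and is in fact more robust than the paper's: pairing against elementary tensors $\varphi \otimes \psi \in \mathcal M_* \frownotimes E_*$ is weak$^*$-continuous on all of $CB(\mathcal M_*, E)$, whereas the paper's proof identifies the weak$^*$-topology with pointwise weak$^*$-convergence, which is legitimate only on bounded sets. The gap is exactly where you locate it, but it is worse than ``delicate'': the uniform cb-bound you hope to extract from the convexity and weak$^*$-compactness of $K$ is false, so no completion of your argument exists. Take $\mathcal M = E = \mathcal B(\ell^2)$ with its usual trace and predual (the actions of $\Gamma$ and $\Lambda$ may be taken trivial; the closed unit ball $K$ of $E$ is convex, weak$^*$-compact, and invariant under any isometric action), and set $T_n = \sum_{i,j=1}^{n} e_{ij} \otimes e_{ij} \in \mathcal M \otimes E$. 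For a normal state $\varphi = {\rm Tr}(\rho \, \cdot\,)$ one computes $T_n(\varphi) = \sum_{i,j \leq n} \varphi(e_{ij}) e_{ij} = \sum_{i,j \leq n} \rho_{ji} e_{ij}$, which is the transpose of a compression of the density matrix $\rho$ and hence has norm at most $1$; thus $T_n \in \mathcal M \ovt K$. But $T_n$ equals $n$ times the rank-one projection onto the vector $\sum_{i \leq n} e_i \otimes e_i$, so $\| T_n \|_{\mathcal M \ovt E} = \| T_n \|_{\rm cb} = n$. Since weak$^*$-compact subsets of a dual Banach space are norm-bounded, $\mathcal M \ovt K$ cannot be weak$^*$-compact in the duality with $\mathcal M_* \frownotimes E_*$. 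The state-slice condition controls only the Banach norm of $\Xi: \mathcal M_* \to E$ (your $2C$ bound); it places no constraint at all on the higher matrix levels, and convexity of $K$ cannot help, as the unit ball is already as convex and invariant as possible.

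You should also know that the paper's own proof does not resolve this point: it consists of the same two observations you make, but simply asserts that boundedness of $K$ makes $\mathcal M \ovt K$ ``norm bounded'' in $\mathcal M \ovt E$ --- an assertion that is valid for the norm of $\mathcal B(\mathcal M_*, E) = (\mathcal M_* \oht{} E_*)^*$ but not for the completely bounded norm dual to $\mathcal M_* \frownotimes E_*$, which is the distinction you correctly insisted on. The lemma, and the intended two-line proof, become correct after a repair: either (a) work at the Banach-space level, defining $\mathcal M \ovt K$ as all \emph{bounded} maps $\Xi: \mathcal M_* \to E$ whose state-slices lie in $K$, with the weak$^*$-topology of $(\mathcal M_* \oht{} E_*)^*$, where your $2C$ bound, your closedness argument, and Banach--Alaoglu finish the proof; or (b) stay in the operator-space framework but replace $\mathcal M \ovt K$ by its intersection with the cb-ball of radius $C = \sup_{k \in K} \| k \|$, a weak$^*$-closed subset of a weak$^*$-compact ball and hence weak$^*$-compact. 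Either repair is harmless for the rest of the paper, since the induced points $\chi_p^k$ of Lemma~\ref{lem:embedding} satisfy $\| \chi_p^k \|_{\rm cb} \leq \| k \| \leq C$ by Lemma~\ref{lem:orthsum} and so lie in the smaller set. In short: your proof is incomplete, the missing step is genuinely false rather than merely hard, and the right conclusion of your analysis is that the statement must be read (or reformulated) at the Banach level, not that a cleverer matricial estimate remains to be found.
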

\begin{proof}
Since $K$ is weak$^*$-compact, it is bounded, and hence $\mathcal M  \ovt  K$ is a norm bounded subset of $\mathcal M  \ovt  E$. Viewing elements in $\mathcal M  \ovt  K$ as maps from $\mathcal M_*$ to $E$, we then have that the weak$^*$-topology coincides with the topology of pointwise weak$^*$-convergence. Since $K$ is weak$^*$-closed, it follows that $\mathcal M  \ovt  K$ is also weak$^*$-closed, hence weak$^*$-compact by the Banach-Alaoglu Theorem.
\end{proof}

\begin{prop}\label{prop:fixedpoint}
Using the notation above, suppose that $\mathcal M^\Gamma$ has a normal $\Lambda$-invariant finite trace $\tau$. Then there exists a $\Gamma$-fixed point in $(\mathcal M  \ovt  K)^\Lambda$ if and only if there exists a $\Lambda$-fixed point in $K$.
\end{prop}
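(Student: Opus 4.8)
The plan is to prove the two implications separately, with the reverse implication being essentially formal and the forward implication carrying the content through an averaging against $\tau$.

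For the direction producing a $\Gamma$-fixed point out of a $\Lambda$-fixed point, suppose $k_0 \in K$ satisfies $\lambda \cdot k_0 = k_0$ for all $\lambda \in \Lambda$. I would take the constant element $\Xi_0 = 1 \otimes k_0 \in \mathcal M \ovt E$, viewed as the map $\varphi \mapsto \varphi(1) k_0$ in $CB(\mathcal M_*, E)$. For every normal state $\varphi$ we have $\Xi_0(\varphi) = k_0 \in K$, so $\Xi_0 \in \mathcal M \ovt K$; since $\Gamma$ fixes the unit of $\mathcal M$, the element $\Xi_0$ is $\Gamma$-fixed; and since $\lambda \cdot 1 = 1$ and $\lambda \cdot k_0 = k_0$, it is $\Lambda$-invariant. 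Thus $\Xi_0 \in (\mathcal M \ovt K)^\Lambda$ is the desired $\Gamma$-fixed point. This direction does not use $\tau$.

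For the forward direction, let $\Xi \in (\mathcal M \ovt K)^\Lambda$ be $\Gamma$-fixed. The key observation is that $\Gamma$-fixedness lets me restrict the ``variable'' of $\Xi$ to the fixed-point algebra. Indeed, the $\Gamma$-action on $CB(\mathcal M_*, E)$ is dual to the action on $\mathcal M_*$ with $\Gamma$ acting trivially on $E_*$, so $\Gamma$-fixedness means $\Xi(\varphi) = \Xi(\gamma \cdot \varphi)$ for all $\gamma \in \Gamma$; hence $\Xi$ annihilates every $\varphi - \gamma \cdot \varphi$ and, by continuity, the norm-closed subspace they span. By the bipolar theorem this closed span is exactly the preannihilator $(\mathcal M^\Gamma)_\perp$, because its annihilator in $\mathcal M$ is $\{ x \in \mathcal M : x = \gamma^{-1} \cdot x \text{ for all } \gamma \in \Gamma \} = \mathcal M^\Gamma$. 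Consequently $\Xi$ takes a single value on any two normal functionals that agree on $\mathcal M^\Gamma$. I would then normalize $\tau$ to a normal state on $\mathcal M^\Gamma$ and invoke the standard fact that a normal state on a von Neumann subalgebra extends to a normal state $\hat\tau$ on the ambient algebra $\mathcal M$, and set $b = \Xi(\hat\tau) \in E$. As $\hat\tau$ is a normal state and $\Xi \in \mathcal M \ovt K$, we get $b \in K$. To see that $b$ is $\Lambda$-fixed, note that since the $\Gamma$- and $\Lambda$-actions commute, $\Lambda$ preserves $\mathcal M^\Gamma$, and $\tau$ is $\Lambda$-invariant by hypothesis; hence for each $\lambda \in \Lambda$ the functional $\lambda \cdot \hat\tau$ again restricts to $\tau$ on $\mathcal M^\Gamma$. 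Using the $\Lambda$-equivariance of $\Xi$ (the content of $\Xi \in (\mathcal M \ovt K)^\Lambda$), namely $\Xi(\lambda \cdot \varphi) = \lambda \cdot \Xi(\varphi)$, together with the previous sentence, I obtain $\lambda \cdot b = \lambda \cdot \Xi(\hat\tau) = \Xi(\lambda \cdot \hat\tau) = \Xi(\hat\tau) = b$, where the third equality holds because $\lambda \cdot \hat\tau$ and $\hat\tau$ agree on $\mathcal M^\Gamma$ and $\Xi$ vanishes on $(\mathcal M^\Gamma)_\perp$. Thus $b$ is the desired $\Lambda$-fixed point.

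The main obstacle is the normal-extension step: one must guarantee that $\tau$ extends not merely to a bounded functional but to a genuine normal \emph{state} on $\mathcal M$, which is precisely what places the barycenter $b = \Xi(\hat\tau)$ inside $K$. This is a standard fact about von Neumann subalgebras, and the remaining ingredients (the bipolar identification of $(\mathcal M^\Gamma)_\perp$ and the equivariance bookkeeping) are routine.
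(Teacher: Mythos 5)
Your proof is correct and takes essentially the same route as the paper: the easy direction via the constant element $1 \otimes k_0$, and the converse by showing that $\Gamma$-invariance forces $\Xi$ to depend only on the restriction of a normal functional to $\mathcal M^\Gamma$, then evaluating at the $\Lambda$-invariant trace $\tau$. Your bipolar-theorem argument together with the normal-state-extension fact is just an explicit unpacking of the paper's one-line identification $CB(\mathcal M_*, K)^\Gamma \cong CB( (\mathcal M^\Gamma)_*, K)$, which the paper obtains from the swap isomorphism $CB(\mathcal M_*, E) \cong CB(E_*, \mathcal M)$.
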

\begin{proof}
If $k_0 \in K$ is fixed by $\Lambda$, then we have that $1 \otimes k_0 \in (\mathcal M \ovt K)^\Lambda$ is clearly $\Gamma$-invariant. 

Conversely, suppose $\Xi \in (\mathcal M \ovt K)^\Lambda \subset CB(\mathcal M_*, K)^\Lambda$ is $\Gamma$-invariant. Under the Banach space isomorphism $CB(\mathcal M_*, E) \cong CB(E_*, \mathcal M)$, we see that we may make the identification $CB(\mathcal M_*, K)^\Gamma \cong CB( ( \mathcal M^\Gamma )_*, K)$, so that we may view $\Xi$ as a completely bounded $\Lambda$-equivariant map from $(\mathcal M^\Gamma )_*$ into $E$ taking states into $K$. Since $\Lambda$ preserves the trace $\tau$ on $\mathcal M^\Gamma$, we have that $\Xi(\tau) \in K$ is $\Lambda$-invariant.
\end{proof}

\begin{lem}\label{lem:orthsum}
Let $E = (E_*)^*$ be a dual operator space, and let $\mathcal M$ be a von Neumann algebra. Suppose $x, y \in \mathcal M$, and $\{ p_i \}_{i \in I}$ is a family of pairwise orthogonal projections in $\mathcal M$. If $\{ a_i \}_{i \in I} \subset E$ is any uniformly bounded family in $E$, then the sum $\sum_{i \in I} x p_i y \otimes a_i$ converges weak$^*$ in $\mathcal M  \ovt  E$. Moreover, we have 
\[
\| \sum_{i \in I} x p_i y \otimes a_i \| \leq \| x \| \| y \| ( \sup_{i \in I} \| a_i \| ).
\] 
\end{lem}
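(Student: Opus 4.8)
The plan is to exhibit the sum as a single bounded operator through a row--diagonal--column factorization, which yields the norm bound, and then to verify $\mathrm{weak}^*$-convergence by testing the uniformly bounded partial sums against the predual. Represent $\mathcal M \subset \mathcal B(\mathcal H)$ and $E \subset \mathcal B(\mathcal K)$ as ultraweakly closed operator spaces, so that $\mathcal M \ovt E$ is the ultraweak closure of $\mathcal M \otimes E$ inside $\mathcal B(\mathcal H \ovt \mathcal K)$ and its operator space norm is the ambient operator norm. Write $M = \sup_i \| a_i \|$, let $P = \sum_i p_i$ be the (orthogonal) sum projection, and for finite $F \subset I$ put $\Xi_F = \sum_{i \in F} x p_i y \otimes a_i$.

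First I would establish the norm estimate via factorization. On the amplified space $\mathcal H \ovt \ell^2(I) \ovt \mathcal K$, introduce the row operator $\mathbf R$ with $i$-th block $x p_i \otimes 1_{\mathcal K}$ (mapping into $\mathcal H \ovt \mathcal K$), the block-diagonal operator $\mathbf D$ whose $i$-th block acts as $a_i$ on $\mathcal K$, and the column operator $\mathbf C$ with $i$-th block $p_i y \otimes 1_{\mathcal K}$. Using $p_i = p_i^2$ and orthogonality, a direct computation gives
\[
\mathbf R \mathbf D \mathbf C = \sum_i x p_i y \otimes a_i
\]
as a single bounded operator on $\mathcal H \ovt \mathcal K$, while $\|\mathbf R\|^2 = \|\sum_i x p_i x^*\| = \|x P x^*\| \le \|x\|^2$, $\|\mathbf C\|^2 = \|\sum_i y^* p_i y\| = \|y^* P y\| \le \|y\|^2$, and $\|\mathbf D\| = M$. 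Applied to finite subfamilies (where $\sum_{i \in F} p_i \le P \le 1$), this shows $\|\Xi_F\| \le \|x\| \|y\| M$ uniformly in $F$.

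Next I would prove convergence. Since the $\Xi_F$ are uniformly bounded and the $\mathrm{weak}^*$-topology of $\mathcal M \ovt E \subset (\mathcal M_* \frownotimes E_*)^*$ is, on bounded sets, pointwise $\mathrm{weak}^*$-convergence, it suffices to test against elementary tensors $\varphi \otimes \psi$ with $\varphi \in \mathcal M_*$, $\psi \in E_*$, as these span a dense subspace of the predual. Here the pairing is $\langle \Xi_F, \varphi \otimes \psi \rangle = \sum_{i \in F} \varphi(x p_i y) \psi(a_i)$, so the heart of the matter is the absolute convergence of $\sum_i \varphi(x p_i y) \psi(a_i)$. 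Writing a normal $\varphi$ as $\varphi(\cdot) = \sum_n \langle \, \cdot \, \xi_n, \eta_n \rangle$ with $\sum_n \|\xi_n\|^2, \sum_n \|\eta_n\|^2 < \infty$, and using $|\psi(a_i)| \le M \|\psi\|$ together with $p_i = p_i^2$, two applications of Cauchy--Schwarz, first in $i$ via $\sum_i \|p_i v\|^2 = \|P v\|^2 \le \|v\|^2$ and then in $n$, give $\sum_i |\varphi(x p_i y)| \le \|x\| \|y\| (\sum_n \|\xi_n\|^2)^{1/2} (\sum_n \|\eta_n\|^2)^{1/2} < \infty$. Thus the net $\Xi_F$ converges $\mathrm{weak}^*$ to a limit lying in the $\mathrm{weak}^*$-closure of $\mathcal M \otimes E$, namely $\mathcal M \ovt E$; testing against vector functionals $\varphi = \langle \, \cdot \, \xi, \xi' \rangle$, $\psi = \langle \, \cdot \, \zeta, \zeta' \rangle$ identifies this limit with the operator $\mathbf R \mathbf D \mathbf C$ above. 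The norm bound for the limit follows either from the factorization or from $\mathrm{weak}^*$-lower semicontinuity of the norm, since the partial sums lie in the (weak$^*$-closed) ball of radius $\|x\| \|y\| M$.

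The main obstacle is precisely the absolute summability estimate $\sum_i |\varphi(x p_i y)| < \infty$ for normal $\varphi$: this is the only place where the orthogonality hypothesis on $\{p_i\}$ is genuinely used, and it is what promotes the formal sum to a convergent net. Everything else, the factorization identity and the three norm computations, is a routine operator-theoretic calculation.
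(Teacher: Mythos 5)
Your proof is correct, but it takes a somewhat different route from the paper's, which is worth comparing. The paper first reduces to the case $E = \mathcal B(\mathcal K)$ (using that $E$ is weak$^*$-homeomorphically an ultraweakly closed subspace, so $\mathcal M \ovt E$ is ultraweakly closed in $\mathcal M \ovt \mathcal B(\mathcal K)$), and then makes a single compressed factorization inside $\mathcal B(\mathcal H \ovt \mathcal K)$: it observes that the block-diagonal sum $\sum_{i} p_i \otimes a_i$ converges ultraweakly with norm $\sup_i \| a_i \|$ (this is where orthogonality enters), and writes $\sum_i x p_i y \otimes a_i = (x \otimes 1)\left(\sum_i p_i \otimes a_i\right)(y \otimes 1)$, so that separate ultraweak continuity of multiplication delivers convergence and the norm bound in one stroke. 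Your argument splits these two tasks: the row--diagonal--column factorization $\mathbf R \mathbf D \mathbf C$ on the amplified space $\mathcal H \ovt \ell^2(I) \ovt \mathcal K$ is essentially the paper's factorization before compression (the paper's middle term is the compression of your $\mathbf D$ by your $\mathbf C$-type isometry data), and you use it only for the uniform bound on partial sums; you then prove weak$^*$-convergence directly in the duality $(\mathcal M_* \frownotimes E_*)^*$ by testing against elementary tensors and establishing the absolute summability $\sum_i |\varphi(x p_i y)| \leq \| x \| \| y \| \| \varphi \|$ via two applications of Cauchy--Schwarz. What the paper's version buys is brevity, at the cost of implicitly invoking two facts it does not spell out (ultraweak convergence of the block-diagonal sum, and compatibility of the abstract weak$^*$-topology on $\mathcal M \ovt E$ with the ambient ultraweak topology); what your version buys is self-containedness: the predual estimate isolates exactly where orthogonality is used, and your appeal to density of elementary tensors in $\mathcal M_* \frownotimes E_*$ together with uniform boundedness is a clean, elementary substitute for the continuity-of-multiplication step. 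Both are complete proofs of the stated lemma.
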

\begin{proof}
By representing $E$ as an ultraweakly closed subspace of a Hilbert space $\mathcal H$, it suffices to show this when $E = \mathcal B(\mathcal H)$. Since $\{ p_i \}_i$ are pairwise orthogonal, we then have that $\sum_{i \in I} p_i \otimes a_i$ converges ultraweakly and hence so does 
\[ 
\sum_{i \in I} x p_i y \otimes a_i = (x \otimes 1) \left(\sum_{i \in I}  p_i \otimes a_i \right) (y \otimes 1).
\]
Moreover, 
\[
\| \sum_{i \in I} x p_i y \otimes a_i \| \leq \| x \otimes 1 \| \| \sum_{i \in I} p_i \otimes a_i \| \| y \otimes 1 \| = \| x \| \| y \| \sup_{i \in I} \| a_i \|.
\]
\end{proof}

\begin{lem}\label{lem:embedding}
Let $E$ be a dual operator $\Lambda$-module, suppose $K \subset E$ is a non-empty $\Lambda$-invariant convex weak$^*$-closed subset, and let $\mathcal M$ be a von Neumann algebra on which $\Lambda$ acts. If the action of $\Lambda$ on $\mathcal M$ has a fundamental domain $p$ then the map $\chi_p:  K \to (\mathcal M  \ovt  K)^\Lambda$ defined by 
\[
\chi_p^k = \sum_{\lambda \in \Lambda} \sigma_\lambda(p) \otimes \lambda k
\]
gives a well-defined, weak$^*$-continuous affine isometric map. In particular, $(\mathcal M  \ovt  K)^\Lambda$ is non-empty in this case.
\end{lem}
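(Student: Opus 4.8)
The plan is to regard $\chi_p$ as the restriction to $K$ of the linear map $L \colon E \to \mathcal M \ovt E$ given by $L(a) = \sum_{\lambda \in \Lambda} \sigma_\lambda(p) \otimes \lambda a$, and then to check the required features one at a time: convergence of the sum, membership in $(\mathcal M \ovt K)^\Lambda$, the affine property, $\Lambda$-invariance, the isometry, and weak$^*$-continuity. Convergence and the bound $\| \chi_p^k \| \le \| k \|$ are handed to us by Lemma~\ref{lem:orthsum}, applied with $x = y = 1$, the pairwise orthogonal projections $\{ \sigma_\lambda(p) \}_{\lambda \in \Lambda}$, and the family $\{ \lambda k \}_{\lambda \in \Lambda}$, which is uniformly bounded because $\Lambda$ acts by complete isometries, so that $\| \lambda k \| = \| k \|$. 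Since $L$ is linear in $a$ and $K$ is convex, the affine property of $\chi_p$ is then immediate.

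For membership, I would identify $\chi_p^k$ with the element of $CB(\mathcal M_*, E)$ that sends a normal functional $\varphi$ to $\sum_\lambda \varphi(\sigma_\lambda(p)) \, \lambda k$. When $\varphi$ is a normal state the coefficients $\varphi(\sigma_\lambda(p))$ are nonnegative and, by normality together with $\sum_\lambda \sigma_\lambda(p) = 1$, sum to $1$, so that $\chi_p^k(\varphi)$ is an infinite convex combination of the points $\lambda k \in K$. To place this in $K$, I would fix some $k_0 \in K$ and, for each finite $F \subset \Lambda$, form the genuine convex combination $\sum_{\lambda \in F} \varphi(\sigma_\lambda(p)) \, \lambda k + ( 1 - \sum_{\lambda \in F} \varphi(\sigma_\lambda(p)) ) k_0 \in K$; as $F \uparrow \Lambda$ these converge weak$^*$ to $\chi_p^k(\varphi)$, which thus lies in $K$ by weak$^*$-closedness. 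Hence $\chi_p^k \in \mathcal M \ovt K$. The $\Lambda$-invariance is then a reindexing: the (weak$^*$-continuous) diagonal action of $\mu \in \Lambda$ carries the $\lambda$-th summand $\sigma_\lambda(p) \otimes \lambda k$ to $\sigma_{\mu \lambda}(p) \otimes (\mu \lambda) k$, and relabeling the summation index returns the original sum.

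For the isometry, the inequality $\| \chi_p^k \| \le \| k \|$ is already in hand, so only the reverse bound remains. Here I would use that $p \ne 0$ to select a normal state $\varphi_0$ with $\varphi_0(p) = 1$; orthogonality forces $\varphi_0(\sigma_\lambda(p)) = 0$ for every $\lambda \ne e$, whence $\chi_p^k(\varphi_0) = k$ and therefore $\| \chi_p^k \| \ge \| \chi_p^k(\varphi_0) \| = \| k \|$, the first inequality because the $\mathcal M \ovt E$-norm dominates the norm of $\chi_p^k$ as an ordinary operator $\mathcal M_* \to E$. Applying this to differences through the linearity of $L$ gives that $\chi_p$ preserves distances.

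The step I expect to be the main obstacle is weak$^*$-continuity, which I would reduce to showing that $L$ is weak$^*$-to-weak$^*$ continuous, i.e.\ that for every $\omega \in \mathcal M_* \frownotimes E_*$ the functional $a \mapsto \langle L(a), \omega \rangle$ lies in $E_*$. Since $\| L(a) \| \le \| a \|$, the assignment $\omega \mapsto ( a \mapsto \langle L(a), \omega \rangle )$ is bounded into $E^*$, so the collection of $\omega$ for which this functional falls in the norm-closed subspace $E_* \subset E^*$ is itself norm-closed; it therefore suffices to treat the elementary tensors $\varphi \otimes \psi$, whose linear span is dense. For such a tensor $\langle L(a), \varphi \otimes \psi \rangle = \sum_\lambda \varphi(\sigma_\lambda(p)) \, \psi(\lambda a)$, and the crux is the absolute summability $\sum_\lambda | \varphi(\sigma_\lambda(p)) | < \infty$, valid for every normal $\varphi$ by writing $\varphi$ as a combination of normal states $\phi_j$ and using $\sum_\lambda \phi_j(\sigma_\lambda(p)) = \phi_j(1)$. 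Together with $\| \lambda_* \psi \| = \| \psi \|$, where $\lambda_* \psi \in E_*$ is the image of $\psi$ under the isometric predual action, this exhibits the functional as pairing against the norm-convergent element $\sum_\lambda \varphi(\sigma_\lambda(p)) \, \lambda_* \psi \in E_*$, completing weak$^*$-continuity and with it the concluding remark that $(\mathcal M \ovt K)^\Lambda$ is non-empty.
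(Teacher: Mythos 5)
Your proof is correct, and its skeleton coincides with the paper's: Lemma~\ref{lem:orthsum} handles convergence and the bound $\| \chi_p^k \| \leq \| k \|$, a change of variables gives $\Lambda$-equivariance, membership in $\mathcal M \ovt K$ is tested against normal states, and a lower norm bound gives the isometry. Two steps, however, are genuinely different. For the isometry, the paper uses the compression $(p \otimes 1)\chi_p^k = p \otimes k$ to get $\| k \| = \| p \otimes k \| \leq \| \chi_p^k \|$, whereas you evaluate at a normal state $\varphi_0$ with $\varphi_0(p) = 1$; both work, and your formulation through the linear map $L$ makes the passage from norm preservation to distance preservation explicit. More significantly, for weak$^*$-continuity the paper argues directly on nets: from $\sum_{\lambda} | \eta(\sigma_\lambda(p)) | \leq \| \eta \|$ it concludes that $\sum_\lambda \eta(\sigma_\lambda(p)) \lambda k_i \to \sum_\lambda \eta(\sigma_\lambda(p)) \lambda k$ weak$^*$ whenever $k_i \to k$ weak$^*$; as stated, this interchange of limit and sum needs the net $(k_i)$ to be norm-bounded (weak$^*$-convergent nets need not be), a point left implicit there. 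Your argument---showing that the adjoint of $L$ sends an elementary tensor $\varphi \otimes \psi$ to the norm-convergent sum $\sum_\lambda \varphi(\sigma_\lambda(p)) \, \lambda_* \psi \in E_*$, hence sends all of $\mathcal M_* \frownotimes E_*$ into $E_*$ by density and boundedness---is the standard preadjoint criterion for weak$^*$-to-weak$^*$ continuity of a bounded linear map, and it sidesteps the boundedness issue entirely; in fact it produces exactly the element of $E_*$ against which the paper's exchange of limits is justified. In the same spirit, your finite convex combination argument (padding the tail with a fixed $k_0 \in K$) makes explicit the paper's assertion that the barycenter $\int \lambda k \, d\mu(\lambda)$ of the discrete measure $\mu(\lambda) = \varphi(\sigma_\lambda(p))$ lies in $K$.
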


\begin{proof}
We first note that the sum defining $\chi_p^k$ converges weak$^*$ by Lemma~\ref{lem:orthsum}. It is also easy to see by a change of variables that we have $\chi_p^k \in (\mathcal M  \ovt  E)^\Lambda$. 

If $\varphi$ is a normal state on $\mathcal M$, then we obtain a probability measure $\mu$ on $\Lambda$ given by $\mu(\lambda) = \varphi(\sigma_\lambda(p))$. Viewing $\chi_p^k$ as a map from $\mathcal M_*$ to $E$, we see that it takes $\varphi$ to the element $\int \lambda k \, d\mu(\lambda) \in K$, so that $\chi_p$ maps into $(\mathcal M  \ovt  K)^\Lambda$. 

We clearly have that $\chi_p$ is affine, and by Lemma~\ref{lem:orthsum} we have $\| \chi_p^k \| \leq \| k \|$. Also, $\| k \| = \| p \otimes k \| \leq \| \chi_p^k \|,$ so that $\chi_p$ is isometric. Also note that for each $\eta \in \mathcal M_*,$ we have $\sum_{\lambda \in \Lambda} | \eta( \sigma_\lambda(p) ) | \leq \| \eta \|.$ If $k_i \to k$ weak$^*$, it then follows that $\sum_{\lambda \in \Lambda} \eta( \sigma_\lambda(p) ) \lambda k_i \to \sum_{\lambda \in \Lambda} \eta( \sigma_\lambda(p) ) \lambda k$ weak$^*$. Since $\eta \in \mathcal M_*$ was arbitrary, this shows that $\chi_p^{k_i} \to \chi_p^k$ weak$^*$.
\end{proof}

\subsection{Properly proximal actions}

In this section we show that if $\Lambda$ has a fundamental domain and the $\Gamma$-action on $\mathcal M$ is mixing, then points that are properly proximal for a $\Lambda$-action can be induced to points that are $\Gamma$-properly proximal. At the heart of the argument is Lemma~\ref{lem:wkconvergence}, which allows us to compare the induction maps $\chi_p$ and $\chi_q$ from Lemma~\ref{lem:embedding} corresponding to different fundamental domains $p$ and $q$. 

\begin{lem}\label{lem:ssotconvergence}
Let $\mathcal M$ be von Neumann algebra, and fix $\varphi \in \mathcal M_*$. If a sequence $x_n \in \mathcal M$ converges in the ultrastrong topology to $0$, then $\lim_{n \to \infty} \| x_n \varphi \|_{*} = 0$. 
\end{lem}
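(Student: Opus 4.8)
The plan is to bound $\|x\varphi\|_*$ by a constant times one of the seminorms that \emph{define} the ultrastrong topology on $\mathcal M$, namely a seminorm of the form $x \mapsto \psi(x^*x)^{1/2}$ with $\psi$ a fixed positive normal functional manufactured from $\varphi$; ultrastrong convergence of $x_n$ then kills this seminorm and the estimate finishes the proof.

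First I would realize $\mathcal M$ in a faithful normal representation $\mathcal M \subseteq \mathcal B(\mathcal H)$ and use the standard description of elements of the predual: there are vectors $(\xi_n)_n, (\eta_n)_n$ in $\mathcal H$ with $\sum_n \|\xi_n\|^2 < \infty$ and $\sum_n \|\eta_n\|^2 < \infty$ such that $\varphi(y) = \sum_n \langle y\xi_n, \eta_n \rangle$ for all $y \in \mathcal M$. Writing the module action as $(x\varphi)(y) = \varphi(yx)$ (this is the convention compatible with the ultrastrong, as opposed to ultrastrong$^*$, hypothesis), one has $(x\varphi)(y) = \sum_n \langle y(x\xi_n), \eta_n \rangle$, i.e. $x\varphi = \sum_n \omega_{x\xi_n, \eta_n}$, where $\omega_{\zeta, \mu}$ denotes the vector functional $y \mapsto \langle y\zeta, \mu \rangle$.

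The crux is then a single norm estimate. Using $\|\omega_{\zeta, \mu}\|_* \le \|\zeta\|\,\|\mu\|$ and the Cauchy--Schwarz inequality for the sum over $n$, I would obtain
\[
\|x\varphi\|_* \le \sum_n \|x\xi_n\|\,\|\eta_n\| \le \Big( \sum_n \|x\xi_n\|^2 \Big)^{1/2} \Big( \sum_n \|\eta_n\|^2 \Big)^{1/2} = C\, \psi(x^*x)^{1/2},
\]
where $C = (\sum_n \|\eta_n\|^2)^{1/2} < \infty$ and $\psi := \sum_n \omega_{\xi_n, \xi_n}$ is a genuine positive normal functional on $\mathcal M$ (convergence of $\sum_n \|\xi_n\|^2$ guarantees $\psi \in \mathcal M_*$), for which $\psi(x^*x) = \sum_n \|x\xi_n\|^2$.

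To conclude, I would invoke that $x \mapsto \psi(x^*x)^{1/2}$ is exactly one of the defining seminorms of the ultrastrong topology. Hence $x_n \to 0$ ultrastrongly forces $\psi(x_n^* x_n) \to 0$, and the displayed inequality yields $\|x_n \varphi\|_* \le C\, \psi(x_n^* x_n)^{1/2} \to 0$. I expect the only real subtlety to be the isolation of the single dominating functional $\psi$: this is precisely where ultrastrong (rather than merely strong operator) convergence is needed, since $\psi$ may be supported on infinitely many vectors, and control of every such $\psi$ is exactly what the ultrastrong topology provides.
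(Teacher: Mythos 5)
Your proof is correct, but it takes a different route from the paper's. The paper argues abstractly: by the polar decomposition $\varphi = v|\varphi|$ of a normal functional, it reduces to the case where $\varphi$ is a state (note that with the convention $(x\varphi)(a) = \varphi(ax)$ one has $x\varphi = (xv)|\varphi|$, and $x_n v \to 0$ ultrastrongly whenever $x_n \to 0$ ultrastrongly, so the reduction is harmless), and then the Cauchy--Schwarz inequality for the state $\varphi$ gives, for $\|a\| \leq 1$,
\[
|\varphi(a x_n)| \leq \varphi(aa^*)^{1/2} \varphi(x_n^* x_n)^{1/2} \leq \varphi(x_n^* x_n)^{1/2},
\]
which tends to $0$ because ultrastrong convergence of $x_n$ to $0$ implies ultraweak convergence of $x_n^* x_n$ to $0$. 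You instead represent $\varphi$ spatially as $\sum_n \omega_{\xi_n, \eta_n}$ with square-summable vectors, apply the $\ell^2$ Cauchy--Schwarz inequality, and manufacture the dominating positive normal functional $\psi = \sum_n \omega_{\xi_n, \xi_n}$ by hand. Both proofs pivot on the same bound $\|x\varphi\|_* \leq C\, \omega(x^*x)^{1/2}$ for some $\omega \in \mathcal M_*^+$; the difference is where $\omega$ comes from. Your version invokes the structure theorem for normal functionals (every element of $\mathcal M_*$ is of the form $\sum_n \omega_{\xi_n,\eta_n}$ in a faithful normal representation) together with the fact that the seminorms $x \mapsto \psi(x^*x)^{1/2}$, $\psi \in \mathcal M_*^+$, define the ultrastrong topology independently of the representation, while the paper's version invokes the polar decomposition theorem and the operator-algebraic Cauchy--Schwarz inequality for positive functionals. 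The paper's argument is shorter and representation-free; yours avoids polar decomposition and makes the dominating ultrastrong seminorm completely explicit. Your side remark that the convention $(x\varphi)(y) = \varphi(yx)$ is the one compatible with an ultrastrong (rather than ultrastrong$^*$) hypothesis is accurate and matches the convention used in the paper's proof.
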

\begin{proof}
By considering the polar decomposition of $\varphi$, it is enough to consider the case when $\varphi$ is a state.
Since $x_n \to 0$ in the ultrastrong topology, we have $x_n^*x_n \to 0$ in the ultraweak topology. Hence by the Cauchy-Schwarz inequality we have 
\[
\| x_n \varphi \|_* = \sup_{a \in \mathcal M, \| a \| \leq 1} | \varphi (a x_n) | \leq  \varphi(a a^*)^{1/2} \varphi(x_n^* x_n)^{1/2} \to 0.
\]
\end{proof}

\begin{lem}\label{lem:wkconvergence}
Suppose $\Lambda \actson (\mathcal M, {\rm Tr})$ is a trace-preserving action, $E$ is a dual operator $\Lambda$-module, and a point $k \in E$ is properly proximal. Fix $A \in \mathfrak m_{\rm Tr}$ and suppose $\{ \alpha_n \}_{n \in \mathbb N} \subset {\rm Aut}(\mathcal M, {\rm Tr})$ is a sequence of trace-preserving automorphisms commuting with the action of $\Lambda$ and such that $\alpha_n(A) \to 0$ in the weak operator topology. Then for any finite-trace fundamental domains $p, q \in \mathcal P(\mathcal M)$, we have weak$^*$-convergence 
\[
\lim_{n \to \infty} \chi_p^k(\alpha_n(A)) - \chi_q^k(\alpha_n(A)) = 0,
\] 
where $\chi_p^k, \chi_q^k \in (\mathcal M  \ovt  E)^\Lambda$ are defined as in Lemma~\ref{lem:embedding}.
\end{lem}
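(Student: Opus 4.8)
The plan is to test weak$^*$-convergence against an arbitrary $\psi\in E_*$ and to convert the whole statement into a trace pairing on $\mathcal M$. Fix $\psi$ and set $\phi(\lambda)=\psi(\lambda k)$, a bounded function on $\Lambda$ with $|\phi(\lambda)|\leq\|\psi\|\,\|k\|$. The first observation is that proper proximality of $k$ is exactly the statement that for each fixed $\nu\in\Lambda$ one has $\phi(\lambda)-\phi(\lambda\nu)=\psi\big(\lambda(k-\nu k)\big)\to 0$ as $\lambda\to\infty$, since a weak$^*$-neighborhood basis of the diagonal in the (weak$^*$-compact) orbit closure $\overline{\Lambda k}$ is given by the seminorms coming from $E_*$. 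Next, viewing $\alpha_n(A)\in\mathfrak m_{\rm Tr}\subset L^1(\mathcal M,{\rm Tr})$ as the normal functional $\omega_n={\rm Tr}(\,\cdot\,\alpha_n(A))$, the definition of $\chi_p^k$ gives $\psi\big(\chi_p^k(\alpha_n(A))\big)=\sum_{\lambda}\phi(\lambda)\,{\rm Tr}\big(\sigma_\lambda(p)\alpha_n(A)\big)$. This sum is absolutely convergent because of the auxiliary estimate $\sum_{\lambda}|{\rm Tr}(\sigma_\lambda(p)T)|\leq\|T\|_1$ for $T\in L^1(\mathcal M,{\rm Tr})$ (proved by inserting unimodular phases $\theta_\lambda$, writing $\sum_\lambda|{\rm Tr}(\sigma_\lambda(p)T)|={\rm Tr}\big((\sum_\lambda\theta_\lambda\sigma_\lambda(p))T\big)$ and using that $\|\sum_\lambda\theta_\lambda\sigma_\lambda(p)\|\leq 1$ since the $\sigma_\lambda(p)$ are pairwise orthogonal). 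Interchanging the sum with ${\rm Tr}$ we obtain the clean reformulation
\[
\psi\big(\chi_p^k(\alpha_n(A))-\chi_q^k(\alpha_n(A))\big)={\rm Tr}\big(W\alpha_n(A)\big),\qquad W=\sum_{\lambda}\phi(\lambda)\sigma_\lambda(p)-\sum_{\mu}\phi(\mu)\sigma_\mu(q)\in\mathcal M.
\]

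The second step is to regroup $W$ by ``displacement''. Inserting $\sum_\mu\sigma_\mu(q)=1$ and $\sum_\lambda\sigma_\lambda(p)=1$ and substituting $\mu=\lambda\nu$, one finds $W=\sum_\nu W_\nu$ with $W_\nu=\sum_\lambda\big(\phi(\lambda)-\phi(\lambda\nu)\big)\,\sigma_\lambda\big(p\,\sigma_\nu(q)\big)$, where I used $\sigma_\lambda(p)\sigma_{\lambda\nu}(q)=\sigma_\lambda\big(p\sigma_\nu(q)\big)$. For each fixed $\nu$ the operators $\sigma_\lambda(p\sigma_\nu(q))$ have pairwise orthogonal left and right supports (dominated by the orthogonal projections $\sigma_\lambda(p)$), so the truncations $W_\nu^F=\sum_{\lambda\in F}(\phi(\lambda)-\phi(\lambda\nu))\sigma_\lambda(p\sigma_\nu(q))$ satisfy $\|W_\nu-W_\nu^F\|\leq\sup_{\lambda\notin F}|\phi(\lambda)-\phi(\lambda\nu)|$, which tends to $0$ as $F\uparrow\Lambda$ by proper proximality. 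Thus each $W_\nu$ is an operator-norm limit of finite-trace (hence $L^1$) operators. Since $\alpha_n(A)$ is bounded and converges to $0$ in the weak operator topology, it converges to $0$ ultraweakly; pairing an $L^1$ element against an ultraweakly null bounded sequence gives ${\rm Tr}(W_\nu^F\alpha_n(A))\to 0$, and then the operator-norm estimate $|{\rm Tr}((W_\nu-W_\nu^F)\alpha_n(A))|\leq\|W_\nu-W_\nu^F\|\,\|A\|_1$ yields ${\rm Tr}(W_\nu\alpha_n(A))\to 0$ for every fixed $\nu$.

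The final step is to assemble these over $\nu$, i.e.\ to interchange $\lim_n$ with the sum $\sum_\nu {\rm Tr}(W_\nu\alpha_n(A))$. The key structural input is a tightness of displacements: because $p$ is a finite-trace fundamental domain, $\sum_\nu{\rm Tr}\big(p\,\sigma_\nu(q)\big)={\rm Tr}\big(p\sum_\nu\sigma_\nu(q)\big)={\rm Tr}(p)<\infty$, so the ``overlap mass'' $t_\nu={\rm Tr}(p\sigma_\nu(q))$ is summable and its tail $\sum_{\nu\notin G}t_\nu$ vanishes as the finite set $G$ exhausts $\Lambda$. I would use this, together with the $\ell^1$-marginal bound from the auxiliary estimate, to produce a uniform-in-$n$ tail bound $\sup_n\big|\sum_{\nu\notin G}{\rm Tr}(W_\nu\alpha_n(A))\big|\to 0$ as $G\uparrow\Lambda$; then writing ${\rm Tr}(W\alpha_n(A))=\sum_{\nu\in G}{\rm Tr}(W_\nu\alpha_n(A))+\sum_{\nu\notin G}{\rm Tr}(W_\nu\alpha_n(A))$, letting $n\to\infty$ (the finite sum over $G$ vanishes by the previous step) and then $G\uparrow\Lambda$ finishes the proof.

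The hard part will be precisely this uniform tail estimate over the (generally infinite) set of displacements. The naive triangle-inequality bound only gives $\|W_\nu\|\lesssim t_\nu^{1/2}$, and $\sum_\nu t_\nu^{1/2}$ need not converge, so one cannot simply dominate term-by-term; nor is the displacement tail of $W$ small in operator norm, as the mass of $q$ can overlap infinitely many translates of $p$. The real mechanism is the interplay between the escape of mass of $\alpha_n(A)$ to infinity (which forces the ${\rm Tr}(\sigma_\lambda(p\sigma_\nu(q))\alpha_n(A))$ to concentrate at large $\lambda$) and the decay of $\phi(\lambda)-\phi(\lambda\nu)$ there; the summable overlap weights $t_\nu$ are exactly what is needed to keep this controlled uniformly in $n$. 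Making this quantitative — most plausibly via a Cauchy–Schwarz estimate that keeps the coefficients $\phi(\lambda)-\phi(\lambda\nu)$ attached to the overlap weights rather than bounding them separately — is the technical heart of the argument.
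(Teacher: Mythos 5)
Your first two steps are sound: the reduction to showing $\Tr(W\alpha_n(A))\to 0$ for $W=\sum_{\lambda}\phi(\lambda)\sigma_\lambda(p)-\sum_{\mu}\phi(\mu)\sigma_\mu(q)$ is correct, and so is the convergence $\Tr(W_\nu\alpha_n(A))\to 0$ for each fixed displacement $\nu$ (your support argument giving $\|W_\nu-W_\nu^F\|\le\sup_{\lambda\notin F}|\phi(\lambda)-\phi(\lambda\nu)|$ is fine). But the proposal has a genuine gap, and you have located it yourself: the interchange of $\lim_n$ with $\sum_\nu$ is never carried out. This is not a routine verification that can be left as a plan --- it is where the entire difficulty of the lemma sits. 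As you observe, $|\Tr(W_\nu\alpha_n(A))|\le 2\|\psi\|\,\|k\|\min(1,t_\nu^{1/2})\,\|A\|_1$ is the only bound available and $\sum_\nu t_\nu^{1/2}$ can diverge, so there is no domination; moreover the phase trick behind your auxiliary estimate does not extend to the doubly indexed family $\{\sigma_\lambda(p)\sigma_\mu(q)\}_{\lambda,\mu}$, because the operators $\sum_{\lambda,\mu}\theta_{\lambda\mu}\sigma_\lambda(p)\sigma_\mu(q)$ need not be uniformly bounded. For the same reason, even for a fixed $n$ the identity $\Tr(W\alpha_n(A))=\sum_\nu\Tr(W_\nu\alpha_n(A))$ (absolute convergence and rearrangement of the double series defining $W$) is unjustified in your write-up. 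So what is missing is not epsilon-pushing but the actual mechanism of the proof.

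It is worth comparing with how the paper closes exactly this gap, since it identifies the missing ingredient. Instead of comparing $\chi^k_p$ and $\chi^k_q$ overlap by overlap, the paper invokes Proposition~\ref{prop:conjugatefds}: under the identification $\Delta_p$ of Proposition~\ref{prop:diagonal}, the two fundamental domains differ by a single unitary $u\in\mathcal U(L\Lambda\ovt\mathcal M^\Lambda)$ with $\chi^k_q(\cdot)=\chi^k_p(u\,\cdot\,u^*)$, so the lemma becomes the asymptotic-commutation statement $\chi^k_p(z\,u\alpha_n(A)-u\alpha_n(A)\,z)\to 0$ for $z\in L\Lambda\ovt\mathcal M^\Lambda$. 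This is first proved for $z$ in the algebraic core $\mathbb C\Lambda\otimes_{\rm alg}\mathcal M^\Lambda$ --- the analogue of your fixed-$\nu$ computation, with proper proximality entering in the same way --- and then extended to all $z$ by Kaplansky density. The point is that $L\Lambda\ovt\mathcal M^\Lambda$ carries a \emph{finite} trace that every $\alpha_n$ preserves, so if $z_m\to z$ boundedly in $\|\cdot\|_2$ then, by Lemma~\ref{lem:ssotconvergence} and trace invariance, $\|(z-z_m)u\,\alpha_n(A)\|_1=\|\alpha_n^{-1}((z-z_m)u)A\|_1$ is small \emph{uniformly in} $n$. That is precisely the uniform-in-$n$ control your tail estimate needs, and it comes from $\alpha_n$-invariance of the finite trace on the algebra containing $u$, not from summability of the overlap masses $t_\nu$. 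In your coordinates, $\sum_\nu W_\nu$ is in effect the Fourier expansion of the conjugation by $u$, and truncating in $\nu$ is an approximation of $u$ in $\|\cdot\|_2$; unless you route the estimate through that trace-preserved $\|\cdot\|_2$-norm (at which point you will have reconstructed the paper's proof), the argument does not close, and as written the proposal does not establish the lemma.
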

\begin{proof}
Consider the trace-preserving embedding $\Delta_p: \mathcal M \to \mathcal B(\ell^2 \Lambda) \ovt \mathcal M^\Lambda$ as given in Proposition~\ref{prop:diagonal}. This then gives a corresponding restriction map from $(\mathcal B(\ell^2 \Lambda) \ovt \mathcal M^\Lambda )_*$ to $\mathcal M_*$, and by composing this with $\chi_p^k$ (where we view $\chi_p^k \in CB(\mathcal M_*, E)$), we obtain a map (which we still denote by $\chi_p^k$) from $(\mathcal B(\ell^2 \Lambda) \ovt \mathcal M^\Lambda )_*$ into $E$. This map is $\Lambda$-equivariant, where $\Lambda$ acts on $(\mathcal B(\ell^2 \Lambda) \ovt \mathcal M^\Lambda )_*$ by conjugation with $\rho_\lambda \otimes 1$. 

Note also that the isomorphism $\mathcal M \rtimes \Lambda \cong \mathcal B(\ell^2 \Lambda) \ovt \mathcal M^\Lambda$ shows that the automorphisms $\alpha_n$ extend to trace-preserving automorphisms of $\mathcal B(\ell^2 \Lambda) \ovt \mathcal M^\Lambda$, which we also denote by $\alpha_n$, and which fix $R \Lambda \otimes \mathbb C$. Part (\ref{item:E}) of Proposition~\ref{prop:funddomain} applied to $\Lambda$ acting by conjugation on $\mathcal M \rtimes \Lambda$ then shows that $\alpha_n$ also preserves the finite trace on $L\Lambda \ovt \mathcal M^\Lambda = R\Lambda' \cap ( \mathcal B( \ell^2 \Lambda) \ovt \mathcal M^\Lambda)$.

Fix $A \in \mathfrak m_{{\rm Tr} \otimes \tau} \subset \mathcal B(\ell^2 \Lambda) \ovt \mathcal M^\Lambda$ and suppose that  $\alpha_n(A) \to 0$ weakly. Fix $t \in \Lambda$, $u \in L\Lambda \ovt \mathcal M^\Lambda$ and $v \in \mathcal M^\Lambda$. For $s \in \Lambda,$ let $P_s$ denote the rank-one projection onto $\mathbb C \delta_s \subset \ell^2 \Lambda$.  Then $\rho_s P_e \rho_s^*=P_{s^{-1}}.$ Viewing $\mathfrak m_{{\rm Tr} \otimes \tau}$ as a subset of the predual via the standard duality (see Section~\ref{sec:semifinite} above), we have 
\begin{align}
\chi_p^k( (\lambda_t \otimes v ) u \alpha_n(A) 
& -  u \alpha_n(A) (\lambda_t \otimes v ) ) \nonumber \\
&= \sum_{s \in \Lambda} ({\rm Tr} \otimes \tau)( ( (\lambda_t \otimes v ) u \alpha_n(A) - u \alpha_n(A) (\lambda_t \otimes v) ) (P_{s^{-1}} \otimes 1 ) ) s k \nonumber \\
&= \sum_{s \in \Lambda} ({\rm Tr} \otimes \tau) (  (\lambda_t \otimes v ) u \alpha_n(A)  (P_{s^{-1}} \otimes 1 ) ) (s k - s t k). \nonumber
\end{align}
Since we have weak operator topology convergence $u \alpha_n(A) \to 0$, and since $\tau$ is a finite trace on $\mathcal M^\Lambda$, it follows that for any finite set $F \subset \Lambda$ we have
\[
\sum_{s \in F} ({\rm Tr} \otimes \tau) (  (\lambda_t \otimes v ) u \alpha_n(A)  (P_{s^{-1}} \otimes 1 ) ) \to 0.
\]
Since $k$ is properly proximal, and since $\{ \alpha_n(A) \}_n$ is uniformly bounded in trace norm, it follows that we have weak$^*$-convergence 
\[
\chi_p^k( (\lambda_t \otimes x ) u \alpha_n(A) - u \alpha_n(A) (\lambda_t \otimes x ) ) \to 0.
\] 
Taking linear combinations of vectors of the form $\lambda_t \otimes v$, it follows that for all $z \in \mathbb C \Lambda \otimes_{\rm alg} \mathcal M^\Lambda$, we have weak$^*$-convergence 
\begin{equation}\label{approxcommute}
\chi_p^k( z u \alpha_n(A) - u \alpha_n(A) z )  \to 0.
\end{equation}

If $\{ z_m \}_m \subset L\Lambda \ovt \mathcal M^\Lambda$ is uniformly bounded, then $z_n$ converge to $0$ in the ultrastrong$^*$ topology if and only if  $z_n$ converge in $\| \cdot \|_2$ with respect to the trace. If this is the case, then as $\alpha_n$ are trace-preserving, we have from Lemma~\ref{lem:ssotconvergence} that 
\[
\lim_{m \to \infty} \sup_{n \in \mathbb N} \| \alpha_n^{-1}(z_m u) A \|_{{\rm Tr} \otimes \tau} = \lim_{m \to \infty} \sup_{n \in \mathbb N} \| A \alpha_n^{-1}(z_m u)  \|_{{\rm Tr} \otimes \tau}  = 0.
\] 
Kaplansky's Density Theorem then shows that we have (\ref{approxcommute}) for all $z \in L\Lambda \ovt \mathcal M^\Lambda$ and all $u \in L\Lambda \ovt \mathcal M^\Lambda$.

By Proposition~\ref{prop:conjugatefds} there exists a unitary $u \in \mathcal U(L\Lambda \ovt \mathcal M^\Lambda)$ so that $\chi_q^k(A) = \chi_p^k(u A u^*)$ for all $A \in \mathcal M_*$. We then have weak$^*$-convergence
\[
\chi_p^k(\alpha_n(A)) - \chi_q^k(\alpha_n(A))
= \chi_p^k( u^* (u \alpha_n(A)) - (u \alpha_n(A)) u^*) \to 0. 
\]
\end{proof}

\begin{prop}\label{prop:convergencept}
Suppose $\Gamma \times \Lambda \actson (\mathcal M, {\rm Tr})$ is a trace-preserving action such that the action of $\Lambda$ has a finite-trace fundamental domain and the Koopman representation $\Gamma \actson L^2(\mathcal M, {\rm Tr})$ is mixing. Suppose $E$ is a dual operator $\Lambda$-module and $K \subset E$ is a non-empty convex weak$^*$-compact $\Lambda$-invariant subset. If the action $\Lambda \actson K$ has a point that is properly proximal, then so does the induced action $\Gamma \actson (\mathcal M  \ovt  K)^\Lambda$.
\end{prop}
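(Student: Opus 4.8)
The plan is to fix a properly proximal point $k \in K$ for the $\Lambda$-action, choose a finite-trace fundamental domain $p$ for the $\Lambda$-action on $\mathcal M$, and show that the induced point $\chi_p^k \in (\mathcal M \ovt K)^\Lambda$ from Lemma~\ref{lem:embedding} is properly proximal for the induced $\Gamma$-action. The first step is to record how $\Gamma$ moves the induction map: since the $\Gamma$- and $\Lambda$-actions commute and $\Gamma$ acts trivially on $E$, unwinding the dual action gives $(\gamma \cdot \chi_p^k)(A) = \chi_p^k(\sigma_{\gamma^{-1}}(A))$, and hence $g \cdot \chi_p^k = \chi_{\sigma_g(p)}^k$ for every $g \in \Gamma$, where $\sigma_g(p)$ is again a finite-trace $\Lambda$-fundamental domain. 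Because proper proximality of a pair is invariant under the diagonal $\Gamma$-action, it suffices to prove that for each fixed $g$ the pair $(\chi_p^k, g\chi_p^k) = (\chi_p^k, \chi_{\sigma_g(p)}^k)$ is properly proximal. As $K' = (\mathcal M \ovt K)^\Lambda$ is weak$^*$-compact and convex and the $\Gamma$-action on it is the restriction of a linear dual action, this reduces to showing that $\gamma \cdot (\chi_p^k - \chi_{\sigma_g(p)}^k) \to 0$ weak$^*$ along the cofinite filter on $\Gamma$, i.e.\ that for every finite family of predual functionals and every $\varepsilon > 0$ all but finitely many $\gamma$ pair below $\varepsilon$.

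Testing against the predual $\mathcal M_* \frownotimes E_*$, and using that $\gamma \cdot (\chi_p^k - \chi_{\sigma_g(p)}^k)$ is uniformly norm-bounded by $2\|k\|$, I would reduce to elementary tensors $\varphi \otimes b$, and then, by density of $\mathfrak m_{\rm Tr}$ in $\mathcal M_* = L^1(\mathcal M, {\rm Tr})$ together with the isometry of the $\Gamma$-action on $\mathcal M_*$, to the case $\varphi = A \in \mathfrak m_{\rm Tr}$. The quantity to control then becomes $\langle (\chi_p^k - \chi_{\sigma_g(p)}^k)(\sigma_{\gamma^{-1}}(A)), b \rangle$, which is exactly the expression compared by Lemma~\ref{lem:wkconvergence} for the two fundamental domains $p$ and $\sigma_g(p)$ and the trace-preserving, $\Lambda$-commuting automorphisms $\alpha_\gamma = \sigma_{\gamma^{-1}}$.

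To invoke Lemma~\ref{lem:wkconvergence} I must verify its hypothesis that $\sigma_{\gamma_n^{-1}}(A) \to 0$ in the weak operator topology along any sequence $\gamma_n \to \infty$. This is where mixing enters. Since $\sigma_{\gamma^{-1}}(A)$ is uniformly bounded in operator norm (by $\|A\|$), it is enough to check matrix coefficients on the dense subspace $\mathfrak n_{\rm Tr} \subset L^2(\mathcal M, {\rm Tr})$, and for $x, y \in \mathfrak n_{\rm Tr}$ the trace identity rewrites $\langle \sigma_{\gamma^{-1}}(A)x, y \rangle_2$ as $\langle \sigma_\gamma^0(xy^*), A^* \rangle_2$; as $xy^* \in \mathfrak n_{\rm Tr}$, this is a matrix coefficient of the mixing Koopman representation and hence tends to $0$. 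A standard extraction then converts the sequential conclusion of Lemma~\ref{lem:wkconvergence} into convergence along the cofinite filter: were it to fail, there would be $\varepsilon > 0$ and infinitely many $\gamma$ pairing at least $\varepsilon$, from which one extracts a sequence $\gamma_n \to \infty$ contradicting the lemma. Taking the union of the resulting finite exceptional sets over a finite family of test functionals completes the verification of proper proximality of $\chi_p^k$.

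I expect the main obstacle to be precisely this interface between the dynamical and the analytic sides: matching the definition of a properly proximal point (a statement quantified over neighborhoods of the diagonal and over the cofinite filter) with the sequential, two-fundamental-domain comparison of Lemma~\ref{lem:wkconvergence}, and in particular checking that lemma's weak-operator hypothesis through the finite-trace reformulation of mixing. The subtle point to get right is that the conjugated operators $\sigma_{\gamma^{-1}}(A)$ do \emph{not} converge to $0$ ultraweakly in general; the convergence holds only against the finite-trace functionals that actually occur, and it is exactly these that are captured by rewriting matrix coefficients as $L^2$-pairings of vectors in $\mathfrak n_{\rm Tr}$.
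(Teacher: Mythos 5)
Your argument is correct and is essentially the paper's proof: you induce via the same map $\chi_p^k$, use the identity $g\cdot\chi_p^k=\chi_{\sigma_g(p)}^k$ to reduce proper proximality of the pair $(\chi_p^k,\,g\cdot\chi_p^k)$ to the two-fundamental-domain comparison of Lemma~\ref{lem:wkconvergence} with $\alpha_n=\sigma_{\gamma_n^{-1}}$ and $q=\sigma_g(p)$, and your verification that mixing of the Koopman representation gives the weak-operator hypothesis (via $\langle \sigma_{\gamma^{-1}}(A)x,y\rangle_2={\rm Tr}(\sigma_\gamma(xy^*)A)$ with $xy^*,A^*\in\mathfrak n_{\rm Tr}$) correctly fills in a step the paper leaves implicit. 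One small correction to your closing remark: since $\|\sigma_{\gamma^{-1}}(A)\|=\|A\|$ is uniformly bounded, weak operator convergence to $0$ \emph{does} imply ultraweak (hence weak$^*$ in $\mathcal M$) convergence to $0$; the remark is harmless, though, since your proof only uses the pairings you actually verify.
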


\begin{proof}
We fix a point $k \in K$ that is properly proximal for the action $\Lambda \actson K$. Given a finite-trace $\Lambda$-fundamental domain $p \in \mathcal M$, we let $\chi_p: K \to (\mathcal M  \ovt  K)^\Lambda$ be defined by $\chi_p^k = \sum_{s \in \Lambda} \sigma_s(p) \otimes s k$ as in Lemma~\ref{lem:embedding}, and we view $\chi_p^k$ as a $\Lambda$-equivariant map from $\mathcal M_*$ to $E$. We claim that $\chi_p^k$ is properly proximal for the induced action $\Gamma \actson (\mathcal M  \ovt  K)^\Lambda$.

Fix $g \in \Gamma$, and suppose $\{ \gamma_n \}_n \subset \Gamma$ is such that $\gamma_n \to \infty$. If $A \in \mathfrak m_{\rm Tr}$, then as the action of $\Gamma$ is mixing, we have that $\sigma_{\gamma_n}(A)$ converges to $0$ in the weak operator topology. Therefore, if we consider the $\Lambda$-fundamental domain $q = \sigma_{g}(p)$, then by Lemma~\ref{lem:wkconvergence} we have weak$^*$-convergence 
\[
\chi_p^k(\sigma_{\gamma_n^{-1}}(A)) - \chi_p^k(\sigma_{g^{-1} \gamma_n^{-1}}(A) ) = \chi_p^k(\sigma_{\gamma_n^{-1}}(A)) - \chi_q^k(\sigma_{\gamma_n^{-1}}(A)) \to 0.
\]  
As the set of such $A$ is dense in $\mathcal M_*$, the result follows.
\end{proof}

\begin{thm}\label{thm:properlyprox}
Suppose $\Gamma \times \Lambda \actson (\mathcal M, {\rm Tr})$ is a trace-preserving action such that $\mathcal M^\Gamma$ has a normal $\Lambda$-invariant finite trace, the action of $\Lambda$ on $\mathcal M$ has a finite-trace fundamental domain,  and the Koopman representation $\Gamma \actson L^2(\mathcal M, {\rm Tr})$ is mixing. If $\Lambda$ is properly proximal, then so is $\Gamma$. 
\end{thm}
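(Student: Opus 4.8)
The plan is to assemble the induction machinery developed above into a short deduction. Since the heavy analytic work is already carried out in Proposition~\ref{prop:convergencept} (and its underlying Lemma~\ref{lem:wkconvergence}), the theorem reduces to checking that the hypotheses of the relevant propositions are met and then combining them through the characterization of proper proximality in Proposition~\ref{prop:properlyproximalequivalent}. First, since $\Lambda$ is properly proximal, condition~(\ref{item:3}) of Proposition~\ref{prop:properlyproximalequivalent} produces a dual Banach $\Lambda$-module $E$ together with a non-empty $\Lambda$-invariant weak$^*$-compact convex subset $K \subset E$ that has a properly proximal point $k$ but no $\Lambda$-fixed point. To feed $K$ into the induction procedure of Definition~\ref{defn:induced}, I would upgrade $E$ to a dual operator $\Lambda$-module by endowing $E_*$ with the operator space structure ${\rm min}(E_*)$, so that $E = {\rm max}(E)$ becomes a dual operator $\Lambda$-module as noted after the definition of operator $\Gamma$-module; the $\Lambda$-action, being isometric, is automatically by complete isometries for these structures. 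This change affects neither the underlying Banach space, nor its weak$^*$-topology, nor the $\Lambda$-action, so $K$ remains a non-empty weak$^*$-compact convex $\Lambda$-invariant set with a properly proximal point and no fixed point.

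Next I would form the induced objects $(\mathcal M \ovt E)^\Lambda$ and $(\mathcal M \ovt K)^\Lambda$ of Definition~\ref{defn:induced}. Here $(\mathcal M \ovt E)^\Lambda$ is a dual operator $\Gamma$-module, hence in particular a dual Banach $\Gamma$-module. By the lemma following Definition~\ref{defn:induced}, $\mathcal M \ovt K$ is weak$^*$-compact; since the commuting $\Lambda$-action is weak$^*$-continuous, its fixed-point set $(\mathcal M \ovt K)^\Lambda$ is weak$^*$-closed, hence weak$^*$-compact, and it is clearly convex and $\Gamma$-invariant (as the $\Gamma$- and $\Lambda$-actions commute). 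Lemma~\ref{lem:embedding}, applied with the finite-trace $\Lambda$-fundamental domain supplied by hypothesis, shows that $(\mathcal M \ovt K)^\Lambda$ is non-empty.

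It then remains to verify the two defining features of condition~(\ref{item:3}) for the pair $(\mathcal M \ovt E)^\Lambda$, $(\mathcal M \ovt K)^\Lambda$. For the existence of a properly proximal point, I would invoke Proposition~\ref{prop:convergencept}: its hypotheses, namely a finite-trace $\Lambda$-fundamental domain and a mixing Koopman representation $\Gamma \actson L^2(\mathcal M, {\rm Tr})$, are exactly those of the present theorem, and $\Lambda \actson K$ has the properly proximal point $k$; hence the induced action $\Gamma \actson (\mathcal M \ovt K)^\Lambda$ has a properly proximal point. For the absence of a $\Gamma$-fixed point, I would apply Proposition~\ref{prop:fixedpoint}: since $\mathcal M^\Gamma$ carries a normal $\Lambda$-invariant finite trace (another hypothesis) and $K$ has no $\Lambda$-fixed point, $(\mathcal M \ovt K)^\Lambda$ has no $\Gamma$-fixed point. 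Combining these, condition~(\ref{item:3}) of Proposition~\ref{prop:properlyproximalequivalent} is met by $\Gamma$, so $\Gamma$ is properly proximal.

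The main obstacle has, in effect, already been surmounted upstream: Proposition~\ref{prop:convergencept} contains the essential dynamical argument, comparing the induction maps $\chi_p$ and $\chi_q$ attached to different fundamental domains and exploiting mixing of the $\Gamma$-action to annihilate their difference. Consequently the theorem itself is largely bookkeeping, and the only points that I expect to require genuine (though routine) care are the compatibility checks, namely that the passage from a dual Banach to a dual operator $\Lambda$-module preserves all the structure used downstream, and that $(\mathcal M \ovt K)^\Lambda$ inherits weak$^*$-compactness, convexity, $\Gamma$-invariance, and non-emptiness, so that it is a legitimate witness for condition~(\ref{item:3}).
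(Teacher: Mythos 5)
Your proposal is correct and takes essentially the same route as the paper: the paper's proof of Theorem~\ref{thm:properlyprox} consists precisely of citing Propositions~\ref{prop:properlyproximalequivalent}, \ref{prop:fixedpoint} and \ref{prop:convergencept}, and your write-up simply makes explicit the bookkeeping (the ${\rm min}/{\rm max}$ operator-space upgrade, non-emptiness via Lemma~\ref{lem:embedding}, and the weak$^*$-compactness, convexity, and $\Gamma$-invariance of $(\mathcal M \ovt K)^\Lambda$) that the paper leaves implicit. All the verifications you flag as requiring care are indeed the right ones, and each is justified by the results you cite.
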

\begin{proof}
This follows from Propositions~\ref{prop:properlyproximalequivalent}, \ref{prop:fixedpoint} and \ref{prop:convergencept}.
\end{proof}

\begin{proof}[Proof of Theorem~\ref{thm:vnepropprox}]
From Proposition~\ref{prop:funddomain}, the existence of a fundamental domain for $\Gamma$ implies that the Koopman representation is a multiple of the left-regular representation, and hence is mixing for any infinite group. The result then follows from Theorem~\ref{thm:properlyprox}.
\end{proof}

\subsection{Inducing unitary representations}

Suppose $\Lambda \actson^{\sigma}  (\mathcal M, {\rm Tr})$ is a trace-preserving action on a semi-finite von Neumann algebra and $\pi: \Lambda \to \mathcal U(\mathcal H)$ is a unitary representation. In Section~\ref{sec:HilbertC} we gave a dual Hilbert $\mathcal M$-module structure to $\mathcal M  \ovt  \mathcal H$ that satisfies
\[
\langle a \otimes \xi, b \otimes \eta \rangle_{\mathcal M} = \langle \eta, \xi \rangle a^*b
\]
for all $a, b \in \mathcal M$ and $\xi, \eta \in \mathcal H$. Thus for $s \in \Lambda$ and $x, y \in \mathcal M  \ovt  \mathcal H$, we have
\begin{equation}\label{eq:unitinduce}
\langle ( \sigma_s \otimes \pi(s) ) x, ( \sigma_s \otimes \pi(s) ) y \rangle_{\mathcal M} = \sigma_s( \langle x, y \rangle_{\mathcal M} ).
\end{equation}
The space of fixed points $(\mathcal M  \ovt  \mathcal H)^\Lambda$ then becomes a dual Hilbert $\mathcal M^\Lambda$-module.

Left multiplication of $\mathcal M$ on itself gives a normal representation of $\mathcal M$ on $\mathcal M  \ovt  \mathcal H$. Thus the space of fixed points $(\mathcal M  \ovt  \mathcal H)^\Lambda$ is endowed with a normal $\mathcal M^\Lambda$-representation. 

If $\tau$ is a faithful normal trace on $\mathcal M^\Lambda$, we obtain a positive definite scalar-valued inner product on $(\mathcal M  \ovt  \mathcal H)^\Lambda$ by $\langle y, x \rangle = \tau( \langle x, y \rangle_\mathcal M )$. We denote the corresponding Hilbert space completion as $(\mathcal M  \ovt  \mathcal H)^\Lambda_\tau$, 
which we then see is an $\mathcal M^\Lambda$-correspondence in the sense of Connes \cite[Chapter 5, Appendix B]{Co94}. 

If we are also given a trace-preserving action $\Gamma \aactson{\sigma} (\mathcal M, {\rm Tr})$ that commutes with the $\Lambda$-action, then we see that (\ref{eq:unitinduce}) also holds for the $\Gamma$-action. Hence if $\Gamma$ preserves the trace $\tau$ on $\mathcal M^\Lambda$, we obtain a unitary representation $\Gamma \actson (\mathcal M  \ovt  \mathcal H)^\Lambda_\tau$.

\begin{defn}\label{defn:inducedrep}
Suppose $\pi: \Lambda \to \mathcal U(\mathcal H)$ is a unitary representation and $\Gamma \times \Lambda \actson (\mathcal M, {\rm Tr})$ is a trace-preserving action on a semi-finite von Neumann algebra such that the action of $\Lambda$ admits a finite-trace fundamental domain. We let $\tau$ denote the $\Gamma$-invariant trace on $\mathcal M^\Lambda$ given by Proposition~\ref{prop:funddomain}. We say that the representation $\Gamma \actson (\mathcal M  \ovt  \mathcal H)^\Lambda_\tau$ is induced from $\pi$, and we denote this representation by $\pi_{\mathcal M}$.         

As an $\mathcal M^\Lambda$-correspondence, we say that $(\mathcal M  \ovt  \mathcal H)^\Lambda_\tau$ is the correspondence induced from $\pi$.
\end{defn}

\begin{prop}\label{prop:cocylerep}
Suppose $\pi: \Lambda \to \mathcal U(\mathcal H)$ is a unitary representation and $\Lambda \actson^\sigma (\mathcal M, {\rm Tr})$ is a trace-preserving action on a semi-finite von Neumann algebra that has a finite-trace fundamental domain $p$. There exists an isomorphism of dual Hilbert $\mathcal M^\Lambda$-modules $V_p: \mathcal M^\Lambda  \ovt  \mathcal H \to (\mathcal M  \ovt  \mathcal H)^\Lambda$ such that 
\[
V_p( x \otimes \xi ) = \sum_{t \in \Lambda} \sigma_t(p) x \otimes \pi(t) \xi
\] 
for all $x \in \mathcal M^\Lambda$ and $\xi \in \mathcal H$. 
\end{prop}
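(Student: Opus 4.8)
The plan is to build $V_p$ in three stages: define it on the algebraic tensor product and check the routine module-theoretic properties, extend it to the whole self-dual module, and finally prove surjectivity, which is where the real content lies. For $x \in \mathcal M^\Lambda$ and $\xi \in \mathcal H$ the defining sum $\sum_{t \in \Lambda} \sigma_t(p) x \otimes \pi(t)\xi$ converges weak$^*$ by Lemma~\ref{lem:orthsum}, applied to the pairwise orthogonal projections $\{\sigma_t(p)\}_{t}$ (orthogonal since $p$ is a fundamental domain) and the uniformly bounded family $\{\pi(t)\xi\}_t$. A change of variables $t \mapsto st$ together with $x \in \mathcal M^\Lambda$ shows $V_p(x \otimes \xi)$ is fixed by $\sigma_s \otimes \pi(s)$, so it lies in $(\mathcal M \ovt \mathcal H)^\Lambda$, and right multiplication by $\mathcal M^\Lambda$ passes through the sum, giving right $\mathcal M^\Lambda$-modularity. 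The key point of this stage is that $V_p$ preserves the $\mathcal M^\Lambda$-valued inner product: expanding $\langle V_p(x \otimes \xi), V_p(y \otimes \eta)\rangle_{\mathcal M}$ as a double sum (legitimate by the separate ultraweak continuity of $\langle \cdot, \cdot \rangle_{\mathcal M}$ recorded in Section~\ref{sec:HilbertC}), and using $\sigma_s(p)\sigma_t(p) = \delta_{s,t}\sigma_s(p)$, the unitarity of $\pi$, and $\sum_s \sigma_s(p) = 1$, collapses it to $\langle \eta, \xi\rangle x^* y = \langle x \otimes \xi, y \otimes \eta \rangle_{\mathcal M^\Lambda}$.

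Next I would extend $V_p$ from the weak$^*$-dense, right $\mathcal M^\Lambda$-invariant subset $\mathcal M^\Lambda \otimes_{\rm alg} \mathcal H$ to all of $\mathcal M^\Lambda \ovt \mathcal H$. Since $V_p$ is right-modular and satisfies the isometry relation (\ref{eq:unitary}) on this subset, the extension result recorded at the end of Section~\ref{sec:HilbertC} produces a weak$^*$-continuous extension, still denoted $V_p$, preserving the $\mathcal M^\Lambda$-valued inner product on all of $\mathcal M^\Lambda \ovt \mathcal H$. Thus $V_p$ is a weak$^*$-continuous module isometry into the self-dual Hilbert $\mathcal M^\Lambda$-module $(\mathcal M \ovt \mathcal H)^\Lambda$, so its image is a self-dual, hence orthogonally complemented, submodule; it then suffices to show that the orthogonal complement of the image is $\{0\}$.

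To carry this out, suppose $\Xi \in (\mathcal M \ovt \mathcal H)^\Lambda$ satisfies $\langle V_p(x \otimes \xi), \Xi\rangle_{\mathcal M} = 0$ for all $x \in \mathcal M^\Lambda$ and $\xi \in \mathcal H$. Setting $\eta_0 = (p \otimes 1)\Xi$ and using the $\Lambda$-invariance of $\Xi$ together with $\sum_s \sigma_s(p) = 1$ gives the reconstruction $\Xi = \sum_{s \in \Lambda} (\sigma_s \otimes \pi(s))\eta_0$, via the identity $(\sigma_s(p) \otimes 1)\Xi = (\sigma_s \otimes \pi(s))\eta_0$; hence it suffices to show $\eta_0 = 0$. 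Write $\eta_0 = \sum_i m_i \otimes e_i$ in an orthonormal basis $\{e_i\}$ of $\mathcal H$, where each $m_i \in p\mathcal M$ because $(p \otimes 1)\eta_0 = \eta_0$, and set $m_\xi = \sum_i \langle e_i, \xi\rangle m_i \in p\mathcal M$. Testing against the image element $V_p(1 \otimes \xi)$ (i.e.\ taking $x = 1 \in \mathcal M^\Lambda$) and using the equivariance relation (\ref{eq:unitinduce}) gives $\langle V_p(1 \otimes \xi), \Xi\rangle_{\mathcal M} = \sum_s \sigma_s(m_\xi)$, so the hypothesis forces $\sum_s \sigma_s(m_\xi) = 0$ as a bounded element of $\mathcal M^\Lambda$. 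Left-multiplying this identity by $\sigma_s(p)$ and using that the summands $\sigma_s(m_\xi) = \sigma_s(p)\sigma_s(m_\xi)$ have pairwise orthogonal left supports isolates $\sigma_s(m_\xi) = 0$ for every $s$, and in particular $m_\xi = 0$ for all $\xi$, whence $\eta_0 = 0$ and $\Xi = 0$. Therefore the image of $V_p$ is weak$^*$-dense and orthogonally complemented, hence all of $(\mathcal M \ovt \mathcal H)^\Lambda$, so $V_p$ is an isomorphism of dual Hilbert $\mathcal M^\Lambda$-modules.

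I expect the surjectivity step to be the main obstacle. It is the analog, at the level of self-dual modules, of the density argument used to construct the unitary $\mathcal F_p$ in Proposition~\ref{prop:funddomain}, and it hinges on combining the fundamental-domain reconstruction of a fixed vector from its $p$-compression with the orthogonality of the left supports $\{\sigma_s(p)\}$. Throughout, the secondary care is tracking weak$^*$-convergence of the various sums, for which Lemma~\ref{lem:orthsum} and the separate ultraweak continuity of the $\mathcal M$-valued inner product are the essential tools.
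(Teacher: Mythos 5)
Your proof is correct, and its first two stages (the inner-product computation on elementary tensors, and the weak$^*$-continuous extension via the facts recorded at the end of Section~\ref{sec:HilbertC}) coincide with the paper's proof. Where you genuinely diverge is the surjectivity step. The paper converts surjectivity into a density statement in the Hilbert space completion $(\mathcal M \ovt \mathcal H)^\Lambda_\tau$: it identifies this completion inside $(\mathcal M p \ovt \mathcal H)_{\rm Tr}$, establishes the identity $\langle \sigma_s(p) x p \otimes \xi, \zeta p \rangle_{\rm Tr} = \langle V_p(x \otimes \pi(s^{-1})\xi), \zeta \rangle_\tau$, and then invokes part (\ref{item:B}) of Proposition~\ref{prop:funddomain} (weak$^*$-density of ${\rm span}\{\theta_p(f)x\}$ in $\mathcal M$) to conclude that a vector $\tau$-orthogonal to the range vanishes. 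You instead stay entirely at the level of the $W^*$-module with its $\mathcal M$-valued inner product: you reconstruct a $\Lambda$-fixed vector from its $p$-compression via $\Xi = \sum_s (\sigma_s \otimes \pi(s))(p \otimes 1)\Xi$, compute $\langle V_p(1 \otimes \xi), \Xi\rangle_{\mathcal M} = \sum_s \sigma_s(m_\xi)$, and use the pairwise orthogonality of the left supports $\sigma_s(p)$ to isolate each summand. I checked the coefficient computation and it is right, including that $m_\xi = \langle 1 \otimes \xi, \eta_0 \rangle_{\mathcal M}$ is a well-defined element of $p\mathcal M$ and that $m_{e_i} = m_i$, so $\eta_0 = 0$ follows. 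The trade-off is this: your route avoids introducing the trace $\tau$ and the Hilbert space $(\mathcal M \ovt \mathcal H)^\Lambda_\tau$ altogether and is computationally more transparent, but it leans on Paschke's theorem that a self-dual submodule of a Hilbert $\mathcal M^\Lambda$-module is orthogonally complemented --- a fact the paper never states explicitly (though it is standard and contained in the cited \cite{Pa73}), whereas the paper's route uses only tools it has already set up, namely the equivalence of weak$^*$-density and $\|\cdot\|_\tau$-density from Section~\ref{sec:HilbertC} together with the structure theory of fundamental domains from Proposition~\ref{prop:funddomain}. Two routine points you should make explicit in a final write-up: the weak$^*$-convergence of the reconstruction sum $\sum_s (\sigma_s(p) \otimes 1)\Xi = \Xi$ (the partial sums are $q_F \Xi$ with projections $q_F \nearrow 1$, so this is the same kind of argument as Lemma~\ref{lem:orthsum}), and the use of the separate ultraweak continuity of $\langle \cdot, \cdot \rangle_{\mathcal M}$ on $\mathcal M \ovt \mathcal H$ to exchange this sum with the inner product against $V_p(1 \otimes \xi)$.
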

\begin{proof}
Note first that by Lemma~\ref{lem:orthsum}, when restricted to the algebraic tensor product, the map $V_p: \mathcal M^\Lambda \otimes \mathcal H \to (\mathcal M \ovt \mathcal H)^\Lambda$ is well-defined. Moreover, for $x, y \in \mathcal M$ and $\xi, \eta \in \mathcal H$, we have
\begin{align}
\langle V_p(x \otimes \xi), V_p(y \otimes \eta) \rangle_{\mathcal M} 
&= \sum_{s, t \in \Lambda} \langle \pi(s) \eta, \pi(t) \xi \rangle x^* \sigma_t(p) \sigma_s(p) y \nonumber \\
&= \langle \eta, \xi \rangle x^*y \nonumber \\
&= \langle x \otimes \xi, y \otimes \eta \rangle_{\mathcal M^\Lambda}. \nonumber
\end{align}
As described in Section~\ref{sec:HilbertC}, it follows that $V_p$ has a weak$^*$-continuous extension $V_p: \mathcal M^\Lambda \ovt \mathcal H \to (\mathcal M \ovt \mathcal H)^\Lambda$ that preserves the inner product; and to see that $V$ is surjective, it suffices to show that the range of $V_p$ is dense when viewed as a map into $(\mathcal M \ovt \mathcal H)^\Lambda_\tau$, where $\tau$ is the trace given by Proposition~\ref{prop:funddomain}, i.e., $\tau(x) = {\rm Tr}(pxp)$ for $x \in \mathcal M^\Lambda$. 

Suppose therefore that we have $\zeta_0 \in ( \mathcal M \ovt \mathcal H)^\Lambda_\tau$, orthogonal to the range of $V$.  

If $s \in \Lambda$, $x \in \mathcal M^\Lambda$, $\zeta \in (\mathcal M \ovt \mathcal H )^\Lambda$, and $\xi \in \mathcal H$, then using (\ref{eq:unitinduce}) we have 
\begin{align}
\langle \sigma_s(p) x  \otimes \xi, \sigma_s(p) \zeta \rangle_{\rm Tr}
&= {\rm Tr}( \langle \zeta, \sigma_s(p) x \otimes \xi \rangle_{\mathcal M}) \nonumber \\ 
&= \sum_{t \in \Lambda} {\rm Tr}(p \sigma_t( \langle \zeta, \sigma_s(p) x \otimes \xi \rangle_{\mathcal M} ) ) \nonumber \\
&=  {\rm Tr}(p \langle \zeta, \sum_{t \in \Lambda} \sigma_{ts}(p) x \otimes \pi(t) \xi \rangle_{\mathcal M} ) \nonumber \\
&= \tau( \langle \zeta, V_p( x \otimes \pi(s^{-1}) \xi )\rangle_{\mathcal M^\Lambda} ) \nonumber \\
&= \langle V_p( x \otimes \pi(s^{-1}) \xi ), \zeta \rangle_\tau. \nonumber
\end{align}
Approximating $\zeta_0$ by elements in $( \mathcal M \ovt \mathcal H)^\Lambda$ it follows that
\[
\langle  \sigma_s(p) x \otimes \xi, \sigma_s(p) \zeta_0 \rangle_{\rm Tr} = 0.
\]

By part (\ref{item:B}) of Proposition~\ref{prop:funddomain} we have that ${\rm span} \{ \sigma_s(p)x \otimes \xi \mid x \in \mathcal M^\Lambda, \xi \in \mathcal H \}$ is weak$^*$-dense in $\sigma_s(p)\mathcal M \ovt \mathcal H$, and hence it follows that $\sigma_s(p)\zeta_0 = 0$. Since $s \in \Lambda$ is arbitrary, it then follows that $\zeta_0 = 0$.
\end{proof}

A motivating example is when $\mathcal M = \mathcal B(\ell^2 \Lambda)$ and the action $\sigma: \Lambda \to {\rm Aut}( \mathcal B(\ell^2 \Lambda) )$ is given by $\sigma_t(T) = \rho_t T \rho_t^*$, where $\rho: \Lambda \to \mathcal U(\ell^2 \Lambda)$ is the right-regular representation. Then $\mathcal M^\Lambda = L\Lambda$ and the above process describes a method of inducing representations of $\Lambda$ to normal Hilbert $L\Lambda$-bimodules.

There is another, extensively used, method of inducing representations to normal Hilbert bimodules, which was originally discovered by Connes (see \cite{Co82, Ch83, CoJo85, Po86}). Given a unitary representation $\pi: \Lambda \to \mathcal U(\mathcal H)$, set $\mathcal K = \ell^2 \Lambda  \ovt \mathcal H $, and consider the representations $\lambda \otimes \pi$, and $1 \otimes \rho$ of $\Lambda$ in $\mathcal U(\mathcal K)$. The Fell unitary $U: \mathcal K \to \mathcal K$ given by $U(\delta_t \otimes \xi) = \delta_t \otimes \pi(t) \xi$ satisfies $U ( \lambda \otimes \pi ) U^* = \lambda \otimes 1$, and thus both representations $\lambda \otimes \pi$ and $\rho \otimes 1$ extend to give commuting normal representations of $L\Lambda$ and $L\Lambda^{\rm op}$ in $\mathcal B(\mathcal K)$.

The following proposition shows that, for this example, the induced bimodule described in Definition~\ref{defn:inducedrep} is isomorphic to Connes' induced bimodule.

\begin{prop}\label{prop:isomorphicbimodules}
Let $\Lambda$ be a discrete group, and $\pi: \Lambda \to \mathcal U(\mathcal H)$ a unitary representation. Then there exists a unitary $V: \ell^2 \Lambda \ovt \mathcal H  \to (\mathcal B(\ell^2 \Lambda)  \ovt  \mathcal H)^\Lambda_\tau$ that induces an isomorphism of $L\Lambda$-bimodules.
\end{prop}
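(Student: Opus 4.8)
The plan is to recognize this as the special case of the abstract induction machinery in which the fundamental domain can be taken to be a rank-one projection, so that Proposition~\ref{prop:cocylerep} directly produces the desired unitary once we pass to the trace completion. First I would record the relevant identifications: with $\mathcal M = \mathcal B(\ell^2\Lambda)$ and $\sigma_t = \operatorname{Ad}(\rho_t)$, the fixed-point algebra is $\mathcal M^\Lambda = \rho(\Lambda)' = L\Lambda$; the rank-one projection $P_e$ onto $\mathbb C\delta_e$ is a finite-trace fundamental domain, since $\rho_t P_e \rho_t^*$ is the projection onto $\mathbb C\delta_{t^{-1}}$ and hence $\sum_{t\in\Lambda}\sigma_t(P_e) = 1$; and the induced trace $\tau(x) = {\rm Tr}(P_e x P_e) = \langle x\delta_e,\delta_e\rangle$ is the canonical trace on $L\Lambda$. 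In particular $L^2(L\Lambda,\tau)$ is canonically $\ell^2\Lambda$ via $u_t \mapsto \delta_t$.

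Next I would apply Proposition~\ref{prop:cocylerep} with $p = P_e$ to obtain the isomorphism of dual Hilbert $L\Lambda$-modules $V_{P_e}\colon L\Lambda \ovt \mathcal H \to (\mathcal M \ovt \mathcal H)^\Lambda$, with $V_{P_e}(x\otimes\xi) = \sum_{t\in\Lambda}\sigma_t(P_e)x\otimes\pi(t)\xi$. Completing both sides with respect to $\tau$ turns this into a Hilbert-space unitary $(V_{P_e})_\tau\colon (L\Lambda \ovt \mathcal H)_\tau \to (\mathcal M \ovt \mathcal H)^\Lambda_\tau$. Using the computation $\langle a\otimes\xi, b\otimes\eta\rangle_\tau = \langle\xi,\eta\rangle\,\tau(b^*a) = \langle\xi,\eta\rangle\langle a\delta_e, b\delta_e\rangle$, the assignment $a\otimes\xi\mapsto (a\delta_e)\otimes\xi$ identifies $(L\Lambda\ovt\mathcal H)_\tau$ with $\ell^2\Lambda\ovt\mathcal H$. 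I then define $V$ to be the resulting composition $\ell^2\Lambda\ovt\mathcal H \to (\mathcal M\ovt\mathcal H)^\Lambda_\tau$, which sends $\delta_t\otimes\xi$ to the class of $V_{P_e}(u_t\otimes\xi)$.

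Finally I would verify that $V$ intertwines the two $L\Lambda$-bimodule structures. The right-module part is essentially free: $V_{P_e}$ is a right $L\Lambda$-module map preserving the $L\Lambda$-valued inner product, completion preserves this, and under $a\otimes\xi\mapsto a\delta_e\otimes\xi$ right multiplication by $L\Lambda$ on $L^2(L\Lambda)$ becomes exactly Connes' right action $\rho\otimes 1$. The substance is the left action: I must show that left multiplication by $u_s\in L\Lambda\subset\mathcal M$ on $(\mathcal M\ovt\mathcal H)^\Lambda_\tau$ corresponds to Connes' left action $\lambda_s\otimes\pi(s)$, i.e. that $\lambda_s\cdot V_{P_e}(u_t\otimes\xi) = V_{P_e}(u_{st}\otimes\pi(s)\xi)$ for all $s,t\in\Lambda$ and $\xi\in\mathcal H$. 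After reindexing the sum over $\Lambda$ this reduces to the single algebraic identity $\lambda_s P_e = \rho_{s^{-1}}P_e\rho_{s^{-1}}^*\,\lambda_s$ (equivalently $\lambda_s P_e\lambda_s^* = P_{\delta_s}$), which expresses precisely how conjugating the fundamental domain by $\lambda$ interacts with $\sigma = \operatorname{Ad}\rho$, and which is immediate on $\delta_e$ since $\lambda$ and $\rho$ commute. I expect the main obstacle to be the bookkeeping around this step: establishing the generator-level identity and then promoting the intertwining from the $u_t$ and algebraic tensors to all of $\ell^2\Lambda\ovt\mathcal H$, and from $\lambda\otimes\pi$ to its normal extension to $L\Lambda$, by weak$^*$-density and normality, using part~(\ref{item:B}) of Proposition~\ref{prop:funddomain} exactly as in the proof of Proposition~\ref{prop:cocylerep}.
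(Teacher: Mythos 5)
Your proposal is correct and follows essentially the same route as the paper's proof: take the rank-one fundamental domain $P_e$, apply Proposition~\ref{prop:cocylerep} to get $V_{P_e}$, identify $L^2(L\Lambda,\tau)$ with $\ell^2\Lambda$, and check the left-action intertwining on generators via the identity $\lambda_s p_r \lambda_s^* = p_{sr}$ (your $\lambda_s P_e \lambda_s^* = \sigma_{s^{-1}}(P_e)$ conjugated by $\rho$), which is exactly the paper's computation $(\lambda_s \otimes 1) V_{p_e}(\delta_t \otimes \xi) = V_{p_e}(\lambda_s \otimes \pi(s))(\delta_t \otimes \xi)$. The remaining points you flag (right $L\Lambda$-modularity, extension by density and normality) are handled the same way in the paper.
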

\begin{proof}
For $r \in \Lambda$ we let $p_r$ be the rank-one projection onto $\mathbb C \delta_r \subset \ell^2 \Lambda$. We let $V_{p_e}: L\Lambda \ovt \mathcal H \to (\mathcal B(\ell^2 \Lambda) \ovt \mathcal H)^\Lambda$ be as in Proposition~\ref{prop:cocylerep}. If $s, t \in \Lambda$ and $\xi \in \mathcal H$, then 
\begin{align}
(\lambda_s \otimes 1) V_{p_e} (\delta_t \otimes \xi)
&=  (\lambda_s \otimes 1) \sum_{r \in \Lambda} p_r \lambda_t \otimes \pi(r^{-1}) \xi \nonumber \\
&= \sum_{r \in \Lambda} p_{sr} \lambda_{st} \otimes \pi(r^{-1}) \xi \nonumber \\
&= V_{p_e} (\lambda_s \otimes \pi(s)) (\delta_t \otimes \xi). \nonumber
\end{align}

Viewing $L\Lambda$ as a dense subspace of $\ell^2 \Lambda$ and taking completions shows that $V_{p_e}$ extends to a unitary $V_{p_e}: \ell^2 \Lambda \ovt \mathcal H \to (\mathcal B(\ell^2 \Lambda)  \ovt  \mathcal H)^\Lambda_\tau$ that intertwines the $L\Lambda$-module structures defined above.  As $V_{p_e}$ is also right $L\Lambda$-modular, the result  follows easily.
\end{proof}

\begin{lem}\label{lem:wkcontain}
Suppose $\pi: \Lambda \to \mathcal U(\mathcal H)$ and $\rho: \Lambda \to \mathcal U(\mathcal K)$ are unitary representations and $\Lambda \actson^\sigma (\mathcal M, {\rm Tr})$ is a trace-preserving action on a semi-finite von Neumann algebra. For finite-trace fundamental domains $p, q \in \mathcal M$, let $V_p$ and $V_q$ respectively be the maps defined in Proposition~\ref{prop:cocylerep}. Suppose $G$ is a finite index set, $I$ is an infinite index set, and we have functions $\xi: G \to \mathcal K$ and $\xi_i: G \to \mathcal H$  such that $\sup_{i \in I, k \in G} \| \xi_i^k \| < \infty$, and for all $t \in \Lambda$ and $k, \ell \in G$ we have 
\[
\langle \pi(t) \xi_i^k, \xi_i^\ell \rangle \to \langle \rho(t) \xi^k, \xi^\ell \rangle.
\] 
Then for all $x, y \in \mathcal M^\Lambda$ and for all $k, \ell \in G$, we have 
\[
\langle V_p (x \otimes \xi_i^k), V_q (y \otimes \xi_i^\ell) \rangle_\tau \to \langle V_p (x \otimes \xi^k), V_q(y \otimes \xi^\ell) \rangle_\tau.
\] 
\end{lem}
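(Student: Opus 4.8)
The plan is to express the $\tau$-inner products as traces of the form ${\rm Tr}(N_i W)$ for a fixed trace-class element $W$ and a uniformly bounded sequence $N_i$ that converges in the strong operator topology, and then to invoke ultraweak continuity of $M\mapsto{\rm Tr}(MW)$. The point I want to stress at the outset is that one must \emph{not} try to pass the limit through the sum over $\Lambda$ at the level of scalar matrix coefficients: the coefficients appearing below are in general only square-summable over $\Lambda$, and the matrix coefficients $\langle\pi(u)\xi_i^k,\xi_i^\ell\rangle$ need not decay, so a termwise argument is illegitimate.

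First I would record the trace-compatibility identity ${\rm Tr}(w) = \tau\big(\sum_{t\in\Lambda}\sigma_t(w)\big)$ for $w\in\mathfrak m_{\rm Tr}$, which follows at once from $\tau(\cdot) = {\rm Tr}(p\,\cdot\,p)$ in Proposition~\ref{prop:funddomain}~(\ref{item:A}), cyclicity of ${\rm Tr}$, and $\sum_t\sigma_{t^{-1}}(p) = 1$. Then, using $\langle a\otimes\xi, b\otimes\eta\rangle_{\mathcal M} = \langle\eta,\xi\rangle a^* b$ and the expansions of $V_p, V_q$ from Proposition~\ref{prop:cocylerep}, I would compute
\[
\langle V_p(x\otimes\xi), V_q(y\otimes\eta)\rangle_{\mathcal M} = \sum_{s,t\in\Lambda}\langle\pi(s)\eta, \pi(t)\xi\rangle\, x^*\sigma_t(p)\sigma_s(q)\,y,
\]
reindex by $s = tu$, use $\langle\pi(tu)\eta,\pi(t)\xi\rangle = \langle\pi(u)\eta,\xi\rangle$ and $\sigma_t(p)\sigma_{tu}(q) = \sigma_t\!\big(p\,\sigma_u(q)\big)$, and apply $\tau$ together with the trace-compatibility identity to the $\Lambda$-average $\sum_t\sigma_t\big(x^* p\,\sigma_u(q)\,y\big)$. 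This produces the key formula
\[
\langle V_p(x\otimes\xi), V_q(y\otimes\eta)\rangle_\tau = \sum_{u\in\Lambda}\langle\pi(u)\eta,\xi\rangle\,{\rm Tr}\big(x^* p\,\sigma_u(q)\,y\big),
\]
whose coefficients ${\rm Tr}(x^* p\,\sigma_u(q)\,y)$ are visibly independent of the representation; the identical computation on the $\mathcal K$-side yields the same coefficients paired against the matrix coefficients of $\rho$.

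Next I would absorb the matrix coefficients into operators. Writing $C = \sup_{i,k}\|\xi_i^k\|$ (so also $\|\xi^k\|\le C$, by taking $t = e$ in the hypothesis), set
\[
N_i = \sum_{u\in\Lambda}\langle\pi(u)\xi_i^\ell,\xi_i^k\rangle\,\sigma_u(q),\qquad N_\infty = \sum_{u\in\Lambda}\langle\rho(u)\xi^\ell,\xi^k\rangle\,\sigma_u(q).
\]
Since $\{\sigma_u(q)\}_u$ is a partition of unity into orthogonal projections and the coefficients are bounded by $C^2$ (Cauchy--Schwarz in $\mathcal H$, resp.\ $\mathcal K$), these sums converge strongly and $\|N_i\|,\|N_\infty\|\le C^2$, so the key formula reads $\langle V_p(x\otimes\xi_i^k), V_q(y\otimes\xi_i^\ell)\rangle_\tau = {\rm Tr}(N_i W)$ with $W = y x^* p$; here $W\in L^1(\mathcal M,{\rm Tr})$ because $yx^*\in\mathcal M$ is bounded and $p$ has finite trace. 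For the strong-operator convergence $N_i\to N_\infty$ I would fix $\zeta\in L^2(\mathcal M,{\rm Tr})$ and use orthogonality to write
\[
\|(N_i - N_\infty)\zeta\|_2^2 = \sum_{u\in\Lambda}\big|\langle\pi(u)\xi_i^\ell,\xi_i^k\rangle - \langle\rho(u)\xi^\ell,\xi^k\rangle\big|^2\,\|\sigma_u(q)\zeta\|_2^2;
\]
the hypothesis (with the roles of $k$ and $\ell$ interchanged) makes each summand tend to $0$, they are dominated by $(2C^2)^2\|\sigma_u(q)\zeta\|_2^2$, and $\sum_u\|\sigma_u(q)\zeta\|_2^2 = \|\zeta\|_2^2<\infty$, so dominated convergence over $u$ forces the left-hand side to $0$.

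Finally, being uniformly bounded and strongly (hence weakly) convergent, the $N_i$ converge to $N_\infty$ ultraweakly, so pairing against the fixed $W\in L^1(\mathcal M,{\rm Tr}) = \mathcal M_*$ gives ${\rm Tr}(N_i W)\to{\rm Tr}(N_\infty W)$, which is precisely the asserted convergence. The main obstacle is exactly the one flagged above: because the coefficients ${\rm Tr}(x^* p\,\sigma_u(q)\,y)$ are not absolutely summable over $\Lambda$, the passage to the limit has to be organized at the operator level, and it is the finiteness of ${\rm Tr}(p)$ that saves the argument, by making $W = yx^* p$ trace-class so that bounded strong-operator convergence of $N_i$ suffices.
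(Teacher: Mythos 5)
Your proof is correct, and up to reindexing it begins exactly as the paper's does: both establish the expansion
\[
\langle V_p (x \otimes \xi_i^k), V_q (y \otimes \xi_i^\ell) \rangle_\tau
= \sum_{u \in \Lambda} \langle \pi(u) \xi_i^\ell, \xi_i^k \rangle \, {\rm Tr}\bigl( x^* p \, \sigma_u(q) \, y \bigr),
\]
which is the paper's (\ref{eq:ipsum}) in disguise. Where you genuinely diverge is the limiting step. The paper splits $\Lambda$ into a finite set $F$ and a tail, chooses $F$ using $\sum_s \tau\bigl(\sum_t \sigma_t(\sigma_s(p)q)\bigr) = {\rm Tr}(p) < \infty$, and bounds the tail via the inequality $|\tau(x^* y_F y)| \leq \|x\| \|y\| \tau(y_F)$ with $y_F = \sum_{s \notin F} \sum_t \sigma_t(\sigma_s(p)q)$; you instead absorb the matrix coefficients into the uniformly bounded operators $N_i$, prove bounded strong convergence $N_i \to N_\infty$ by orthogonality of $\{\sigma_u(q)\}_u$ and dominated convergence, and pair against the fixed element $W = yx^*p \in L^1(\mathcal M, {\rm Tr})$ by normality. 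Your route buys real robustness, and your opening caveat identifies an actual delicacy in the paper's own argument: $y_F$ is a sum of translates of products $\sigma_s(p)q$ of two noncommuting projections, so it need not be positive or even self-adjoint, and the displayed Cauchy--Schwarz bound can fail --- already for $\mathcal M = \mathbb M_2(\mathbb C)$ with $\Lambda = \mathbb Z/2$ acting by conjugation with the flip unitary, $p = E_{11}$, $q = \frac{1}{2}\bigl(\begin{smallmatrix} 1 & i \\ -i & 1 \end{smallmatrix}\bigr)$, $x = 1$, and $y$ a suitable unitary in $\mathcal M^\Lambda$, one gets $|\tau(y_F y)| = 1/\sqrt{2} > \tau(y_F) = 1/2$. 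Relatedly, keeping the matrix coefficients inside the tail sum would require absolute summability of the scalars ${\rm Tr}(x^* p \sigma_u(q) y)$, whereas the natural estimates give only square-summability, exactly as you say; the paper's splitting is fully rigorous in the commutative (measure equivalence) setting, where $p$ and $\sigma_u(q)$ commute and those scalars are dominated by $\|x\| \|y\| {\rm Tr}(p\sigma_u(q))$, but not obviously in general. Your operator-level argument is precisely the repair: in your notation the tail of the pairing is $|{\rm Tr}(N_i Q_F W)| \leq C^2 \|Q_F W\|_1$ for the tail projection $Q_F = \sum_{u \notin F} \sigma_u(q)$, and $\|Q_F W\|_1 \to 0$ because ${\rm Tr}(p) < \infty$ makes $W$ trace class. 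The only gloss in your write-up --- justifying the interchanges of sums and of $\tau$ in deriving the key formula via weak$^*$-convergence of the sums defining $V_p, V_q$ and normality of $\tau$ --- is the same gloss the paper makes in deriving (\ref{eq:ipsum}), so nothing is lost there.
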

\begin{proof}
We compute 
\begin{align}
\langle V_q (y \otimes \xi_i^\ell), V_p (x \otimes \xi_i^k) \rangle_{\mathcal M}
&= \sum_{s, t \in \Lambda} \langle \sigma_s(q) y \otimes \pi(s) \xi_i^\ell, \sigma_t(p) x \otimes \pi(t) \xi_i^k \rangle_{\mathcal M} \nonumber \\
&= \sum_{s, t \in \Lambda} y^* \sigma_s(q)\sigma_t(p) x \langle \pi(t)\xi_i^k, \pi(s) \xi_i^\ell \rangle \nonumber \\
&= \sum_{s, t \in \Lambda} y^*  \sigma_t( \sigma_s(q)p ) x \langle \xi_i^k, \pi(s) \xi_i^\ell \rangle. \label{eq:ipsum} 
\end{align}

Hence, 
\begin{align}
\langle V_p (x \otimes \xi_i^k), V_q (y \otimes \xi_i^\ell) \rangle_\tau
&= \sum_{s, t \in \Lambda}  {\rm Tr}(p y^* \sigma_t( \sigma_s(q)p ) x) \langle \xi_i^k, \pi(s) \xi_i^\ell \rangle \nonumber \\
&= {\rm Tr}\left( y^* \left( \sum_{s \in \Lambda} \sigma_s(q) \langle \xi_i^k, \pi(s) \xi_i^\ell \rangle \right) p x \right). \nonumber
\end{align}

By normality of the trace, given $\varepsilon > 0$ there exists $F \subset \Lambda$ finite so that for all $f \in \ell^\infty \Lambda$ we have
\[
\left| {\rm Tr}\left( y^* \left( \sum_{s \not\in F} \sigma_s(q) f(s) \right) px \right) \right|
< \varepsilon \| f \|_\infty.
\]

Thus, using Cauchy-Schwarz, for all $k, \ell \in G$, 
\begin{align}
\limsup_{i \to \infty} & | \langle V_p (x \otimes \xi_i^k),  V_q (y \otimes \xi_i^\ell) \rangle_\tau - \langle V_p (x \otimes \xi^k), V_q(y \otimes \xi^\ell) \rangle_\tau |  \nonumber \\
&\leq 2\varepsilon \sup_i \| \xi_i^k \| \| \xi_i^\ell \|  + \limsup_{i \to \infty} \sum_{s \in F} \tau \left( x^* \left( \sum_{t \in \Lambda} \sigma_t( \sigma_s(p)q ) \right) y \right)  \nonumber \\
&\qquad {} \qquad {} \qquad {} \qquad {} \qquad {} \qquad {} \cdot | \langle \xi_i^\ell, \pi(s) \xi_i^k \rangle - \langle \xi^\ell, \rho(s) \xi^k \rangle | \nonumber \\
&= 2 \varepsilon \sup_i \| \xi_i^k \| \| \xi_i^\ell \|.  \nonumber
\end{align}
As $\varepsilon > 0$ was arbitrary, the result follows.
\end{proof}

\begin{lem}\label{lem:mix}
Suppose $\pi: \Lambda \to \mathcal U(\mathcal H)$ is a mixing representation and $\Lambda \actson^\sigma (\mathcal M, {\rm Tr})$ is a trace-preserving action on a semi-finite von Neumann algebra. Suppose we have finite-trace $\Lambda$-fundamental domains $p_i \in \mathcal M$ such that $p_i \to 0$ in the weak operator topology. Then for any $\Lambda$-fundamental domain $p$ and $\xi, \eta \in \mathcal H$, we have
\[
 \lim_{i \to \infty} \sup_{x, y \in (\mathcal M^\Lambda)_1} | \langle V_p (x \otimes \xi), V_{p_i} (y \otimes \eta) \rangle_\tau | = 0.
\]
\end{lem}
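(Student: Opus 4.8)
The plan is to absorb the operator-norm variables $x,y$ through the finite trace on $\mathcal M^\Lambda$, reducing the entire supremum to the norm of a single vector, and then to exploit the orthogonality of the partition $\{\sigma_s(p_i)\}_s$ to estimate that norm. First note that all $\Lambda$-fundamental domains have equal trace, since ${\rm Tr}(p)=\sum_s{\rm Tr}(p\,\sigma_s(p_i))={\rm Tr}(p_i)$; writing $c:={\rm Tr}(p)={\rm Tr}(p_i)<\infty$, the trace $\tau$ on $\mathcal M^\Lambda$ from Proposition~\ref{prop:funddomain} is \emph{finite} with $\tau(1)=c$. Because $V_p$ and $V_{p_i}$ are right $\mathcal M^\Lambda$-module maps and $x\otimes\xi=(1\otimes\xi)\cdot x$, setting $m_i:=\langle V_p(1\otimes\xi),V_{p_i}(1\otimes\eta)\rangle_{\mathcal M^\Lambda}\in\mathcal M^\Lambda$ yields $\langle V_p(x\otimes\xi),V_{p_i}(y\otimes\eta)\rangle_{\mathcal M^\Lambda}=x^*m_iy$. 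Hence, for $x,y$ in the unit ball, $|\langle V_p(x\otimes\xi),V_{p_i}(y\otimes\eta)\rangle_\tau|=|\tau(y^*m_i^*x)|\le\|m_i\|_{1,\tau}$, and by Cauchy--Schwarz $\|m_i\|_{1,\tau}\le c^{1/2}\|m_i\|_{2,\tau}$. It therefore suffices to prove $\|m_i\|_{2,\tau}\to0$.

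Next I would expand $m_i$ exactly as in the proof of Lemma~\ref{lem:wkcontain} with $x=y=1$, obtaining
\[
m_i=\sum_{s\in\Lambda}\langle\eta,\pi(s)\xi\rangle\,z_s^{(i)},\qquad z_s^{(i)}=\sum_{t\in\Lambda}\sigma_t(\sigma_s(p)\,p_i)\in\mathcal M^\Lambda.
\]
The key computation is that the $z_s^{(i)}$ are pairwise orthogonal in $L^2(\mathcal M^\Lambda,\tau)$. Using the identity $\tau(\sum_t\sigma_t(a))={\rm Tr}(a)$ (which follows from $\tau(w)={\rm Tr}(pwp)$ and $\sum_t\sigma_{t^{-1}}(p)=1$), together with the fact that $\{\sigma_s(p)\}_s$ and $\{\sigma_s(p_i)\}_s$ are partitions of unity into pairwise orthogonal projections, one finds
\[
\langle z_s^{(i)},z_{s'}^{(i)}\rangle_\tau=\delta_{s,s'}\,{\rm Tr}(\sigma_s(p)\,p_i),
\]
the cross terms vanishing because $\sigma_{s'}(p)\,\sigma_{ts}(p)=0$ unless $ts=s'$ and $\sigma_{s's^{-1}}(p_i)\,p_i=0$ unless $s=s'$. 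Consequently
\[
\|m_i\|_{2,\tau}^2=\sum_{s\in\Lambda}|\langle\eta,\pi(s)\xi\rangle|^2\,{\rm Tr}(\sigma_s(p)\,p_i).
\]

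Finally I would run a head/tail argument. The weights $w_s^{(i)}:={\rm Tr}(\sigma_s(p)\,p_i)$ are nonnegative and satisfy $\sum_s w_s^{(i)}={\rm Tr}(p_i)=c$ for every $i$. Since $\pi$ is mixing, $|\langle\eta,\pi(s)\xi\rangle|\to0$ as $s\to\infty$, so given $\varepsilon>0$ there is a finite $F\subset\Lambda$ with $|\langle\eta,\pi(s)\xi\rangle|^2<\varepsilon$ off $F$; the tail then contributes at most $\varepsilon c$. For the finitely many $s\in F$, $w_s^{(i)}\to0$ as $i\to\infty$, because $p_i\to0$ in the weak operator topology, hence ultraweakly on the bounded set of projections, while $b\mapsto{\rm Tr}(\sigma_s(p)b)$ is a normal functional as $\sigma_s(p)$ has finite trace. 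Thus $\limsup_i\|m_i\|_{2,\tau}^2\le\varepsilon c$, and letting $\varepsilon\to0$ gives $\|m_i\|_{2,\tau}\to0$, completing the proof.

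I expect the \emph{main obstacle} to be the uniformity of the estimate over the operator-norm unit ball of $\mathcal M^\Lambda$: this ball is unbounded in $L^2(\mathcal M,{\rm Tr})$ (for instance $1\in\mathcal M^\Lambda$ is not ${\rm Tr}$-square-integrable when $\Lambda$ is infinite), so any bound phrased through ${\rm Tr}$-Hilbert--Schmidt norms of $x$ or $y$ diverges. The decisive device is to factor $x,y$ out of the problem via the finite-trace duality $\sup_{x,y\in(\mathcal M^\Lambda)_1}|\tau(x^*m y)|\le\|m\|_{1,\tau}$, which collapses the supremum to a single element $m_i$; once this is done, the orthogonality of the $z_s^{(i)}$ renders the remaining estimate elementary.
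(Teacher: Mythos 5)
Your proof is correct, and while it shares the paper's skeleton---the same expansion of the inner product into $\sum_{s}\langle\eta,\pi(s)\xi\rangle\,z_s^{(i)}$ with $z_s^{(i)}=\sum_t\sigma_t(\sigma_s(p)p_i)$ (namely (\ref{eq:ipsum}) with $q=p_i$), the head/tail split supplied by mixing, and normality of $b\mapsto{\rm Tr}(\sigma_s(p)b)$ for the head terms---it treats the decisive issue, uniformity over the operator-norm unit ball, in a genuinely different way. The paper keeps $x,y$ inside every term: the head terms are dropped on the strength of $\sum_{s\in F}|\tau(z_s^{(i)})|\to0$ alone, and the uniform bound on the tail $\sum_{s\notin F}|\tau(x^*z_s^{(i)}y)\langle\eta,\pi(s)\xi\rangle|$ is left implicit, resting on the Cauchy--Schwarz device of Lemma~\ref{lem:wkcontain}, $|\tau(x^*wy)|\le\|x\|\|y\|\tau(w)$, which read literally presumes positivity of $w$ (a sum of products of orthogonal projections, which need not even be self-adjoint). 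You instead collapse $x,y$ at the outset via right $\mathcal M^\Lambda$-modularity together with the finite-trace duality $|\tau(y^*m_i^*x)|\le\|m_i\|_{1,\tau}\le\tau(1)^{1/2}\|m_i\|_{2,\tau}$, and then compute $\|m_i\|_{2,\tau}$ exactly from the orthogonality $\langle z_s^{(i)},z_{s'}^{(i)}\rangle_\tau=\delta_{s,s'}{\rm Tr}(\sigma_s(p)p_i)$, an identity the paper never isolates. This Pythagorean formula makes the tail estimate exact rather than term-by-term, and that matters: a term-by-term bound does not close, since the best one can say is $|\tau(x^*z_s^{(i)}y)|\le\tau(1)^{1/2}\,{\rm Tr}(\sigma_s(p)p_i)^{1/2}$, and these square roots need not be summable over $\Lambda$. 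So your route buys rigor exactly where the paper's write-up is loosest, at the price of a slightly longer setup; the paper's route buys brevity and parallelism with Lemma~\ref{lem:wkcontain}. Two small points to make explicit in a final write-up: the cross terms in your orthogonality computation vanish only after a cyclic permutation under ${\rm Tr}$ brings the two $p_i$-translates together (cyclicity, not adjacency, is the operative fact), and the interchanges of $\tau$ with the sums over $\Lambda$ deserve a word (e.g.\ the partial sums are uniformly bounded in norm and the relevant functionals are normal).
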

\begin{proof}
Fix $p \in \mathcal M$ a finite-trace fundamental domain and $\xi, \eta \in \mathcal H$. Then for $x, y \in \mathcal M^\Lambda$ we may compute as in (\ref{eq:ipsum})
\[
\langle V_p (x \otimes \xi), V_{p_i} (y \otimes \eta) \rangle_\tau
= \sum_{s \in \Lambda} x^* \left( \sum_{t \in \Lambda} \sigma_t( \sigma_s(p) p_i ) \right) y \langle \eta, \pi(s) \xi \rangle. 
\]

Fix $\varepsilon > 0$. Since $\pi$ is a mixing representation, there exists $F \subset \Lambda$ finite so that $| \langle \eta, \pi(s) \xi \rangle | < \varepsilon$ for all $s \not\in F$. As $p_i \to 0$ weakly, we have 
\[
\lim_{i \to \infty} \sum_{s \in F} \left| \tau \left( \sum_{t \in \Lambda} \sigma_t(\sigma_s(p)p_i)  \right) \right| = 0.
\]
Hence 
\begin{align}
 \limsup_{i \to \infty} & \sup_{x, y \in (\mathcal M^\Lambda)_1} | \langle V_p (x \otimes \xi), V_{p_i} (y \otimes \eta) \rangle_\tau | \nonumber \\
&\leq  \limsup_{i \to \infty}  \sup_{x, y \in (\mathcal M^\Lambda)_1}  \sum_{s \not\in F} \left| \tau \left( x^* \left( \sum_{t \in \Lambda} \sigma_t( \sigma_s(p) p_i ) \right) y \right) \langle \eta, \pi(s) \xi \rangle \right| < \varepsilon. \nonumber 
\end{align}
Since $\varepsilon > 0$ was arbitrary, and since vectors of the form $x \otimes \xi$ span a weak$^*$-dense subset of $\mathcal M^\Lambda \ovt \mathcal H$, the result follows. 
\end{proof}

The following proposition generalizes results in Section 8 from \cite{Fu99B}. In the case when $\mathcal M$ is associated to a $W^*$-equivalence as in Proposition~\ref{prop:isomorphicbimodules}, this follows from results in \cite{Ch83, CoJo85}.

\begin{prop}\label{prop:inducedunitary}
Suppose $\pi: \Lambda \to U(\mathcal H)$ and $\rho: \Lambda \to U(\mathcal K)$ are two unitary representations of $\Lambda$, and $\Gamma \times \Lambda \actson (\mathcal M, {\rm Tr})$ is a von Neumann coupling. The following hold:
\begin{enumerate}[$($i$)$]
\item\label{item:induceA} If $\pi \prec \rho$, then $\pi_{\mathcal M} \prec \rho_{\mathcal M}$. 
\item\label{item:induceB} If $\pi$ is mixing, then $\pi_{\mathcal M}$ is mixing. 
\item\label{item:induceC} $\lambda_\mathcal M$ is a multiple of the left-regular representation of $\Gamma$.
\item\label{item:induceD} If $\pi$ is weak mixing, then $\pi_{\mathcal M}$ has no non-zero invariant vectors.
\end{enumerate}
\end{prop}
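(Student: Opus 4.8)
The plan is to reduce each part to a matrix-coefficient computation for $\pi_{\mathcal M}$ carried out through the cocycle isomorphism $V_p$ of Proposition~\ref{prop:cocylerep}. The identity on which everything rests is that, since $\Gamma$ acts by $\sigma_\gamma\otimes 1$ and commutes with $\Lambda$, for $x\in\mathcal M^\Lambda$ and $\xi\in\mathcal H$ one has $\pi_{\mathcal M}(\gamma)V_p(x\otimes\xi)=V_{\sigma_\gamma(p)}(\sigma_\gamma(x)\otimes\xi)$; thus applying $\pi_{\mathcal M}(\gamma)$ merely replaces the fundamental domain $p$ by $\sigma_\gamma(p)$ and $x$ by $\sigma_\gamma(x)$, both of which I can feed into Lemmas~\ref{lem:wkcontain} and~\ref{lem:mix}. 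I will also use throughout that, since $\Gamma$ admits a finite-trace fundamental domain, part~(\ref{item:D}) of Proposition~\ref{prop:funddomain} exhibits its Koopman representation as conjugate to $\rho\otimes 1$, hence a multiple of the regular representation and in particular mixing; consequently $\sigma_\gamma(p)\to 0$ in the weak operator topology as $\gamma\to\infty$, for any finite-trace fundamental domain $p$.

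For (\ref{item:induceA}) and (\ref{item:induceB}) I compute the coefficients of $V_p(x\otimes\xi)$, $V_p(y\otimes\eta)$, which are total by Proposition~\ref{prop:cocylerep}. For (\ref{item:induceA}), since induction is additive, $(\rho^{\oplus n})_{\mathcal M}\cong(\rho_{\mathcal M})^{\oplus n}$, it suffices to approximate the diagonal coefficients $\gamma\mapsto\langle \pi_{\mathcal M}(\gamma)V_p(x\otimes\xi),V_p(x\otimes\xi)\rangle_\tau$ on a finite set $F\subset\Gamma$ by sums of coefficients of $\rho_{\mathcal M}$. Given $\pi\prec\rho$, choose tuples $\eta^{(i)}\in\mathcal K^{\oplus n}$ with $\langle\rho^{\oplus n}(t)\eta^{(i)},\eta^{(i)}\rangle\to\langle\pi(t)\xi,\xi\rangle$ uniformly on finite subsets of $\Lambda$; then for each fixed $\gamma\in F$ an application of Lemma~\ref{lem:wkcontain}, with the pair of fundamental domains $(\sigma_\gamma(p),p)$, module elements $(\sigma_\gamma(x),x)$, a singleton index set, and its two representations taken to be $\rho^{\oplus n}$ and $\pi$ respectively, gives $\langle(\rho^{\oplus n})_{\mathcal M}(\gamma)V_p(x\otimes\eta^{(i)}),V_p(x\otimes\eta^{(i)})\rangle_\tau\to\langle\pi_{\mathcal M}(\gamma)V_p(x\otimes\xi),V_p(x\otimes\xi)\rangle_\tau$. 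As $F$ is finite this yields the desired simultaneous approximation and hence $\pi_{\mathcal M}\prec\rho_{\mathcal M}$. For (\ref{item:induceB}), writing the coefficient as $\langle V_{\sigma_{\gamma_n}(p)}(\sigma_{\gamma_n}(x)\otimes\xi),V_p(y\otimes\eta)\rangle_\tau$ and conjugating, I apply Lemma~\ref{lem:mix} with $p_i=\sigma_{\gamma_n}(p)\to 0$ weakly; since $\|\sigma_{\gamma_n}(x)\|=\|x\|$, the coefficient is dominated by $\|x\|\,\|y\|$ times the supremum in that lemma, which tends to $0$, and density finishes the argument.

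Part (\ref{item:induceC}) is the cleanest. When $\mathcal H=\ell^2\Lambda$ and $\pi=\lambda$, every element of the fixed-point module $(\mathcal M\ovt\ell^2\Lambda)^\Lambda$ has the form $\sum_{s\in\Lambda}\sigma_s(a)\otimes\delta_s$ for a unique $a\in\mathcal M$ with $\sum_s\sigma_s(a^*a)$ bounded, and using $\sum_s\sigma_s(p)=1$ one computes $\tau\bigl(\sum_s\sigma_s(a^*a)\bigr)={\rm Tr}(a^*a)$. Hence $a\mapsto\sum_s\sigma_s(a)\otimes\delta_s$ is an isometry of $L^2(\mathcal M,{\rm Tr})$ onto $(\mathcal M\ovt\ell^2\Lambda)^\Lambda_\tau$, and since the $\Gamma$-action $\sigma_\gamma\otimes 1$ corresponds to $a\mapsto\sigma_\gamma(a)$ it intertwines $\lambda_{\mathcal M}$ with the Koopman representation of $\Gamma$. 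By the common observation above this Koopman representation is a multiple of the regular representation, and therefore so is $\lambda_{\mathcal M}$.

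Finally, for (\ref{item:induceD}) I would identify the $\Gamma$-invariant vectors of $\pi_{\mathcal M}$. At the module level $(\mathcal M\ovt\mathcal H)^\Gamma=\mathcal M^\Gamma\ovt\mathcal H$, since $\Gamma$ acts only on the first factor, so the $\Gamma\times\Lambda$-invariants are $(\mathcal M^\Gamma\ovt\mathcal H)^\Lambda$, where $\mathcal M^\Gamma$ carries the $\Lambda$-invariant finite trace $x\mapsto{\rm Tr}(PxP)$ (finite as ${\rm Tr}(P)<\infty$, and $\Lambda$-invariant by part~(\ref{item:E}) of Proposition~\ref{prop:funddomain}). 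A $\Lambda$-invariant vector of $\sigma^0\otimes\pi$ on $L^2(\mathcal M^\Gamma)\ovt\mathcal H$ is precisely a $\Lambda$-equivariant Hilbert--Schmidt map $T\colon\mathcal H\to L^2(\mathcal M^\Gamma)$; then $T^*T$ is a nonzero positive trace-class operator commuting with $\pi$, whose spectral projections produce a nonzero finite-dimensional $\pi$-invariant subspace, contradicting weak mixing. I expect the main obstacle to lie exactly in making this reduction rigorous: one must verify that every $\Gamma$-invariant vector of the completion $(\mathcal M\ovt\mathcal H)^\Lambda_\tau$ genuinely comes from the $\Gamma$-fixed submodule $\mathcal M^\Gamma\ovt\mathcal H$, and reconcile the two traces $\tau$ and $x\mapsto{\rm Tr}(PxP)$ on $\mathcal M^{\Gamma\times\Lambda}$ (which are not proportional), so that the passage to $L^2(\mathcal M^\Gamma)\ovt\mathcal H$ and the compact-operator argument are legitimate. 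This is also the step that explains why weak mixing, rather than mere absence of invariant vectors for $\pi$, is the correct hypothesis, since the finite-dimensional subspaces that must be excluded pair against finite-dimensional pieces of the Koopman representation $\sigma^0$.
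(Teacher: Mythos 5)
Parts (\ref{item:induceA})--(\ref{item:induceC}) of your proposal are correct and, modulo presentation, coincide with the paper's own proof: the driving identity $\pi_{\mathcal M}(\gamma)V_p(x\otimes\xi)=V_{\sigma_\gamma(p)}(\sigma_\gamma(x)\otimes\xi)$, the application of Lemma~\ref{lem:wkcontain} to the pair of fundamental domains $(\sigma_\gamma(p),p)$, and the application of Lemma~\ref{lem:mix} with $p_i=\sigma_{\gamma_n}(p)\to 0$ in the weak operator topology (justified, as you note, by the $\Gamma$-Koopman representation being a multiple of the regular representation) are exactly the paper's steps, and your isometry $a\mapsto\sum_{s}\sigma_s(a)\otimes\delta_s$ in (\ref{item:induceC}) is precisely the inverse of the paper's unitary $\mathcal F(\zeta)=\langle 1\otimes\delta_e,\zeta\rangle_{\mathcal M}$. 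The one real deviation is in (\ref{item:induceA}): you approximate only \emph{diagonal} coefficients over the total set $\{V_p(x\otimes\xi)\}$ and appeal to the standard fact that this suffices for weak containment, whereas the paper runs Lemma~\ref{lem:wkcontain} over a finite index set $G$, approximating all matrix coefficients of a finite family simultaneously and never invoking that fact (this is exactly why the lemma is stated for finite sets $G$ rather than single vectors). Your route is legitimate, but you should cite the criterion (e.g.\ Bekka--de la Harpe--Valette, Proposition F.1.4); the kernel-containment argument behind it is short but not a triviality.

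Part (\ref{item:induceD}) has a genuine gap, and it is the one you flag yourself: you never show that a non-zero $\Gamma$-invariant vector of the completion $(\mathcal M\ovt\mathcal H)^\Lambda_\tau$ yields a non-zero $\Gamma$-invariant element of the \emph{bounded} module $(\mathcal M\ovt\mathcal H)^\Lambda$. Your identification $(\mathcal M\ovt\mathcal H)^\Gamma=\mathcal M^\Gamma\ovt\mathcal H$ lives at the level of bounded elements, and a general vector of the $\|\cdot\|_\tau$-completion is not of that form; without this reduction there is no operator-valued object to which your Hilbert--Schmidt argument can be applied, so (\ref{item:induceD}) is incomplete as written. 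The paper closes the hole with a circumcenter argument: choose $\eta\in(\mathcal M\ovt\mathcal H)^\Lambda$ with $\|\xi-\eta\|_\tau<\tfrac12\|\xi\|_\tau$, and let $\xi_0$ be the unique element of minimal $\|\cdot\|_\tau$-norm in the $\|\cdot\|_\tau$-closed convex hull of the orbit $\{\pi_{\mathcal M}(\gamma)\eta : \gamma\in\Gamma\}$. Uniqueness forces $\Gamma$-invariance of $\xi_0$, the estimate $\|\xi_0-\xi\|_\tau\le\|\eta-\xi\|_\tau$ forces $\xi_0\neq 0$, and---this is the missing idea---the orbit lies in a fixed ball of $\mathcal M\ovt\mathcal H$, which is weak$^*$-compact by Banach--Alaoglu, so $\xi_0$ lies again in $(\mathcal M\ovt\mathcal H)^\Lambda$. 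Only then does one land in $(\mathcal M\ovt\mathcal H)^{\Gamma\times\Lambda}\cong(\mathcal M^\Gamma\ovt\mathcal H)^\Lambda$ and can run your argument.

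By contrast, the second difficulty you raise---reconciling $\tau$ with $\tau_\Gamma={\rm Tr}(P\,\cdot\,P)$---is not an obstruction: proportionality of the traces is irrelevant, one only needs $\tau_\Gamma$ to be a finite, faithful, $\Lambda$-invariant trace on $\mathcal M^\Gamma$ (parts (\ref{item:A}) and (\ref{item:E}) of Proposition~\ref{prop:funddomain}). That already embeds $(\mathcal M^\Gamma\ovt\mathcal H)^\Lambda$ equivariantly into $L^2(\mathcal M^\Gamma,\tau_\Gamma)\ovt\mathcal H$, and slicing $\xi_0\xi_0^*$ by $\tau_\Gamma\otimes{\rm id}$ produces a non-zero positive trace-class operator commuting with $\pi$ (strictly, with $\bar\pi$, since vectors of $L^2(\mathcal M^\Gamma)\ovt\mathcal H$ are Hilbert--Schmidt maps out of the conjugate space $\overline{\mathcal H}$; this conjugation is harmless). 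This is exactly the paper's final step $(\tau_\Gamma\otimes{\rm id})(|\xi_0|)\in HS(\mathcal H)$. So the repair needed is precisely the circumcenter/weak$^*$-compactness reduction; with it, the rest of your outline for (\ref{item:induceD}) goes through.
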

\begin{proof}
Suppose first that $\pi \prec \rho$. Replacing $\rho$ with $\rho^{\oplus \infty}$, we may assume that $\rho$ has infinite multiplicity. Fix $G$ a finite set, and suppose $\xi: G \to \mathcal K$ is a map. Since $\pi \prec \rho$, there exists a net $\xi_i: G \to \mathcal H$ such that for all $t \in \Lambda$, we have $\langle \pi(t) \xi_i^k, \xi_i^\ell \rangle \to \langle \rho(t) \xi^k, \xi^\ell \rangle$. By Lemma~\ref{lem:wkcontain}, for all $x, y \in \mathcal M^\Lambda$ and $\gamma \in \Gamma$, we then have 
\begin{align}
\langle (\sigma_\gamma \otimes 1 )V_{p} ( x \otimes \xi_i^k ), V_p( y \otimes \xi_i^\ell ) \rangle_\tau
&= \langle V_{\sigma_\gamma(p)} ( \sigma_\gamma(x) \otimes \xi_i^k ), V_p( y \otimes \xi_i^\ell ) \rangle_\tau \nonumber \\
&\to \langle V_{\sigma_\gamma(p)} ( \sigma_\gamma(x) \otimes \xi^k), V_p( y \otimes \xi^\ell )  \rangle_\tau \nonumber \\
&= \langle (\sigma_\gamma \otimes 1 )V_{p} ( x \otimes \xi^k), V_p( y \otimes \xi^\ell )  \rangle_\tau. \nonumber
\end{align}
As elements of the form $x \otimes \xi$ span a dense subset of $( \mathcal M^\Lambda \ovt \mathcal H )_\tau$ this then shows (\ref{item:induceA}).

If $\pi$ is mixing and $\gamma \to \infty$, then for a fixed $\Lambda$-fundamental domain $p \in \mathcal M$, we have that $\sigma_\gamma(p) \to 0$ weakly. Hence Lemma~\ref{lem:mix} shows that for all $\xi, \eta \in \mathcal H$ and $x, y \in \mathcal M^\Lambda$, we have
\[
\lim_{\gamma \to \infty} \langle (\sigma_\gamma \otimes 1) V_p (x \otimes \xi), V_p( y \otimes \eta) \rangle_\tau
= \lim_{\gamma \to \infty} \langle V_{\sigma_\gamma(p)} \sigma_\gamma(x) \otimes \xi, V_p (y \otimes \eta \rangle_\tau
=0.
\]
Thus $\pi_{\mathcal M}$ is also mixing, which then shows (\ref{item:induceB}).

We define the map $\mathcal F: (\mathcal M \ovt \ell^2 \Lambda)^\Lambda \to \mathcal M$ by $\mathcal F(\xi) = \langle 1 \otimes \delta_e, \xi \rangle_{\mathcal M}$. For $x \in \mathcal M^\Lambda$ and $t \in \Lambda$, we then have $\mathcal F( V_p( x \otimes \delta_t))  = \langle 1 \otimes \delta_e, V_p( x \otimes \delta_t) \rangle = \sigma_{t^{-1}}(p) x$. Hence if we also have $y \in \mathcal M^\Lambda$ and $s \in \Lambda$, then
\begin{align*}
\langle \mathcal F(V_p( x \otimes \delta_t)), \mathcal F(V_p(y \otimes \delta_s)) \rangle_{\rm Tr}
&= \delta_{s, t} {\rm Tr}( x^* \sigma_{t^{-1}}(p) y) \\
&= \delta_{s, t} \tau( \sigma_t(y x^*) ) \\
&= \delta_{s, t} \tau( x^* y) \\
&= \langle x \otimes \delta_t, y \otimes \delta_s \rangle_\tau \\
&= \langle V_p( x \otimes \delta_t), V_p(y \otimes \delta_s) \rangle_\tau.
\end{align*}
Thus, $\mathcal F$ extends to an isometry $\mathcal F: (\mathcal M \ovt \ell^2 \Lambda)^\Lambda_{\tau} \to L^2(\mathcal M, {\rm Tr})$. Moreover, by part (\ref{item:B}) of Proposition~\ref{prop:funddomain} we see that ${\rm span} \{ \mathcal F( V_p(x \otimes \delta_t) ) \mid x \in \mathcal M^\Lambda, t \in \Lambda \}$ is dense in $L^2(M, {\rm Tr})$, hence $\mathcal F$ is unitary. 

As $\mathcal F$ commutes with the action of $\Gamma$, we then see that $\mathcal F$ implements an intertwiner between the representation $\lambda_{\mathcal M}$ and the Koopman representation on $L^2(\mathcal M, {\rm Tr})$. Since $\Gamma$ has a finite-measure fundamental domain, the latter representation is isomorphic to an amplification of the left regular representation by part (\ref{item:C}) of Proposition~\ref{prop:funddomain}. This then establishes (\ref{item:induceC}).

We now suppose that $\pi_{\mathcal M}$ has a non-zero invariant vector in $(\mathcal M \ovt \mathcal H)^\Lambda_\tau$. First, note that this then implies that there is a non-zero $\Gamma$-invariant vector in $(\mathcal M \ovt \mathcal H)^\Lambda$. Indeed, if $\xi \in (\mathcal M \ovt \mathcal H)^\Lambda_\tau$ is a $\Gamma$-invariant vector, then we may approximate $\xi$ by some $\eta \in (\mathcal M \ovt \mathcal H)^\Lambda$ so that $\| \xi - \eta \|_\tau < \frac{1}{2} \| \xi \|$. If we let $\xi_0$ be the unique element of minimal $\| \cdot \|_\tau$ in the $\| \cdot \|_\tau$-closed convex closure hull of $\{ \pi_{\mathcal M}(\gamma) \eta \mid \gamma \in \Gamma \}$, then $\xi_0$ is also $\Gamma$-invariant, and we have $\| \xi_0 - \xi \| \leq \| \eta - \xi \| < \frac{1}{2} \| \xi \|$, so that $\xi_0$ is non-zero. Closed balls in $\mathcal M \ovt \mathcal H$ are weak$^*$-compact by the Banach-Alaoglu theorem and hence we see that $\xi_0 \in (\mathcal M \ovt \mathcal H)^\Lambda \subset (\mathcal M \ovt \mathcal H)^\Lambda_\tau$. 

We therefore have a non-zero vector in $(\mathcal M \ovt \mathcal H)^{\Gamma \times \Lambda} \cong (\mathcal M^\Gamma \ovt \mathcal H )^\Lambda$. Recall that we endow $\mathcal H$ with its column operator space structure coming from the isomorphism $\mathcal H \cong \mathcal B( \mathbb C, \mathcal H)$. We therefore consider $\xi_0 \in (\mathcal M^\Gamma \ovt \mathcal B(\mathbb C, \mathcal H) )^\Lambda$, and we then obtain a non-zero positive operator $| \xi_0 | \in (\mathcal M^\Gamma \ovt HS(\mathcal H) )^\Lambda$, where $HS(\mathcal H)$ denotes the space of Hilbert-Schmidt operators on $\mathcal H$. As $\tau_\Gamma \otimes {\rm Tr}$ gives a faithful trace on $\mathcal M^\Gamma \ovt \mathcal B(\mathcal H)$, we then obtain a non-zero $\Lambda$-invariant vector $(\tau_\Gamma \otimes {\rm id})( | \xi_0 | ) \in HS(\mathcal H)$. This then shows that $\pi$ is not weak mixing, establishing (\ref{item:induceD}).
\end{proof}

\begin{proof}[Proof of Theorem~\ref{thm:vneamenaT}]
Amenability is characterized by having the left regular representation weakly contain the trivial representation, thus (\ref{item:induceA}) and (\ref{item:induceC}) in Proposition~\ref{prop:inducedunitary} show that amenability is preserved under von Neumann equivalence. 

Similarly, the Haagerup property is characterized by having a mixing representation that weakly contains the trivial representation. Thus, (\ref{item:induceA}) and (\ref{item:induceB}) in Proposition~\ref{prop:inducedunitary} show that the Haagerup property is preserved under von Neumann equivalence. 

Finally, if $\Gamma$ has property (T) and $\pi$ is a representation of $\Lambda$ that weakly contains the trivial representation, then since $1_{\mathcal M}$ contains the trivial representation for $\Gamma$, it follows that $\pi_{\mathcal M}$ also weakly contains the trivial representation. Property (T) then implies that $\pi_{\mathcal M}$ contains non-zero $\Gamma$-invariant vectors, and by (\ref{item:induceD}) in Proposition~\ref{prop:inducedunitary} it follows that $\pi$ is not weak mixing. It then follows from \cite[Theorem 1]{BeVa93} that $\Lambda$ also has property (T). 
\end{proof}

\section{Von Neumann equivalence for finite von Neumann algebras}\label{sec:vnefinite}

\begin{defn}\label{defn:funddomvn}
Let $M \subset \mathcal M$ be an inclusion of semi-finite von Neumann algebras. A fundamental domain for $M$ inside of $\mathcal M$ consists of a realization of  the standard representation $M \subset \mathcal B(L^2(M))$ as an intermediate von Neumann subalgebra $M \subset \mathcal B(L^2(M)) \subset \mathcal M$. The fundamental domain is finite if finite-rank projections in $\mathcal B(L^2(M))$ are mapped to finite projections in $\mathcal M$. 
\end{defn} 

Note that if $P = \mathcal B(L^2(M) )' \cap \mathcal M$, then we have an isomorphism 
\[
\mathcal M \cong \mathcal B(L^2(M)) \ovt P
\] 
where $M$ acts standardly as $M \otimes \mathbb C$. 

\begin{lem}\label{lem:conjfd}
Let $M \subset \mathcal M$ be an inclusion of von Neumann algebras with $\mathcal M$ being semi-finite and $M$ being finite and $\sigma$-finite. Then any two fundamental domains for $M$ are conjugate by a unitary in $M' \cap \mathcal M$. 
\end{lem}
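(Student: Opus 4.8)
The plan is to reduce the statement to a single Murray--von Neumann comparison of two minimal projections, and then to upgrade a conjugating unitary of $\mathcal{M}$ to one lying in $M'\cap\mathcal{M}$ by a correction taking place inside a type~I factor.

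First I set up notation. A fundamental domain is the same datum as a normal embedding $\theta_i\colon\mathcal{B}(L^2(M))\to\mathcal{M}$ (for $i=1,2$) whose restriction to $M$ is the fixed inclusion $M\subset\mathcal{M}$; write $\mathcal{B}_i=\theta_i(\mathcal{B}(L^2(M)))$. Fix a faithful normal tracial state $\tau$ on $M$, which exists since $M$ is finite and $\sigma$-finite, let $\xi_0\in L^2(M)$ be the associated trace vector, let $p_0$ be the rank-one projection of $\mathcal{B}(L^2(M))$ onto $\mathbb{C}\xi_0$, and put $e_i=\theta_i(p_0)$. Since $M\xi_0$ is dense in $L^2(M)$ one has $(M\cup\{p_0\})''=\mathcal{B}(L^2(M))$, whence $\mathcal{B}_i=(M\cup\{e_i\})''$; moreover $e_i x e_i=\tau(x)e_i$ for $x\in M$, and $e_i$ has full central support in $\mathcal{M}$. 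Finally set $\theta:=\theta_2\circ\theta_1^{-1}\colon\mathcal{B}_1\to\mathcal{B}_2$. As $\theta_1$ and $\theta_2$ agree on $M$, the isomorphism $\theta$ fixes $M$ pointwise and carries $e_1$ to $e_2$.

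The key reduction is that it suffices to produce a unitary $u\in\mathcal{M}$ that implements $\theta$ on $\mathcal{B}_1$, i.e.\ with $\Ad(u)|_{\mathcal{B}_1}=\theta$. Indeed, such a $u$ satisfies $u\mathcal{B}_1u^*=\mathcal{B}_2$, while $uxu^*=\theta(x)=x$ for every $x\in M$ shows $u\in M'\cap\mathcal{M}$. To obtain $u$ I would first invoke the criterion recalled in the preliminaries: the two copies $\mathcal{B}_1,\mathcal{B}_2$ of $\mathcal{B}(L^2(M))$ are conjugate by some unitary $w\in\mathcal{M}$ as soon as the minimal projections $e_1$ and $e_2$ are Murray--von Neumann equivalent in $\mathcal{M}$. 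Given such a $w$, the map $\alpha:=\theta^{-1}\circ\Ad(w)$ is an automorphism of the type~I factor $\mathcal{B}_1$, hence inner because $\Out(\mathcal{B}(\mathcal{H}))$ is trivial; writing $\alpha=\Ad(w_0)$ with $w_0\in\mathcal{U}(\mathcal{B}_1)$, the unitary $u:=ww_0^*\in\mathcal{U}(\mathcal{M})$ then satisfies $\Ad(u)|_{\mathcal{B}_1}=\Ad(w)\circ\alpha^{-1}=\theta$, as required. This correction step is routine.

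The main obstacle is therefore the single equivalence $e_1\sim e_2$ in $\mathcal{M}$. Both $e_i$ are trace-vector (Jones) projections for the standardly embedded $M$, so the claim is that their class is intrinsic to the inclusion $M\subset\mathcal{M}$ and does not depend on the fundamental domain. I would prove this by comparison theory, matching the central-valued traces of $e_1$ and $e_2$. The conceptual input is that, viewing $L^2(\mathcal{M},\Tr)$ as a left $M$-module, each fundamental domain exhibits it as the amplification $L^2(M)\ovt\mathfrak{H}_i$ of the standard module, with multiplicity space $\mathfrak{H}_i\cong\overline{e_iL^2(\mathcal{M})}$; the right $\mathcal{M}$-module carried by $\mathfrak{H}_i$ is exactly $e_iL^2(\mathcal{M})$, whose isomorphism class records the equivalence class of $e_i$. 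Using that $M$ is finite and $\sigma$-finite, so that the standard left $M$-module $L^2(M)$ has a well-defined $\mathcal{Z}(\mathcal{M})$-valued coupling, one identifies this class with an invariant of $M\subset\mathcal{M}$ alone, forcing $e_1L^2(\mathcal{M})\cong e_2L^2(\mathcal{M})$ as right $\mathcal{M}$-modules and hence $e_1\sim e_2$. The delicate point—where the hypotheses on $M$ and the semifiniteness of $\mathcal{M}$ are really used—is upgrading the mere Hilbert-space uniqueness of the multiplicity space to a genuine right-$\mathcal{M}$-linear isomorphism, equivalently verifying directly that the central-valued trace of $e_i$ is independent of the fundamental domain; this is the crux of the argument.
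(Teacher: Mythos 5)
Your reduction and correction steps are sound and in fact coincide with what the paper does: the paper likewise reduces everything to showing that the two Jones projections $e_i=\theta_i(P_{\mathbb C\hat 1})$ are Murray--von Neumann equivalent in $\mathcal M$, invokes the criterion from its preliminaries to conjugate $\theta_1(\mathcal B(L^2(M)))$ onto $\theta_2(\mathcal B(L^2(M)))$ by a unitary of $\mathcal M$, and then adjusts that unitary so that it implements $\theta_2\circ\theta_1^{-1}$ and hence fixes $M$ pointwise. The genuine gap is that you never prove the one statement on which all of this hinges, namely $e_1\sim e_2$, equivalently $\mathcal C_{\rm Tr}(e_1)=\mathcal C_{\rm Tr}(e_2)$. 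Your module-theoretic sketch for it is circular: the decomposition $L^2(\mathcal M,{\rm Tr})\cong L^2(M)\ovt \mathfrak H_i$ is an isomorphism of \emph{left} $M$-modules only, so two such decompositions are related by a unitary in the commutant of $M$ acting on $L^2(\mathcal M,{\rm Tr})$, which is much larger than $J\mathcal M J$ and has no reason to intertwine the right $\mathcal M$-module structures of $e_1L^2(\mathcal M)$ and $e_2L^2(\mathcal M)$. Declaring that this right-$\mathcal M$-isomorphism class ``is an invariant of $M\subset\mathcal M$ alone'' is simply a restatement of $e_1\sim e_2$; as you yourself concede, this is the crux, and it is exactly the part left unproved.

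For the record, the paper fills this gap by a direct computation rather than by dimension theory. Take $V\subset M\subset L^2(M,\tau)$ a finite-dimensional subspace with orthonormal basis $\{x_1,\dots,x_k\}$ consisting of elements of $M$, so that $P_V=\sum_j x_jP_{\mathbb C\hat 1}x_j^*$ and $P_{V^*}=\sum_j x_j^*P_{\mathbb C\hat 1}x_j$ (traciality of $\tau$ makes $x\mapsto x^*$ a $\|\cdot\|_2$-isometry, so $\{x_j^*\}$ is again orthonormal). Since both embeddings restrict to the identity on $M$ and $\mathcal C_{\rm Tr}$ is tracial,
\[
\mathcal C_{\rm Tr}\bigl(\theta_1(P_{\mathbb C\hat 1})\,\theta_2(P_V)\bigr)
=\sum_{j}\mathcal C_{\rm Tr}\bigl(x_j^*\,\theta_1(P_{\mathbb C\hat 1})\,x_j\,\theta_2(P_{\mathbb C\hat 1})\bigr)
=\mathcal C_{\rm Tr}\bigl(\theta_1(P_{V^*})\,\theta_2(P_{\mathbb C\hat 1})\bigr).
\]
Because $M$ is finite, $M$ is $\|\cdot\|_2$-dense in $L^2(M,\tau)$, so letting $V$ increase through the finite-dimensional subspaces of $M$ makes both $P_V$ and $P_{V^*}$ converge strongly to $1$; normality of $\mathcal C_{\rm Tr}$ then gives $\mathcal C_{\rm Tr}(e_1)=\mathcal C_{\rm Tr}(e_2)$, hence $e_1\sim e_2$. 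This is where the finiteness of $M$ (traciality plus density of $M$ in $L^2(M,\tau)$) and the $\sigma$-finiteness (existence of the faithful normal tracial state $\tau$) actually enter. Without this computation, or some substitute for it, your proposal is a correct skeleton with the heart missing.
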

\begin{proof}
Let $\mathcal C_{\rm Tr}$ be a faithful normal semi-finite center-valued trace on $\mathcal M$, and let $\tau$ be a faithful normal trace on $M$. Let  $\mathcal F$ denote the collection of finite dimensional subspaces of $M \subset L^2(M, \tau)$, which we order by inclusion. Then the net $\{ P_{V} \}_{V \in \mathcal F} \subset \mathcal B(L^2(M, \tau))$ converges to the identity in the strong operator topology. If $\theta_1, \theta_2: \mathcal B(L^2(M, \tau)) \to \mathcal M$ are embeddings that restrict to the identity on $M$, and if $V \in \mathcal F$ has an orthonormal basis $\{ x_1, \ldots, x_k \} \subset M$, then we have
\begin{align}
\mathcal C_{\rm Tr}( \theta_1(P_{\mathbb C \hat 1}) \theta_2(P_V) )
&= \sum_{i = 1}^k \mathcal C_{\rm Tr}(  \theta_1(P_{\mathbb C \hat 1}) \theta_2( x_j P_{\mathbb C \hat 1} x_j^*))  \nonumber \\
&= \sum_{i = 1}^k \mathcal C_{\rm Tr}( \theta_1( x_j^* P_{\mathbb C \hat 1} x_j ) \theta_2( P_{\mathbb C \hat 1} ) ) \nonumber \\
&= \mathcal C_{\rm Tr}( \theta_1( P_{V^*} ) \theta_2( P_{\mathbb C \hat 1}) ). \nonumber
\end{align}
Taking the limit over $\mathcal F$, we see that $\mathcal C_{\rm Tr}( \theta_1(P_{\mathbb C \hat 1}) ) = \mathcal C_{\rm Tr}( \theta_2(P_{\mathbb C \hat 1}) )$. 

Thus, $\theta_1(P_{\mathbb C \hat 1})$ and $\theta_2(P_{\mathbb C \hat 1})$ are Murray-von Neumann equivalent projections in $\mathcal M$. It follows that $\theta_1(\mathcal B(L^2(M, \tau)))$ and $\theta_2(\mathcal B(L^2(M, \tau)))$ are conjugate by a unitary in $\mathcal M$, and as $M$ is standardly represented on $L^2(M, \tau)$, we may then find such a unitary $u \in \mathcal M$ so that 
\[
u x u^* = u \theta_1(x) u^* = \theta_2(x) = x
\] 
for all $x \in M$, and hence $u \in M' \cap \mathcal M$.
\end{proof}

\begin{defn}
A von Neumann coupling between two finite, $\sigma$-finite von Neumann algebras $M$ and $N$ consists of a semi-finite von Neumann algebra $\mathcal M$, together with embeddings of $M$ and $N^{\rm op}$ into $\mathcal M$ such that $N^{\rm op} \subset M' \cap \mathcal M$ and such that each inclusion $M \subset \mathcal M$ and $N^{\rm op} \subset \mathcal M$ has a finite fundamental domain. 
\end{defn}

We use the notation $\mathcal M = {\prescript{}{M}{\mathcal M}}_N$ to indicate that $\mathcal M$ is a von Neumann coupling between $M$ and $N$. Two von Neumann couplings ${\prescript{}{M}{\mathcal M}}_N$ and ${ \prescript{}{M}{\mathcal N} }_N$ are isomorphic if there exists an isomorphism between $\mathcal M$ and $\mathcal N$ that restricts to the identity on $M$ and $N$, respectively.

\begin{prop}
Let $\mathcal M = {\prescript{}{M}{\mathcal M}}_N$ be a von Neumann coupling between $M$ and $N$. Fix a normal faithful semi-finite center-valued trace $\mathcal C_{\rm Tr}$ on $\mathcal M$, and consider the quantity
\[
[M : N]_{\mathcal M} := \mathcal C_{\rm Tr}(p)/\mathcal C_{\rm Tr}(q)
\] 
where $p$ and $q$ are rank-one projections in $\mathcal B(L^2(M))$ and $\mathcal B(L^2(N))$ respectively. 
Then this gives an invertible operator affiliated to $\mathcal Z(\mathcal M)$, which is independent of the choice of $\mathcal C_{\rm Tr}$ as well as the fundamental domains for $M \subset \mathcal B(L^2(M)) \subset \mathcal M$ and $N \subset \mathcal B(L^2(N)) \subset \mathcal M$. 
\end{prop}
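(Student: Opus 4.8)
The plan is to treat $\mathcal{C}_{\rm Tr}(p)$ and $\mathcal{C}_{\rm Tr}(q)$ separately as positive operators affiliated to the abelian algebra $\mathcal{Z}(\mathcal{M})$, to check that each is unchanged under the allowed choices, and only then to form their quotient inside the commutative $*$-algebra of operators affiliated to $\mathcal{Z}(\mathcal{M})$. For the basic well-definedness, fix the fundamental domain $M \subset \mathcal{B}(L^2(M)) \subset \mathcal{M}$ and write $\mathcal{M} \cong \mathcal{B}(L^2(M)) \ovt P$ with $P = \mathcal{B}(L^2(M))' \cap \mathcal{M}$, so that $\mathcal{Z}(\mathcal{M}) = \mathbb C \ovt \mathcal{Z}(P)$. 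A rank-one projection $p$ then corresponds to $e \otimes 1$ and hence has central support $z(p) = 1$. Since $\mathcal{C}_{\rm Tr}$ is a faithful center-valued trace, the module identity $\mathcal{C}_{\rm Tr}(z_0 p) = z_0\mathcal{C}_{\rm Tr}(p)$ for central projections $z_0$, together with faithfulness, shows that $\mathcal{C}_{\rm Tr}(p)$ has full support (its support projection equals $z(p)=1$); as $p$ is a finite projection (the domain is finite), $\mathcal{C}_{\rm Tr}(p)$ is a genuine injective positive operator affiliated to $\mathcal{Z}(\mathcal{M})$, and therefore has a positive affiliated inverse. The same applies to $q$, and since both operators are affiliated to the \emph{abelian} algebra $\mathcal{Z}(\mathcal{M})$ they commute; hence $[M:N]_{\mathcal{M}} = \mathcal{C}_{\rm Tr}(p)\mathcal{C}_{\rm Tr}(q)^{-1}$ is a well-defined injective positive operator affiliated to $\mathcal{Z}(\mathcal{M})$ whose inverse is $\mathcal{C}_{\rm Tr}(q)\mathcal{C}_{\rm Tr}(p)^{-1}$, giving invertibility.

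Next I would dispatch the two ``geometric'' independences using the trace property. Any two rank-one projections in $\mathcal{B}(L^2(M))$ are Murray--von Neumann equivalent through a partial isometry lying in $\mathcal{B}(L^2(M)) \subset \mathcal{M}$, so $\mathcal{C}_{\rm Tr}(p) = \mathcal{C}_{\rm Tr}(p')$; likewise for $q$. For the choice of fundamental domain, Lemma~\ref{lem:conjfd} applies (as $\mathcal{M}$ is semi-finite and $M$ is finite and $\sigma$-finite) and shows that any two fundamental domains for $M$ are conjugate by a unitary $u \in M' \cap \mathcal{M}$; thus the two images of a rank-one projection are unitarily conjugate in $\mathcal{M}$, hence Murray--von Neumann equivalent, and again have equal center-valued trace. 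The identical argument applied to $N^{\rm op}$ shows that $\mathcal{C}_{\rm Tr}(q)$ is independent of the fundamental domain for $N^{\rm op}$. Consequently neither $\mathcal{C}_{\rm Tr}(p)$ nor $\mathcal{C}_{\rm Tr}(q)$, and a fortiori not their ratio, depends on the chosen rank-one projections or fundamental domains.

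Finally I would address independence of $\mathcal{C}_{\rm Tr}$, which is the one genuinely nontrivial point. Here I would invoke the uniqueness of the center-valued trace up to central scaling: any two normal faithful semi-finite center-valued traces $\mathcal{C}_{\rm Tr}$ and $\mathcal{C}_{\rm Tr}'$ on the semi-finite algebra $\mathcal{M}$ satisfy $\mathcal{C}_{\rm Tr}'(x) = h\,\mathcal{C}_{\rm Tr}(x)$ for all $x \in \mathcal{M}_+$, where $h$ is an injective positive operator affiliated to $\mathcal{Z}(\mathcal{M})$; this is the center-valued refinement of the scalar Radon--Nikodym comparison recalled in Section~2 (cf.\ \cite{Ta02}). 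Working in $\mathcal{Z}(\mathcal{M}) \cong L^\infty(Z,\nu)$, where affiliated positive operators are $\nu$-a.e.\ finite measurable functions, the factor $h$ cancels in the quotient:
\[
\mathcal{C}_{\rm Tr}'(p)\,\mathcal{C}_{\rm Tr}'(q)^{-1} = \big(h\,\mathcal{C}_{\rm Tr}(p)\big)\big(h\,\mathcal{C}_{\rm Tr}(q)\big)^{-1} = \mathcal{C}_{\rm Tr}(p)\,\mathcal{C}_{\rm Tr}(q)^{-1},
\]
the cancellation being legitimate because $h$ is central and $\nu$-a.e.\ nonzero. I expect this last step --- establishing the central-scaling uniqueness and making rigorous the cancellation of the possibly unbounded affiliated operator $h$ --- to be the main obstacle; the preceding invariances are routine consequences of the trace property and Lemma~\ref{lem:conjfd}.
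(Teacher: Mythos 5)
Your proposal is correct and takes essentially the same route as the paper's proof: the decomposition $\mathcal M \cong \mathcal B(L^2(M)) \ovt P$ giving central support $1$ for rank-one projections (hence invertibility of the ratio as an operator affiliated to $\mathcal Z(\mathcal M)$), Lemma~\ref{lem:conjfd} for independence of the choice of fundamental domains, and the Radon--Nikodym-type comparison of two faithful normal semi-finite center-valued traces by an injective positive central affiliated operator, which cancels in the quotient. The only difference is that you spell out steps the paper leaves implicit, such as the Murray--von Neumann equivalence of different rank-one projections and the almost-everywhere cancellation of the affiliated operator $h$ in $\mathcal Z(\mathcal M) \cong L^\infty(Z,\nu)$.
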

\begin{proof}
Considering the decomposition $\mathcal M \cong \mathcal B(L^2(M)) \ovt P$ we see that non-zero projections in $\mathcal B(L^2(M))$ have central support equal to $1$. Therefore $\mathcal C_{\rm Tr}(p)/\mathcal C_{\rm Tr}(q)$ gives an invertible operator affiliated to $\mathcal Z(\mathcal M)$. 

As two faithful semi-finite center-valued traces are related by a positive injective operator affiliated to $\mathcal Z(\mathcal M)$, we see that the quantity $\mathcal C_{\rm Tr}(p)/\mathcal C_{\rm Tr}(q)$ is independent of $\mathcal C_{\rm Tr}$. Also, by Lemma~\ref{lem:conjfd} fundamental domains must be conjugate in $\mathcal M$ and hence the quantities ${\rm Tr}_{\mathcal M}(p)$ and ${\rm Tr}_{\mathcal M}(q)$ are each independent of the choice of fundamental domain.
\end{proof}

\begin{defn}
The quantity $[M : N]_{\mathcal M} \in \mathcal Z(\mathcal M)$ is the index of the coupling $\mathcal M$. The index group of $M$ is the subset of $\mathbb R_+^*$ consisting of all indices for factorial self-couplings of $M$ and is denoted by $\mathcal I_{vNE}(M)$. 
\end{defn}

Note that in Theorem~\ref{thm:vneindexgp} below, we justify the terminology by showing that the index group is indeed a subgroup of $\mathbb R_+^*$. 

Suppose $\mathcal M = {\prescript{}{M}{\mathcal M}}_N$ and $\mathcal N = { \prescript{}{N}{\mathcal M} }_Q$ are $M$-$N$ and $N$-$Q$ von Neumann couplings, respectively. Choose fundamental domains $\theta_{\mathcal M}: \mathcal B(L^2(N)) \to \mathcal M$ for $N^{\rm op} \subset \mathcal M$ and $\theta_{\mathcal N}: \mathcal B(L^2(N)) \to \mathcal N$ for $N \subset \mathcal N$.  Set $P_1 = \theta_{\mathcal M}( \mathcal B(L^2(N)) )' \cap \mathcal M$ and $P_2 = \theta_{\mathcal N}( \mathcal B(L^2(N)) )' \cap \mathcal N$. Then we have isomorphisms
\[
{\tilde \theta}_\mathcal M:  P_1 \ovt \mathcal B(L^2(N)) \to \mathcal M, \ \ \ \ \ \ \tilde \theta_{\mathcal N}: \mathcal B(L^2(N)) \ovt P_2 \to \mathcal N
\] 
such that ${\tilde \theta}_{\mathcal M}(a \otimes x) = a \theta_{\mathcal M}(x)$ and $\tilde \theta_{\mathcal N}(x \otimes b) = \theta_{\mathcal N}(x) b$ for $a \in P_1$, $b \in P_2$ and $x \in \mathcal B(L^2(N))$.

We then define the fusion (or composition) of the couplings ${\prescript{}{M}{\mathcal M}}_N$ and ${ \prescript{}{N}{\mathcal M} }_Q$ to consist of the von Neumann algebra 
\[
\mathcal M  \oovt{N}  \mathcal N := P_1 \ovt \mathcal B(L^2(N) ) \ovt P_2,
\] 
endowed with the  
embeddings of $M$ and $Q$ via the inclusions $\tilde \theta_{\mathcal M}^{-1} \times 1$ of $\mathcal M$ and $1 \times \tilde \theta_{\mathcal N}^{-1}$ of $\mathcal N$ given respectively by
\[
\mathcal M \ni x \mapsto {\tilde \theta}_{\mathcal M}^{-1}(x) \otimes 1 \in \mathcal M \oovt{N}  \mathcal N,
\ \ \ \ \ 
\mathcal N \ni x \mapsto 1 \otimes {\tilde \theta_{\mathcal N}}^{-1}(x) \in \mathcal M \oovt{N}  \mathcal N.
\]
Note that $\mathcal Z(\mathcal M) \subset P_1$ and $\mathcal Z(\mathcal N) \subset P_2$, so we have an inclusion 
\[
\mathcal Z(\mathcal M) \ovt \mathcal Z(\mathcal N) \subset \mathcal M \oovt{N}  \mathcal N.
\] 

\begin{prop}\label{prop:transitivevne}
Using the notation above, the von Neumann algebra $\mathcal M \oovt{N}  \mathcal N$ gives a von Neumann coupling between $M$ and $Q$ with index
\[
[M : Q]_{\mathcal M \oovt{N}  \mathcal N} = [M : N]_{\mathcal M} \otimes [ N : Q ]_{\mathcal N}.
\]
Moreover, up to isomorphism, this coupling is independent of the choice of fundamental domains for the inclusions $N \subset \mathcal M$ and $N \subset \mathcal N$. 
\end{prop}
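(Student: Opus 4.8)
The plan is to check the three assertions in turn. Write $\mathcal F = \mathcal M \oovt{N} \mathcal N = P_1 \ovt \mathcal B(L^2(N)) \ovt P_2$ and let $\iota_M,\iota_Q$ be the given embeddings of $M$ and $Q^{\rm op}$. First I would record that the finiteness of the fundamental domains forces $P_1$ and $P_2$ to be finite: under $\mathcal N \cong \mathcal B(L^2(N)) \ovt P_2$ a rank-one projection $f \in \mathcal B(L^2(N))$ satisfies $(f \otimes 1)\mathcal N (f \otimes 1) \cong P_2$, so $f \otimes 1$ is finite in $\mathcal N$ exactly when $P_2$ is finite, and symmetrically for $P_1$; hence $\mathcal F$ is semi-finite. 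Because $\theta_{\mathcal M}$ realizes the standard form of $N^{\rm op}$, we have $M \subset (N^{\rm op})' \cap \mathcal M = P_1 \ovt N$, so $\iota_M(M) \subset P_1 \ovt N \ovt \mathbb C$; likewise $Q^{\rm op} \subset N' \cap \mathcal N = N^{\rm op} \ovt P_2$ gives $\iota_Q(Q^{\rm op}) \subset \mathbb C \ovt N^{\rm op} \ovt P_2$. These commute factor-by-factor, using that $N$ and $N^{\rm op}$ commute in $\mathcal B(L^2(N))$, so $Q^{\rm op} \subset M' \cap \mathcal F$. Finally, via $\mathcal F \cong \mathcal M \ovt P_2$ the fundamental domain $\mathcal B(L^2(M)) \subset \mathcal M$ includes as $\mathcal B(L^2(M)) \otimes 1_{P_2}$, and a finite projection of $\mathcal M$ tensored with the identity of the finite algebra $P_2$ stays finite; symmetrically for $Q$ via $\mathcal F \cong P_1 \ovt \mathcal N$. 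This exhibits $\mathcal F$ as an $M$-$Q$ coupling.

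For the index I would compute with the center-valued trace $\mathcal C_{\rm Tr} = \mathcal C_1 \otimes {\rm Tr}_N \otimes \mathcal C_2$ on $\mathcal F$, where $\mathcal C_i$ is a faithful normal center-valued trace on the finite algebra $P_i$ and ${\rm Tr}_N$ is the canonical trace on $\mathcal B(L^2(N))$ normalized to be $1$ on rank-one projections; this is permissible since the index does not depend on the choice of center-valued trace. Taking the rank-one fundamental-domain projections $p \otimes 1_{P_2}$ for $M$ and $1_{P_1} \otimes q$ for $Q$, the trace factors as $\mathcal C_{\rm Tr}(p \otimes 1_{P_2}) = \mathcal C_{\rm Tr}^{\mathcal M}(p) \otimes \mathcal C_2(1)$ and $\mathcal C_{\rm Tr}(1_{P_1} \otimes q) = \mathcal C_1(1) \otimes \mathcal C_{\rm Tr}^{\mathcal N}(q)$, where $\mathcal C_{\rm Tr}^{\mathcal M} = \mathcal C_1 \otimes {\rm Tr}_N$ and $\mathcal C_{\rm Tr}^{\mathcal N} = {\rm Tr}_N \otimes \mathcal C_2$. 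Since a rank-one projection of $\theta_{\mathcal M}(\mathcal B(L^2(N)))$ has $\mathcal C_{\rm Tr}^{\mathcal M}$-trace $\mathcal C_1(1)$ and one of $\theta_{\mathcal N}(\mathcal B(L^2(N)))$ has $\mathcal C_{\rm Tr}^{\mathcal N}$-trace $\mathcal C_2(1)$, the ratio (whose two factors lie in $\mathcal Z(P_1)$ and $\mathcal Z(P_2)$ respectively) becomes
\[
[M : Q]_{\mathcal F} = \frac{\mathcal C_{\rm Tr}^{\mathcal M}(p)}{\mathcal C_1(1)} \otimes \frac{\mathcal C_2(1)}{\mathcal C_{\rm Tr}^{\mathcal N}(q)} = [M : N]_{\mathcal M} \otimes [N : Q]_{\mathcal N}.
\]

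The delicate point, and the main obstacle, is the independence of the isomorphism class: the naive approach of conjugating by a unitary relating two fundamental domains (as in Lemma~\ref{lem:conjfd}) fails, because such a unitary lies in $(N^{\rm op})' \cap \mathcal M$ but need not commute with $M$, so it does not fix the embedding $\iota_M$. The resolution I would use is that every fundamental domain $\theta_{\mathcal M}$ restricts to the identity on $N^{\rm op}$, being a realization of the standard representation extending the given inclusion $N^{\rm op} \subset \mathcal M$. Fixing $\theta_{\mathcal N}$ and taking two choices $\theta_{\mathcal M}, \theta_{\mathcal M}'$ with fusions $\mathcal F, \mathcal F'$ and isomorphisms $\Theta = \tilde\theta_{\mathcal M} \otimes {\rm id}$, $\Theta' = \tilde\theta_{\mathcal M}' \otimes {\rm id}$ onto $\mathcal M \ovt P_2$, the embedding of $M$ becomes $m \mapsto m \otimes 1$, while, writing $\tilde\theta_{\mathcal N}^{-1}(y) = \sum_j x_j \otimes b_j$ with $x_j \in N^{\rm op}$, the embedding of $Q^{\rm op}$ becomes $y \mapsto \sum_j \theta_{\mathcal M}(x_j) \otimes b_j = \sum_j x_j \otimes b_j$, which is independent of $\theta_{\mathcal M}$ precisely because $\theta_{\mathcal M}$ fixes $N^{\rm op}$. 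Thus $\Theta$ and $\Theta'$ carry the respective embeddings to the same pair in $\mathcal M \ovt P_2$, so $(\Theta')^{-1} \circ \Theta$ is an isomorphism of couplings fixing $M$ and $Q$. Symmetrically, using $\mathcal F \cong P_1 \ovt \mathcal N$ and that $\theta_{\mathcal N}$ fixes $N$, varying $\theta_{\mathcal N}$ also preserves the isomorphism class; changing the two domains one at a time yields the full independence.
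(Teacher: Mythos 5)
Your proof is correct, and its first two parts coincide with the paper's: the paper also places the two embeddings inside $P_1 \ovt N \ovt \mathbb C$ and $\mathbb C \ovt N^{\rm op} \ovt P_2$ to see that they commute, deduces finiteness of the fundamental domains from the identifications $\mathcal M \ovt P_2 \cong \mathcal M \oovt{N} \mathcal N \cong P_1 \ovt \mathcal N$ with $P_1, P_2$ finite, and computes the index against $\mathcal C_1 \otimes {\rm Tr}_{\mathcal B(L^2(N))} \otimes \mathcal C_2$ exactly as you do. Where you genuinely diverge is the independence claim. The paper does go through Lemma~\ref{lem:conjfd}: it takes unitaries $u \in (N^{\rm op})' \cap \mathcal M$ and $v \in N' \cap \mathcal N$ with $\phi_{\mathcal M} = {\rm Ad}(u) \circ \theta_{\mathcal M}$ and $\phi_{\mathcal N} = {\rm Ad}(v) \circ \theta_{\mathcal N}$, forms the factorwise isomorphism $\alpha = {\rm Ad}(u^*) \otimes {\rm id} \otimes {\rm Ad}(v^*)$ between the two fusions---which, exactly as you anticipate, does not by itself intertwine the embeddings---and then repairs it by an inner automorphism: setting $U = \tilde\phi_{\mathcal M}^{-1}(u) \in R_1 \ovt N$ and $V = \tilde\phi_{\mathcal N}^{-1}(v) \in N^{\rm op} \ovt R_2$, which commute, the coupling isomorphism is ${\rm Ad}(UV) \circ \alpha^{-1}$. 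You bypass conjugation altogether: with $\theta_{\mathcal N}$ fixed, both fusions are identified with $\mathcal M \ovt P_2$ via $\Theta = \tilde\theta_{\mathcal M} \otimes {\rm id}$ and $\Theta' = \tilde\theta'_{\mathcal M} \otimes {\rm id}$, and these identifications send the embedding of $M$ to $m \mapsto m \otimes 1$ and the embedding of $Q^{\rm op}$ to $({\rm incl}_{N^{\rm op} \subset \mathcal M} \otimes {\rm id}_{P_2}) \circ \tilde\theta_{\mathcal N}^{-1}$, neither of which depends on $\theta_{\mathcal M}$, precisely because a fundamental domain restricts to the given inclusion on $N^{\rm op}$; hence $(\Theta')^{-1} \circ \Theta$ is a coupling isomorphism, and the symmetric argument plus composing the two one-variable changes gives full independence. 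Your route is shorter, avoids Lemma~\ref{lem:conjfd} for this step, and makes transparent why the embeddings are preserved; the paper's route costs the bookkeeping with $U$ and $V$ but exhibits the identification explicitly as a (corrected) unitary conjugation, so the two fusions are seen to differ by an essentially inner transformation. One small point of rigor on your side: elements of $N^{\rm op} \ovt P_2$ are generally not finite sums $\sum_j x_j \otimes b_j$, so the key identity should be phrased as saying that $z \mapsto \Theta(1_{P_1} \otimes z)$ and the canonical inclusion $N^{\rm op} \ovt P_2 \subset \mathcal M \ovt P_2$ are normal homomorphisms agreeing on the algebraic tensor product $N^{\rm op} \otimes P_2$, hence on its weak closure; this is cosmetic, not a gap.
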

\begin{proof}
We have 
\[
(\tilde \theta_{\mathcal M}^{-1} \times 1) (M) \subset (\tilde \theta_{\mathcal M}^{-1} \times 1) ({N^{\rm op}}' \cap \mathcal M) = P_1 \ovt N \ovt \mathbb C,
\] 
while 
\[
( 1 \times \tilde \theta_{\mathcal N}^{-1}) (Q) \subset ( 1 \times \tilde \theta_{\mathcal N}^{-1} ) (N' \cap \mathcal N) = \mathbb C \ovt  N^{\rm op} \ovt P_2,
\] 
so that the copies of $M$ and $Q$ in $\mathcal M \oovt{N}  \mathcal N$ commute. Since we have isomorphisms 
\[
\mathcal M \ovt P_2 \cong \mathcal M \oovt{N}  \mathcal N \cong P_1 \ovt \mathcal N,
\]
and since $P_1$ and $P_2$ are finite, we then have finite fundamental domains for $M$ and $Q$. We let $p \in \mathcal M$ and $q \in \mathcal N$ be minimal projections in fundamental domains for $M$ and $Q$ respectively. We also let $\mathcal C_i$ denote the center-valued trace on $P_i$ for $i = 1, 2$, and we define 
\[
\mathcal C_{\mathcal M} = \mathcal C_1 \otimes {\rm Tr}_{\mathcal B(L^2(N))} {\rm \ and \ } \mathcal C_{\mathcal N} = {\rm Tr}_{\mathcal B(L^2(N))} \otimes \mathcal C_2.
\] 
Then we have 
\begin{align}
[M : Q]_{\mathcal M \oovt{N}  \mathcal N}
&= (\mathcal C_1 \otimes {\rm Tr}_{\mathcal B(L^2(N))} \otimes \mathcal C_2 ) (\tilde \theta_{\mathcal M}^{-1}(p) \otimes 1)/  (\mathcal C_1 \otimes {\rm Tr}_{\mathcal B(L^2(N))} \otimes \mathcal C_2 ) (1 \otimes {\tilde \theta_{\mathcal N}}^{-1}(q)) \nonumber \\
&= {\mathcal C}_{\mathcal M}(p) \otimes \mathcal C_2(1)/ \mathcal C_1(1) \otimes {\mathcal C}_{\mathcal N}(q) \nonumber \\
&= [M : N]_{\mathcal M} \otimes [N : Q]_{\mathcal N}. \nonumber
\end{align}

Suppose now that we have fundamental domains for $N^{\rm op} \subset \mathcal M$ and $N \subset \mathcal N$ given respectively by $\phi_{\mathcal M}: \mathcal B(L^2(N)) \to \mathcal M$ and $\phi_{\mathcal N}: \mathcal B(L^2(N)) \to \mathcal N$.  We set 
\[
R_1 = \phi_{\mathcal M}(\mathcal B(L^2(N)))' \cap \mathcal M {\rm \ and \ } R_2 = \phi_{\mathcal N}(\mathcal B(L^2(N)))' \cap \mathcal N,
\] 
and we define the isomorphisms $\tilde \phi_{\mathcal M}$ and $\tilde \phi_{\mathcal N}$ as above. 

By Lemma~\ref{lem:conjfd} there exist unitaries $u \in {N^{\rm op}}' \cap \mathcal M$ and $v \in N' \cap \mathcal N$ so that $\phi_{\mathcal M} = {\rm Ad}(u) \circ \theta_{\mathcal M}$ and $\phi_{\mathcal N} = {\rm Ad}(v) \circ \theta_{\mathcal N}$. We then have $R_1 = u P_1 u^*$ and $R_2 = v P_2 v^*$. 

We consider the isomorphism $\alpha: R_1 \ovt \mathcal B(L^2(N)) \ovt R_2 \to P_1 \ovt \mathcal B(L^2(N)) \ovt P_2$ given by $\alpha = {\rm Ad}(u^*) \otimes {\rm id} \otimes {\rm Ad}(v^*)$. Under this isomorphism, the inclusion of $M$ coming from the fundamental domains $\phi_{\mathcal M}$ and $\phi_{\mathcal N}$ is given by $\alpha \circ ( \tilde \phi_{\mathcal M}^{-1} \times 1 )$ and still maps $M$ into $P_1 \ovt N \otimes \mathbb C$. Similarly, the new inclusion of $Q$ again maps into $\mathbb C \otimes N^{\rm op} \ovt P_2$.

If we restrict $\alpha$ to $P_1 \ovt \mathcal B(L^2(N))$ and consider the automorphism 
\[
\beta = \alpha \circ \tilde \phi_{\mathcal M}^{-1} \circ \tilde \theta_{\mathcal M} \in {\rm Aut}(P_1 \ovt \mathcal B(L^2(N)),
\] 
then for $a \in P_1$ and $x \in \mathcal B(L^2(N))$ we have
\begin{align}
\beta ( a \otimes x ) 
&= \alpha \circ \tilde \phi_ {\mathcal M}^{-1} ( a \theta_{\mathcal M}(x) ) \nonumber \\
&= \alpha \circ \tilde \phi_{\mathcal M}^{-1} ( u^* (u a u^*) \phi_{\mathcal M}(x) u ) \nonumber \\
&= \alpha \circ {\rm Ad}( \tilde \phi_{\mathcal M}^{-1} (u^*) ) ( u a u^* \otimes x ) \nonumber \\
&= {\rm Ad}(\alpha( \tilde \phi_{\mathcal M}^{-1} (u^*) ) ) (a \otimes x). \nonumber
\end{align}

Hence, $\beta = {\rm Ad}(\alpha( \tilde \phi_{\mathcal M}^{-1} (u^*) ) )$, and if we set $U =  \tilde \phi_{\mathcal M}^{-1}(u)$, then we see that $U \in R_1 \ovt N \subset R_1 \ovt \mathcal B(L^2(N))$ and  the map ${\rm Ad} ( U ) \circ  \alpha^{-1}$ intertwines the two inclusions of $M$ coming from the choice of fundamental domains. Similarly, if we set $V = \tilde \phi_{\mathcal N}^{-1}(v)$, then $V \in N^{\rm op} \ovt R_2 \subset \mathcal B(L^2(N)) \ovt R_2$ and the map ${\rm Ad} ( U ) \circ  \alpha^{-1}$ intertwines the two inclusions of $Q$ coming from the fundamental domains. We then have that $U$ and $V$ commute and the isomorphism ${\rm Ad}(UV) \circ \alpha^{-1}$ intertwines both the inclusions of $M$ and $Q$. 
\end{proof}

\begin{defn}
Two finite, $\sigma$-finite von Neumann algebras $M$ and $N$ are von Neumann equivalent, denoted $M \sim_{vNE} N$, if there exists a von Neumann coupling between them. 
\end{defn}

Von Neumann equivalence is indeed an equivalence relation. Reflexivity follows by considering the trivial von Neumann coupling $\mathcal B(L^2(M, \tau))$ with the standard embeddings of $M$ and $M^{\rm op}$. If ${\prescript{}{M}{\mathcal M}}_N$ is a von Neumann coupling between $M$ and $N$, then ${\mathcal M}^{\rm op}$ gives a von Neumann coupling between $N$ and $M$ (with index $[N: M]_{{\mathcal M}^{\rm op}} = [M:N]_{\mathcal M}^{-1}$), showing that this relation is symmetric, while transitivity of this relation follows from Proposition~\ref{prop:transitivevne}. We also see that by considering the index it follows that $\mathcal I_{vNE}(M)$ is a subgroup of $\mathbb R_+^*$. We record all these facts in the following theorem.

\begin{thm}\label{thm:vneindexgp}
Von Neumann equivalence gives an equivalence relation on the collection of finite, $\sigma$-finite von Neumann algebras. Given a finite, $\sigma$-finite von Neumann algebra $M$, we have that $\mathcal I_{vNE}(M)$ is a subgroup of $\mathbb R_+^*$, which only depends on the von Neumann equivalence class of $M$. 
\end{thm}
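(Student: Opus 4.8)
The plan is to verify three assertions in turn: that von Neumann equivalence is an equivalence relation, that $\mathcal I_{vNE}(M)$ is a subgroup of $\mathbb R_+^*$, and that it is invariant under the relation. For the equivalence relation, reflexivity is witnessed by the trivial self-coupling $\mathcal B(L^2(M))$, in which $M$ and $M^{\mathrm{op}} = JMJ$ are standardly represented and every finite-rank projection is automatically finite. Symmetry follows by passing from a coupling $\mathcal M = {\prescript{}{M}{\mathcal M}}_N$ to $\mathcal M^{\mathrm{op}}$, which couples $N$ with $M$; since ${\rm Tr}^{\mathrm{op}}$ agrees with ${\rm Tr}$ on the rank-one projections used to compute indices, and any two rank-one projections in $\mathcal B(L^2(M))$ have the same central trace, one gets $[N:M]_{\mathcal M^{\mathrm{op}}} = [M:N]_{\mathcal M}^{-1}$. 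Transitivity is precisely Proposition~\ref{prop:transitivevne}, the fusion $\mathcal M \oovt{N} \mathcal N$ furnishing the composite coupling with multiplicative index.

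For the subgroup claim, note first that for a factorial self-coupling the center is trivial, so $\mathcal C_{\rm Tr}$ is scalar-valued and the index $[M:M]_{\mathcal M} = {\rm Tr}(p)/{\rm Tr}(q)$ lies in $\mathbb R_+^*$. The identity $1$ is the index of the trivial self-coupling $\mathcal B(L^2(M))$, with $p$ and $q$ both rank-one. Closure under inverses is immediate from symmetry, as $\mathcal M^{\mathrm{op}}$ is again a factor with index $[M:M]_{\mathcal M}^{-1}$. The essential point is closure under products: given factorial self-couplings $\mathcal M,\mathcal N$ of $M$ with indices $c,d$, their fusion is $\mathcal M \oovt{M} \mathcal N \cong P_1 \ovt \mathcal B(L^2(M)) \ovt P_2$, and since $\mathcal M \cong \mathcal B(L^2(M)) \ovt P_1$ and $\mathcal N \cong \mathcal B(L^2(M)) \ovt P_2$ are factors the legs $P_1,P_2$ are factors; hence the fusion is a tensor product of factors, again a factor, and has index $cd$ by Proposition~\ref{prop:transitivevne}.

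For invariance I would show $\mathcal I_{vNE}(M) \subseteq \mathcal I_{vNE}(N)$ whenever $M \sim_{vNE} N$, the reverse inclusion following by symmetry. Fix a coupling $\mathcal K = {\prescript{}{M}{\mathcal K}}_N$ and let $c \in \mathcal I_{vNE}(M)$ be realized by a factorial self-coupling $\mathcal M$. The natural candidate self-coupling of $N$ is the conjugate $\mathcal K^{\mathrm{op}} \oovt{M} \mathcal M \oovt{M} \mathcal K$, whose index, by two applications of Proposition~\ref{prop:transitivevne}, equals $[M:N]_{\mathcal K}^{-1} \otimes c \otimes [M:N]_{\mathcal K}$. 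When $\mathcal K$ is a factor the two index operators are reciprocal scalars that cancel, the fusion of factors is a factor, and the index collapses to $c$, giving $c \in \mathcal I_{vNE}(N)$.

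The principal obstacle is that $\mathcal K$ need not be a factor: the center of $\mathcal K^{\mathrm{op}} \oovt{M} \mathcal M \oovt{M} \mathcal K$ is $\mathcal Z(\mathcal K) \ovt \mathcal Z(\mathcal K)$, on which the index is $c\,([M:N]_{\mathcal K}^{-1} \otimes [M:N]_{\mathcal K})$, a scalar only along the ``diagonal'' of the center. I would dispose of this by reducing to a factorial coupling $\mathcal K$. Any nonzero central projection $z \in \mathcal Z(\mathcal K)$ gives a coupling $\mathcal K z = {\prescript{}{M}{(\mathcal K z)}}_N$: the maps $m \mapsto mz$ and $n^{\mathrm{op}} \mapsto n^{\mathrm{op}}z$ stay faithful for $z \neq 0$, and the finite fundamental domains restrict to finite ones because the commutant leg $P$ in $\mathcal K \cong \mathcal B(L^2(M)) \ovt P$ is finite. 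Thus when $\mathcal Z(\mathcal K)$ has an atom one cuts down to a factorial coupling and the cancellation above yields index exactly $c$, independent of the atom. The residual difficulty --- producing a factorial coupling when $\mathcal Z(\mathcal K)$ is diffuse --- I expect to be the genuinely delicate step; I would treat it by disintegrating $\mathcal K$ over its center, observing that the fiberwise index of $\mathcal K_z^{\mathrm{op}} \oovt{M} \mathcal M \oovt{M} \mathcal K_z$ is the constant $c$, so that the central-valued index of the resulting direct integral is the scalar $c \cdot 1$, and then extracting a factorial self-coupling of $N$ of index $c$ from this scalar-index coupling. Combining the two inclusions gives $\mathcal I_{vNE}(M) = \mathcal I_{vNE}(N)$.
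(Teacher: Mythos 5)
Your treatment of the first two claims coincides with the paper's own argument: reflexivity via the trivial coupling $\mathcal B(L^2(M,\tau))$ with the standard embeddings of $M$ and $M^{\rm op}$, symmetry via $\mathcal M^{\rm op}$ with $[N:M]_{\mathcal M^{\rm op}}=[M:N]_{\mathcal M}^{-1}$, transitivity via Proposition~\ref{prop:transitivevne}, and the subgroup property by combining these. Your observation that the fusion of two \emph{factorial} self-couplings is again factorial (because $\mathcal Z(\mathcal M)\cong\mathcal Z(P_1)$ and $\mathcal Z(\mathcal N)\cong\mathcal Z(P_2)$, so the commutant legs are factors and the fusion is a tensor product of factors) is exactly the point the paper compresses into the phrase ``by considering the index,'' and you have it right.

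The invariance claim is where the two texts diverge: the paper simply records that $\mathcal I_{vNE}(M)$ depends only on the von Neumann equivalence class, with no argument, and your attempt exposes why an argument is actually needed --- the conjugation $\mathcal K^{\rm op}\oovt{M}\mathcal M\oovt{M}\mathcal K$ only yields a factorial self-coupling of $N$ of index $c$ when the connecting coupling $\mathcal K$ is a factor. Your reduction in the atomic case is correct: cutting by a nonzero central projection $z$ preserves the coupling structure (the cut map is a nonzero normal homomorphism on each copy of $\mathcal B(L^2(\cdot))$, hence faithful there and on $M$, $N^{\rm op}$, and the finiteness of fundamental domains survives because the commutant leg $P$ is finite), and a minimal central projection makes the index operator $[M:N]_{\mathcal K z}$ scalar, so the cancellation gives exactly $c$.

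The diffuse case, however, remains a genuine gap in your proposal, for two concrete reasons. First, disintegration over the center requires separability (a standard measure space and separable Hilbert spaces), which is not available here: the theorem is stated for arbitrary finite, $\sigma$-finite $M$ and $N$, and even when $M$ and $N$ have separable preduals the coupling $\mathcal K$ need not act on a separable space; one would first have to cut down, e.g.\ to $W^*(\mathcal B(L^2(M)),\mathcal B(L^2(N)))\subset\mathcal K$, which is again a coupling since its $\mathcal B(L^2(M))$-commutant sits inside the finite algebra $P$, and even this does not dispose of the non-separable case of the theorem. Second, and more seriously, your closing step --- ``extracting a factorial self-coupling of $N$ of index $c$ from this scalar-index coupling'' --- is circular as stated: a self-coupling whose central-valued index is the scalar $c$ is not thereby a factor, and factoriality is precisely what membership in $\mathcal I_{vNE}(N)$ requires. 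The workable version of your idea is different: show that almost every fiber $\mathcal K_x$ of the disintegration of $\mathcal K$ over its center is itself a \emph{factorial} $M$-$N$ coupling --- i.e.\ that the fiber maps restrict to faithful normal embeddings of $M$, $N^{\rm op}$ and of both copies of $\mathcal B(L^2(\cdot))$, with finite-rank projections remaining finite fiberwise --- and then run your atomic-case cancellation on a single fiber. That fiberwise verification is the delicate content that is missing from your proposal, and, in fairness, from the paper itself, which asserts the invariance without proof.
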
 

Two finite factors $M$ and $N$ are virtually isomorphic if there exists a normal Hilbert $M$-$N$-bimodule $\mathcal H$ that has finite von Neumann dimension as both an $M$ module and an $N$ module. Two ICC groups $\Gamma$ and $\Lambda$ are virtually $W^*$-equivalent if $L\Gamma$ and $L\Lambda$ are virtually isomorphic. The notion of virtual isomorphism of factors was first studied by Popa in \cite[Section 1.4]{Po86}, while the terminology was coined more recently in \cite[Section 4.1]{PoSh18}. 

\begin{thm}
Let $M$ and $N$ be finite factors, and suppose that $\mathcal H$ is a Hilbert $M$-$N$-bimodule that is finite as both an $M$-module and an $N$-module. Then $M$ and $N$ are von Neumann equivalent, and a von Neumann coupling $\mathcal M$ may be chosen so that 
\[
[M : N]_{\mathcal M} = \dim_M(\mathcal H) \cdot \dim_{N}(\mathcal H)^{-1}.
\]
\end{thm}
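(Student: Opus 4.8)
The plan is to produce the coupling $\mathcal M$ as an explicit semifinite factor built from the commuting actions of $M$ and $N^{\rm op}$ on $\mathcal H$, and to read off the index from a single semifinite trace. First I would record the reformulation of the fundamental-domain condition coming from the discussion after Definition~\ref{defn:funddomvn}: an inclusion $M\subset\mathcal M$ admits a finite fundamental domain exactly when $\mathcal M\cong\mathcal B(L^2(M))\ovt P$ with $M=M\otimes 1$ standard and the relative commutant $P=\mathcal B(L^2(M))'\cap\mathcal M$ finite. In particular a necessary feature of any coupling is that both $M'\cap\mathcal M$ and $(N^{\rm op})'\cap\mathcal M$ be finite von Neumann algebras. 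Since $\mathcal H$ is finite as a left $M$-module, the commutant $B_M:=M'\cap\mathcal B(\mathcal H)$ is a finite factor containing $N^{\rm op}$; symmetrically $B_N:=(N^{\rm op})'\cap\mathcal B(\mathcal H)$ is a finite factor containing $M$, and the coupling-constant (dimension) calculus shows that the inclusions $N^{\rm op}\subseteq B_M$ and $M\subseteq B_N$ have finite index, with the relevant traces recording $\dim_M(\mathcal H)$ and $\dim_N(\mathcal H)$.

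Next I would assemble $\mathcal M$. Because $\mathcal B(\mathcal H)$ is a type I factor, it contains an intermediate $\mathcal B(L^2(M))$ with $M$ standard only when $\dim_M(\mathcal H)$ is an integer, so in general one must pass to a type II$_\infty$ factor in order for the ``finite'' fundamental-domain condition (finite trace rather than finite rank) to accommodate non-integer dimensions. I would therefore set $\mathcal M:=\mathcal B(\mathcal H)\ovt R$ for a suitable auxiliary finite factor $R$, keeping $M,N^{\rm op}\subset\mathcal B(\mathcal H)\otimes 1$. Tensoring with the infinite-dimensional $L^2(R)$ turns $\mathcal H$ into an infinite multiple of $L^2(M)$ as an $M$-module, and is intended to produce a type I$_\infty$ subfactor $\mathcal B(L^2(M))\subset\mathcal M$ with $M$ standard; any such leg has relative commutant inside $M'\cap\mathcal M=B_M\ovt R$, hence automatically finite, so $M$ would acquire a finite fundamental domain. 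The symmetric statement gives the $N^{\rm op}$ leg, and the inclusion $N^{\rm op}\subseteq B_M\ovt R=M'\cap\mathcal M$ records the required commutation.

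The main obstacle is to realize the two standard-representation legs $\mathcal B(L^2(M))$ and $\mathcal B(L^2(N))$ inside the \emph{same} $\mathcal M$; equivalently, to exhibit commuting ``conjugate'' copies $\tilde M\cong M^{\rm op}$ and $\tilde N\cong N$ inside $M'\cap\mathcal M$ and $(N^{\rm op})'\cap\mathcal M$ respectively for which $M\vee\tilde M\cong\mathcal B(L^2(M))$ and $N^{\rm op}\vee\tilde N\cong\mathcal B(L^2(N))$ sit in standard position. This is where the finite-index inclusions $N^{\rm op}\subseteq B_M$ and $M\subseteq B_N$ are used: a Jones basic-construction/standard-form argument, with the auxiliary factor $R$ supplying the infinite multiplicity, should produce these copies, and the choice of $R$ must be made so that both legs are simultaneously available (this is exactly what forces the continuous, rather than type I, part of $\mathcal M$, and it is the point I expect to be delicate, since the two candidate complements coming from the $M$- and $N$-sides must be reconciled). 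Lemma~\ref{lem:conjfd} then guarantees that the resulting fundamental domains are canonical up to unitary conjugacy, so the construction does not depend on these choices. Finally I would compute the index by fixing one semifinite trace $\mathrm{Tr}$ on the factor $\mathcal M$ (unique up to scaling): normalizing so that the trace on the finite complement encodes the module dimension gives $\mathrm{Tr}(p)=\dim_M(\mathcal H)$ and $\mathrm{Tr}(q)=\dim_N(\mathcal H)$ for rank-one projections $p\in\mathcal B(L^2(M))$ and $q\in\mathcal B(L^2(N))$, whence $[M:N]_{\mathcal M}=\mathrm{Tr}(p)/\mathrm{Tr}(q)=\dim_M(\mathcal H)\cdot\dim_N(\mathcal H)^{-1}$, a ratio manifestly independent of the scaling of $\mathrm{Tr}$.
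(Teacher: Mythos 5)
Your choice of coupling algebra, $\mathcal M = \mathcal B(\mathcal H) \ovt R$, is exactly the paper's (the paper takes $R$ to be the hyperfinite II$_1$ factor), and your reformulation is correct: a finite fundamental domain for $M \subset \mathcal M$ is an intermediate standard copy $M \subset \mathcal B(L^2(M)) \subset \mathcal M$, whose relative commutant is automatically finite because it lies in the II$_1$ factor $M' \cap \mathcal M = (M' \cap \mathcal B(\mathcal H)) \ovt R$. But the crucial existence step is missing, and you locate the difficulty in the wrong place. Tensoring with $L^2(R)$ does make $\mathcal H \otimes L^2(R)$ an infinite multiple of $L^2(M)$ as an $M$-module, but that only yields an intermediate $\mathcal B(L^2(M))$ inside $\mathcal B(\mathcal H \otimes L^2(R))$, whereas the coupling algebra $\mathcal M = \mathcal B(\mathcal H) \ovt R$ is a \emph{proper} subalgebra of $\mathcal B(\mathcal H \otimes L^2(R))$ (it is type II$_\infty$, not type I). A generic $M$-modular unitary $U: \mathcal H \otimes L^2(R) \to L^2(M) \otimes \ell^2$ conjugates $\mathcal B(L^2(M)) \otimes \mathbb C$ to a subalgebra that has no reason to lie in $\mathcal B(\mathcal H) \ovt R$; exhibiting one identification that does land inside $\mathcal M$ is precisely the content of the proof, and your appeal to ``a Jones basic-construction/standard-form argument'' does not supply it. The paper's mechanism is concrete: write $\mathcal H \cong p(\mathbb C^n \otimes L^2(M))$ with $p \in \mathbb M_n(\mathbb C) \ovt M^{\rm op}$ of trace $t = \dim_M(\mathcal H) < n$, choose a projection $q \in R$ with $\tau(q) = t/n$; then $1_R \otimes p$ and $q \otimes 1_n \otimes 1$ have equal trace in the II$_1$ factor $R \ovt \mathbb M_n(\mathbb C) \ovt M^{\rm op}$, hence are Murray--von Neumann equivalent there, and the implementing partial isometry commutes with $M$, so conjugating by it identifies the inclusion $M \subset \mathcal M$ with $M \subset qRq \ovt \mathbb M_n(\mathbb C) \ovt \mathcal B(L^2(M))$, where the intermediate standard copy is visible and its rank-one projections have trace $n\tau(q) = t$. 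This is where $R$ is genuinely used: it realizes the possibly non-integer trace $t/n$ and the equivalence of projections, inside an algebra commuting with $M$.

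By contrast, the issue you call the ``main obstacle''---reconciling the $\mathcal B(L^2(M))$ and $\mathcal B(L^2(N))$ legs inside the same $\mathcal M$---is not an obstacle at all: the definition of a von Neumann coupling requires only the commuting embeddings $M, N^{\rm op} \subset \mathcal M$ (which you get for free inside $\mathcal B(\mathcal H) \otimes 1$, since $\mathcal H$ is a bimodule) and that \emph{each} inclusion separately admit a finite fundamental domain; no compatibility between the two legs is demanded, so the two sides are handled by two independent runs of the same argument, and the finite-index inclusions $N^{\rm op} \subset B_M$ and $M \subset B_N$ you set up are never needed. Finally, your index computation is circular as phrased: one cannot ``normalize'' a single trace twice so that $\mathrm{Tr}(p) = \dim_M(\mathcal H)$ and $\mathrm{Tr}(q) = \dim_N(\mathcal H)$ both hold by fiat. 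One fixes the trace $\tau_R \otimes \mathrm{Tr}_{\mathcal B(\mathcal H)}$ on the factor $\mathcal M$ and \emph{computes} both values from the explicit fundamental domains (here your appeal to Lemma~\ref{lem:conjfd} is apt, since it makes these values independent of the choice of fundamental domain), obtaining $t$ and $\dim_N(\mathcal H)$ and hence the asserted index.
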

\begin{proof}
Let $\mathcal H$ be a Hilbert $M$-$N$-bimodule that is finite as both an $M$-module and an $N$-module. Let $R$ denote the hyperfinite II$_1$ factor and set $\mathcal M = R \ovt \mathcal B(\mathcal H)$. 

Suppose $t = \dim_M(\mathcal H) < \infty$, and take $k \in \mathbb N$ so that $k > t$. If we take a projection $p \in \mathbb M_n(\mathbb C) \ovt M^{\rm op}$ such that $( {\rm Tr} \otimes \tau ) (p ) = t$, then we have an isomorphism of inclusions between $M \subset \mathcal B(\mathcal H)$ and $p M \subset p ( \mathbb M_n(\mathbb C) \ovt \mathcal B(L^2(M, \tau)) ) p$.

If we now take a projection $q \in R$ so that $\tau(q) = t/n$, then we have that $q$ and $p$ are equivalent projections in $R \ovt \mathbb M_n(\mathbb C) \ovt M^{\rm op}$ and hence we see that we have an isomorphism of inclusions between $M \subset R \ovt \mathcal B(\mathcal H)$ and $q M \subset q R q \ovt \mathbb M_n(\mathbb C) \ovt \mathcal B(L^2(M, \tau))$. In particular, we then see that we have a fundamental domain for the inclusion $M \subset R \ovt \mathcal B(\mathcal H)$. Moreover, the trace of a rank-one projection in this fundamental domain will be $n \tau(q) = t$. 

We similarly see that the inclusion $N \subset R \ovt \mathcal B(\mathcal H)$ has a fundamental domain, and the trace of a rank-one projection in its fundamental domain will be $\dim_{N}(\mathcal H)$. Thus, $\mathcal M$ is a von Neumann coupling between $M$ and $N$ with index $\dim_M(\mathcal H)  \dim_{N}(\mathcal H)^{-1}.$
\end{proof}

We note that Pimsner and Popa showed in \cite[Proposition 1.11]{PP86} that property Gamma of Murray and von Neumann \cite{MuvN43} is a virtual isomorphism invariant, while Theorem~\ref{thm:vnequivalencegp} together with Effros's Theorem \cite{Ef75} and Caprace's example \cite[Section 5.C]{DuTDWe18} show that property Gamma is not an invariant of von Neumann equivalence. 

\begin{cor}
If $M$ is a II$_1$ factor and $s, t > 0$, then $M^t$ and $M^s$ have a von Neumann coupling $\mathcal M$ that satisfies 
\[
[M^t: M^s]_{\mathcal M} = t^2/s^2.
\] 
Consequently, we have an inclusion $\mathcal F(M)^2 \subset \mathcal I_{vNE}(M)$. 
\end{cor}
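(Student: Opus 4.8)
The plan is to obtain the index identity from the bimodule-to-coupling construction of the preceding theorem, and then read off the containment $\mathcal F(M)^2 \subset \mathcal I_{vNE}(M)$ as the special case $s=1$. For the index statement I would fix a large ambient factor, say $\tilde M = \mathbb M_n(\mathbb C) \ovt M$ with $n > \max\{s,t\}$ (or more invariantly $\tilde M = \mathcal B(\ell^2) \ovt M$), equipped with its semi-finite trace ${\rm Tr} = {\rm Tr}_n \otimes \tau$, and realize the two amplifications as corners $M^t \cong p \tilde M p$ and $M^s \cong q\tilde M q$ for projections $p, q \in \tilde M$ with ${\rm Tr}(p) = t$ and ${\rm Tr}(q) = s$, exactly as in the proof of the preceding theorem. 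The candidate bimodule is then $\mathcal H = p L^2(\tilde M, {\rm Tr}) q$, which carries a normal left action of $M^t = p\tilde M p$ and a commuting normal right action of $M^s = q\tilde M q$, and because each finite-rank corner of $\tilde M$ is a finite projection, $\mathcal H$ is finite as a module on both sides, so the hypotheses of the theorem are met.

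Next I would compute the two module dimensions of $\mathcal H$ using the standard rescaling of the coupling constant under compression: compressing the acting algebra by a projection $e$ multiplies the dimension by $\tau(e)^{-1}$, while compressing the underlying Hilbert space by a projection in the commutant multiplies it by the trace of that projection. Starting from $\dim_{\tilde M} L^2(\tilde M) = 1$ and cutting on the left by $p$ and on the right by $q$, the left $M^t$-dimension and the right $M^s$-dimension come out as the reciprocal ratios $\dim_{M^t}(\mathcal H) = {\rm Tr}(p)/{\rm Tr}(q)$ and $\dim_{M^s}(\mathcal H) = {\rm Tr}(q)/{\rm Tr}(p)$. Feeding this into the preceding theorem, which produces a von Neumann coupling $\mathcal M$ (for instance $R \ovt \mathcal B(\mathcal H)$) with $[M^t : M^s]_{\mathcal M} = \dim_{M^t}(\mathcal H)\cdot \dim_{M^s}(\mathcal H)^{-1}$, the two reciprocal factors reinforce to give $({\rm Tr}(p)/{\rm Tr}(q))^2 = t^2/s^2$, as claimed.

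For the consequence I would specialize to $s = 1$, so that $\mathcal M$ is a coupling between $M^t$ and $M$ of index $t^2$. When $t = r \in \mathcal F(M)$ we have an isomorphism $M^r \cong M$; transporting the embedding of $M^r$ through this isomorphism turns $\mathcal M$ into a self-coupling of $M$, and since $\mathcal M = R \ovt \mathcal B(\mathcal H)$ is a factor this is a factorial self-coupling, whose index is therefore the scalar $r^2$. Hence $r^2 \in \mathcal I_{vNE}(M)$ for every $r \in \mathcal F(M)$. As $\mathcal F(M)$ is a subgroup of $\mathbb R_+^*$, the image $\mathcal F(M)^2 = \{ r^2 : r \in \mathcal F(M)\}$ under the squaring homomorphism is again a subgroup, and it sits inside $\mathcal I_{vNE}(M)$, which is itself a group by Theorem~\ref{thm:vneindexgp}.

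The main obstacle is the dimension bookkeeping in the second paragraph: verifying that the index is the square $t^2/s^2$ rather than the first power $t/s$, and keeping the left/right conventions consistent with the normalization used in the preceding theorem. The appearance of the square is precisely the point, and is not an accident of the computation: the left and right module dimensions of the \emph{single} bimodule $\mathcal H$ scale reciprocally under the two compressions, so since the index of the resulting coupling is their quotient rather than their product, the two contributions add in the exponent instead of cancelling. This is the phenomenon anticipated by Connes and Shlyakhtenko and flagged in the introduction, and checking the compression formula carefully at each corner is where the real care is needed.
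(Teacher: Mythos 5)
You follow the route the corollary is clearly meant to follow --- feed the natural Morita bimodule between the two amplifications into the preceding theorem --- and the structural parts of your argument are sound: $\mathcal H = p L^2(\tilde M, {\rm Tr})\, q$ is an $M^t$-$M^s$ bimodule that is finite on both sides, $R \ovt \mathcal B(\mathcal H)$ is a factor, and the passage from the case $s=1$, $t = r \in \mathcal F(M)$ to the inclusion $\mathcal F(M)^2 \subset \mathcal I_{vNE}(M)$, using that $\mathcal F(M)^2$ is a subgroup, is correct. The genuine error is in the dimension bookkeeping, exactly the step you flag as the crux. Your two scaling rules are stated correctly but then applied upside down: cutting the standard space on the right by $q$ scales the left dimension proportionally to ${\rm Tr}(q)$, while compressing the acting algebra from $\tilde M$ to $p \tilde M p$ scales it by ${\rm Tr}(p)^{-1}$ (the normalizations cancel in the ratio), so that
\[
\dim_{M^t}\bigl( p L^2(\tilde M) q \bigr) = \frac{{\rm Tr}(q)}{{\rm Tr}(p)} = \frac{s}{t},
\qquad
\dim_{M^s}\bigl( p L^2(\tilde M) q \bigr) = \frac{{\rm Tr}(p)}{{\rm Tr}(q)} = \frac{t}{s},
\]
the reciprocals of the values you wrote. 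As a sanity check take $s = 1$, $t \in \mathbb N$, $p = e \otimes 1$ with $e \in \mathbb M_n(\mathbb C)$ of rank $t$, and $q = f \otimes 1$ with $f$ of rank one: then $\mathcal H \cong \mathbb C^t \otimes L^2(M)$ is a single column of the standard form $\mathbb M_t(\mathbb C) \otimes L^2(M)$ of $M^t = \mathbb M_t(\mathbb C) \ovt M$, so its left dimension is $1/t$, not $t$; as a right $M$-module it is $L^2(M)^{\oplus t}$, of dimension $t$, not $1/t$.

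With the corrected values the preceding theorem gives $[M^t : M^s]_{\mathcal M} = (s/t)\cdot(t/s)^{-1} = s^2/t^2$, the reciprocal of the displayed formula; your two inversions cancel, which is the only reason you land on the printed statement. Note that this cannot be repaired by relabeling: since $[N : M]_{\mathcal M^{\rm op}} = [M : N]_{\mathcal M}^{-1}$, interchanging the roles of $s$ and $t$, or passing to the opposite coupling, again produces a coupling between $M^t$ and $M^s$ of index $s^2/t^2$, so under the normalization of $\dim$ fixed in the proof of the preceding theorem the orientation discrepancy sits in the corollary's printed formula itself. This is harmless for everything the corollary is used for --- the essential content, which your final paragraph explains correctly, is that the index is the \emph{square} of the amplification ratio, and since $\mathcal F(M)$ is a group, $\mathcal F(M)^2$ is closed under inverses, so factorial self-couplings of index $r^{-2}$ for every $r \in \mathcal F(M)$ still yield $\mathcal F(M)^2 \subset \mathcal I_{vNE}(M)$ --- but your computation of the two module dimensions, as written, is wrong, and the proof should be corrected with the compression rules applied in the right direction.
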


We may now show the relationship between von Neumann equivalence for groups and for finite von Neumann algebras as stated in Theorem~\ref{thm:vnequivalencegp}. 

\begin{proof}[Proof of Theorem~\ref{thm:vnequivalencegp}]
We first suppose that $\mathcal M$ is an $L\Gamma$-$L\Lambda$ von Neumann coupling. If $p \in \mathcal Z(\mathcal M)$ is a non-trivial central projection, then $p \mathcal M$ is also an $L\Gamma$-$L\Lambda$ von Neumann coupling, hence we may assume that $\mathcal M$ is $\sigma$-finite and fix a semi-finite normal faithful trace ${\rm Tr}$ on $\mathcal M$. 

We identify $\Gamma$ (resp.\ $\Lambda$) as a subgroup of $\mathcal U(L\Gamma)$ (resp.\ $\mathcal U(L\Lambda)$) and then consider the commuting trace-preserving actions of $\Gamma$ and $\Lambda$ on $\mathcal M$ given by conjugation. If we have a fundamental domain $L\Gamma \subset \mathcal B(\ell^2 \Gamma) \subset \mathcal M$, then the rank-one projection onto the subspace spanned by $\delta_e \in \ell^2 \Gamma$ gives a finite-trace fundamental domain for the conjugation action of $\Gamma$ on $\mathcal M$. We similarly have a finite-trace fundamental domain for the action of $\Lambda$ on $\mathcal M$, and hence we see that $\mathcal M$ is then a $\Gamma$-$\Lambda$ von Neumann coupling.

Now suppose that $(\mathcal M, {\rm Tr})$ is a $\Gamma$-$\Lambda$ von Neumann coupling. We set $\mathcal N = \mathcal M \rtimes (\Gamma \times \Lambda)$. We then have embeddings $L\Gamma, L\Lambda \subset \mathcal N$. A $\Gamma$-fundamental domain in $\mathcal M$ gives a $\Gamma$-equivariant embedding $\ell^\infty \Gamma \subset \mathcal M$ and hence we get an embedding of von Neumann algebras 
\[
\mathcal B(\ell^2 \Gamma) \cong \ell^\infty \Gamma \rtimes \Gamma \subset \mathcal M \rtimes \Gamma \subset \mathcal N.
\] 
Thus $\mathcal N$ has an $L \Gamma$ fundamental domain. Moreover, if $P_e$ is the rank-one projection onto the span of $\delta_e \in \ell^2 \Gamma$, then we have $P_e \in \mathcal M \subset \mathcal N$, and therefore the fundamental domain for $L\Gamma$ has finite trace and so must be finite. We similarly have a finite-trace fundamental domain for $L\Lambda$ in $\mathcal N$, and hence $\mathcal N$ is an $L\Gamma$-$L\Lambda$ von Neumann coupling.
\end{proof}

The analogue of the index group has also been considered in the setting of measure equivalence. For instance, in \cite[Section 2.2]{Ga02} or \cite[Question 2.8]{Ga05} Gaboriau considered the set of indices of all ergodic self measure equivalence couplings of a group $\Gamma$. For minimally almost periodic groups \cite{vNWi40} any non-trivial ergodic probability measure-preserving action is weak mixing, and a simple argument then shows that the composition of two ergodic measure equivalence self-couplings is again ergodic. This then shows that for minimally almost periodic groups, the set of indices of all ergodic self measure equivalence couplings is a subgroup of $\mathbb R_+^*$. It is not clear, however, if this set is a group in general, or that it is a measure equivalence invariant, as the composition of ergodic measure equivalence couplings need not be ergodic in general. For ICC groups, at least, we have the following relationship between indices for ergodic measure equivalence couplings and the index group for the group von Neumann algebra:

\begin{prop}\label{prop:meselfcoupling}
Suppose $(\Omega, m)$ is an ergodic ME-self-coupling of an ICC group $\Gamma$, then $[\Gamma: \Gamma]_\Omega \in \mathcal I_{vNE}(L\Gamma)$. 
\end{prop}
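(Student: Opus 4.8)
The plan is to pass from the measure-theoretic coupling to the von Neumann algebraic one and then verify that the resulting $L\Gamma$-$L\Gamma$ coupling is \emph{factorial}, so that its index is an honest scalar equal to $[\Gamma:\Gamma]_\Omega$. Write $\Gamma_1,\Gamma_2$ for the two commuting copies of $\Gamma$ acting on $(\Omega,m)$, with finite-measure fundamental domains $F_1,F_2$, so that by definition $[\Gamma:\Gamma]_\Omega = m(F_1)/m(F_2)$. The coupling $(\Omega,m)$ is precisely an abelian $(\Gamma,\Gamma)$ von Neumann coupling with $\mathcal M = L^\infty(\Omega,m)$. Following the proof of Theorem~\ref{thm:vnequivalencegp}, I would form $\mathcal N = L^\infty(\Omega,m)\rtimes(\Gamma_1\times\Gamma_2)$, which contains commuting copies of $L\Gamma_1$ and $L\Gamma_2$ whose fundamental domains are witnessed by the rank-one projections $1_{F_1},1_{F_2}\in L^\infty(\Omega,m)\subset\mathcal N$. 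Since the canonical semi-finite trace on $\mathcal N$ is $\mathrm{Tr}\circ E_{\mathcal M}$ and restricts to $m$ on $L^\infty(\Omega,m)$, we have $\mathrm{Tr}_{\mathcal N}(1_{F_i}) = m(F_i)$, and $\mathcal N$ is already an $L\Gamma$-$L\Gamma$ von Neumann coupling.

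The only remaining point is that $\mathcal N$ is a factor, for then its index is the genuine scalar $\mathrm{Tr}_{\mathcal N}(1_{F_1})/\mathrm{Tr}_{\mathcal N}(1_{F_2}) = m(F_1)/m(F_2) = [\Gamma:\Gamma]_\Omega$, which by definition lies in $\mathcal I_{vNE}(L\Gamma)$. To establish factoriality I would apply Proposition~\ref{prop:diagonal} to the $\Gamma_1$-action to write
\[
\mathcal N \cong \mathcal B(\ell^2\Gamma_1)\ovt(\mathcal M^{\Gamma_1}\rtimes\Gamma_2),
\]
where $\mathcal M^{\Gamma_1} = L^\infty(\Omega,m)^{\Gamma_1} = L^\infty(\Omega/\Gamma_1)$ carries the induced finite trace $\tau$ of total mass $m(F_1)$ coming from Proposition~\ref{prop:funddomain}, and $\Gamma_2$ preserves $\tau$ by part (\ref{item:E}) of that proposition. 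Thus $\mathcal N$ is a factor if and only if the finite von Neumann algebra $\mathcal Q := \mathcal M^{\Gamma_1}\rtimes\Gamma_2$ is a factor.

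The factoriality of $\mathcal Q$ is where ergodicity and the ICC hypothesis enter, and this is the main obstacle. Ergodicity of the coupling means $\Gamma_1\times\Gamma_2\actson\Omega$ is ergodic, so $\Gamma_2$ acts ergodically on $L^\infty(\Omega/\Gamma_1)$ and the $\Gamma_2$-fixed points of the abelian part are trivial. I would then read off the center of $\mathcal Q$ from its Fourier expansion: a central $z = \sum_{h\in\Gamma_2} a_h v_h$, with $a_h\in\mathcal M^{\Gamma_1}$ and $v_h$ the canonical unitaries, must have each $a_h$ supported on the fixed-point set of $h$ and must satisfy $\sigma_{h'}(a_h) = a_{h'hh'^{-1}}$. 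Since $\sigma_{h'}$ is $\tau$-preserving, the norms $\|a_h\|_{2,\tau}$ are constant along conjugacy classes; as $\Gamma$ is ICC every nontrivial conjugacy class is infinite, while $z\in\mathcal Q\subset L^2(\mathcal Q,\tau)$ forces $\sum_h\|a_h\|_{2,\tau}^2<\infty$. Hence $a_h = 0$ for $h\neq e$, and $a_e\in(\mathcal M^{\Gamma_1})^{\Gamma_2} = \mathbb C$ by ergodicity, so $\mathcal Q$ is a factor and therefore so is $\mathcal N$. I would emphasize that this argument does \emph{not} require the product action $\Gamma_1\times\Gamma_2\actson\Omega$ to be free (indeed it typically is not, owing to coincidences $gx=hx$ between the two actions); it is precisely the ICC assumption, via the vanishing of the off-diagonal Fourier coefficients in the finite algebra $\mathcal Q$, that repairs the potential non-factoriality. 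Once factoriality is in hand the index bookkeeping is immediate, giving $[\Gamma:\Gamma]_\Omega\in\mathcal I_{vNE}(L\Gamma)$.
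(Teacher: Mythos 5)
Your proof is correct and takes essentially the same route as the paper's: pass to the von Neumann coupling $L^\infty(\Omega,m)\rtimes(\Gamma_1\times\Gamma_2)$ via Theorem~\ref{thm:vnequivalencegp}, use Proposition~\ref{prop:diagonal} to reduce factoriality to that of $L^\infty(\Omega/\Gamma_1)\rtimes\Gamma_2$, and conclude from ergodicity of $\Gamma_2\actson\Omega/\Gamma_1$ together with the ICC hypothesis. The only difference is cosmetic: you write out the Murray--von Neumann Fourier-coefficient argument for the center explicitly, whereas the paper cites that argument to bound the relative commutant $L\Gamma_2'\cap(L^\infty(\Omega/\Gamma_1)\rtimes\Gamma_2)$ by $L^\infty(\Omega/\Gamma_1)^{\Gamma_2}=\mathbb C$.
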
 
\begin{proof}
We see from the proof of Theorem~\ref{thm:vnequivalencegp} that if $(\Omega, m)$ is an ergodic measure equivalence self-coupling of $\Gamma$, then $L^\infty(\Omega, m) \rtimes (\Gamma \times \Gamma)$ is a von Neumann self-coupling for $L\Gamma$, and if $L^\infty(\Omega, m) \rtimes (\Gamma \times \Gamma)$ is a factor, then the indices for these couplings agree. Thus it suffices to show that under these hypotheses, we have that $L^\infty(\Omega, m) \rtimes (\Gamma \times \Gamma)$ is a factor. 

If we let $\Gamma_i$ denote the $i$th copy of $\Gamma$ in $\Gamma \times \Gamma$, then Proposition~\ref{prop:diagonal} shows that the fundamental domain for $\Gamma_1$ leads to an isomorphism 
\[
L^\infty(\Omega, m) \rtimes (\Gamma \times \Gamma) \cong (L^\infty( \Omega/\Gamma_1 ) \rtimes \Gamma_2 ) \ovt \mathcal B(\ell^2 \Gamma_1).
\] 
Since $\Gamma_2$ is ICC, and since $\Gamma_2 \actson \Omega/ \Gamma_1$ is an ergodic and measure-preserving action on a finite measure space, Murray and von Neumann's proof of factoriality of $L\Gamma_2$ \cite{MuvN43} shows that we have 
\[
L\Gamma_2' \cap ( L^\infty( \Omega/\Gamma_1 ) \rtimes \Gamma_2 ) \subset L^\infty(\Omega/\Gamma_1)^{\Gamma_2} = \mathbb C.
\] 
Hence $L^\infty( \Omega/\Gamma_1 ) \rtimes \Gamma_2$ is a factor, and so is 
\[
(L^\infty( \Omega/\Gamma_1 ) \rtimes \Gamma_2 ) \ovt \mathcal B(\ell^2 \Gamma_1) \cong L^\infty(\Omega, m) \rtimes (\Gamma \times \Gamma). 
\] 
\end{proof}

\vskip -.1in

In \cite{PoVa10} Popa and Vaes study the collection $\mathcal S_{\rm eqrel}(\Gamma)$ of fundamental groups for equivalence relations associated to free, ergodic, probability measure-preserving actions of $\Gamma$. Each element in such a fundamental group gives rise to an ergodic measure equivalence coupling with the same index \cite[Theorem 3.3]{Fu99A}, and hence we obtain the following corollary. 

\begin{cor}\label{cor:subgpindex}
For a countable ICC group $\Gamma$, we have $\mathcal F < \mathcal I_{vNE}(L\Gamma)$ for all $\mathcal F \in \mathcal S_{\rm eqrel}(\Gamma)$.
\end{cor}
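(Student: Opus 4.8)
The plan is to combine the proposition immediately preceding the corollary with the correspondence of Furman (\cite[Theorem 3.3]{Fu99A}, as quoted just before the statement) and with the group structure of $\mathcal I_{vNE}(L\Gamma)$ established in Theorem~\ref{thm:vneindexgp}. The deduction is short: essentially everything is already in place, and the corollary is the packaging of these three facts.

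First I would unpack the hypothesis. By definition an element $\mathcal F \in \mathcal S_{\rm eqrel}(\Gamma)$ is the fundamental group $\mathcal F = \mathcal F(\mathcal R)$ of the orbit equivalence relation $\mathcal R$ associated to some free, ergodic, probability measure-preserving action $\Gamma \actson (X, \mu)$. In particular $\mathcal F$ is itself a subgroup of $\mathbb R_+^*$. Since $\mathcal I_{vNE}(L\Gamma)$ is also a subgroup of $\mathbb R_+^*$ by Theorem~\ref{thm:vneindexgp}, it therefore suffices to establish the containment $\mathcal F \subseteq \mathcal I_{vNE}(L\Gamma)$; any subgroup of $\mathbb R_+^*$ contained in another subgroup is automatically a subgroup of it.

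Next, I would fix an arbitrary $t \in \mathcal F$ and use Furman's Theorem 3.3 to produce an ergodic measure equivalence self-coupling $(\Omega_t, m)$ of $\Gamma$ whose index satisfies $[\Gamma : \Gamma]_{\Omega_t} = t$. Applying the preceding proposition to this ergodic self-coupling gives $t = [\Gamma : \Gamma]_{\Omega_t} \in \mathcal I_{vNE}(L\Gamma)$. As $t \in \mathcal F$ was arbitrary, we conclude $\mathcal F \subseteq \mathcal I_{vNE}(L\Gamma)$, and hence $\mathcal F < \mathcal I_{vNE}(L\Gamma)$ by the observation of the previous paragraph.

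The substance of the result lives entirely in its two inputs, so there is no real obstacle in the deduction itself. The one point meriting care is \emph{ergodicity}: the preceding proposition applies only to ergodic self-couplings, since ergodicity is precisely what forces $L^\infty(\Omega_t, m) \rtimes (\Gamma \times \Gamma)$ to be a factor and thereby collapses the center-valued index to the scalar $t$. Thus the only thing to verify is that Furman's construction yields an ergodic coupling for each $t \in \mathcal F$, which is exactly the content of the quoted \cite[Theorem 3.3]{Fu99A}; with that confirmed, the corollary is immediate.
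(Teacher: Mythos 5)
Your proposal is correct and is essentially identical to the paper's own (very brief) argument: the paper likewise obtains, for each element of $\mathcal F$, an ergodic measure equivalence self-coupling of $\Gamma$ with that index via \cite[Theorem 3.3]{Fu99A}, and then applies the preceding proposition together with the group structure of $\mathcal I_{vNE}(L\Gamma)$ from Theorem~\ref{thm:vneindexgp}. Your explicit remark that ergodicity of the coupling is what makes the preceding proposition applicable is exactly the point the paper relies on as well.
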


As an example, the previous corollary applied to Theorem 1.3 in \cite{PoVa10} shows that for $n \geq 3$ we have $\mathbb Q_+^* < \mathcal I_{vNE}(L (\mathbb Z^n \rtimes SL(n, \mathbb Z) ) )$.

\section{Open problems}

While we have introduced a basic framework for studying von Neumann equivalence, there are many problems left open and further directions to explore. Since our preprint appeared, results were obtained in \cite{Is21, Ba23, Ba22} showing that von Neumann equivalence for groups also preserves weak amenability, the Cowling-Haagerup constant, the weak Haagerup property, the approximation property, exactness, and for $d \geq 2$, certain $M_d$-versions of the approximation property, weak amenability, and the weak Haagerup property. In this section we list some additional problems regarding von Neumann equivalence whose solutions would further illuminate this concept.

\begin{problem}
If $M$ is von Neumann equivalent to a factor with fundamental group $\mathbb R_+^*$, then we have $\mathcal I_{vNE}(M) = \mathbb R_+^*$. It would be interesting to have examples of von Neumann algebras, or even groups $\Gamma$, such that $\mathcal I_{vNE}(L\Gamma)$ is not $\mathbb R_+^*$, perhaps trivial. Or, in view of Proposition~\ref{prop:meselfcoupling}, examples when $\mathcal I_{vNE}(L\Gamma)$ is non-trivial and discrete.
\end{problem}

\begin{problem}
Find an example of a group $\Gamma$ such that a group $\Lambda$ is von Neumann equivalent to $\Gamma$ if and only if $\Lambda$ is commensurable up to finite kernel with $\Gamma$. 
\end{problem}

\begin{problem}
Chifan and Ioana in \cite{ChIo11} gave examples of groups that are orbit equivalent (and hence also von Neumann equivalent) but that are not $W^*$-equivalent. Popa and Shlyakhtenko showed in \cite[Propostion 4.3]{PoSh18} that these are not even virtually $W^*$-equivalent (and additional examples with this property are also given). Combining this result with Theorem~\ref{thm:vnequivalencegp} shows that von Neumann equivalence for groups (resp.\ for von Neumann algebras) is strictly coarser than virtual $W^*$-equivalence (resp.\ virtual isomorphism). The related problem of finding groups that are $W^*$-equivalent but not measure equivalent remains open (see \cite{ChIo11}). A natural related problem is then to find examples of groups that are von Neumann equivalent but not measure equivalent.
\end{problem}

\begin{problem}
A natural candidate for an example in the previous problem would be a group $\Gamma$ such that $\Gamma$ is von Neumann equivalent to $\Gamma \times \mathbb Z$, but such that $\Gamma$ is not measure equivalent to $\Gamma \times \mathbb Z$. The class of groups $\Gamma$ that are ME-stable, i.e. that are measure equivalent to $\Gamma \times \mathbb Z,$ is studied some in \cite{Ki15} or \cite{KiTD20}. It would be interesting to study the class of groups or von Neumann algebras that are von Neumann equivalence stable. 
\end{problem}

\begin{problem}
Let $\mathcal C$ be a class of von Neumann algebras with semi-finite traces that contains $\ell^\infty I$ (with the counting measure) for any set $I$, and that is stable under taking opposites, tensor products, and fixed point subalgebras for actions that have fundamental domains. We may then define two groups $\Gamma$ and $\Lambda$ to be $\mathcal C$-equivalent if they have commuting trace-preserving actions on a von Neumann algebra $\mathcal M \in \mathcal C$ such that each group has a finite-trace fundamental domain. The same argument in Section~\ref{sec:VNE} shows that being $\mathcal C$-equivalent is an equivalence relation. When $\mathcal C$ contains all abelian von Neumann algebras we recover measure equivalence, while when $\mathcal C$ is the class of all semi-finite von Neumann algebras we obtain von Neumann equivalence. 

One may also consider other classes, for example, the class of all finite von Neumann algebras, or the class of all semi-finite injective von Neumann algebras. It would be interesting to know when two such classes give the same, or different, equivalence relations.  
\end{problem}

\begin{problem}
There are additional powerful ME-invariants that seem extremely difficult to adapt to the noncommutative setting. In particular, it would be interesting to know if the ratio of $\ell^2$-Betti numbers is a von Neumann equivalence invariant \cite{Ga00}. In the setting of $W^*$-equivalence this is closely connected to the free group factor problem. 
\end{problem}

\begin{problem}
Can von Neumann equivalence be used to induce bounded cohomology, analogous to the situation for measure equivalence in \cite{MoSh06}?
\end{problem}

\begin{problem}
It is shown in \cite{Sa09} that biexactness is a measure equivalence invariant. More recently, it is shown in \cite{DiP23} that biexactness is also a $W^*$-equivalence invariant. This suggests that it is very likely that biexactness is also an invariant of von Neumann equivalence.
\end{problem}

\begin{problem}
Is the property of being coarsely embeddable into Hilbert space a von Neumann equivalence invariant?
\end{problem}

\begin{problem}
Measure equivalence for locally compact groups has been introduced in \cite{KoKyRa21, KoKyRa21b}. There should also be a notion of von Neumann equivalence for locally compact groups. Given the noncommutative setting, it may also be possible to introduce the notion of von Neumann equivalence for compact quantum groups. 
\end{problem}

\begin{problem}
Since there is such a simple classification of finite injective von Neumann algebras \cite{Co76}, it seems interesting to determine which of these are von Neumann equivalent.
\end{problem}

\appendix

\section{Measure equivalence and properly proximal groups}

For the benefit of the reader who may be less familiar with von Neumann algebras, we include here a separate proof that proper proximality is a measure equivalence invariant. We refer the reader to \cite{Zi84} or \cite{Fu11} for preliminary results on measure equivalence and cocycles.

If $E_*$ is a separable Banach space and $(X, \mu)$ is a standard Borel space, then we denote by $L^1(X; E_*)$ the set of norm-integrable Borel functions from $X$ to $E_*$, where we identify two functions if they agree almost everywhere. This is naturally a Banach space with norm $\| f \| = \int \| f(x) \| \, d\mu$. We set $E = (E_*)^*$ and let $L^\infty(X; E)$ denote the space of measurable, essentially bounded functions from $X$ to $E$, where $E$ is given the Borel structure coming from the weak$^*$-topology, and we identify functions that agree almost everywhere. We have a natural identification of $L^\infty(X; E)$ with $L^1(X; E_*)^*$ via the pairing $\langle \varphi, f \rangle = \int \varphi_x(f_x) d\mu(x)$. If $K \subset E$ is a weak$^*$-compact convex subset, then $L^\infty(X; K)$ gives a weak$^*$-compact convex subset of $L^\infty(X; E)$.

If $E$ is a dual Banach $\Lambda$-module and $K \subset E$ is a non-empty weak$^*$-compact convex $\Lambda$-invariant subset, $\Gamma \actson (X, \mu)$ is a  probability measure-preserving action, and  $\alpha: \Gamma \times X \to \Lambda$ is a cocycle, then we obtain an induced affine action of $\Gamma$ on $L^\infty(X; K)$ by 
\[
(\gamma \cdot f)(x) = \alpha(\gamma, \gamma^{-1}x) f(\gamma^{-1} x).
\]

Let $(\Omega, m)$ be an ME-coupling of two groups $\Gamma$ and $\Lambda$, and let $X \subset \Omega$ be a fundamental domain for the $\Lambda$-action. Under the identification $\Omega/ \Lambda \cong X$ given by $\Lambda \omega \mapsto \Lambda \omega \cap X$, the action $\Gamma \actson \Omega/ \Lambda$ translates to 
\[
\gamma  \cdot x =  \alpha(\gamma, x) \gamma x,
\]
where $\alpha$ is the Zimmer cocycle, which is defined by the property that $\alpha(\gamma, x)$ is the unique element in $\Lambda$ such that $\alpha(\gamma, x) \gamma x \in X$.

The following result is well known.

\begin{prop}
There exists a $\Gamma$-fixed point in $L^\infty(\Omega/\Lambda; K)$ if and only if there exists a $\Lambda$-fixed point in $K$.
\end{prop}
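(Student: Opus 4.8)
The plan is to prove this as a measure-theoretic analogue of Proposition~\ref{prop:fixedpoint}, using the same two ideas: one direction is a trivial embedding of constants, and the converse uses averaging against an invariant finite measure to push a $\Gamma$-fixed point down to a $\Lambda$-fixed point in $K$.

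For the easy direction, suppose $k_0 \in K$ is fixed by $\Lambda$. Then the constant function $f \equiv k_0$ lies in $L^\infty(\Omega/\Lambda; K)$, and by the formula for the induced action, $(\gamma \cdot f)(x) = \alpha(\gamma, \gamma^{-1} x) f(\gamma^{-1} x) = \alpha(\gamma, \gamma^{-1} x) k_0 = k_0$, since $k_0$ is $\Lambda$-fixed and $\alpha(\gamma, \gamma^{-1} x) \in \Lambda$. Hence $f$ is a $\Gamma$-fixed point.

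For the converse, suppose $f \in L^\infty(\Omega/\Lambda; K)$ is $\Gamma$-fixed. The key point I would exploit is that $\Gamma \actson (\Omega/\Lambda, \mu)$ is a probability-measure-preserving action (the fundamental domain $X$ has finite measure), so I have a $\Gamma$-invariant probability measure $\mu$ on $\Omega/\Lambda$ to average against. First I would note that because $K$ is weak$^*$-compact and convex, the barycenter $b = \int_{\Omega/\Lambda} f(x) \, d\mu(x)$ is a well-defined element of $K$ (the integral exists as a weak$^*$-integral of a bounded $K$-valued function against a probability measure, and lands in $K$ by convexity and weak$^*$-compactness, via a Hahn--Banach separation argument exactly as in Proposition~\ref{prop:abelrange}). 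Then I would compute that $b$ is $\Lambda$-fixed. The natural way is to test the $\Gamma$-invariance of $f$ against the invariant measure: for any $\lambda \in \Lambda$, I want to show $\lambda b = b$. This is where the cocycle identity does the real work; the averaging must be arranged so that the $\Lambda$-action induced by $\alpha$ combines with the $\mu$-invariance of the $\Gamma$-action to yield $\Lambda$-invariance of $b$.

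The main obstacle is precisely that last computation: the integrand $f$ is only $\Gamma$-equivariant via the Zimmer cocycle $\alpha$, not genuinely $\Lambda$-equivariant, so I cannot directly read off $\Lambda$-invariance of the barycenter. The correct approach, I expect, is to encode the data differently: rather than averaging $f$ itself, I would view the $\Gamma$-fixed point as giving, via the duality $L^\infty(\Omega/\Lambda; E) \cong L^1(\Omega/\Lambda; E_*)^*$, a bounded $\Gamma$-equivariant map from $L^1(\Omega/\Lambda)$ (or its relevant predual) into $E$ carrying states into $K$, and then evaluate this map at the $\Gamma$-invariant state coming from $\mu$, mirroring the operator-theoretic identification $CB(\mathcal M_*, K)^\Gamma \cong CB((\mathcal M^\Gamma)_*, K)$ used in Proposition~\ref{prop:fixedpoint}. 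Concretely, $\Gamma$-invariance of $f$ together with invariance of $\mu$ forces the barycenter to be unchanged under the $\Lambda$-action, since the cocycle relation $\alpha(\gamma_1\gamma_2, x) = \alpha(\gamma_1, \gamma_2 x)\alpha(\gamma_2, x)$ lets one transfer the $\Gamma$-translation-invariance of the averaged element into $\Lambda$-invariance of its value in $K$. Once $b \in K$ is shown to be $\Lambda$-fixed, the proof is complete. I would present the forward direction in a sentence and devote the bulk of the argument to setting up the barycenter correctly and verifying its $\Lambda$-invariance via the cocycle identity.
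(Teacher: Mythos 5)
Your forward direction is fine (and is the part the paper leaves implicit). The converse has a genuine gap, and it sits exactly where you flagged "the main obstacle": you propose to average $f$ over $\Omega/\Lambda$ against $\mu$ and then assert that the cocycle identity forces this barycenter to be $\Lambda$-fixed. That assertion is false, not merely unproven. Concretely, take $\Omega = \mathbb{R}$ with Lebesgue measure, $\Gamma = \mathbb{Z}$ and $\Lambda = \sqrt{2}\,\mathbb{Z}$ acting by translations, $X = [0,\sqrt{2})$, $E = \mathbb{C}$ with $\Lambda$ acting by $\sqrt{2}\,n \cdot z = e^{2\pi i \sqrt{2}\, n} z$, and $K$ the closed unit disc, whose unique $\Lambda$-fixed point is $0$. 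The function $f(x) = e^{2\pi i x}$ satisfies $f(\gamma \cdot x) = \alpha(\gamma,x) f(x)$, since $\gamma \cdot x = x + \gamma + \alpha(\gamma,x)$ as real numbers and $e^{2\pi i \gamma} = 1$ for $\gamma \in \mathbb{Z}$; so $f$ is a $\Gamma$-fixed point of $L^\infty(X;K)$. But its barycenter $\frac{1}{\sqrt{2}}\int_0^{\sqrt{2}} e^{2\pi i x}\,dx = \frac{e^{2\pi i \sqrt{2}}-1}{2\pi i \sqrt{2}} \neq 0$ is not $\Lambda$-fixed. The failure is structural: $\Lambda$ does not act on $\Omega/\Lambda$, so there is no equivariance available to move $\lambda$ past the integral; the cocycle records the $\Lambda$-correction of the $\Gamma$-action, and integrating against the $\Gamma$-invariant measure $\mu$ does not erase it.

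The paper's proof averages over the \emph{other} quotient. Given the $\Gamma$-fixed $\xi$ on $\Omega/\Lambda$, lift it to the map $\tilde\xi : \Omega \to K$ that agrees with $\xi$ on the fundamental domain $X$ and is genuinely $\Lambda$-equivariant (this is the paper's $\tilde\xi(\lambda,x) = \lambda^{-1}\xi(x)$, up to the convention for identifying $\Omega$ with $\Lambda \times (\Omega/\Lambda)$). The twisted relation $\xi(\gamma \cdot x) = \alpha(\gamma,x)\xi(x)$ translates exactly into invariance of $\tilde\xi$ under the genuine $\Gamma$-action on $\Omega$, so $\tilde\xi$ descends to a $\Lambda$-equivariant map $\Omega/\Gamma \to K$. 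Since the $\Gamma$-fundamental domain has finite measure, $\Omega/\Gamma$ carries a finite $\Lambda$-invariant measure, and the barycenter of $\tilde\xi$ over $\Omega/\Gamma$ lies in $K$ and is $\Lambda$-fixed, because now equivariance of the integrand and invariance of the measure cooperate. In my counterexample this gives $\int_0^1 e^{2\pi i t}\,dt = 0$, the correct fixed point. This is also the faithful translation of Proposition~\ref{prop:fixedpoint}, which you tried to mirror: there $\mathcal{M}$ corresponds to $L^\infty(\Omega)$, not to $L^\infty(\Omega/\Lambda)$, and the state one evaluates at is the $\Lambda$-invariant trace on $\mathcal{M}^\Gamma = L^\infty(\Omega/\Gamma)$, i.e., integration over $\Omega/\Gamma$ --- not the $\Gamma$-invariant measure $\mu$ on $\Omega/\Lambda$. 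Choosing the wrong fixed-point algebra, and hence the wrong state, is the single missing idea in your argument.
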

\begin{proof}
Suppose $\xi: \Omega/ \Lambda \to K$ is a Borel map that satisfies $\xi(\gamma x) = \alpha(\gamma, x) \xi(x)$. We then define the map $\tilde \xi: \Omega \to K$ by $\tilde \xi(\lambda, x) = \lambda^{-1} \xi(x)$, where we identify $\Omega$ with $\Lambda \times (\Omega/ \Lambda)$. Then as $\tilde \xi$ is invariant under the induced $\Gamma$-action and is equivariant with respect to the $\Lambda$-action, we therefore obtain a $\Lambda$-equivariant map from $\Omega/ \Gamma \to K$. Integrating this map with respect to the $\Lambda$-invariant measure on $\Omega/ \Gamma$ then gives a $\Lambda$-fixed point. 
\end{proof}

We recall from Proposition~\ref{prop:properlyproximalequivalent} that a group $\Lambda$ is properly proximal if there exists a dual Banach $\Lambda$-module $E$ and a non-empty weak$^*$-compact convex $\Lambda$-invariant subset $K \subset E$ such that $K$ has a properly proximal point, but has no fixed point.

A cocycle $\alpha: \Gamma \times X \to \Lambda$ is proper if for all $\varepsilon > 0$ and $F \subset \Lambda$ finite, there exists $F' \subset \Gamma$ finite such that $\mu( \{ x \mid \alpha(\gamma, \gamma^{-1} x) \in F \} ) < \varepsilon$ for all $\gamma \in \Gamma \setminus F'$. It's easy to see that a cocycle coming from an ME-coupling is proper.

\begin{prop}
If the action $\Lambda \actson K$ is properly proximal, and if the cocycle $\alpha$ is proper, then the induced action $\Gamma \actson L^\infty(X; K)$ is properly proximal. 
\end{prop}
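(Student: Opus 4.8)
The plan is to exhibit properly proximal points for the induced action directly, the basic such points being the constant functions. Note first that the induced action is \emph{linear} and isometric on $L^\infty(X;E)=L^1(X;E_*)^*$, so $L^\infty(X;E)$ is a dual Banach $\Gamma$-module and $L^\infty(X;K)$ is an invariant weak$^*$-compact convex subset; it therefore suffices, by the cofinality of the difference entourages, to test proper proximality against the basic neighborhoods of the diagonal of the form $\{(u,v):|\langle\varphi^{(j)},u-v\rangle|<\varepsilon,\ j=1,\dots,m\}$ for finitely many $\varphi^{(j)}\in L^1(X;E_*)$ and $\varepsilon>0$. Fix a point $k\in K$ that is properly proximal for $\Lambda\actson K$ and let $\mathbf{k}\in L^\infty(X;K)$ be the constant function with value $k$; the goal is to show $\mathbf{k}$ is properly proximal.

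Fix $g\in\Gamma$ and write $\lambda_\gamma(x)=\alpha(\gamma,\gamma^{-1}x)$ and $h_\gamma(x)=\alpha(g,g^{-1}\gamma^{-1}x)$. The cocycle identity gives $(\gamma\cdot\mathbf{k})(x)=\lambda_\gamma(x)\cdot k$ and $(\gamma g\cdot\mathbf{k})(x)=\lambda_\gamma(x)h_\gamma(x)\cdot k$, so that
\[
\langle\varphi^{(j)},\gamma\cdot\mathbf{k}-\gamma g\cdot\mathbf{k}\rangle=\int_X\langle\varphi^{(j)}_x,\ \lambda_\gamma(x)\cdot(k-h_\gamma(x)\cdot k)\rangle\,d\mu(x).
\]
The key point is that the integrand pairs $\varphi^{(j)}_x$ with $\lambda_\gamma(x)$ applied to a member of the fixed family $\{k-s\cdot k:s\in\Lambda\}\subset E_{\mathrm{mix}}$, since proper proximality of $k$ means exactly that $\lambda\cdot(k-s\cdot k)\to 0$ weak$^*$ as $\lambda\to\infty$ for each fixed $s$.

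Two quantities must be controlled uniformly in $\gamma$: the inner twist $h_\gamma(x)$ and the escaping element $\lambda_\gamma(x)$. For the inner twist, since $\alpha(g,\cdot)\colon X\to\Lambda$ is measurable into a countable group I choose a finite $S\subset\Lambda$ with $\mu(\{y:\alpha(g,y)\notin S\})$ as small as desired; because $\Gamma\actson(X,\mu)$ is measure-preserving and $h_\gamma(x)=\alpha(g,g^{-1}\gamma^{-1}x)$ with $g^{-1}\gamma^{-1}\in\Gamma$, the exceptional set $\{x:h_\gamma(x)\notin S\}$ has the \emph{same} measure for every $\gamma$, and on its complement $h_\gamma(x)\in S$. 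After approximating each $\varphi^{(j)}$ in $L^1(X;E_*)$ by a simple function $\sum_l 1_{C_l}\psi_l$, the integral decomposes into finitely many regions on each of which the integrand has the form $\langle\psi_l,\lambda_\gamma(x)\cdot(k-s\cdot k)\rangle$ with $s\in S$ fixed. On such a region I split according to whether $\lambda_\gamma(x)$ lies in the finite set $T$ outside of which $|\langle\psi_l,\lambda\cdot(k-s\cdot k)\rangle|$ is small: off $T$ the integrand is uniformly small, while the properness of $\alpha$ furnishes a finite $F\subset\Gamma$ with $\mu(\{x:\lambda_\gamma(x)\in T\})$ small for all $\gamma\notin F$. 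Collecting the finitely many such $F$ and using $\|k-s\cdot k\|\le 2\sup_{K}\|\cdot\|$ together with absolute continuity of the integral on the exceptional sets, the whole pairing is $<\varepsilon$ for $\gamma$ outside a finite set.

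This produces a properly proximal point, and as in Lemma~\ref{lem:probpropproximal} the convexity provided by Lemma~\ref{lem:convex} shows the set of properly proximal points is a convex $\Gamma$-invariant set containing every constant function with value a properly proximal point of $K$. Since $K$ has no $\Lambda$-fixed point, the preceding fixed-point proposition shows $L^\infty(\Omega/\Lambda;K)$ has no $\Gamma$-fixed point, whence condition~(\ref{item:3}) of Proposition~\ref{prop:properlyproximalequivalent} applies. The main obstacle is precisely the joint $x$-dependence in the displayed integral: in contrast to $\Lambda\actson K$, here both the escaping element $\lambda_\gamma(x)$ and the inner twist $h_\gamma(x)$ vary measurably with $x$ and with $\gamma$. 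The device that resolves this is that measure-preservation of the $\Gamma$-action makes the distribution of $h_\gamma(x)$ independent of $\gamma$ (allowing a uniform finite truncation $S$), while properness of $\alpha$ drives $\lambda_\gamma(x)$ out of every finite set for almost every $x$ as $\gamma\to\infty$; the varying test vector $\varphi^{(j)}_x$ is handled by simple-function approximation.
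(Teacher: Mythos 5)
Your proof is correct and follows essentially the same route as the paper's: both show that the constant function at a properly proximal point $k$ is a convergence point by combining the cocycle identity $\alpha(\gamma g,(\gamma g)^{-1}x)=\alpha(\gamma,\gamma^{-1}x)\alpha(g,g^{-1}\gamma^{-1}x)$, measure-preservation of the $\Gamma$-action to confine the inner twist $\alpha(g,g^{-1}\gamma^{-1}x)$ to a fixed finite set off a set of small measure, proper proximality of $k$ to make the pairing small once $\alpha(\gamma,\gamma^{-1}x)$ leaves a finite set $T$, and properness of $\alpha$ to exclude finitely many $\gamma$, together with approximation of the test functionals by simple functions in $L^1(X;E_*)$. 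The remaining differences are cosmetic: the paper works with step functions from the outset and organizes the estimate as a $3\varepsilon$ bound, while your closing remarks on convexity and the fixed-point proposition belong to the deduction of the corollary rather than to this proposition.
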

\begin{proof}
We assume for simplicity that $K$ is contained in the unit ball of $E^*$. Fix $k \in K$ such that for all $h \in \Lambda$ we have $\lim_{\lambda \to \infty} \lambda h k - \lambda k = 0$. We view $k \in L^\infty(X; K)$ as a constant function. Fix $g \in \Gamma$, $\varepsilon > 0$, and $\mathcal F \subset L^1(X; E)$ a finite collection of step functions with finite range $F_0$ contained in the unit ball of $E$. Fix a set $X_0 \subset X$ such that $\mu(X_0) > 1 - \varepsilon$ and such that $x \mapsto \alpha(g, x)$ ranges in a finite set $F_{00} \subset \Lambda$. 

Since $k$ is a convergence point for $\Lambda$, there exists a finite set $F_{00}' \subset \Lambda$ such that for all $\lambda \in \Lambda \setminus F_{00}'$ we have $| \langle \lambda h k - \lambda k, a \rangle | < \varepsilon$ for all $h \in F_{00}, a \in F_0$. As the cocycle $\alpha$ is proper, there exists a finite set $G_0 \subset \Gamma$, so that if $E_\gamma = \{ x \in X \mid \alpha(\gamma, \gamma^{-1} x) \not\in F_{00}' \}$, then $\mu(E_\gamma) > 1- \varepsilon$ for all $\gamma \in \Gamma \setminus G_0$. For $\gamma \in \Gamma \setminus G_0$, and $f \in \mathcal F$ we then have 
\begin{align}
| \langle \gamma g k - \gamma k, f \rangle |
&= \left| \int  \langle \alpha(\gamma g, g^{-1} \gamma^{-1} x) k - \alpha(\gamma, \gamma^{-1} x) k, f(x) \rangle d\mu(x) \right| \nonumber \\
&\leq \int | \langle \alpha(\gamma, \gamma^{-1} x) \alpha(g, g^{-1} \gamma^{-1} x) k - \alpha(\gamma, \gamma^{-1}x) k, f(x) \rangle | d\mu(x) \nonumber \\
&\leq \varepsilon + \int_{\gamma g X_0} \sup_{h \in F_{00}} \sup_{a \in F_0} | \langle \alpha(\gamma, \gamma^{-1}x) h k - \alpha(\gamma, \gamma^{-1}x)k, a \rangle | d\mu(x) \nonumber \\
&\leq 2\varepsilon + \int_{\gamma g X_0 \cap E_\gamma} \sup_{h \in F_{00}} \sup_{a \in F_0} | \langle \alpha(\gamma, \gamma^{-1}x) h k - \alpha(\gamma, \gamma^{-1}x)k, a \rangle | d\mu(x) 
< 3 \varepsilon. \nonumber
\end{align}
Since simple functions are dense in $L^1(X; E)$, it follows that $k$ is a convergence point for the action $\Gamma \actson L^\infty(X; K)$. 
\end{proof}

\begin{cor}
If two groups $\Gamma$ and $\Lambda$ are measure equivalent, then $\Gamma$ is properly proximal if and only if $\Lambda$ is properly proximal. 
\end{cor}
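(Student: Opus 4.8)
Since measure equivalence is a symmetric relation, it suffices to prove a single implication: that proper proximality of $\Lambda$ forces proper proximality of $\Gamma$. Interchanging the roles of the two groups then yields the converse, and the biconditional follows. The strategy is to certify proper proximality of $\Gamma$ through the third criterion of Proposition~\ref{prop:properlyproximalequivalent}, feeding it the output of the induction machinery already set up in this appendix.

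First I would fix an ME-coupling $(\Omega, m)$ of $\Gamma$ and $\Lambda$, take a fundamental domain $X \subset \Omega$ for the $\Lambda$-action, identify $\Omega/\Lambda$ with $X$, and recall the associated Zimmer cocycle $\alpha \colon \Gamma \times X \to \Lambda$, which is proper. Assuming $\Lambda$ is properly proximal, Proposition~\ref{prop:properlyproximalequivalent} (in the form recalled above) hands us a dual Banach $\Lambda$-module $E$ and a non-empty weak$^*$-compact convex $\Lambda$-invariant subset $K \subset E$ possessing a properly proximal point but no $\Lambda$-fixed point. I would then pass to the induced object $L^\infty(X; K)$, which is a non-empty weak$^*$-compact convex subset of the dual Banach $\Gamma$-module $L^\infty(X; E) \cong L^1(X; E_*)^*$, invariant under the affine $\Gamma$-action determined by $\alpha$.

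The two preceding propositions of the appendix then complete the argument by assembly. The properness of $\alpha$ together with the properly proximal point of $K$ produces, via the induction proposition above, a properly proximal point in $L^\infty(X; K)$ for the $\Gamma$-action --- namely the corresponding constant function. At the same time, since $K$ has no $\Lambda$-fixed point, the fixed-point proposition above (under the identification $L^\infty(\Omega/\Lambda; K) \cong L^\infty(X; K)$) shows that $L^\infty(X; K)$ has no $\Gamma$-fixed point. These two conclusions are exactly the hypotheses of the third criterion in Proposition~\ref{prop:properlyproximalequivalent}, so $\Gamma$ is properly proximal, and the proof is finished.

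Because the substantive analytic content has already been carried out in the two propositions, the corollary is essentially an assembly and there is no serious obstacle. The only points demanding a little care are structural: one should confirm that the cocycle-twisted $\Gamma$-action on $L^\infty(X; E)$ is by weak$^*$-continuous isometries dual to an isometric action on the predual $L^1(X; E_*)$, so that $L^\infty(X; K)$ genuinely sits inside a dual Banach $\Gamma$-module, and one should note the minor bookkeeping point that the third criterion requires only a single properly proximal point, which is precisely what the induction proposition delivers.
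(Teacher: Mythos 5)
Your proposal is correct and follows exactly the route the paper intends: the corollary is stated there without explicit proof precisely because it is the assembly you describe, namely combining the appendix's induction proposition (a properly proximal point of $K$ yields one in $L^\infty(X;K)$ via the proper Zimmer cocycle) with the fixed-point proposition (no $\Lambda$-fixed point in $K$ forces no $\Gamma$-fixed point in $L^\infty(\Omega/\Lambda;K)$), then invoking criterion (iii) of Proposition~\ref{prop:properlyproximalequivalent} and the symmetry of measure equivalence. The structural caveats you flag at the end are the right ones to check, and they hold since the cocycle-twisted action is dual to the evident isometric $\Gamma$-action on $L^1(X;E_*)$.
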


\providecommand{\bysame}{\leavevmode\hbox to3em{\hrulefill}\thinspace}
\providecommand{\MR}{\relax\ifhmode\unskip\space\fi MR }

\providecommand{\MRhref}[2]{%
  \href{http://www.ams.org/mathscinet-getitem?mr=#1}{#2}
}
\providecommand{\href}[2]{#2}


\begin{thebibliography}{DTDW18}



\bibitem[Bat23]{Ba23}
Bat-Od Battseren, \emph{{V}on {N}eumann equivalence and group exactness}, J. Funct. Anal. \textbf{284} (2023), no.~4, 109786, 12.

\bibitem[Bat22]{Ba22}
\bysame, \emph{{V}on {N}eumann equivalence and {$M_d$} type approximation properties}, Proc. Amer. Math. Soc. \textbf{151} (2023), no. 10, 4447-4459.


\bibitem[BV93]{BeVa93}
Mohammed E.~B. Bekka and Alain Valette, \emph{Kazhdan's property {$({\rm T})$}
  and amenable representations}, Math. Z. \textbf{212} (1993), no.~2, 293--299.
  
\bibitem[Ber15]{Be15}
Mihaita Berbec, \emph{{${\rm W}^*$}-superrigidity for wreath products with
  groups having positive first {$\ell^2$}-{B}etti number}, Internat. J. Math.
  \textbf{26} (2015), no.~1, 1550003, 27.


\bibitem[BV14]{BeVa14}
Mihaita Berbec and Stefaan Vaes, \emph{{$\rm W^*$}-superrigidity for group von
  {N}eumann algebras of left-right wreath products}, Proc. Lond. Math. Soc. (3)
  \textbf{108} (2014), no.~5, 1116--1152.


  
\bibitem[Ble92]{Bl92}
David~P. Blecher, \emph{Tensor products of operator spaces. {II}}, Canad. J.
  Math. \textbf{44} (1992), no.~1, 75--90.

\bibitem[BLM04]{BlMe04}
David~P. Blecher and Christian Le~Merdy, \emph{Operator algebras and their
  modules---an operator space approach}, London Mathematical Society
  Monographs. New Series, vol.~30, The Clarendon Press, Oxford University
  Press, Oxford, 2004, Oxford Science Publications.


\bibitem[BP91]{BlPa91}
David~P. Blecher and Vern~I. Paulsen, \emph{Tensor products of operator
  spaces}, J. Funct. Anal. \textbf{99} (1991), no.~2, 262--292.
  
\bibitem[BIP21]{BIP18}
R\'emi Boutonnet, Adrian Ioana, and Jesse Peterson, \emph{Properly proximal
  groups and their von {N}eumann algebras}, Ann. Sci. \'{E}c. Norm. Sup\'{e}r. (4) \textbf{54} (2021), no.~2, 445--482.



\bibitem[CI11]{ChIo11}
Ionu\c{t} Chifan and Adrian Ioana, \emph{On a question of {D}. {S}hlyakhtenko},
  Proc. Amer. Math. Soc. \textbf{139} (2011), no.~3, 1091--1093.

\bibitem[CI18]{ChIo18}
\bysame, \emph{Amalgamated free product rigidity for group von {N}eumann
  algebras}, Adv. Math. \textbf{329} (2018), 819--850.


\bibitem[Cho83]{Ch83}
Marie Choda, \emph{Group factors of the {H}aagerup type}, Proc. Japan Acad.
  Ser. A Math. Sci. \textbf{59} (1983), no.~5, 174--177.


\bibitem[Con76]{Co76}
A.~Connes, \emph{Classification of injective factors. {C}ases {$II_{1},$}
  {$II_{\infty },$} {$III_{\lambda },$} {$\lambda \not=1$}}, Ann. of Math. (2)
  \textbf{104} (1976), no.~1, 73--115.

\bibitem[Con82]{Co82}
\bysame, \emph{Classification des facteurs}, Operator algebras and
  applications, {P}art 2 ({K}ingston, {O}nt., 1980), Proc. Sympos. Pure Math.,
  vol.~38, Amer. Math. Soc., Providence, R.I., 1982, pp.~43--109.

\bibitem[Con94]{Co94}
\bysame, \emph{Noncommutative geometry}, Academic Press, Inc., San Diego,
  CA, 1994.
  
\bibitem[CJ85]{CoJo85}
A.~Connes and V.~Jones, \emph{Property {$T$} for von {N}eumann algebras}, Bull.
  London Math. Soc. \textbf{17} (1985), no.~1, 57--62.


\bibitem[CS05]{CoSh05}
Alain Connes and Dimitri Shlyakhtenko, \emph{{$L^2$}-homology for von {N}eumann
  algebras}, J. Reine Angew. Math. \textbf{586} (2005), 125--168.

\bibitem[DP23]{DiP23}
Changying Ding and Jesse Peterson, 
\emph{Biexact von {N}eumann algebras}, 2023, arXiv:2309.10161.


\bibitem[DTDW20]{DuTDWe18}
Bruno Duchesne, Robin Tucker-Drob, and Phillip Wesolek, \emph{A new lattice
  invariant for lattices in totally disconnected locally compact groups}, Israel J. Math. \textbf{240} (2020), no.~2, 539--565.

\bibitem[Dye59]{dye1}
H.~A. Dye, \emph{On groups of measure preserving transformations. {I}}, Amer.
  J. Math. \textbf{81} (1959), 119--159.

\bibitem[Dye63]{dye2}
\bysame, \emph{On groups of measure preserving transformations. {II}}, Amer. J.
  Math. \textbf{85} (1963), 551--576.

\bibitem[Eff75]{Ef75}
Edward~G. Effros, \emph{Property {$\Gamma $} and inner amenability}, Proc.
  Amer. Math. Soc. \textbf{47} (1975), 483--486.

\bibitem[Fur99a]{Fu99B}
Alex Furman, \emph{Gromov's measure equivalence and rigidity of higher rank
  lattices}, Ann. of Math. (2) \textbf{150} (1999), no.~3, 1059--1081.

\bibitem[Fur99b]{Fu99A}
\bysame, \emph{Orbit equivalence rigidity}, Ann. of Math. (2) \textbf{150}
  (1999), no.~3, 1083--1108.

\bibitem[Fur11]{Fu11}
\bysame, \emph{A survey of measured group theory}, Geometry, rigidity, and
  group actions, Chicago Lectures in Math., Univ. Chicago Press, Chicago, IL,
  2011, pp.~296--374.

\bibitem[Gab00]{Ga00}
Damien Gaboriau, \emph{Sur la (co-)homologie {$L^2$} des actions pr\'{e}servant
  une mesure}, C. R. Acad. Sci. Paris S\'{e}r. I Math. \textbf{330} (2000),
  no.~5, 365--370.

\bibitem[Gab02]{Ga02}
\bysame, \emph{On orbit equivalence of measure preserving actions}, Rigidity in
  dynamics and geometry ({C}ambridge, 2000), Springer, Berlin, 2002,
  pp.~167--186.

\bibitem[Gab05]{Ga05}
\bysame, \emph{Examples of groups that are measure equivalent to the free
  group}, Ergodic Theory Dynam. Systems \textbf{25} (2005), no.~6, 1809--1827.

\bibitem[GK96]{GK96}
Liming Ge and R. Kadison, \emph{On tensor products of von {N}eumann algebras}, Invent. Math. \textbf{123} (1996), no.~3, 453--466.


\bibitem[Gro93]{Gr93}
M.~Gromov, \emph{Asymptotic invariants of infinite groups}, Geometric group
  theory, {V}ol. 2 ({S}ussex, 1991), London Math. Soc. Lecture Note Ser., vol.
  182, Cambridge Univ. Press, Cambridge, 1993, pp.~1--295.

\bibitem[Ioa13]{Io13}
Adrian Ioana, \emph{Classification and rigidity for von {N}eumann algebras},
  European {C}ongress of {M}athematics, Eur. Math. Soc., Z\"{u}rich, 2013,
  pp.~601--625.

\bibitem[Ioa18]{Io18}
\bysame, \emph{Rigidity for von neumann algebras}, Proceedings of the
  {I}nternational {C}ongress of {M}athematicians. {V}olume {II}, 2018,
  pp.~1635--1668.

\bibitem[IPV13]{IoPoVa13}
Adrian Ioana, Sorin Popa, and Stefaan Vaes, \emph{A class of superrigid group
  von {N}eumann algebras}, Ann. of Math. (2) \textbf{178} (2013), no.~1,
  231--286.

\bibitem[Ish21]{Is21}
Ishan Ishan, \emph{{V}on {N}eumann equivalence and group approximation properties}, 2021, arXiv:2107.11335.

\bibitem[Kid10]{Ki10}
Yoshikata Kida, \emph{Measure equivalence rigidity of the mapping class group},
  Ann. of Math. (2) \textbf{171} (2010), no.~3, 1851--1901.

\bibitem[Kid11]{Ki11}
\bysame, \emph{Rigidity of amalgamated free products in measure equivalence},
  J. Topol. \textbf{4} (2011), no.~3, 687--735.

\bibitem[Kid15]{Ki15}
\bysame, \emph{Stability in orbit equivalence for {B}aumslag-{S}olitar groups and {V}aes groups}, Groups. Geom. Dyn. \textbf{9} (2015), no.~1, 203--235.

\bibitem[KTD20]{KiTD20}
Yoshikata Kida and Robin Tucker-Drob, \emph{Inner amenable groupoids and central sequences}, Forum Math. Sigma \textbf{8} (2020), Paper No. e29, 84 pp.

\bibitem[KoKyR21a]{KoKyRa21}
Juhani Koivisto, David Kyed, and Sven Raum, \emph{Measure equivalence and coarse equivalence for unimodular locally compact groups}, Groups Geom. Dyn. \textbf{15} (2021), no.~1, 223--267

\bibitem[KoKyR21b]{KoKyRa21b}
\bysame, \emph{Measure equivalence for non-unimodular groups}, Transform. Groups \textbf{26} (2021), no.~1, 327--346.

\bibitem[Kra91]{Kr91}
Jon Kraus, \emph{The slice map problem and approximation properties}, J. Funct.
  Anal. \textbf{102} (1991), no.~1, 116--155.

\bibitem[Lan95]{La95}
E.~C. Lance, \emph{Hilbert {$C^*$}-modules}, London Mathematical Society
  Lecture Note Series, vol. 210, Cambridge University Press, Cambridge, 1995, A
  toolkit for operator algebraists.

\bibitem[Mar75]{Ma75}
G.~A. Margulis, \emph{Discrete groups of motions of manifolds of nonpositive
  curvature}, Proceedings of the {I}nternational {C}ongress of {M}athematicians
  ({V}ancouver, {B}.{C}., 1974), {V}ol. 2, 1975, pp.~21--34.

\bibitem[Mon01]{Mo01}
Nicolas Monod, \emph{Continuous bounded cohomology of locally compact groups},
  Lecture Notes in Mathematics, vol. 1758, Springer-Verlag, Berlin, 2001.

\bibitem[MS06]{MoSh06}
Nicolas Monod and Yehuda Shalom, \emph{Orbit equivalence rigidity and bounded cohomology}, Ann. of Math. (2) \textbf{164} (2006), no.~3, 826--878.

\bibitem[MvN37]{MuvN37}
F.~J. Murray and J.~von Neumann, \emph{On rings of operators. {II}}, Trans.
  Amer. Math. Soc. \textbf{41} (1937), no.~2, 208--248.

\bibitem[MvN43]{MuvN43}
\bysame, \emph{On rings of operators. {IV}}, Ann. of Math. (2) \textbf{44}
  (1943), 716--808.

\bibitem[OW80]{OrWe80}
Donald~S. Ornstein and Benjamin Weiss, \emph{Ergodic theory of amenable group
  actions. {I}. {T}he {R}ohlin lemma}, Bull. Amer. Math. Soc. (N.S.) \textbf{2}
  (1980), no.~1, 161--164.

\bibitem[Pas73]{Pa73}
William~L. Paschke, \emph{Inner product modules over {$B^{\ast} $}-algebras},
  Trans. Amer. Math. Soc. \textbf{182} (1973), 443--468.

\bibitem[PP86]{PP86}
Mihai Pimsner and Sorin Popa, \emph{Entropy and index for subfactors}, Annales Scientifiques de l'\'Ecole Normale Sup\'erieure Ser. 4 \textbf{19} (1986), no.~1, 57--106.


\bibitem[Pis03]{Pi03}
Gilles Pisier, \emph{Introduction to operator space theory}, London
  Mathematical Society Lecture Note Series, vol. 294, Cambridge University
  Press, Cambridge, 2003.

\bibitem[Pop86]{Po86}
Sorin Popa, \emph{Correspondences}, INCREST preprint No. 56/1986, 1986,
  www.math.ucla.edu/ \~{}popa/preprints.html.

\bibitem[Pop06a]{Po06A}
\bysame, \emph{On a class of type {${\rm II}_1$} factors with {B}etti
  numbers invariants}, Ann. of Math. (2) \textbf{163} (2006), no.~3, 809--899.

\bibitem[Pop06b]{Po06B}
\bysame, \emph{Strong rigidity of {$\rm II_1$} factors arising from malleable
  actions of {$w$}-rigid groups. {I}}, Invent. Math. \textbf{165} (2006),
  no.~2, 369--408.

\bibitem[Pop06c]{Po06C}
\bysame, \emph{Strong rigidity of {$\rm II_1$} factors arising from malleable
  actions of {$w$}-rigid groups. {II}}, Invent. Math. \textbf{165} (2006),
  no.~2, 409--451.

\bibitem[Pop07a]{Po07}
\bysame, \emph{Cocycle and orbit equivalence superrigidity for malleable
  actions of {$w$}-rigid groups}, Invent. Math. \textbf{170} (2007), no.~2,
  243--295.

\bibitem[Pop07b]{Po07B}
\bysame, \emph{Deformation and rigidity for group actions and von {N}eumann
  algebras}, International {C}ongress of {M}athematicians. {V}ol. {I}, Eur.
  Math. Soc., Z\"{u}rich, 2007, pp.~445--477.

\bibitem[Pop08]{Po08}
\bysame, \emph{On the superrigidity of malleable actions with spectral gap}, J.
  Amer. Math. Soc. \textbf{21} (2008), no.~4, 981--1000.

\bibitem[PS20]{PoSh18}
Sorin Popa and Dimitri Shlyakhtenko, \emph{Representing interpolated free group
  factors as group factors}, Groups Geom. Dyn. \textbf{14} (2020), no.~3, 837--855.

\bibitem[PV10]{PoVa10}
Sorin Popa and Stefaan Vaes, \emph{On the fundamental group of {${\rm II}_1$}
  factors and equivalence relations arising from group actions}, Quanta of
  maths, Clay Math. Proc., vol.~11, Amer. Math. Soc., Providence, RI, 2010,
  pp.~519--541.

\bibitem[Rua92]{Ru92}
Zhong-Jin Ruan, \emph{On the predual of dual algebras}, J. Operator Theory
  \textbf{27} (1992), no.~1, 179--192.

\bibitem[Sak09]{Sa09}
Hiroki Sako, \emph{The class {$S$} as an {ME} invariant}, Int. Math. Res. Not. IMRN (2009), no.~15, 2749-2759.

\bibitem[Sch02]{Sc02}
J\"{u}rgen Schweizer, \emph{Hilbert {$C^\ast$}-modules with a predual}, J.
  Operator Theory \textbf{48} (2002), no.~3, suppl., 621--632.

\bibitem[Sin55]{Si55}
I.~M. Singer, \emph{Automorphisms of finite factors}, Amer. J. Math.
  \textbf{77} (1955), 117--133.

\bibitem[Tak02]{Ta02}
M.~Takesaki, \emph{Theory of operator algebras. {I}}, Encyclopaedia of
  Mathematical Sciences, vol. 124, Springer-Verlag, Berlin, 2002, Reprint of
  the first (1979) edition, Operator Algebras and Non-commutative Geometry, 5.

\bibitem[Vae07]{Va07}
Stefaan Vaes, \emph{Rigidity results for {B}ernoulli actions and their von
  {N}eumann algebras (after {S}orin {P}opa)}, no. 311, 2007, S\'{e}minaire
  Bourbaki. Vol. 2005/2006, pp.~Exp. No. 961, viii, 237--294.

\bibitem[Vae10]{Va10}
\bysame, \emph{Rigidity for von {N}eumann algebras and their invariants},
  Proceedings of the {I}nternational {C}ongress of {M}athematicians. {V}olume
  {III}, Hindustan Book Agency, New Delhi, 2010, pp.~1624--1650.

\bibitem[vNW40]{vNWi40}
J.~v.~Neumann and E.~P. Wigner, \emph{Minimally almost periodic groups}, Ann.
  of Math. (2) \textbf{41} (1940), 746--750.

\bibitem[Zim84]{Zi84}
Robert~J. Zimmer, \emph{Ergodic theory and semisimple groups}, Monographs in
  Mathematics, vol.~81, Birkh\"auser Verlag, Basel, 1984.

\end{thebibliography}
\end{document}